\documentclass[12pt,a4paper]{amsart}
\usepackage{amssymb}
\usepackage[cmtip,all]{xy}
\usepackage{hyperref}

\calclayout

\newcommand{\+}{\nobreakdash-}
\renewcommand{\:}{\colon}

\newcommand{\rarrow}{\longrightarrow}

\newcommand{\ot}{\otimes}

\newcommand{\lrarrow}{\mskip.5\thinmuskip\relbar\joinrel\relbar
   \joinrel\rightarrow\mskip.5\thinmuskip\relax}

\DeclareFontFamily{U}{mathx}{\hyphenchar\font45}
\DeclareFontShape{U}{mathx}{m}{n}{
      <5> <6> <7> <8> <9> <10>
      <10.95> <12> <14.4> <17.28> <20.74> <24.88>
      mathx10
      }{}
\DeclareSymbolFont{mathx}{U}{mathx}{m}{n}
\DeclareFontSubstitution{U}{mathx}{m}{n}
\DeclareMathAccent{\widecheck}{0}{mathx}{"71}

\DeclareMathOperator{\Hom}{Hom}
\DeclareMathOperator{\Ext}{Ext}
\DeclareMathOperator{\Spec}{Spec}

\newcommand{\Modl}{{\operatorname{\mathsf{--Mod}}}}
\newcommand{\Qcoh}{{\operatorname{\mathsf{--Qcoh}}}}
\newcommand{\Ctrh}{{\operatorname{\mathsf{--Ctrh}}}}
\newcommand{\Lcth}{{\operatorname{\mathsf{--Lcth}}}}
\newcommand{\Sh}{{\operatorname{\mathsf{--Sh}}}}
\newcommand{\Cosh}{{\operatorname{\mathsf{--Cosh}}}}

\newcommand{\ctra}{{\operatorname{\mathsf{-ctra}}}}

\newcommand{\Ab}{\mathsf{Ab}}

\newcommand{\sop}{\mathsf{op}}

\newcommand{\prj}{\mathsf{prj}}
\newcommand{\inj}{\mathsf{inj}}
\newcommand{\fl}{\mathsf{fl}}
\renewcommand{\cot}{\mathsf{cot}}
\newcommand{\vfl}{\mathsf{vfl}}
\newcommand{\cta}{\mathsf{cta}}
\newcommand{\qs}{\mathsf{qs}}
\newcommand{\lct}{\mathsf{lct}}
\newcommand{\lin}{\mathsf{lin}}
\newcommand{\alf}{\mathsf{alf}}
\newcommand{\cfq}{\mathsf{cfq}}

\newcommand{\bB}{\mathbf B}
\newcommand{\bW}{\mathbf W}
\newcommand{\bT}{\mathbf T}

\newcommand{\cO}{\mathcal O}
\newcommand{\cF}{\mathcal F}
\newcommand{\cI}{\mathcal I}
\newcommand{\cM}{\mathcal M}
\newcommand{\cN}{\mathcal N}

\newcommand{\fE}{\mathfrak E}
\newcommand{\fF}{\mathfrak F}
\newcommand{\fG}{\mathfrak G}
\newcommand{\fH}{\mathfrak H}
\newcommand{\fI}{\mathfrak I}
\newcommand{\fJ}{\mathfrak J}
\newcommand{\fK}{\mathfrak K}
\newcommand{\fL}{\mathfrak L}
\newcommand{\fM}{\mathfrak M}
\newcommand{\fN}{\mathfrak N}
\newcommand{\fP}{\mathfrak P}
\newcommand{\fQ}{\mathfrak Q}
\newcommand{\fR}{\mathfrak R}

\newcommand{\m}{\mathfrak m}
\newcommand{\p}{\mathfrak p}

\newcommand{\sK}{\mathsf K}
\newcommand{\sE}{\mathsf E}
\newcommand{\sF}{\mathsf F}
\newcommand{\sC}{\mathsf C}
\newcommand{\sS}{\mathsf S}

\newcommand{\bC}{\mathbf C}
\newcommand{\bX}{\mathbf X}
\newcommand{\bomega}{\boldsymbol\omega}

\newcommand{\boL}{\mathbb L}

\newcommand{\boQ}{\mathbb Q}
\newcommand{\boZ}{\mathbb Z}

\newcommand{\Section}[1]{\bigskip\section{#1}\medskip}
\theoremstyle{plain}
\newtheorem{thm}{Theorem}[section]
\newtheorem{prop}[thm]{Proposition}
\newtheorem{lem}[thm]{Lemma}
\newtheorem{cor}[thm]{Corollary}
\theoremstyle{definition}
\newtheorem{rem}[thm]{Remark}
\newtheorem{rems}[thm]{Remarks}

\newtheorem{ex}[thm]{Example}

\begin{document}

\title{The contraherent version of the theorem \\
of Sl\'avik and \v S\v tov\'\i\v cek}

\author{Leonid Positselski}

\address{Institute of Mathematics, Czech Academy of Sciences \\
\v Zitn\'a~25, 115~67 Prague~1 \\ Czech Republic} 

\email{positselski@math.cas.cz}

\begin{abstract}
 This is a paper about contraherent cosheaves on non-semi-separated
schemes.
 We prove two theorems, a negative one and a positive one.
 On the negative side, let $X$ be a quasi-compact, quasi-separated
scheme that is not semi-separated.
 We present a locally cotorsion contraherent cosheaf on $X$ that
does not have an admissible monomorphism into any locally injective
locally contraherent cosheaf.
 The construction and proof follow the arguments of Sl\'avik
and \v St\!'ov\'\i\v cek.
 On the positive side, let $X$ be a Noetherian scheme of finite
Krull dimension.
 We prove that every $\bW$\+locally contraherent cosheaf on $X$ has
an admissible monomorphism into a locally cotorsion $\bW$\+locally
contraherent cosheaf.
 Moreover, the cokernel is a flat contraherent cosheaf.
 The proof is based on the theorem of Raynaud--Gruson about
the projective dimensions of flat modules and Enochs' classification
of flat cotorsion modules.
\end{abstract}

\maketitle

\tableofcontents

\section{Introduction}
\medskip

 Various classes of adjusted objects, such the classical flat,
projective, and injective modules, are used in homological algebra
in order to build resolutions and construct derived functors.
 There are always enough projective and injective objects in
the category of modules over an associative ring.
 The situation becomes more complicated when one passes to
algebraic geometry and considers the category of quasi-coherent
sheaves over a scheme.

 For any scheme $X$, the category of quasi-coherent sheaves $X\Qcoh$
is a Grothendieck abelian category~\cite[Section Tag~077K]{SP};
so it has all limits and enough injective objects.
 However, in any abelian category with infinite products and enough
projective objects, the direct product functors are exact.
 The infinite products of quasi-coherent sheaves are usually \emph{not}
exact functors~\cite[Example~4.9]{Kra}, \cite{Kan}; hence the category
$X\Qcoh$ cannot have enough projective objects (and in fact, usually
has no nonzero projective objects at all).  {\hbadness=1600\par}

 The conventional approach is to use flat quasi-coherent sheaves in
lieu of the nonexistent projective ones~\cite{M-th}.
 On a quasi-compact, semi-separated scheme $X$, any quasi-coherent sheaf
is a quotient sheaf of a flat quasi-coherent
sheaf~\cite[Section~2.4]{M-n}, \cite[Section~3.2]{M-th}.
 Another and perhaps clearer proof of this result can be found
in~\cite[Lemma~A.1]{EP}.
 Essentially the same argument as in~\cite{EP} can be used to prove
a stronger assertion that any quasi-coherent sheaf on $X$ is
a quotient sheaf of a \emph{very flat} quasi-coherent
sheaf~\cite[Lemma~4.1.1]{Pcosh}.

 On the other hand, let $X$ be a quasi-compact, quasi-separated scheme
that is \emph{not} semi-separated.
 Then the construction of Sl\'avik and
\v St\!'ov\'\i\v cek~\cite[Theorem~2.2]{SS} provides an example of
a quasi-coherent sheaf on $X$ that is \emph{not} a quotient sheaf of
any flat quasi-coherent sheaf.

 The \emph{contraherent cosheaves} are the dual analogues of
quasi-coherent sheaves~\cite[Preface]{Pcosh}, \cite{Pphil}.
 For any scheme $X$, there is the exact (but not abelian!) category
of contraherent cosheaves $X\Ctrh$.
 Choosing an open covering $\bW$ for the scheme $X$, one also obtains
a larger exact category of \emph{$\bW$\+locally contraherent cosheaves}
$X\Lcth_\bW$.
 The categories $X\Ctrh$ and $X\Lcth_\bW$ have exact functors of
infinite products and, under mild assumptions on the scheme $X$,
enough projective objects~\cite[Section~0.18, Lemma~4.5.1,
and Corollary~6.2.4]{Pcosh}.

 Dual-analogously to the categories of quasi-coherent sheaves not
having enough projective objects, one does not expect the categories
of contraherent or locally contraherent cosheaves to have enough
injective objects (though we are not aware of any specific
counterexamples).
 The \emph{locally injective} contraherent or locally contraherent
cosheaves are the closest dual analogues of flat (or very flat)
quasi-coherent sheaves.

 On a quasi-compact semi-separated scheme $X$, any $\bW$\+locally
contraherent cosheaf is an admissible subobject of a locally injective
$\bW$\+locally contraherent cosheaf~\cite[Lemma~4.3.2(a)]{Pcosh}.
 The proof of this result is dual-analogous to the proofs of
the existence of enough flat or very flat quasi-coherent sheaves
given in~\cite[Lemma~A.1]{EP} and~\cite[Lemma~4.1.1]{Pcosh}
(see Proposition~\ref{qcoh-ssep-enough-lin-prop} in the present paper).

 Now let $X$ be again a quasi-compact, quasi-separated scheme that is
not semi-separated.
 Then very close dual analogues of the construction and arguments of
Sl\'avik and \v St\!'ov\'\i\v cek~\cite[Section~2]{SS} provide
an example of a contraherent cosheaf $\fG$ on $X$ that is \emph{not}
an admissible subobject of any locally injective locally contraherent
cosheaf.
 This is the first main result of the present paper
(Theorem~\ref{non-ssep-not-enough-lin-theorem}).

 The (locally) contraherent cosheaves forming the exact categories
$X\Ctrh$ and $X\Lcth_\bW$ are glued from what we call
\emph{contraadjusted modules} over commutative
rings~\cite[Section~1.1]{Pcosh}, \cite{ST}, \cite{Pcta}; so such
cosheaves are called \emph{locally contraadjusted}.
 This is the widest reasonable class of modules to glue cosheaves from.
 The \emph{locally cotorsion} contraherent or locally contraherent
cosheaves, glued from \emph{cotorsion modules}~\cite{En2},
form intermediate exact categories between the exact categories of
locally contraadjusted and locally injective (locally) contraherent
cosheaves.

 Our contraherent cosheaf $\fG$ mentioned two paragraphs above is,
actually, a locally cotorsion contraherent cosheaf.
 So embedding locally cotorsion contraherent cosheaves into locally
injective ones appears to be no easier task than embedding locally
contraadjusted contraherent cosheaves into locally injective ones.

 Now let $X$ be a Noetherian scheme of finite Krull dimension.
 Notice that such a scheme is always quasi-compact and quasi-separated,
but \emph{need not} be semi-separated.
 Our second main result in this paper claims that any (i.~e., any
locally contraadjusted) $\bW$\+locally contraherent cosheaf on $X$ is
an admissible subobject of a locally cotorsion $\bW$\+locally
contraherent cosheaf.
 See Theorem~\ref{noetherian-findim-enough-lct-theorem}.

 The proof of the latter theorem is more involved.
 It is based on Enochs' classification of flat cotorsion modules over
Noetherian commutative rings~\cite[Section~2]{En2} and on
the theorem of Raynaud--Gruson~\cite[Corollaire~II.3.2.7]{RG},
according to which the projective dimension of a flat module over
a Noetherian commutative ring of Krull dimension~$D$ cannot
exceed~$D$.

 A proof of our second main theorem in a greater generality of
formal schemes can be found in~\cite[Proposition~4.7.11]{Pform}.
 However, \cite{Pform}~is a very long paper with lots of complicated
material (and indeed, formal schemes are harder to contemplate and
work with than schemes).
 For the benefit of the reader, we decided to include an independent
account of the second main result in this shorter paper.

 We hope that this paper will be also useful as a brief introduction
into the theory of contraherent cosheaves that may be more accessible
for some readers than the longer and heavier book-sized
manuscripts~\cite{Pcosh,Pdomc,Pform} (and at the same time contain
more details than the announcement~\cite{Pphil}).

\subsection*{Acknowledgement}
 The author is supported by the GA\v CR project 26-22734S
and the Institute of Mathematics, Czech Academy of Sciences
(research plan RVO:~67985840).

\Section{Category-Theoretic Preliminaries}

 We suggest the survey paper~\cite{Bueh} as the background reference
source on exact categories (in the sense of Quillen).
 The assumption of \emph{weak
idempotent-completeness}~\cite[Section~7]{Bueh} simplifies
the theory of exact categories.
 For the purposes of the present paper, one can safely assume all
the exact categories to satisfy the even stronger
\emph{idempotent-completeness} condition~\cite[Section~6]{Bueh}.

 Given an exact category $\sK$ and a full additive subcategory
$\sE\subset\sK$ closed under extensions in $\sK$, the category $\sE$
is endowed with the \emph{inherited} (or \emph{induced}) exact
category structure.
 This means that the admissible short exact sequences in $\sE$ are
the admissible short exact sequences in $\sK$ with the terms belonging
to~$\sE$.

 The Yoneda Ext groups $\Ext_\sE^n(X,Y)$, \,$X$, $Y\in\sE$,
\,$n\in\boZ_{\ge0}$ in an exact category $\sE$ can be defined in
the way similar to the classical construction for abelian categories.
 In particular, $\Ext_\sE^1(X,Y)$ is the group of equivalence
classes of admissible short exact sequences $0\rarrow Y\rarrow Z
\rarrow X\rarrow0$ in~$\sE$.

 A discussion of projective and injective objects in exact categories
can be found in~\cite[Section~11]{Bueh}.
 We denote the full subcategory of projective objects by
$\sE_\prj\subset\sE$ and the full subcategory of injective objects
by $\sE^\inj\subset\sE$.

\subsection{Resolving and coresolving subcategories}
\label{resolving-subsecn}
 Let $\sE$ be an exact category.
 A full subcategory $\sF\subset\sE$ is said to be \emph{generating}
in $\sE$ if for every object $E\in\sE$ there exists an object
$F\in\sF$ together with an admissible epimorphism $F\rarrow E$.
 Dually, a full subcategory $\sC\subset\sE$ is said to be
\emph{cogenerating} in $\sE$ if for every object $E\in\sE$ there
exists an object $C\in\sC$ together with an admissible monomorphism
$E\rarrow C$.

 A generating full subcategory $\sF\subset\sE$ is said to be
\emph{resolving} if $\sF$ is closed under extensions and kernels of
admissible epimorphisms in~$\sE$.
 Dually, a cogenerating full subcategory $\sC\subset\sE$ is said to be
\emph{coresolving} if $\sC$ is closed under extensions and cokernels
of admissible monomorphisms in~$\sE$.

 For example, the full subcategory of projective objects $\sE_\prj$
is resolving in $\sE$ if and only if it is generating in $\sE$, and
if and only if there are enough projectives in~$\sE$.
 Dually, the full subcategory of injective objects $\sE^\inj$ is
coresolving in $\sE$ if and only if it is cogenerating in $\sE$, and
if and only if there are enough injectives in~$\sE$.
 To give yet another example, for any associative ring $R$,
the full subcategory of flat $R$\+modules $R\Modl_\fl$ is resolving
in the abelian category of left $R$\+modules $R\Modl$.

 We say that an exact category is \emph{split} if all the (admissible)
short exact sequences in it are split.

\begin{lem} \label{split-co-resolving-lemma}
\textup{(a)} Let\/ $\sE$ be an exact category and\/ $\sF\subset\sE$
be a resolving subcategory closed under direct summands.
 Assume that the exact category\/ $\sF$ is split.
 Then there are enough projective objects in\/ $\sE$, and\/ $\sF$ is
the class of all such projective objects, $\sF=\sE_\prj$. \par
\textup{(b)} Let\/ $\sE$ be an exact category and\/ $\sC\subset\sE$
be a coresolving subcategory closed under direct summands.
 Assume that the exact category\/ $\sC$ is split.
 Then there are enough injective objects in\/ $\sE$, and\/ $\sC$ is
the class of all such injective objects, $\sC=\sE^\inj$.
\end{lem}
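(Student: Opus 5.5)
Part~(b) is dual to part~(a), so I will prove~(a) and obtain~(b) by passing to the opposite exact category. The plan is to show two inclusions: first $\sF\subset\sE_\prj$, and then $\sE_\prj\subset\sF$. Once $\sF\subset\sE_\prj$ is known, the generating property of $\sF$ immediately says there are enough projectives in~$\sE$.

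\emph{Step 1: every object of $\sF$ is projective in $\sE$.} Let $P\in\sF$ and let $0\rarrow A\rarrow B\rarrow P\rarrow0$ be an admissible short exact sequence in~$\sE$; I need to show it splits. Since $\sF$ is generating, choose an admissible epimorphism $F\rarrow B$ with $F\in\sF$. The composition $F\rarrow B\rarrow P$ is again an admissible epimorphism (admissible epimorphisms compose). Its kernel $K$ lies in $\sF$, because $\sF$ is closed under kernels of admissible epimorphisms. So $0\rarrow K\rarrow F\rarrow P\rarrow0$ is an admissible short exact sequence with all three terms in~$\sF$, hence an admissible sequence in the inherited exact structure on~$\sF$. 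As $\sF$ is split, it splits: there is a section $s\:P\rarrow F$. Then the composition $P\rarrow F\rarrow B$ is a section of $B\rarrow P$, so the original sequence splits.

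Equivalently, one can run this argument on Yoneda $\Ext^1_\sE(P,A)$ classes by pulling back along $F\rarrow B$, but the direct section argument above is simpler. This step is the only place where any care is needed: one must check that the sequence $K\rarrow F\rarrow P$ really is admissible in~$\sE$, which holds by the standard axioms of exact categories.

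\emph{Step 2: every projective object lies in $\sF$.} Let $Q\in\sE_\prj$. Since $\sF$ is generating, choose an admissible epimorphism $F\rarrow Q$ with $F\in\sF$. By projectivity of~$Q$ this epimorphism splits, so $Q$ is a direct summand of~$F$. Since $\sF$ is closed under direct summands, $Q\in\sF$.

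Combining the two steps gives $\sF=\sE_\prj$, and together with the generating property this shows there are enough projectives in~$\sE$. Part~(b) then follows verbatim by duality.
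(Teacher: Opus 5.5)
Your proof is correct and is essentially the paper's argument, dualized. The paper proves part~(b) by composing an admissible monomorphism $C\to E$ with a cogenerating admissible monomorphism $E\to D$, $D\in\sC$, and using closure under cokernels to split $C\to D$ inside $\sC$. It then shows that injectives lie in $\sC$ via the direct-summand closure, which is exactly the mirror image of your Steps~1 and~2 for part~(a).
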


\begin{proof}
 Let us prove part~(b).
 Let $C\in\sC$ be an object; we need to show that the object $C$ is
injective in~$\sE$.
 Indeed, let $C\rarrow E$ be an admissible monomorphism in~$\sE$.
 As the full subcategory $\sC\subset\sE$ is cogenerating by assumption,
there exists an admissible monomorphism $E\rarrow D$ in $\sE$ with
$D\in\sC$.
 Now the composition $C\rarrow E\rarrow D$ is an admissible
monomorphism in $\sE$ between two objects from~$\sC$.
 As the cokernel of any such morphism $C\rarrow D$ belongs to $\sC$ by
assumption, it follows that $C\rarrow D$ is also an admissible
monomorphism in~$\sC$.
 Hence $C\rarrow D$ is a split monomorphism.
 Let $s\:D\rarrow C$ be its retraction; then the composition
$E\rarrow D\overset s\rarrow C$ is a retraction for the morphism
$C\rarrow E$.
 Thus $C\rarrow E$ is a split monomorphism as well, and we have
proved that $C\in\sE^\inj$.
 Conversely, let $J$ be an injective object in~$\sE$.
 Since the full subcategory $\sC\subset\sE$ is cogenerating,
there exists an admissible monomorphism $J\rarrow D$ in $\sE$
with $D\in\sC$.
 Then $J$ is a direct summand of $D$, and by assumption it follows
that $J\in\sC$.
 The assertion that there are enough injective objects in $\sE$ now
also follows from the assumption that the class $\sC$ is cogenerating
in~$\sE$.
\end{proof}

 Let $\sE$ be a weakly idempotent-complete exact category,
$\sF\subset\sE$ be a resolving subcategory, $E\in\sE$ be an object,
and $d\ge-1$ be an integer.
 One says that the \emph{$\sF$\+resolution dimension} of $E$ does not
exceed~$d$ if there exists an exact sequence $0\rarrow F_d\rarrow
F_{d-1}\rarrow\dotsb\rarrow F_0\rarrow E\rarrow0$ in $\sE$ with
the objects $F_i\in\sF$ for all $0\le i\le d$.
 Dually, given a coresolving subcategory $\sC\subset\sE$, one says that
the \emph{$\sC$\+coresolution dimension} of an object $E\in\sE$ does
not exceed~$d$ if there exists an exact sequence $0\rarrow E\rarrow
C^0\rarrow C^1\rarrow\dotsb\rarrow C^d\rarrow0$ in $\sE$ with
the objects $C^i\in\sC$ for all $0\le i\le d$.

 It is a classical observation in homological algebra that
the projective/injective dimension of an object does not depend on
the choice of a projective/injective (co)resolution.
 Similarly, the flat dimension of an $R$\+module does not depend on
the choice of a flat resolution.
 Such results, stated in the following proposition, are actually valid
for all (co)resolving subcategories in (weakly idempotent-complete)
exact categories.

\begin{prop} \label{co-resolution-dimension-prop}
\textup{(a)} Let\/ $\sE$ be a weakly idempotent-complete exact category
and\/ $\sF\subset\sE$ be a resolving subcategory.
 Let $E\in\sE$ be an object of\/ $\sF$\+resolution dimension\/~$\le d$
and\/ $0\rarrow F\rarrow F_{d-1}\rarrow\dotsb\rarrow F_0\rarrow E
\rarrow0$ be an exact sequence in\/ $\sE$ with the objects $F_i\in\sF$
for all\/ $0\le i\le d-1$.
 Then one has $F\in\sF$. \par
\textup{(b)} Let\/ $\sE$ be a weakly idempotent-complete exact category
and\/ $\sC\subset\sE$ be a coresolving subcategory.
 Let $E\in\sE$ be an object of\/ $\sC$\+coresolution dimension\/~$\le d$
and\/ $0\rarrow E\rarrow C^0\rarrow\dotsb\rarrow C^{d-1}\rarrow C
\rarrow0 $ be an exact sequence in\/ $\sE$ with the objects $C_i\in\sC$
for all\/ $0\le i\le d-1$.
 Then one has $C\in\sC$.
\end{prop}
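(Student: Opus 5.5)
We prove part~(a); part~(b) is dual, obtained by passing to the opposite exact category $\sE^\sop$, in which $\sC$ becomes resolving.

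The plan is the classical Schanuel-type comparison argument, done by induction on~$d$. The case $d=0$ is trivial: $F\cong E$ and $E\in\sF$ by assumption. The case $d=-1$ is vacuous.

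The key step is a generalized Schanuel lemma in the following form. Suppose $0\rarrow K\rarrow F\rarrow E\rarrow0$ and $0\rarrow K'\rarrow F'\rarrow E\rarrow0$ are admissible short exact sequences with $F$, $F'\in\sF$. Then $K\in\sF$ if and only if $K'\in\sF$. To prove it, form the pullback $P=F\times_EF'$. In an exact category, the pullback of an admissible epimorphism exists and is again an admissible epimorphism with the same kernel. So we get admissible short exact sequences $0\rarrow K'\rarrow P\rarrow F\rarrow0$ and $0\rarrow K\rarrow P\rarrow F'\rarrow0$.

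If $K'\in\sF$, then $P\in\sF$ by closedness under extensions. Then $K$, being the kernel of the admissible epimorphism $P\rarrow F'$ between objects of $\sF$, lies in $\sF$, since $\sF$ is closed under kernels of admissible epimorphisms. The converse follows by symmetry. Note that we never need direct summands here; this is why the statement holds without assuming $\sF$ closed under summands.

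For the induction, suppose we have the given sequence $0\rarrow F\rarrow F_{d-1}\rarrow\dotsb\rarrow F_0\rarrow E\rarrow0$ and a resolution $0\rarrow F'_d\rarrow\dotsb\rarrow F'_0\rarrow E\rarrow0$ witnessing dimension $\le d$. Exactness of a long sequence in an exact category means it splices from admissible short exact sequences. Weak idempotent-completeness is used here so that kernels and images behave well and this splicing is unambiguous.

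Let $Z$ and $Z'$ be the kernels of $F_0\rarrow E$ and $F'_0\rarrow E$. Then $Z'$ has $\sF$\+resolution dimension $\le d-1$ via $F'_d\rarrow\dotsb\rarrow F'_1$. We cannot compare $Z$ and $Z'$ directly, so we do the pullback trick one level down. Set $P=F_0\times_EF'_0$. There are admissible short exact sequences $0\rarrow Z\rarrow P\rarrow F'_0\rarrow0$ and $0\rarrow Z'\rarrow P\rarrow F_0\rarrow0$, and the latter shows $P\in\sF$.

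Now consider the kernel $Z''$ of the admissible epimorphism $F_1\oplus F'_0\rarrow P$. This map is the composite $F_1\rarrow Z\rarrow P$ on the first summand and a lift $F'_0\rarrow P$ on the second. Finding this lift is the fiddly step. It is cleaner to replace it by the following: apply the induction hypothesis to $Z'$ after modifying its resolution. Namely, $0\rarrow F'_d\rarrow\dotsb\rarrow F'_2\rarrow F'_1\oplus F_0\rarrow P\rarrow F'_0\rarrow0$ is not what we want either, so the simplest rigorous route is a dimension-shifting lemma.

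The dimension-shifting lemma says: if $0\rarrow Z\rarrow G\rarrow E\rarrow0$ is admissible with $G\in\sF$ and $E$ has dimension $\le d$, then $Z$ has dimension $\le d-1$. By induction on $d$, the lemma follows from the Schanuel statement above applied to the syzygies. Concretely, $Z\oplus F'_0$ and $Z'\oplus F_0$ are both the kernel-type objects related through $P$. More precisely, $Z$ is the kernel of $P\rarrow F'_0$, and $P$ is an extension of $F_0$ by $Z'$. Since $Z'$ has resolution $F'_\bullet$ of length $d-1$, the horseshoe lemma gives $P$ a resolution of length $\le d-1$ as well. Then $Z$, being the kernel of $P\rarrow F'_0$, inherits a length $\le d-1$ resolution, again by a pullback argument.

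The main obstacle will be precisely this bookkeeping: verifying the horseshoe and pullback constructions produce admissible sequences in $\sE$. Once $Z$ has dimension $\le d-1$, the induction hypothesis applied to $0\rarrow F\rarrow F_{d-1}\rarrow\dotsb\rarrow F_1\rarrow Z\rarrow0$ yields $F\in\sF$.
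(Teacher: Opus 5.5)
Your Schanuel argument for $d=1$ is correct, and reducing to a dimension-shifting statement is the right architecture. That statement reads: if $0\rarrow Z\rarrow F_0\rarrow E\rarrow0$ is admissible with $F_0\in\sF$ and $E$ has $\sF$-resolution dimension $\le d$, $d\ge1$, then $Z$ has dimension $\le d-1$. But the one step with real content is not justified, namely that the pullback $P=F_0\times_EF'_0$ has dimension $\le d-1$. Three things go wrong.
\begin{itemize}
\item For $d\ge2$ the sequence $0\rarrow Z'\rarrow P\rarrow F_0\rarrow0$ does not give $P\in\sF$, because $Z'$ is only known to have dimension $\le d-1$.
\item Comparing $Z\oplus F'_0$ with $Z'\oplus F_0$ is Schanuel's lemma for projectives and has no counterpart here.
\item The horseshoe lemma does not apply. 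To run it on $0\rarrow Z'\rarrow P\rarrow F_0\rarrow0$ you must lift the augmentation of a resolution of $F_0$ through $P\rarrow F_0$. For the trivial resolution this lift is a section of $P\rarrow F_0$, i.e.\ a splitting of the extension. Objects of $\sF$ are not projective, so there is no reason for it, or for a lift of an arbitrary cover of $F_0$, to exist.
\end{itemize}
So the obstacle is not admissibility bookkeeping but a missing lifting property. Notice that your argument never uses that $\sF$ is generating, and that is exactly what is needed.

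The claim is nevertheless true. If $d=1$, then $Z'\in\sF$ and hence $P\in\sF$. If $d\ge2$, choose an admissible epimorphism $H\rarrow P$ with $H\in\sF$; in effect, you choose the cover of $F_0$ so that it factors through $P$. The composition $H\rarrow P\rarrow F_0$ is an admissible epimorphism between objects of $\sF$, so its kernel $L$ lies in $\sF$. Moreover $L\simeq H\times_PZ'$, so the kernel $M$ of $H\rarrow P$ sits in an admissible short exact sequence $0\rarrow M\rarrow L\rarrow Z'\rarrow0$. The dimension-shifting statement for $d-1$, applied to $Z'$, shows that $M$ has dimension $\le d-2$, so $0\rarrow M\rarrow H\rarrow P\rarrow0$ gives $P$ dimension $\le d-1$. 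Your last step then goes through. Let $0\rarrow P_1\rarrow Q_0\rarrow P\rarrow0$ begin a resolution of $P$ of length $\le d-1$. The kernel $N$ of the composition $Q_0\rarrow P\rarrow F'_0$ lies in $\sF$, and $0\rarrow P_1\rarrow N\rarrow Z\rarrow0$ is admissible. Hence $Z$ has dimension $\le d-1$, and your outer induction finishes. (The paper gives no argument of its own for this proposition, only references to the literature.)
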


\begin{proof}
 Various expositions on various generality levels can be found
in~\cite[Lemma~2.1]{Zhu}, \cite[Proposition~2.3(1)]{Sto},
and~\cite[Corollary~A.5.2]{Pcosh}.  \hbadness=1600
\end{proof}

\subsection{Cotorsion pairs}
 The notion of a \emph{cotorsion pair} (also known as a \emph{cotorsion
theory}) goes back to Salce~\cite{Sal}.
 The main result was obtained in the paper of Eklof and
Trlifaj~\cite{ET}.
 The book~\cite[Chapter~6]{GT} is the main reference source in
the module-theoretic context, while the papers~\cite[Sections~8
and~12]{Pcta} and~\cite[Introduction]{Pctrl} can be used for
introductory reading.
 Various category-theoretic expositions can be found in
the papers~\cite{SaoSt,Pal}, the lecture notes~\cite{Sto-ICRA},
the announcement~\cite{Pphil}, and the book
manuscript~\cite[Appendix~B]{Pcosh}.

 Let $\sE$ be an exact category and $\sF$, $\sC\subset\sE$ be two
classes of objects.
 One denotes by $\sF^{\perp_1}\subset\sE$ the class of all objects
$X\in\sE$ such that $\Ext^1_\sE(F,X)=0$ for all $F\in\sF$.
 Dually, the notation ${}^{\perp_1}\sC\subset\sE$ stands for the class
of all objects $Y\in\sE$ such that $\Ext^1_\sE(Y,C)=0$ for all
$C\in\sC$.

 A pair of classes of objects $(\sF,\sC)$ in $\sE$ is called
a \emph{cotorsion pair} if one has $\sC=\sF^{\perp_1}$ and
$\sF={}^{\perp_1}\sC$.
 For any class of objects $\sS\subset\sE$, the pair of classes
$\sC=\sS^{\perp_1}$ and $\sF={}^{\perp_1}\sC$ is a cotorsion pair
in~$\sE$.
 The latter cotorsion pair is said to be \emph{generated by}
the class of objects $\sS\subset\sE$.

 For example, for any exact category $\sE$, the pair of classes of
objects $(\sE_\prj,\sE)$ is a cotorsion pair in $\sE$, called
the \emph{projective cotorsion pair}.
 Dually, the pair of classes of objects $(\sE,\sE^\inj)$ is also
a cotorsion pair in $\sE$, called the \emph{injective cotorsion pair}.

 Notice that there is \emph{no} assumption of existence of enough
projective or injective objects in $\sE$ in the previous paragraph.
 So what does it mean, in application to a cotorsion pair, that
there are enough objects in the classes $\sF$ and~$\sC$\,?
 The rather nontrivial answer to this question is provided by
the following definition of a \emph{complete} cotorsion pair in
an exact category.

 A cotorsion pair $(\sF,\sC)$ in $\sE$ is said to be complete if,
for every object $E\in\sE$, there exist (admissible) short exact
sequences
\begin{alignat}{4}
 0&\lrarrow C'&&\lrarrow F&&\lrarrow E&&\lrarrow0,
\label{special-precover-sequence} \\
 0&\lrarrow E&&\lrarrow C&&\lrarrow F'&&\lrarrow0
\label{special-preenvelope-sequence}
\end{alignat}
with some objects $F$, $F'\in\sF$ and $C$, $C'\in\sC$.

 The short exact sequence~\eqref{special-precover-sequence} is called
a \emph{special precover sequence}.
 The short exact sequence~\eqref{special-preenvelope-sequence} is
called a \emph{special preenvelope sequence}.
 Collectively, the sequences~(\ref{special-precover-sequence}\+-%
\ref{special-preenvelope-sequence}) are referred to as
the \emph{approximation sequences}.

 The following assertion is a suitably generalized version of
the result known as the \emph{Salce lemma}~\cite{Sal},
\cite[Lemma~5.20]{GT}.

\begin{lem} \label{salce-lemma}
 Let\/ $\sE$ be an exact category and\/ $\sF$, $\sC\subset\sE$ be
a pair of classes of objects. \par
\textup{(a)} Assume that a short exact
sequence~\eqref{special-preenvelope-sequence} with objects
$C\in\sC$ and $F'\in\sF$ exists for every object $E\in\sE$.
 Assume further that the class\/ $\sF$ is generating and closed
under extensions in\/~$\sE$.
 Then a short exact sequence~\eqref{special-precover-sequence}
with objects $F\in\sF$ and $C'\in\sC$ exists for every object
$E\in\sE$. \par
\textup{(b)} Assume that a short exact
sequence~\eqref{special-precover-sequence} with objects
$F\in\sF$ and $C'\in\sC$ exists for every object $E\in\sE$.
 Assume further that the class\/ $\sC$ is cogenerating and closed
under extensions in\/~$\sE$.
 Then a short exact sequence~\eqref{special-preenvelope-sequence}
with objects $C\in\sC$ and $F'\in\sF$ exists for every object
$E\in\sE$.
\end{lem}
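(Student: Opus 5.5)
Part~(a) is proved by a pullback construction, and part~(b) is its dual.

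For~(a), let $E\in\sE$ be given. Since $\sF$ is generating, choose an admissible epimorphism $G\rarrow E$ with $G\in\sF$, and let $K$ be its kernel, so that $0\rarrow K\rarrow G\rarrow E\rarrow0$ is admissible exact. Apply the assumed special preenvelope sequence to $K$: there is an admissible short exact sequence $0\rarrow K\rarrow C'\rarrow F'\rarrow0$ with $C'\in\sC$ and $F'\in\sF$. Now form the pushout $F$ of the span $C'\leftarrow K\rarrow G$. In an exact category the pushout of an admissible monomorphism along any morphism exists and is again an admissible monomorphism, with the same cokernel. This gives a commutative diagram with admissible exact rows and columns:
\[
 0\rarrow C'\rarrow F\rarrow E\rarrow0, \qquad
 0\rarrow G\rarrow F\rarrow F'\rarrow0.
\]
The first sequence comes from comparing cokernels: the cokernel of $C'\rarrow F$ equals the cokernel of $K\rarrow G$, which is $E$. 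The second comes from pushing out along $K\rarrow C'$: the cokernel of $G\rarrow F$ equals the cokernel of $K\rarrow C'$, which is $F'$.

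In the second sequence we have $G,F'\in\sF$. Since $\sF$ is closed under extensions, it follows that $F\in\sF$. The first sequence is then the desired sequence~\eqref{special-precover-sequence}, with $F\in\sF$ and $C'\in\sC$.

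Part~(b) is the formal dual. Take an admissible monomorphism $E\rarrow D$ with $D\in\sC$, and let $L$ be its cokernel. Choose a special precover sequence $0\rarrow C''\rarrow F'\rarrow L\rarrow0$, and pull back $D\rarrow L$ along $F'\rarrow L$ to obtain an object $C$. Exactly dual to the above, this yields admissible exact sequences $0\rarrow E\rarrow C\rarrow F'\rarrow0$ and $0\rarrow C''\rarrow C\rarrow D\rarrow0$. Closure of $\sC$ under extensions gives $C\in\sC$, and the first sequence is the required sequence~\eqref{special-preenvelope-sequence}.

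The only step needing care is the exact-category bookkeeping in the pushout (respectively pullback) diagram. One must check that both new sequences are admissible and have the stated cokernels (respectively kernels). I would cite the standard facts on pushouts and pullbacks of admissible morphisms and the resulting $3\times3$ diagrams from~\cite[Proposition~2.12]{Bueh}. No idempotent-completeness assumption is needed, since all objects involved are constructed as pushouts or pullbacks.
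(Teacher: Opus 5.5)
Your proof is correct and is the classical Salce pushout/pullback argument. The paper gives no proof of its own and refers to \cite[Lemma~B.1.1]{Pcosh}, which uses this same construction. Your bookkeeping is valid in any exact category by \cite[Proposition~2.12]{Bueh} and its dual: the single pushout square yields both $0\rarrow C'\rarrow F\rarrow E\rarrow0$ and $0\rarrow G\rarrow F\rarrow F'\rarrow0$, and dually for the pullback.
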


\begin{proof}
 See, e.~g., \cite[Lemma~B.1.1]{Pcosh}.
\end{proof}

 Given a cotorsion pair $(\sF,\sC)$ in $\sE$, the intersection of
the two classes $\sF\cap\sC$ is called the \emph{kernel} (or in
a different terminology, the \emph{core}) of the cotorsion pair
$(\sF,\sC)$.
 The following lemma is well-known.

\begin{lem} \label{cotorsion-pair-injectives-projectives}
 Let\/ $\sE$ be an exact category and $(\sF,\sC)$ be a complete
cotorsion pair in\/~$\sE$.
 Then \par
\textup{(a)} there are enough injective objects in the exact category\/
$\sF$, and the full subcategory\/ $\sF^\inj\subset\sF$ coincides with
the full subcategory\/ $\sF\cap\sC\subset\sF$; \par
\textup{(b)} there are enough projective objects in the exact category\/
$\sC$, and the full subcategory\/ $\sC_\prj\subset\sC$ coincides with
the full subcategory\/ $\sF\cap\sC\subset\sC$.
\end{lem}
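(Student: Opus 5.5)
I will prove part~(a); part~(b) is dual. Throughout, $\sF$ carries the exact structure inherited from $\sE$. This makes sense because the left class of a cotorsion pair, $\sF={}^{\perp_1}\sC$, is closed under extensions.

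The first step shows that every object of $\sF\cap\sC$ is injective in $\sF$. Let $K\in\sF\cap\sC$, and let $0\rarrow K\rarrow F_1\rarrow F_2\rarrow0$ be an admissible short exact sequence in $\sF$. This is an admissible short exact sequence in $\sE$. Its class lies in $\Ext^1_\sE(F_2,K)$, which vanishes because $F_2\in\sF$ and $K\in\sC=\sF^{\perp_1}$. So the sequence splits, and $K$ is injective in~$\sF$.

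The second step produces enough of these injectives. Take $F\in\sF$ and use the special preenvelope sequence~\eqref{special-preenvelope-sequence} for $E=F$:
\[
0\rarrow F\rarrow C\rarrow F'\rarrow0,
\]
with $C\in\sC$ and $F'\in\sF$. Since $\sF$ is closed under extensions in $\sE$ and both $F$ and $F'$ lie in $\sF$, we get $C\in\sF$. Hence $C\in\sF\cap\sC$. All three terms of the sequence lie in $\sF$, so $F\rarrow C$ is an admissible monomorphism in $\sF$ with cokernel in $\sF$. Together with the first step, this shows that $\sF$ has enough injectives.

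The third step proves the converse inclusion $\sF^\inj\subset\sF\cap\sC$. Let $J\in\sF$ be injective in $\sF$. The sequence from the second step, with $F=J$, is an admissible monomorphism $J\rarrow C$ in $\sF$. It therefore splits, so $J$ is a direct summand of $C\in\sC$. Finally, $\sC=\sF^{\perp_1}$ is closed under direct summands, because $\Ext^1_\sE(F,{-})$ takes finite direct sums to direct sums. Hence $J\in\sC$, and so $J\in\sF\cap\sC$.

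There is no real obstacle here. The only point needing care is to use the inherited exact structure on $\sF$ consistently: extension-closedness puts the middle term $C$ of the preenvelope sequence in $\sF$, and an admissible sequence in $\sF$ with terms in $\sF$ is admissible in $\sE$. For part~(b), apply the same argument in dual form, using the special precover sequence~\eqref{special-precover-sequence} together with the fact that $\sC$ is closed under extensions. Note that completeness is needed only for the approximation sequences, and only on the relevant side.
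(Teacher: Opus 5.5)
Your proof is correct: objects of $\sF\cap\sC$ are injective in $\sF$ by $\Ext^1$-vanishing, special preenvelope sequences of objects of $\sF$ have middle term in $\sF\cap\sC$, and every injective of $\sF$ is a retract of such a middle term; part~(b) is dual. The paper gives no argument of its own here and only cites the literature, and your proof is the standard argument for this lemma, so it takes essentially the same route.
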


\begin{proof}
 See, e.~g., \cite[Lemma~1.9]{Pfltp} or~\cite[Lemma~A.2.4]{Pform}.
\end{proof}

\Section{Module-Theoretic Preliminaries}

 Given an associative ring $R$, we denote by $R\Modl$ the abelian
category of left $R$\+modules.
 Accordingly, $R\Modl_\prj$ and $R\Modl^\inj\subset R\Modl$ are the full
subcategories of projective and injective $R$\+modules, respectively.
 The notation $R\Modl_\fl\subset R\Modl$ for the full subcategory
of flat $R$\+modules was already mentioned in
Section~\ref{resolving-subsecn}.

\subsection{Cotorsion modules} \label{cotorsion-modules-subsecn}
 A left $R$\+module $F$ is said to be \emph{cotorsion} (in the sense
of Enochs~\cite{En2}) if one has $\Ext_R^1(F,C)=0$ for all flat
left $R$\+modules~$F$.
 This is equivalent to having $\Ext_R^n(F,C)=0$ for all flat left
$R$\+modules $F$ and all integers $n\ge1$ (as one can easily see
using the facts that projective modules are flat and the kernels of
subjective morphisms of flat modules are flat).

 We denote the full subcategory of cotorsion $R$\+modules by
$R\Modl^\cot\subset R\Modl$.
 The full subcategory $R\Modl^\cot$ is closed under extensions,
cokernels of monomorphisms, and infinite products in $R\Modl$,
while the full subcategory $R\Modl_\fl$ is well known to be closed
under extensions, kernels of epimorphisms, and direct limits in
$R\Modl$.
 So both the full subcategories $R\Modl^\cot$ and $R\Modl_\fl$
inherit exact category structures from the abelian exact structure
of $R\Modl$.

\begin{thm} \label{flat-cotorsion-pair}
 For any associative ring $R$, the pair of classes (flat $R$\+modules,
cotorsion $R$\+modules) is a complete cotorsion pair in $R\Modl$.
 In other words: \par
\textup{(a)} for any $R$\+module $M$ there exists a short exact
sequence of $R$\+modules\/ $0\rarrow C\rarrow F\rarrow M\rarrow0$
with a flat $R$\+module $F$ and a cotorsion $R$\+module~$C$; \par
\textup{(b)} for any $R$\+module $M$ there exists a short exact
sequence of $R$\+modules\/ $0\rarrow M\rarrow C\rarrow F\rarrow0$
with a cotorsion $R$\+module $C$ and a flat $R$\+module~$F$.
\end{thm}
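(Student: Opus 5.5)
This is the flat cover theorem of Bican--El Bashir--Enochs, in its cotorsion-pair form, and I would follow the Eklof--Trlifaj route. First, the cotorsion pair property itself. By definition, the cotorsion modules are exactly $(R\Modl_\fl)^{\perp_1}$. So the only thing to check is that ${}^{\perp_1}(R\Modl^\cot)=R\Modl_\fl$.

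The inclusion $\supseteq$ is the definition of a cotorsion module. For $\subseteq$, let $M$ satisfy $\Ext^1_R(M,C)=0$ for all cotorsion $C$. Take the character module $N^+=\Hom_\boZ(N,\boQ/\boZ)$ of any right module $N$. Such modules are pure-injective, and pure-injective modules are cotorsion, because flat modules are direct limits of projective ones (Lazard) and short exact sequences ending in a flat module are pure. Then
\[
\Ext^1_R(M,N^+)\cong\Hom_\boZ(\mathrm{Tor}_1^R(N,M),\boQ/\boZ),
\]
so its vanishing forces $\mathrm{Tor}_1^R(N,M)=0$ for all $N$, i.e.\ $M$ is flat.

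For completeness, the key step is to show that the pair is generated by a \emph{set}. Choose a cardinal $\kappa\ge|R|+\aleph_0$. I would prove that every flat module $F$ is the union of a continuous well-ordered chain of pure submodules $F_\alpha$ such that each $F_{\alpha+1}/F_\alpha$ is flat of cardinality $\le\kappa$. The tool is a purification argument: any subset of size $\le\kappa$ of a module lies in a pure submodule of size $\le\kappa$, obtained by a countable closure process that solves the relevant finite linear systems. Moreover, quotients of flat modules by pure submodules are flat.

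Let $\sS$ be a set of representatives of flat modules of cardinality $\le\kappa$. Eklof's lemma (Ext out of a filtered module) then gives $\sS^{\perp_1}=(R\Modl_\fl)^{\perp_1}$, the class of cotorsion modules. The Eklof--Trlifaj theorem for a cotorsion pair generated by a set, which additionally contains $R$, provides the special preenvelope sequence~(b). That sequence is built by a transfinite small object argument that repeatedly pushes out along extensions by elements of $\sS$, with the cokernel being $\sS$-filtered and hence flat, again by closure of flat modules under transfinite extensions.

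Sequence~(a) then follows from Lemma~\ref{salce-lemma}(a). The hypotheses hold: flat modules are generating, since free modules are flat, and they are closed under extensions.

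The main obstacle is the deconstructibility step, i.e.\ the purification/cardinality argument showing that flat modules are filtered by flat modules of bounded size. Everything else is formal given the Eklof and Eklof--Trlifaj lemmas.
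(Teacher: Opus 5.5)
Your proposal is correct and follows essentially the same route as the paper. The paper does not spell the argument out: it cites Xu's book for the equality $R\Modl_\fl={}^{\perp_1}(R\Modl^\cot)$, which is proved by the character-module and $\mathrm{Tor}$ duality argument you give, and it cites Bican--El Bashir--Enochs, building on the Eklof--Trlifaj theorems, for completeness, where your purification/deconstructibility step, the Eklof-lemma reduction to a set, and the small-object construction followed by the Salce-type passage to precovers are exactly the content of those references.
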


\begin{proof}
 The assertion that $(R\Modl_\fl,\allowbreak\>R\Modl^\cot)$ is
\emph{a cotorsion pair} already requires a proof.
 One has $R\Modl^\cot=(R\Modl_\fl)^{\perp_1}$ by the definition; but
why does the equality $R\Modl_\fl={}^{\perp_1}(R\Modl^\cot)$ hold?
 A proof of that can be found in~\cite[Lemma~3.4.1]{Xu}.
 \emph{Completeness} of this cotorsion pair is harder to prove;
the argument, based on~\cite[Theorems~2 and~10]{ET}, appeared
in the paper~\cite[Section~2]{BBE}.
\end{proof}

 The cotorsion pair $(R\Modl_\fl,\allowbreak\>R\Modl^\cot)$ is called
the \emph{flat cotorsion pair} in $R\Modl$.

\begin{lem} \label{flat-cotorsion-injective-tensor-Hom-lemma}
 Let $R$ be a commutative ring. \par
\textup{(a)} For any two flat $R$\+modules $F$ and $G$,
the $R$\+module $F\ot_RG$ is flat. \par
\textup{(b)} For any flat $R$\+module $F$ and any cotorsion
$R$\+module $C$, the $R$\+module\/ $\Hom_R(F,C)$ is cotorsion. \par
\textup{(c)} For any $R$\+module $M$ and any injective
$R$\+module $J$, the $R$\+module\/ $\Hom_R(M,J)$ is cotorsion. \par
\textup{(d)} For any flat $R$\+module $F$ and any injective
$R$\+module $J$, the $R$\+module\/ $\Hom_R(F,J)$ is injective.
\end{lem}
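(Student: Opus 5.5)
The plan is to derive all four parts from the tensor-Hom adjunction over the commutative ring $R$, together with Theorem~\ref{flat-cotorsion-pair} and the standard Ext identities it yields.

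For part~(a), we use associativity of the tensor product. For any $R$\+module $M$ there is a natural isomorphism $M\ot_R(F\ot_RG)\cong(M\ot_RF)\ot_RG$. The functor $M\longmapsto M\ot_RF$ is exact since $F$ is flat, and the functor $N\longmapsto N\ot_RG$ is exact since $G$ is flat. So the composition is exact, and $F\ot_RG$ is flat.

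For part~(b), we need $\Ext^1_R(G,\Hom_R(F,C))=0$ for every flat $R$\+module~$G$.
\begin{itemize}
\item Take a projective resolution $P_\bullet\rarrow G$.
\item Since $F$ is flat, $P_\bullet\ot_RF$ is a flat resolution of $G\ot_RF$.
\item Adjunction gives an isomorphism of complexes $\Hom_R(P_\bullet,\Hom_R(F,C))\cong\Hom_R(P_\bullet\ot_RF,\>C)$.
\end{itemize}
The cohomology of the right-hand side computes $\Ext^*_R(G\ot_RF,\>C)$ via a flat resolution. This is legitimate because $C$ is cotorsion: $\Ext^n_R(\text{flat},C)=0$ for all $n\ge1$, as noted in Section~\ref{cotorsion-modules-subsecn}, so flat modules are acyclic for $\Hom_R({-},C)$. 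Hence
$$\Ext^1_R(G,\Hom_R(F,C))\cong\Ext^1_R(G\ot_RF,\>C)=0,$$
where the vanishing uses that $G\ot_RF$ is flat by part~(a). This acyclicity step is the only mildly delicate point. Alternatively, it can be handled directly: apply $\Hom_R({-},C)$ to $0\rarrow K\ot_RF\rarrow P\ot_RF\rarrow G\ot_RF\rarrow0$ and note that $\Ext^1_R(P\ot_RF,\>C)=0$ because $P\ot_RF$ is flat.

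For part~(c), the same adjunction gives $\Ext^n_R(G,\Hom_R(M,J))\cong\Hom_R(\mathrm{Tor}_n^R(G,M),\>J)$, because $\Hom_R({-},J)$ is exact. For $G$ flat and $n\ge1$ we have $\mathrm{Tor}_n^R(G,M)=0$, so $\Hom_R(M,J)$ is cotorsion.

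For part~(d), we have $\Hom_R({-},\Hom_R(F,J))\cong\Hom_R({-}\ot_RF,\>J)$. This is a composition of the exact functor ${-}\ot_RF$ with the exact functor $\Hom_R({-},J)$, hence exact, so $\Hom_R(F,J)$ is injective.
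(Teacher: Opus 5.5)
Your proposal is correct, including the acyclicity justification in (b): cotorsion $C$ kills $\Ext^{\ge1}_R(\text{flat},C)$, so the flat resolution $P_\bullet\ot_RF$ computes $\Ext^*_R(G\ot_RF,\>C)$. The paper gives no argument of its own here (it calls (a) easy and cites \cite[Lemma~2.1]{En2} for (c) and \cite[Lemma~1.3.2]{Pcosh} for (b)--(d)); your adjunction-based proofs, with the duality $\Ext^n_R(G,\Hom_R(M,J))\cong\Hom_R(\mathrm{Tor}_n^R(G,M),J)$ for (c), are the standard arguments those references rely on.
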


\begin{proof}
 All the assertions are well known.
 Part~(a) is easy.
 Part~(c) can be found in~\cite[Lemma~2.1]{En2}.
 Parts~(b\+-d) can be found in~\cite[Lemma~1.3.2]{Pcosh};
see also the more general~\cite[Lemma~1.5]{Pdomc} with its
slightly different proof.
\end{proof}

 The next lemma is even more well-known and easy.

\begin{lem} \label{injective-change-of-scalars-lemma}
 Let $R\rarrow S$ be a homomorphism of commutative rings. \par
\textup{(a)} If $S$ is a flat $R$\+module, then any injective
$S$\+module is injective as an $R$\+module. \par
\textup{(b)} For any injective $R$\+module $J$, the $S$\+module\/
$\Hom_R(S,J)$ is injective. \qed
\end{lem}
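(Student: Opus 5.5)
Part~(a) follows from the adjunction between restriction and extension of scalars. Let $J$ be an injective $S$\+module. For any $R$\+module $M$ there is a natural isomorphism
\[
 \Hom_R(M,J)\cong\Hom_S(S\ot_RM,\,J),
\]
so the functor $\Hom_R({-},J)$ on $R\Modl$ is the composition of the functor $S\ot_R{-}\,\:R\Modl\rarrow S\Modl$ with the functor $\Hom_S({-},J)\:S\Modl^\sop\rarrow\Ab$. The first functor is exact because $S$ is a flat $R$\+module. The second is exact because $J$ is injective over~$S$. Hence $\Hom_R({-},J)$ is exact, which means that $J$ is an injective $R$\+module.

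Part~(b) uses the other adjunction, between restriction and coextension of scalars. For any $S$\+module $N$ there is a natural isomorphism
\[
 \Hom_S(N,\Hom_R(S,J))\cong\Hom_R(N,J).
\]
Here $N$ is viewed as an $R$\+module via $R\rarrow S$. The $S$\+module structure on $\Hom_R(S,J)$ comes from the action of $S$ on itself; since $S$ is commutative, the left and right actions agree and no bimodule issues arise. So $\Hom_S({-},\Hom_R(S,J))$ is the composition of the exact restriction-of-scalars functor $S\Modl\rarrow R\Modl$ with the exact functor $\Hom_R({-},J)$. It is therefore exact, and $\Hom_R(S,J)$ is an injective $S$\+module.

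The only points requiring any care are verifying naturality of the two adjunction isomorphisms and checking that the $S$\+module structures match. Both are routine. No flatness hypothesis is needed in~(b), and the flatness assumption in~(a) is used exactly once, for exactness of $S\ot_R{-}$.
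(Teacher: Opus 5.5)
Your proof is correct. The paper gives no argument for this lemma and states it without proof as well known and easy; your two adjunction arguments (extension of scalars plus flatness of $S$ for~(a), coextension of scalars for~(b)) are the standard proof it takes for granted.
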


\begin{lem} \label{cotorsion-change-of-scalars-lemma}
 Let $R\rarrow S$ be a homomorphism of commutative rings. \par
\textup{(a)} Any cotorsion $S$\+module is cotorsion as an $R$\+module.
\par
\textup{(b)} If $S$ is a flat $R$\+module, then for any cotorsion
$R$\+module $C$, the $S$\+module\/ $\Hom_R(S,C)$ is cotorsion.
\end{lem}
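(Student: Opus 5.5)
Both parts reduce to adjunction isomorphisms for Ext under restriction and coinduction of scalars, together with the fact that flatness is preserved by the relevant change of rings. Recall that cotorsion means $\Ext^1(F,C)=0$ for all flat $F$, and that, by the remark in Section~\ref{cotorsion-modules-subsecn}, this is equivalent to vanishing of $\Ext^n$ for all $n\ge1$.

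For part~(a), let $C$ be a cotorsion $S$\+module and $F$ a flat $R$\+module; we must show $\Ext_R^1(F,C)=0$. We use the change-of-rings isomorphism
$$\Ext_R^n(F,C)\cong\Ext_S^n(S\ot_RF,\,C),$$
which holds for $n\ge0$ whenever $\mathrm{Tor}^R_i(S,F)=0$ for $i>0$. That Tor-vanishing holds here because $F$ is flat. To prove the isomorphism, take a projective resolution $P_\bullet\to F$ over $R$. Then $S\ot_RP_\bullet$ is a projective resolution of $S\ot_RF$ over $S$, exactly because the higher Tor vanish. The adjunction $\Hom_S(S\ot_RP_i,C)\cong\Hom_R(P_i,C)$ then identifies the two complexes computing the Ext groups. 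Since $S\ot_RF$ is a flat $S$\+module and $C$ is cotorsion over $S$, the right-hand side vanishes for $n=1$.

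For part~(b), let $F$ be a flat $S$\+module; we must show $\Ext_S^1(F,\Hom_R(S,C))=0$. The dual adjunction gives
$$\Ext_S^n(F,\Hom_R(S,C))\cong\Ext_R^n(F,C).$$
This requires $\Ext_R^i(S,C)=0$ for $i>0$, which holds because $S$ is a flat $R$\+module and $C$ is cotorsion. To prove the isomorphism, take an injective coresolution $C\to J^\bullet$ over $R$. Then $\Hom_R(S,J^\bullet)$ is a coresolution of $\Hom_R(S,C)$: exactness is precisely the vanishing $\Ext_R^{>0}(S,C)=0$. Its terms are injective $S$\+modules by Lemma~\ref{injective-change-of-scalars-lemma}(b). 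Applying $\Hom_S(F,-)$ and the adjunction $\Hom_S(F,\Hom_R(S,J))\cong\Hom_R(F,J)$ identifies the complexes. It remains to see that $\Ext_R^1(F,C)=0$, i.e., that $F$ is flat over $R$. This follows from transitivity of flatness: $F\ot_R-\cong F\ot_S(S\ot_R-)$ is a composite of exact functors, using that $S$ is $R$\+flat.

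The only step needing care is the exactness of $\Hom_R(S,J^\bullet)$ in part~(b). There we use the full strength of cotorsion, namely vanishing of all higher $\Ext_R^n(S,C)$ for $n\ge1$ and not only $n=1$, as recalled in Section~\ref{cotorsion-modules-subsecn}. With that in hand, both parts follow directly from the adjunction isomorphisms.
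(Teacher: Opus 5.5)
Your proof is correct: both Ext change-of-rings isomorphisms are justified properly, namely $\mathrm{Tor}^R_{>0}(S,F)=0$ in part~(a) and $\Ext^{>0}_R(S,C)=0$ with injectivity of $\Hom_R(S,J)$ in part~(b). Flatness also transfers correctly in each direction. The paper itself only cites earlier work for this lemma, and your argument is the standard one that those cited lemmas rely on, so the approach is essentially the same.
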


\begin{proof}
 This is a particular case of~\cite[Lemmas~1.3.4(a) and~1.3.5(a)]{Pcosh}
or~\cite[Lemma~1.6(a,c)]{Pdomc}.
\end{proof}

\subsection{Contraadjusted modules} \label{cta-modules-subsecn}
 Now let $R$ be a commutative ring.
 For any element $s\in R$, we denote by $R[s^{-1}]$ the localization
of the ring $R$ at the element~$s$, i.~e., the ring obtained from $R$
by formally inverting~$s$.
 So, in other words, $R[s^{-1}]=S^{-1}R$, where $S$ is
the multiplicative subset $S=\{1,s,s^2,s^3,\dotsc\}\subset R$.

 An $R$\+module $C$ is said to be
\emph{contraadjusted}~\cite[Section~1.1]{Pcosh}, \cite[Sections~2
and~5]{ST}, \cite[Sections~2 and~8]{Pcta}, \cite[Section~4.3]{Pphil},
\cite[Section~2]{Pal} if $\Ext^1_R(R[s^{-1}],C)=0$ for all
elements $s\in R$.
 It is helpful to keep in mind that the projective dimension of
the $R$\+module $R[s^{-1}]$ never exceeds~$1$ \,\cite[proof of
Lemma~2.2]{ST}, \cite[proof of Lemma~2.1]{Pcta}; that is why no
higher $\Ext$ vanishing condition is needed in this definition.
 Any quotient $R$\+module of a contraadjusted $R$\+module is
contraadjusted.

 An $R$\+module $F$ is said to be \emph{very
flat}~\cite[Section~1.1]{Pcosh}, \cite[Section~2]{ST},
\cite[Section~0.5]{PSl1}, \cite[Section~4.3]{Pphil}
if $\Ext^1_R(F,C)=0$ for all contraadjusted $R$\+modules~$C$.
 The projective dimensions of very flat $R$\+modules do not
exceed~$1$ \,\cite[Section~1.1]{Pcosh}, \cite[Lemma~2.2]{ST},
\cite[Theorem~4.5(d)]{Pphil}.

\begin{ex} \label{open-affine-very-flat-example}
 Let $U$ be an affine scheme and $V\subset U$ be an affine open
subscheme.
 Then the $\cO(U)$\+module $\cO(V)$ is very
flat~\cite[Lemma~1.2.4]{Pcosh}.
 The argument uses the finite \v Cech exact sequence for a finite
covering of the affine open subscheme $V\subset U$ by principal affine
open subschemes of $U$, and the fact that the kernels of surjective
morphisms of very flat $\cO(U)$\+modules are very flat (as
mentioned below).

 This is a special case of a much more general (and much more
difficult) result of~\cite[Main Theorem~1.1]{PSl1}
(see also a discussion in~\cite[Remark~5.1]{Pphil}).
\end{ex}

 We denote the full subcategory of contraadjusted $R$\+modules by
$R\Modl^\cta\subset R\Modl$ and the full subcategory of very flat
$R$\+modules by $R\Modl_\vfl\subset R\Modl$.
 The full subcategory $R\Modl^\cta$ is closed under extensions,
quotients, and infinite products, while the full subcategory
$R\Modl_\vfl$ is closed under extensions, kernels of epimorphisms,
and infinite direct sums in $R\Modl$.
 So both the full subcategories $R\Modl^\cta$ and $R\Modl_\vfl$
inherit exact category structures from the abelian exact structure
of $R\Modl$.

\begin{thm} \label{very-flat-cotorsion-pair}
 For any commutative ring $R$, the pair of classes (very flat
$R$\+modules, contraadjusted $R$\+modules) is a complete cotorsion
pair in $R\Modl$.
 In other words: \par
\textup{(a)} for any $R$\+module $M$ there exists a short exact
sequence of $R$\+modules\/ $0\rarrow C\rarrow F\rarrow M\rarrow0$ with
a very flat $R$\+module $F$ and a contraadjusted $R$\+module~$C$; \par
\textup{(b)} for any $R$\+module $M$ there exists a short exact
sequence of $R$\+modules\/ $0\rarrow M\rarrow C\rarrow F\rarrow0$ with
a contraadjusted $R$\+module $C$ and a very flat $R$\+module~$F$.
\end{thm}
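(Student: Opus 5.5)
This is the Eklof--Trlifaj theorem for the cotorsion pair generated by the set $\sS=\{R[s^{-1}]\mid s\in R\}$. By definition, the contraadjusted modules form exactly $\sS^{\perp_1}$, and the very flat modules are defined as ${}^{\perp_1}(R\Modl^\cta)$. So $(R\Modl_\vfl,R\Modl^\cta)$ is a cotorsion pair: it is the cotorsion pair generated by $\sS$, and no separate argument is needed for this part, unlike in Theorem~\ref{flat-cotorsion-pair}.

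Completeness is what needs proof. I would first construct the special preenvelope sequence~(b) directly by a small object argument, and then deduce~(a) from the Salce Lemma~\ref{salce-lemma}(a).

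For~(b), the key input is that each $R[s^{-1}]$ has projective dimension $\le1$, with an explicit free resolution
\[ 0\rarrow R[x]\overset{1-sx}\lrarrow R[x]\rarrow R[s^{-1}]\rarrow0. \]
Hence a module $C$ is contraadjusted if and only if every homomorphism $R[x]\rarrow C$ extends along $1-sx$, i.e., if and only if for every countable sequence $(c_n)$ in $C$ there is a sequence $(d_n)$ with $d_n-sd_{n+1}=c_n$.

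Given $M$, I would build a transfinite chain of monomorphisms $M=C_0\rarrow C_1\rarrow\dotsb$. At each successor step $C_{\alpha+1}$ is the pushout of a direct sum of copies of $1-sx\:R[x]\rarrow R[x]$ along all maps $R[x]\rarrow C_\alpha$, for all $s\in R$. The cokernel of $C_\alpha\rarrow C_{\alpha+1}$ is then a direct sum of copies of the modules $R[s^{-1}]$. At limit steps I take unions. Since $R[x]$ is countably generated, stopping at $\alpha=\aleph_1$ gives a contraadjusted module $C=C_{\aleph_1}$: any map $R[x]\rarrow C$ factors through some $C_\alpha$ with $\alpha<\aleph_1$ and therefore extends in $C_{\alpha+1}$. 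The cokernel $C/M$ is filtered by direct sums of the modules $R[s^{-1}]$. By Eklof's lemma (modules filtered by objects of ${}^{\perp_1}\mathsf D$ lie in ${}^{\perp_1}\mathsf D$), $C/M$ is very flat. This gives sequence~(b).

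For~(a), I would apply Lemma~\ref{salce-lemma}(a). Very flat modules are closed under extensions, because a left $\Ext^1$-orthogonal class always is. They are generating, since free modules are very flat: $\Ext^1_R(\text{free},-)=0$. So~(a) follows.

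The main obstacle is Eklof's lemma itself, i.e., showing $\Ext^1$-orthogonality survives the transfinite extension at limit ordinals. I would prove it in the standard way, extending a compatible family of splittings stage by stage; equivalently, I would cite~\cite[Theorems~2 and~10]{ET} as was done for Theorem~\ref{flat-cotorsion-pair}.
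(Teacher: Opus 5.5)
Your proposal is correct and takes essentially the same route as the paper. The paper's proof simply cites the Eklof--Trlifaj theorem for the cotorsion pair generated by the set $\{R[s^{-1}]\mid s\in R\}$, and your argument is that theorem unwound in this special case: pushouts along the countable free presentation of $R[s^{-1}]$ given by multiplication by $1-sx$, iterated up to $\aleph_1$; then Eklof's lemma to show the cokernel is very flat; then the Salce lemma to obtain part~(a).
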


\begin{proof}
 This is a special case of the Eklof--Trlifaj
theorem~\cite[Theorems~2 and~10]{ET}, \cite[Theorem~6.11]{GT}.
 Some details specific to the situation at hand can be found
in~\cite[Theorem~1.1.1]{Pcosh}, \cite{ST}, \cite[Theorem~4.5]{Pphil}.
\end{proof}

 The word ``contraadjusted'' means ``adjusted to contraherent
cosheaves''.
 The cotorsion pair $(R\Modl_\vfl,\allowbreak\>R\Modl^\cta)$ is called
the \emph{very flat cotorsion pair} in $R\Modl$.

 The following lemma is the very flat/contraadjusted version of
Lemma~\ref{flat-cotorsion-injective-tensor-Hom-lemma}.

\begin{lem} \label{vfl-cta-tensor-Hom-lemma}
 Let $R$ be a commutative ring. \par
\textup{(a)} For any two very flat $R$\+modules $F$ and $G$,
the $R$\+module $F\ot_RG$ is very flat. \par
\textup{(b)} For any very flat $R$\+module $F$ and any contraadjusted
$R$\+module $C$, the $R$\+module\/ $\Hom_R(F,C)$ is contraadjusted.
\end{lem}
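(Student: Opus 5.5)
Both parts follow from the tensor--Hom adjunction, once we know that very flat modules are characterized by Ext-orthogonality to contraadjusted ones (this is the cotorsion pair of Theorem~\ref{very-flat-cotorsion-pair}). The single key identity is, for an $R$\+module $M$ and a very flat $R$\+module $F$,
\[
 \Ext^1_R(M,\Hom_R(F,C))\cong\Ext^1_R(F\ot_RM,\>C)
\]
whenever $C$ is contraadjusted. To prove it, we use that $F$ is flat. Every very flat module is flat, since contraadjusted modules include all cotorsion modules: $R[s^{-1}]$ is flat, so cotorsion implies contraadjusted, and then orthogonality of the flat cotorsion pair (Theorem~\ref{flat-cotorsion-pair}) applies. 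Take a projective resolution $P_\bullet\rarrow M$. Then $F\ot_RP_\bullet$ is a flat resolution of $F\ot_RM$, and $\Hom_R(F\ot_RP_\bullet,C)\cong\Hom_R(P_\bullet,\Hom_R(F,C))$.

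The main obstacle is comparing the cohomology of $\Hom_R(F\ot_RP_\bullet,C)$ with the Ext groups out of $F\ot_RM$. This requires that $\Ext^n_R(F\ot_RP,C)=0$ for $n\ge1$ and all projective $P$. Now $F\ot_RP$ is a direct summand of a direct sum of copies of $F$, hence very flat, since the class of very flat modules is closed under direct sums and summands. Because very flat modules have projective dimension $\le1$ and $C$ is contraadjusted, $\Ext^n_R(F\ot_RP,C)=0$ for all $n\ge1$: the case $n=1$ is the definition, and higher $n$ vanish by the dimension bound. So the resolution $F\ot_RP_\bullet$ is acyclic for $\Hom_R({-},C)$ and computes $\Ext^*_R(F\ot_RM,C)$. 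This gives the identity.

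For part~(b), take $M=R[s^{-1}]$. The identity gives $\Ext^1_R(R[s^{-1}],\Hom_R(F,C))\cong\Ext^1_R(F[s^{-1}],C)$. The module $F[s^{-1}]=F\ot_RR[s^{-1}]$ is an extension-closed construction from very flat modules; we must show it is very flat. We reduce this to part~(a): $R[s^{-1}]$ is very flat, since its Ext$^1$ into contraadjusted modules vanishes by definition. So it suffices to prove~(a) first, using the identity with $G$ in place of $M$.

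For part~(a), we use the identity with $M=G$ very flat. We need $\Ext^1_R(G,\Hom_R(F,C))=0$ for every contraadjusted $C$; this requires $\Hom_R(F,C)$ to be contraadjusted, which is~(b), so there is circularity. To break it, first prove~(b) only for $F$ ranging over direct sums and summands of modules $R[s^{-1}]$ and their tensor products. Instead, argue directly: $\Ext^1_R(R[s^{-1}]\ot_R R[t^{-1}],C)=\Ext^1_R(R[(st)^{-1}],C)=0$. By the Eklof--Trlifaj description underlying Theorem~\ref{very-flat-cotorsion-pair}, every very flat module is a direct summand of a module filtered by localizations $R[s^{-1}]$. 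Tensoring such filtrations (using flatness, which keeps them exact) shows that $F\ot_RG$ is a summand of a module filtered by $R[s^{-1}]\ot_RR[t^{-1}]\cong R[(st)^{-1}]$. By the Eklof lemma it is then very flat. This proves~(a), and~(b) follows from the identity above.
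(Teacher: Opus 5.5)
Your proof is correct. The paper gives no argument of its own here, only citing \cite[Lemma~1.2.1]{Pcosh}, and your route is essentially the standard proof of this fact: (a) follows from the Eklof--Trlifaj description of very flat modules as direct summands of $\{R[s^{-1}]\}$\+filtered modules, together with $R[s^{-1}]\ot_RR[t^{-1}]\cong R[(st)^{-1}]$ and Eklof's lemma, and (b) then follows from $\Ext^1_R(R[s^{-1}],\Hom_R(F,C))\cong\Ext^1_R(F[s^{-1}],C)$. In a final write-up, delete the abandoned circular attempt at (a). Incidentally, that circularity can also be broken without filtrations: the identity with $R[s^{-1}]$ in place of $F$ shows first that $\Hom_R(R[s^{-1}],C)$ is contraadjusted, and then that $\Ext^1_R(F[s^{-1}],C)\cong\Ext^1_R(F,\Hom_R(R[s^{-1}],C))=0$, which gives (b) and hence (a).
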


\begin{proof}
 This is~\cite[Lemma~1.2.1]{Pcosh}.
\end{proof}

\begin{lem} \label{very-flat-change-of-scalars-lemma}
 Let $R\rarrow S$ be a homomorphism of commutative rings. \par
\textup{(a)} If the $R$\+module $S[s^{-1}]$ is very flat for every
element $s\in S$, then any very flat $S$\+module is very flat as
an $R$\+module. \par
\textup{(b)} For any very flat $R$\+module $F$, the $S$\+module
$S\ot_RF$ is very flat.
\end{lem}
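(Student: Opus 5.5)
Both parts reduce, via adjunctions, to the very flat cotorsion pair of Theorem~\ref{very-flat-cotorsion-pair}. The common tool is the characterization of very flat modules as ${}^{\perp_1}$ of the contraadjusted ones. Throughout, recall that the projective dimension of a very flat module is at most~$1$, so $\Ext^n$ vanishes for $n\ge2$.

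For part~(b), let $F$ be a very flat $R$\+module. We must show $\Ext^1_S(S\ot_RF,D)=0$ for every contraadjusted $S$\+module~$D$. Choose a projective resolution of $F$ over~$R$; since $F$ has projective dimension $\le1$, take $0\rarrow P_1\rarrow P_0\rarrow F\rarrow0$. Because $F$ is flat, applying $S\ot_R-$ keeps this exact and gives a projective resolution of $S\ot_RF$ over~$S$. The adjunction $\Hom_S(S\ot_RP,D)\cong\Hom_R(P,D)$ then yields $\Ext^1_S(S\ot_RF,D)\cong\Ext^1_R(F,D)$. So it suffices to know that $D$, viewed as an $R$\+module, is contraadjusted. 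For $r\in R$ with image $s\in S$, the same adjunction argument applies to the flat $R$\+module $R[r^{-1}]$ (of projective dimension $\le1$). Since $S\ot_RR[r^{-1}]=S[s^{-1}]$, we get $\Ext^1_R(R[r^{-1}],D)\cong\Ext^1_S(S[s^{-1}],D)=0$. Hence $D$ is contraadjusted over~$R$, and $S\ot_RF$ is very flat over~$S$.

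For part~(a), let $G$ be a very flat $S$\+module and $C$ a contraadjusted $R$\+module. We need $\Ext^1_R(G,C)=0$. The plan is to use the coextension $\Hom_R(S,C)$. There is a spectral sequence, or more concretely a comparison of resolutions, relating $\Ext_S(G,\boL\Hom_R(S,C))$ with $\Ext_R(G,C)$. If $\Ext^1_R(S,C)=0$, then the low-degree part gives $\Ext^1_R(G,C)\cong\Ext^1_S(G,\Hom_R(S,C))$, provided $\Hom_R(S,C)$ is contraadjusted over~$S$.

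\begin{itemize}
\item The vanishing $\Ext^1_R(S,C)=0$ holds because $S=S[1^{-1}]$ is very flat over~$R$ by hypothesis.
\item Contraadjustedness of $\Hom_R(S,C)$ over $S$ follows from $\Ext^1_S(S[s^{-1}],\Hom_R(S,C))\cong\Ext^1_R(S[s^{-1}],C)=0$. This uses that $S[s^{-1}]$ is very flat over~$R$ and again $\Ext^1_R(S,C)=0$.
\end{itemize}

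To avoid spectral sequences, I would argue directly with an $S$\+projective resolution $0\rarrow K\rarrow Q\rarrow G\rarrow0$ with $Q$ free over~$S$. Then $Q$ is very flat over~$R$, since it is a direct sum of copies of~$S$. So $\Ext^1_R(Q,C)=0$, and $\Ext^1_R(G,C)$ is the cokernel of $\Hom_R(Q,C)\rarrow\Hom_R(K,C)$. By adjunction this equals the cokernel of $\Hom_S(Q,\Hom_R(S,C))\rarrow\Hom_S(K,\Hom_R(S,C))$, which is $\Ext^1_S(G,\Hom_R(S,C))=0$.

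The same identification with $S[s^{-1}]$ in place of $G$ is exactly what justifies the contraadjustedness of $\Hom_R(S,C)$ used above. The only delicate point is this identification $\Ext^1_R(M,C)\cong\Ext^1_S(M,\Hom_R(S,C))$ for $S$\+modules $M$, given that $\Ext^1_R(S,C)=0$. It is handled by the free-resolution argument above, and no higher Ext is needed.
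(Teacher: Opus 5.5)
Your proof is correct, and it is essentially the argument behind the paper's citation of \cite[Lemmas~1.2.2--1.2.3]{Pcosh}. In that source, each part is paired with the corresponding contraadjusted change-of-scalars statement of Lemma~\ref{contraadjusted-change-of-scalars-lemma}: contraadjusted $S$\+modules are contraadjusted over~$R$, resp.\ $\Hom_R(S,C)$ is contraadjusted over~$S$. The link is the isomorphisms $\Ext^1_S(S\ot_RF,D)\simeq\Ext^1_R(F,D)$ for flat~$F$, and $\Ext^1_R(M,C)\simeq\Ext^1_S(M,\Hom_R(S,C))$ when $\Ext^1_R(S,C)=0$. Your free-resolution proof of the latter isomorphism is sound; the ``provided $\Hom_R(S,C)$ is contraadjusted'' proviso in your spectral-sequence paragraph is needed only for the final vanishing, not for the isomorphism itself.
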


\begin{proof}
 This is~\cite[Lemmas~1.2.3(b) and~1.2.2(b)]{Pcosh}.
\end{proof}

 The next lemma is the contraadjusted version of
Lemma~\ref{cotorsion-change-of-scalars-lemma}.

\begin{lem} \label{contraadjusted-change-of-scalars-lemma}
 Let $R\rarrow S$ be a homomorphism of commutative rings. \par
\textup{(a)} Any contraadjusted $S$\+module is also contraadjusted
as an $R$\+module. \par
\textup{(b)} If the $R$\+module $S[s^{-1}]$ is very flat for every
element $s\in S$, then the $S$\+module\/ $\Hom_R(S,C)$ is
contraadjusted for any contraadjusted $R$\+module~$C$.
\end{lem}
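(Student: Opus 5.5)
Both parts will come from the Ext–Hom adjunctions for restriction and coextension of scalars, together with the fact that the localizations $S[s^{-1}]$ of $S$ are flat $S$\+modules.

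For part~(a), let $C$ be a contraadjusted $S$\+module and $r\in R$, with image $s\in S$. We have to show $\Ext^1_R(R[r^{-1}],C)=0$. The plan is to compare with $\Ext^1_S(S[s^{-1}],C)$, which vanishes by assumption. Since $R[r^{-1}]$ is a flat $R$\+module, $S\ot_R R[r^{-1}]\cong S[s^{-1}]$ and $\mathrm{Tor}^R_i(S,R[r^{-1}])=0$ for $i>0$. Take a projective resolution $P_\bullet\rarrow R[r^{-1}]$ over $R$. Then $S\ot_RP_\bullet$ is a projective resolution of $S[s^{-1}]$ over $S$, and
\[
\Hom_R(P_\bullet,C)\cong\Hom_S(S\ot_RP_\bullet,C).
\]
Passing to cohomology gives $\Ext^n_R(R[r^{-1}],C)\cong\Ext^n_S(S[s^{-1}],C)$ for all $n$, and the case $n=1$ is zero. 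The only delicate point is the Tor vanishing, which is immediate from flatness of the localization.

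For part~(b), let $C$ be a contraadjusted $R$\+module and $s\in S$; we need $\Ext^1_S(S[s^{-1}],\Hom_R(S,C))=0$. Here the plan is the dual adjunction. Take a projective resolution $Q_\bullet\rarrow S[s^{-1}]$ over $S$. Then
\[
\Hom_S(Q_\bullet,\Hom_R(S,C))\cong\Hom_R(Q_\bullet,C),
\]
where each $Q_i$ is regarded as an $R$\+module. Each $Q_i$ is a direct summand of a free $S$\+module, hence a direct summand of a direct sum of copies of $S$. Therefore $\Ext^{>0}_R(Q_i,C)$ is a direct summand of a product of copies of $\Ext^{>0}_R(S,C)$.

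The main obstacle is that $S$ need not be flat over $R$, so $Q_\bullet$ is not an $R$\+projective resolution and these Ext groups need not vanish. I would try to get around this in one of two ways.

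The first route is to replace $Q_\bullet$ by a resolution that is acyclic for $\Hom_R({-},C)$. The hypothesis applied to $s=1$ shows that $S$ itself is a very flat $R$\+module, and by hypothesis $S[s^{-1}]$ is very flat over $R$. Very flat modules are closed under direct sums and direct summands, so every free or projective $S$\+module is very flat over $R$. Since $C$ is contraadjusted, $\Ext^{>0}_R(F,C)=0$ for all very flat $F$: the projective dimension of a very flat module is at most~$1$, so only $\Ext^1$ needs checking, and it vanishes by definition. Hence $Q_\bullet$ is $\Hom_R({-},C)$\+acyclic, with the $Q_i$ and $S[s^{-1}]$ all acyclic. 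It follows that the cohomology of $\Hom_R(Q_\bullet,C)$ computes $\Ext^*_R(S[s^{-1}],C)$. So
\[
\Ext^1_S(S[s^{-1}],\Hom_R(S,C))\cong\Ext^1_R(S[s^{-1}],C)=0,
\]
again because $S[s^{-1}]$ is very flat over $R$ and $C$ is contraadjusted.

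The second route is to use the explicit two-term resolution $0\rarrow\bigoplus_{\boZ_{\ge0}}S\rarrow\bigoplus_{\boZ_{\ge0}}S\rarrow S[s^{-1}]\rarrow0$, given by the telescope construction. This reduces the question to surjectivity of a map between products of copies of $\Hom_R(S,C)$, and handles it by the same very-flatness argument.
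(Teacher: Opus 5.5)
Your proof is correct and follows essentially the same route as the paper, which simply cites \cite[Lemmas~1.2.2(a) and~1.2.3(a)]{Pcosh}, where the two parts come from the change-of-rings isomorphisms $\Ext^1_R(R[r^{-1}],C)\cong\Ext^1_S(S[s^{-1}],C)$ (using flatness of $R[r^{-1}]$, with $s$ the image of~$r$) and $\Ext^1_S(S[s^{-1}],\Hom_R(S,C))\cong\Ext^1_R(S[s^{-1}],C)$ (using $\Ext^{>0}_R(S,C)=0$, i.e., very flatness of $S=S[1^{-1}]$, as you noted). Your telescope variant also works and needs slightly less: only $\Ext^1_R(S[s^{-1}],C)=0$ for the given~$s$, because $\Ext^1_S(G,\Hom_R(S,C))$ always embeds into $\Ext^1_R(G,C)$ for any $S$-module~$G$.
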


\begin{proof}
 This is~\cite[Lemmas~1.2.2(a) and~1.2.3(a)]{Pcosh}.
\end{proof}

\begin{lem} \label{Hom-into-pushout-lemma}
 Let $R$ be an associative ring and
\begin{equation} \label{pushout-diagram-of-modules}
\begin{gathered}
 \xymatrix{
  0 \ar[r] & K \ar[r] \ar[d] & L \ar[r] \ar[d] & M \ar[r] & 0 \\
  0 \ar[r] & K' \ar[r] & L' \ar[ur]
 }
\end{gathered}
\end{equation}
be a pushout diagram of short exact sequences of left $R$\+modules
(so the upper line is a short exact sequence and the square on
the left-hand side is a pushout square).
 In this context: \par
\textup{(a)} If $K$ is a cotorsion $R$\+module, and $F$ is a flat
left $R$\+module, then applying the functor\/ $\Hom_R(F,{-})$ to
the diagram~\eqref{pushout-diagram-of-modules} produces
a pushout diagram of abelian groups. \par
\textup{(b)} If the ring $R$ is commutative, $K$ is a contraadjusted
$R$\+module, and $F$ is a very flat $R$\+module, then applying
the functor\/ $\Hom_R(F,{-})$ to
the diagram~\eqref{pushout-diagram-of-modules} produces
a pushout diagram of $R$\+modules.
\end{lem}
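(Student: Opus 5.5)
The plan is to reduce pushout preservation to exactness of $\Hom_R(F,{-})$ on the relevant short exact sequences, using the $\Ext^1$ vanishing supplied by the hypotheses.

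Start from the fact that a commutative square in an abelian category is a pushout precisely when the induced sequence $K\rarrow K'\oplus L\rarrow L'\rarrow0$ is exact. In the situation of~\eqref{pushout-diagram-of-modules} the map $K\rarrow L$ is injective, so the map $K\rarrow K'\oplus L$ is injective too. Hence we have a short exact sequence
\[
 0\rarrow K\rarrow K'\oplus L\rarrow L'\rarrow0 .
\]
A square of abelian groups (or $R$\+modules) is a pushout exactly when the corresponding sequence is right exact at the last two terms. So it suffices to show that applying $\Hom_R(F,{-})$ to this short exact sequence gives a sequence exact at $\Hom_R(F,L')$. In other words, we need $\Hom_R(F,K'\oplus L)\rarrow\Hom_R(F,L')$ to be surjective.

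By the long exact sequence of $\Ext_R^*(F,{-})$, the cokernel of this map embeds into $\Ext^1_R(F,K)$. It therefore suffices to check that $\Ext^1_R(F,K)=0$.

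In part~(a), $F$ is flat and $K$ is cotorsion, so $\Ext^1_R(F,K)=0$ by the definition of a cotorsion module in Section~\ref{cotorsion-modules-subsecn}. In part~(b), $F$ is very flat and $K$ is contraadjusted, so $\Ext^1_R(F,K)=0$ by the definition of a very flat module in Section~\ref{cta-modules-subsecn}.

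Since $\Hom_R(F,{-})$ is additive, the middle term becomes $\Hom_R(F,K')\oplus\Hom_R(F,L)$. We thus obtain an exact sequence
\[
 \Hom_R(F,K)\rarrow\Hom_R(F,K')\oplus\Hom_R(F,L)\rarrow\Hom_R(F,L')\rarrow0 ,
\]
which says exactly that the square formed by applying $\Hom_R(F,{-})$ is a pushout. Left exactness of $\Hom_R(F,{-})$ also gives injectivity of the first map, so the resulting top and bottom rows remain left exact. If the intended reading is that the whole image diagram is again a pushout diagram of short exact sequences, then the top row should be exact as well. The same $\Ext^1$ vanishing argument applied to $0\rarrow K\rarrow L\rarrow M\rarrow0$ gives surjectivity of $\Hom_R(F,L)\rarrow\Hom_R(F,M)$.

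In part~(b) the ring is commutative, so all the $\Hom$ groups are $R$\+modules and the maps are $R$\+linear. The cokernel characterization of pushouts holds in $R\Modl$ just as in abelian groups, so the conclusion is a pushout of $R$\+modules.

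The main point to handle carefully is the identification of pushout squares with exactness of the sequence $K\rarrow K'\oplus L\rarrow L'\rarrow0$. This identification requires the sign convention in the middle map. Once it is in place, the rest follows from the $\Ext^1$ vanishing above.
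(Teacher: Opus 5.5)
Your proposal is correct and is essentially the paper's own proof. The paper likewise observes that $0\rarrow K\rarrow L\oplus K'\rarrow L'\rarrow0$ stays exact after applying $\Hom_R(F,{-})$, because the assumptions on $K$ and $F$ give $\Ext^1_R(F,K)=0$, and this exactness is the pushout criterion you use. Exactness of the bottom row after applying $\Hom_R(F,{-})$ then comes for free, since the cokernel of a pushout of a monomorphism is unchanged.
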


\begin{proof}
 In both parts~(a) and~(b), the point is that the short exact
sequence $0\rarrow K\rarrow L\oplus K'\rarrow L'\rarrow0$ remains
exact after the functor $\Hom_R(F,{-})$ is applied, due to
the adjustedness assumptions imposed on $K$ and~$F$.
 See, e.~g., \cite[Proposition~2.12]{Bueh} for a background
discussion.
\end{proof}

\subsection{Locality lemmas}
 Throughout this section, we consider a commutative ring $R$ and
a finite collection of homomorphisms of commutative rings
$R\rarrow S_\alpha$, \,$1\le\alpha\le N$, such that the related
finite collection of morphisms of affine schemes $\Spec S_\alpha
\rarrow\Spec R$ is an open covering of $\Spec R$.
 So, in particular, for every index~$\alpha$, the morphism
$\Spec S_\alpha\rarrow\Spec R$ is an open immersion of schemes.

\begin{lem} \label{very-flatness-local}
\textup{(a)} An $R$\+module $F$ is flat if and only if
the $S_\alpha$\+module $S_\alpha\ot_RF$ is flat for every
index\/~$\alpha$. \par
\textup{(b)} An $R$\+module $F$ is very flat if and only if
the $S_\alpha$\+module $S_\alpha\ot_RF$ is very flat for every
index\/~$\alpha$.
\end{lem}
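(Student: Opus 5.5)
Part~(a) is classical, and part~(b) follows the same two-step pattern. We prove the forward implications by base change and the converse implications by a finite \v Cech resolution.

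For the ``only if'' directions there is little to do. If $F$ is flat over $R$, then $S_\alpha\ot_RF$ is flat over $S_\alpha$ by base change. If $F$ is very flat, then $S_\alpha\ot_RF$ is very flat by Lemma~\ref{very-flat-change-of-scalars-lemma}(b).

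For the ``if'' direction of~(a), we use that flatness is stalk-local. Every prime $\p\subset R$ lies in some $\Spec S_\alpha$, and then $F_\p=(S_\alpha\ot_RF)_{\p'}$ for the corresponding prime $\p'$ of $S_\alpha$. This localization is flat over $(S_\alpha)_{\p'}=R_\p$. Hence all stalks of $F$ are flat, and so $F$ is flat.

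For the ``if'' direction of~(b), we first show that $R\rarrow S_\alpha$ satisfies the hypothesis of Lemma~\ref{very-flat-change-of-scalars-lemma}(a). For any $s\in S_\alpha$, the ring $S_\alpha[s^{-1}]$ is the ring of functions on an affine open subscheme of $\Spec R$. By Example~\ref{open-affine-very-flat-example}, it is very flat as an $R$\+module. Lemma~\ref{very-flat-change-of-scalars-lemma}(a) then shows that each $S_\alpha\ot_RF$, and likewise each finite intersection term $S_{\alpha_0}\ot_R\dotsb\ot_RS_{\alpha_k}\ot_RF$, is very flat as an $R$\+module. For the intersection terms, note that $S_{\alpha_0}\ot_R\dotsb\ot_RS_{\alpha_k}$ is the coordinate ring of the affine open subscheme $\Spec S_{\alpha_0}\cap\dotsb\cap\Spec S_{\alpha_k}$ (affine because $\Spec R$ is separated). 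Moreover, $S_{\alpha_0}\ot_R\dotsb\ot_RS_{\alpha_k}\ot_RF$ is obtained from the very flat $S_{\alpha_0}$\+module $S_{\alpha_0}\ot_RF$ by base change, so it is very flat over that ring by Lemma~\ref{very-flat-change-of-scalars-lemma}(b). Applying Lemma~\ref{very-flat-change-of-scalars-lemma}(a) again, now with Example~\ref{open-affine-very-flat-example} for the intersection, shows it is very flat over~$R$.

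Next we use the \v Cech sequence for the open covering. Since $F$ is a quasi-coherent sheaf on $\Spec R$, it is exact:
\[
 0\rarrow F\rarrow\textstyle\prod_\alpha S_\alpha\ot_RF\rarrow
 \prod_{\alpha<\beta}S_\alpha\ot_RS_\beta\ot_RF\rarrow\dotsb\rarrow0.
\]
All the terms other than $F$ are finite direct sums of very flat $R$\+modules, hence very flat. The class of very flat modules is closed under kernels of epimorphisms. Splitting the sequence into short exact sequences and moving inward from the right end, each successive kernel is therefore very flat, and at the last step so is $F$.

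The main obstacle is the care needed in checking the hypotheses of Lemma~\ref{very-flat-change-of-scalars-lemma}(a) for the intersection rings. This amounts to affineness of the intersections together with Example~\ref{open-affine-very-flat-example}; the rest of the argument is routine.
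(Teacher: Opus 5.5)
Your proof is correct, and for part~(b) it is essentially the paper's argument. You use the finite \v Cech coresolution, make all the non-leftmost terms very flat over $R$ via Example~\ref{open-affine-very-flat-example} and Lemma~\ref{very-flat-change-of-scalars-lemma}(a) and~(b), and then apply closure of very flat modules under kernels of epimorphisms, working from the right end inward. The only deviation is the ``if'' direction of~(a): you argue stalkwise, whereas the paper indicates the same \v Cech coresolution (with the terms flat over $R$ and flat modules closed under kernels of epimorphisms). Both arguments are valid.
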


\begin{proof}
 Part~(a) is well known, while part~(b) is~\cite[Lemma~1.2.6(a)]{Pcosh}
or~\cite[Example~2.5]{Pal}.
 The ``only if'' assertion of part~(b) holds by
Lemma~\ref{very-flat-change-of-scalars-lemma}(b).
 The ``if'' implications of both parts~(a) and~(b) are provable using
the finite \v Cech coresolution
\begin{multline} \label{module-cech-coresolution}
 0\lrarrow F\lrarrow\bigoplus\nolimits_{\alpha=1}^N
 S_\alpha\ot_RF\lrarrow\bigoplus\nolimits_{1\le\alpha<\beta\le N}
 S_\alpha\ot_RS_\beta\ot_RF \\
 \lrarrow\dotsb\lrarrow S_1\ot_R\dotsb\ot_RS_N\ot_RF\lrarrow0,
\end{multline}
which is an exact sequence of $R$\+modules for any $R$\+module~$F$.
 To prove the ``if'' implication in~(b), one needs to
use~\eqref{module-cech-coresolution} together with
Example~\ref{open-affine-very-flat-example},
Lemma~\ref{very-flat-change-of-scalars-lemma}(a\+-b), and
the fact that the class of very flat $R$\+modules is closed
under kernels of epimorphisms.
 One proceeds by induction, moving from the rightmost to
the leftmost end of the sequence~\eqref{module-cech-coresolution}
and proving that its $R$\+modules of cocycles are very flat.
\end{proof}

\begin{lem} \label{contraadjusted-cech-resolution}
 Let $C$ be a contraadjusted $R$\+module.
 Then the finite \v Cech resolution
\begin{multline} \label{cta-module-cech-resolution}
 0\lrarrow\Hom_R(S_1\ot_R\dotsb\ot_R S_N,\>C)\lrarrow\dotsb \\
 \lrarrow\bigoplus\nolimits_{1\le\alpha<\beta\le N}
 \Hom_R(S_\alpha\ot_RS_\beta,\>C)\lrarrow
 \bigoplus\nolimits_{\alpha=1}^N\Hom_R(S_\alpha,C)
 \lrarrow C\lrarrow 0
\end{multline}
is an exact sequence of $R$\+modules (and in fact, an exact
sequence in the exact category of contraadjusted $R$\+modules
$R\Modl^\cta$).
\end{lem}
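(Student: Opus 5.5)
We apply the functor $\Hom_R({-},C)$ to the finite \v Cech coresolution~\eqref{module-cech-coresolution} taken with $F=R$, that is, to the exact sequence
$$
 0\rarrow R\rarrow\bigoplus\nolimits_\alpha S_\alpha\rarrow
 \bigoplus\nolimits_{\alpha<\beta}S_\alpha\ot_RS_\beta\rarrow\dotsb\rarrow
 S_1\ot_R\dotsb\ot_RS_N\rarrow0.
$$
This produces exactly the complex~\eqref{cta-module-cech-resolution}, because $\Hom_R(R,C)=C$ and $\Hom_R$ takes finite direct sums to finite products. Exactness will then follow from an $\Ext^1$-vanishing argument applied to the short exact pieces of this sequence.

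The first step is to observe that every term $T=S_{\alpha_1}\ot_R\dotsb\ot_RS_{\alpha_k}$ with $k\ge1$ is a very flat $R$\+module. Indeed, $\Spec T$ is the intersection of the affine open subschemes $\Spec S_{\alpha_i}$ inside the affine scheme $\Spec R$, hence it is itself an affine open subscheme of $\Spec R$. So Example~\ref{open-affine-very-flat-example} applies. Alternatively, each $S_\alpha$ is very flat by Example~\ref{open-affine-very-flat-example}, and tensor products of very flat modules are very flat by Lemma~\ref{vfl-cta-tensor-Hom-lemma}(a).

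The second step is to show that the modules of cocycles $Z^i$ of the coresolution are very flat for all $i$, including $Z^0=R$. We argue by descending induction from the right end, as in the proof of Lemma~\ref{very-flatness-local}. The rightmost cocycle module is the last term itself. At each stage there is a short exact sequence $0\rarrow Z^i\rarrow T^i\rarrow Z^{i+1}\rarrow0$ with $T^i$ and $Z^{i+1}$ very flat, and since the class of very flat modules is closed under kernels of epimorphisms, $Z^i$ is very flat. (That $R$ is very flat is of course also clear directly.)

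The third step is to apply $\Hom_R({-},C)$ to each short exact sequence $0\rarrow Z^i\rarrow T^i\rarrow Z^{i+1}\rarrow0$. The result is a short exact sequence $0\rarrow\Hom_R(Z^{i+1},C)\rarrow\Hom_R(T^i,C)\rarrow\Hom_R(Z^i,C)\rarrow0$: left exactness is automatic, and surjectivity on the right holds because $\Ext^1_R(Z^{i+1},C)=0$, as $Z^{i+1}$ is very flat and $C$ is contraadjusted. Splicing these short exact sequences together gives exactness of~\eqref{cta-module-cech-resolution}.

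For the refinement that the sequence is exact in $R\Modl^\cta$, we check that all terms and all modules of cycles are contraadjusted. Each $\Hom_R(T,C)$ and each $\Hom_R(Z^i,C)$ has the form $\Hom_R(\text{very flat},C)$, which is contraadjusted by Lemma~\ref{vfl-cta-tensor-Hom-lemma}(b). Since $R\Modl^\cta$ inherits its exact structure from $R\Modl$, the short exact sequences above are then admissible short exact sequences in $R\Modl^\cta$.

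The only point needing care is the very flatness of the cocycle modules in the second step. It is handled by closure of the very flat class under kernels of epimorphisms, so I expect no real obstacle.
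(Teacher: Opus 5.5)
Your proposal is correct and is the paper's proof: apply $\Hom_R({-},C)$ to the \v Cech coresolution~\eqref{module-cech-coresolution} with $F=R$, use that this coresolution is exact in $R\Modl_\vfl$ (as in the proof of Lemma~\ref{very-flatness-local}), and invoke Lemma~\ref{vfl-cta-tensor-Hom-lemma}(b) to get exactness in $R\Modl^\cta$. You additionally write out the details the paper leaves implicit, namely the very flatness of the cocycle modules, the $\Ext^1$-vanishing on each short exact piece, and the splicing.
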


\begin{proof}
 This is~\cite[Lemma~1.2.6(b)]{Pcosh}.
 Set $F=R$; then the sequence~\eqref{module-cech-coresolution}
is exact in the exact category $R\Modl_\vfl$ (as per the proof of
Lemma~\ref{very-flatness-local} above).
 Hence, applying the functor $\Hom_R({-},C)$
to~\eqref{module-cech-coresolution} produces an exact sequence
in $R\Modl^\cta$ in view of Lemma~\ref{vfl-cta-tensor-Hom-lemma}(b).
 This is the desired exact sequence~\eqref{cta-module-cech-resolution}.
\end{proof}

 The following lemma is a dual-analogous version of
Lemma~\ref{very-flatness-local}.

\begin{lem} \label{cotorsion-injective-colocal-if-contraadjusted}
 Let $C$ be a contraadjusted $R$\+module. \par
\textup{(a)} The $R$\+module $C$ is cotorsion if and only if
the $S_\alpha$\+module\/ $\Hom_R(S_\alpha,C)$ is cotorsion
for every index\/~$\alpha$. \par
\textup{(b)} The $R$\+module $C$ is injective if and only if
the $S_\alpha$\+module\/ $\Hom_R(S_\alpha,C)$ is injective
for every index\/~$\alpha$.
\end{lem}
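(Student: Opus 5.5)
The ``only if'' implications are immediate from the change-of-scalars lemmas. Each morphism $R\rarrow S_\alpha$ is an open immersion of affine schemes, so $S_\alpha$ is a flat $R$\+module. If $C$ is cotorsion, then Lemma~\ref{cotorsion-change-of-scalars-lemma}(b) shows that $\Hom_R(S_\alpha,C)$ is a cotorsion $S_\alpha$\+module. If $C$ is injective, then Lemma~\ref{injective-change-of-scalars-lemma}(b) shows that $\Hom_R(S_\alpha,C)$ is an injective $S_\alpha$\+module. Neither implication uses the contraadjustedness of~$C$.

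For the ``if'' implications I would use the \v Cech resolution~\eqref{cta-module-cech-resolution} of Lemma~\ref{contraadjusted-cech-resolution}; this is where contraadjustedness enters. Every term other than $C$ has the form $\Hom_R(S_{\alpha_1}\ot_R\dotsb\ot_RS_{\alpha_k},\>C)$ with $k\ge1$. Since $S_{\alpha_1}\ot_R\dotsb\ot_RS_{\alpha_k}$ is an $S_{\alpha_1}$\+algebra, I would rewrite this term as
\[
\Hom_{S_{\alpha_1}}\bigl(S_{\alpha_1}\ot_R S_{\alpha_2}\ot_R\dotsb\ot_RS_{\alpha_k},\>\Hom_R(S_{\alpha_1},C)\bigr).
\]
Here $S_{\alpha_1}\ot_RS_{\alpha_2}\ot_R\dotsb\ot_RS_{\alpha_k}$ is a flat $S_{\alpha_1}$\+module, being a tensor product of flat modules (base changes of the flat $R$\+modules $S_{\alpha_j}$).

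In part~(b), $\Hom_R(S_{\alpha_1},C)$ is an injective $S_{\alpha_1}$\+module by assumption. Lemma~\ref{flat-cotorsion-injective-tensor-Hom-lemma}(d) then makes the term an injective $S_{\alpha_1}$\+module, and Lemma~\ref{injective-change-of-scalars-lemma}(a) makes it injective over~$R$, because $S_{\alpha_1}$ is $R$\+flat. In part~(a), Lemma~\ref{flat-cotorsion-injective-tensor-Hom-lemma}(b) makes each term a cotorsion $S_{\alpha_1}$\+module, and Lemma~\ref{cotorsion-change-of-scalars-lemma}(a) makes it cotorsion over~$R$. So in both cases $C$ has a finite resolution
\[
0\rarrow T_{N-1}\rarrow\dotsb\rarrow T_0\rarrow C\rarrow0
\]
by injective (respectively, cotorsion) $R$\+modules $T_i$.

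It remains to show that such a resolution forces $C$ itself to be injective (respectively, cotorsion). Both classes are closed under cokernels of monomorphisms: for injectives because the sequence splits, and for cotorsion modules as recorded in Section~\ref{cotorsion-modules-subsecn}. So I would proceed by induction from the left end of the resolution. The cokernel of $T_{N-1}\rarrow T_{N-2}$ lies in the class, the next cokernel is the cokernel of a monomorphism between modules of the class, and so on, until we reach $C$. Equivalently, I could compute $\Ext^1_R(F,C)$ by dimension shifting, using the vanishing of $\Ext^{n}_R(F,T_i)$ for all $n\ge1$ and all flat (respectively, all) modules~$F$.

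The main point requiring care is the identification of the \v Cech terms as modules over a single $S_{\alpha_1}$ and the justification of the adjunction isomorphism. I expect no real obstacle there, since this is just $\Hom$-tensor adjunction for the ring map $R\rarrow S_{\alpha_1}$.
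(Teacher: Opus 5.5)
Your proposal is correct and follows the paper's route. The ``only if'' parts come from the change-of-scalars lemmas. For the ``if'' parts, you use the \v Cech resolution of Lemma~\ref{contraadjusted-cech-resolution}, show that every term except $C$ is cotorsion (respectively, injective), and then induct from the left end using closure under cokernels of monomorphisms. The only difference is cosmetic: you check the terms via the $\Hom$--tensor adjunction over $S_{\alpha_1}$ together with Lemma~\ref{flat-cotorsion-injective-tensor-Hom-lemma}(b,d), whereas the paper cites Lemma~\ref{cotorsion-change-of-scalars-lemma}(a--b); both rest on the same adjunction rewriting.
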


\begin{proof}
 Part~(a) is~\cite[Lemma~1.3.6(a)]{Pcosh} or~\cite[Example~3.8]{Pal},
while part~(b) is~\cite[Lemma~1.3.6(b)]{Pcosh}
or~\cite[Example~3.7]{Pal}.
 The ``only if'' implication in part~(a) holds by
Lemma~\ref{cotorsion-change-of-scalars-lemma}(b),
and the ``only if'' implication in part~(b) holds by
Lemma~\ref{injective-change-of-scalars-lemma}(b).

 The ``if'' implications in both parts~(a) and~(b) are provable
using Lemma~\ref{contraadjusted-cech-resolution}.
 Let us sketch the proof of the ``if'' implication in part~(a).
 Using Lemma~\ref{cotorsion-change-of-scalars-lemma}(a\+-b),
one shows that all the terms of the exact
sequence~\eqref{cta-module-cech-resolution}, except perhaps
the rightmost one, are cotorsion $R$\+modules.
 Then one proceeds by induction, moving from the leftmost to
the rightmost end of the sequence~\eqref{cta-module-cech-resolution}
and proving that its $R$\+modules of cycles are cotorsion.
 The fact that the class of cotorsion $R$\+modules is closed under
cokernels of monomorphisms in $R\Modl$ (see
Section~\ref{cotorsion-modules-subsecn}) plays a key role in
this argument.
\end{proof}

\begin{lem} \label{short-exact-sequnces-of-cta-are-colocal}
 Let $C\rarrow D\rarrow E$ be a composable pair of homomorphisms of
contraadjusted $R$\+modules.
 Then\/ $0\rarrow C\rarrow D\rarrow E\rarrow0$ is a short exact
sequence of $R$\+modules if and only if\/ $0\rarrow\Hom_R(S_\alpha,C)
\rarrow\Hom_R(S_\alpha,D)\rarrow\Hom_R(S_\alpha,E)\rarrow0$ is a short
exact sequence of $S_\alpha$\+modules for every index\/~$\alpha$.
\end{lem}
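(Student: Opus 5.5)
The ``only if'' direction follows from the very flatness of $S_\alpha$ over~$R$. By Example~\ref{open-affine-very-flat-example}, $S_\alpha=\cO(\Spec S_\alpha)$ is a very flat $R$\+module, since $\Spec S_\alpha$ is an affine open subscheme of $\Spec R$. As $C$ is contraadjusted, $\Ext^1_R(S_\alpha,C)=0$. So applying $\Hom_R(S_\alpha,{-})$ to a short exact sequence $0\rarrow C\rarrow D\rarrow E\rarrow0$ gives a short exact sequence. Its terms are $S_\alpha$\+modules and its maps are $S_\alpha$\+linear.

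For the ``if'' direction I would use the \v Cech resolutions of Lemma~\ref{contraadjusted-cech-resolution}. For each nonempty subset $\{\alpha_1<\dotsb<\alpha_k\}\subset\{1,\dots,N\}$ put $S_{\alpha_1\dotsm\alpha_k}=S_{\alpha_1}\ot_R\dotsb\ot_RS_{\alpha_k}$. Since $S_{\alpha_1}$ is an $R$\+algebra, one has
\[
 \Hom_R(S_{\alpha_1\dotsm\alpha_k},C)\cong
 \Hom_{S_{\alpha_1}}\bigl(S_{\alpha_1}\ot_RS_{\alpha_2}\ot_R\dotsb\ot_RS_{\alpha_k},\;
 \Hom_R(S_{\alpha_1},C)\bigr),
\]
and similarly for $D$ and~$E$. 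Here $S_{\alpha_1}\ot_R S_{\alpha_2}\ot_R\dotsb$ is the ring of the affine open subscheme $\Spec S_{\alpha_1}\times_{\Spec R}\dotsb$ of $\Spec S_{\alpha_1}$ (open immersions into an affine scheme), so it is a very flat $S_{\alpha_1}$\+module by Example~\ref{open-affine-very-flat-example}. The $S_{\alpha_1}$\+module $\Hom_R(S_{\alpha_1},C)$ is contraadjusted: it is $\Hom_R(S_{\alpha_1},C)$ with $S_{\alpha_1}[s^{-1}]$ very flat over $R$ by Example~\ref{open-affine-very-flat-example}, so Lemma~\ref{contraadjusted-change-of-scalars-lemma}(b) applies. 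Hence, by the ``only if'' argument now applied over the ring $S_{\alpha_1}$, the hypothesized short exact sequence over $S_{\alpha_1}$ stays exact after applying $\Hom_{S_{\alpha_1}}(S_{\alpha_1}\ot_R\dotsb,{-})$. So the sequence $0\rarrow\Hom_R(S_{\alpha_1\dotsm\alpha_k},C)\rarrow\Hom_R(S_{\alpha_1\dotsm\alpha_k},D)\rarrow\Hom_R(S_{\alpha_1\dotsm\alpha_k},E)\rarrow0$ is exact for every nonempty subset.

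We then have a sequence $0\rarrow\mathcal C_\bullet\rarrow\mathcal D_\bullet\rarrow\mathcal E_\bullet\rarrow0$ of the \v Cech complexes~\eqref{cta-module-cech-resolution} with the augmentation terms $C$, $D$, $E$ deleted. It is termwise short exact and natural, since the \v Cech resolution is functorial in the module. By Lemma~\ref{contraadjusted-cech-resolution}, each of these three complexes is a resolution of $C$, $D$, $E$ respectively, i.~e., has homology only in degree~$0$, equal to $C$, $D$, $E$. The long exact homology sequence of the short exact sequence of complexes then yields
\[
 H_1(\mathcal E_\bullet)=0\rarrow C\rarrow D\rarrow E\rarrow H_{-1}(\mathcal C_\bullet)=0 .
\]
The maps here are the given ones, by naturality of the augmentation. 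So $0\rarrow C\rarrow D\rarrow E\rarrow0$ is exact.

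The main obstacle is the middle step: checking that the multiple intersections inherit the exactness. This rests on the identification of $\Hom_R(S_{\alpha_1}\ot_R\dotsb,C)$ as a Hom over $S_{\alpha_1}$ from a very flat $S_{\alpha_1}$\+module into a contraadjusted $S_{\alpha_1}$\+module, which I expect to go through as indicated. The rest is formal.
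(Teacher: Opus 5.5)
Your proposal is correct and follows essentially the argument behind the paper's proof, which gives the ``only if'' part exactly as you do via Example~\ref{open-affine-very-flat-example} and cites \cite[Lemma~1.4.1(a)]{Pcosh} for the ``if'' part. For that part you first transfer exactness to all multiple intersections through $\Hom_R(S_{\alpha_1}\ot_R\dotsb\ot_RS_{\alpha_k},C)\cong\Hom_{S_{\alpha_1}}(S_{\alpha_1}\ot_R\dotsb\ot_RS_{\alpha_k},\Hom_R(S_{\alpha_1},C))$, correctly checking that $\Hom_R(S_{\alpha_1},C)$ is a contraadjusted $S_{\alpha_1}$\+module, and then compare the \v Cech resolutions of Lemma~\ref{contraadjusted-cech-resolution} via the long exact homology sequence, which is the standard route.
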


\begin{proof}
 This is~\cite[Lemma~1.4.1(a)]{Pcosh}.
 In particular, the easy ``only if'' assertion follows from
Example~\ref{open-affine-very-flat-example} above.
 (See also~\cite[Lemma~2.17.10(a)]{Pform}.)
\end{proof}

\begin{lem} \label{colocality-of-epimorphisms}
\textup{(a)} Let $D\rarrow E$ be a homomorphism of contraadjusted
$R$\+modules.
 Then $D\rarrow E$ is an admissible epimorphism in $R\Modl^\cta$ if
and only if\/ $\Hom_R(S_\alpha,D)\rarrow\Hom_R(S_\alpha,E)$ is
an admissible epimorphism in $S_\alpha\Modl^\cta$ for every
index\/~$\alpha$. {\hbadness=1350\par}
\textup{(b)} Let $D\rarrow E$ be a homomorphism of cotorsion
$R$\+modules.
 Then $D\rarrow E$ is an admissible epimorphism in $R\Modl^\cot$ if
and only if\/ $\Hom_R(S_\alpha,D)\rarrow\Hom_R(S_\alpha,E)$ is
an admissible epimorphism in $S_\alpha\Modl^\cot$ for every
index\/~$\alpha$. \par
\textup{(c)} Let $D\rarrow E$ be a homomorphism of injective
$R$\+modules.
 Then $D\rarrow E$ is a split epimorphism if and only if\/
$\Hom_R(S_\alpha,D)\rarrow\Hom_R(S_\alpha,E)$ is a split
epimorphism of (injective) $S_\alpha$\+modules for every
index\/~$\alpha$.
\end{lem}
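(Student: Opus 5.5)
The plan is to reduce all three parts to the locality results already established: Lemma~\ref{short-exact-sequnces-of-cta-are-colocal} for exactness, and Lemma~\ref{cotorsion-injective-colocal-if-contraadjusted} for identifying the kernel. In every part, the ``only if'' direction should be formal. Given an admissible short exact sequence $0\rarrow K\rarrow D\rarrow E\rarrow0$ in $R\Modl^\cta$ (resp.\ $R\Modl^\cot$), the functor $\Hom_R(S_\alpha,{-})$ preserves its exactness by Lemma~\ref{short-exact-sequnces-of-cta-are-colocal}. It also takes the kernel $K$ to a contraadjusted (resp.\ cotorsion) $S_\alpha$\+module, by Lemma~\ref{contraadjusted-change-of-scalars-lemma}(b) together with Example~\ref{open-affine-very-flat-example} (resp.\ Lemma~\ref{cotorsion-change-of-scalars-lemma}(b)). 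In part~(c), a split epimorphism stays split under any additive functor.

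For the ``if'' direction of (a) and (b), let $K$ be the kernel of $D\rarrow E$ in $R\Modl$, and let $I$ be its image, so $0\rarrow K\rarrow D\rarrow I\rarrow0$. The main obstacle is that $K$ and $I$ are not known to be contraadjusted, so the lemmas cannot be applied to them directly. The image $I$ is contraadjusted, being a quotient of $D$. To get exactness of $0\rarrow I\rarrow E\rarrow E/I\rarrow0$ I would argue as follows. Since $S_\alpha$ is very flat of projective dimension $\le1$, the functor $\Hom_R(S_\alpha,{-})$ is exact on sequences whose left term is contraadjusted. The sequence $0\rarrow I\rarrow E\rarrow E/I\rarrow0$ has contraadjusted left term $I$, so it stays exact after applying $\Hom_R(S_\alpha,{-})$. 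The composition $\Hom_R(S_\alpha,D)\rarrow\Hom_R(S_\alpha,I)\rarrow\Hom_R(S_\alpha,E)$ is surjective by assumption, so $\Hom_R(S_\alpha,I)\rarrow\Hom_R(S_\alpha,E)$ is surjective. Hence $\Hom_R(S_\alpha,E/I)=0$ for all~$\alpha$, where $E/I$ is contraadjusted as a quotient of~$E$. The \v Cech resolution of Lemma~\ref{contraadjusted-cech-resolution} for $E/I$ then has all terms zero except possibly the last, which forces $E/I=0$. This shows surjectivity, so $I=E$.

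Now $0\rarrow K\rarrow D\rarrow E\rarrow0$ is exact, and it remains to show $K$ is contraadjusted (resp.\ cotorsion). For this I would use the long exact sequence of $\Ext_R({-},{-})$ from $S_\alpha$, recalling $\operatorname{pd}S_\alpha\le1$. This gives an exact sequence
\[
 \Hom_R(S_\alpha,D)\rarrow\Hom_R(S_\alpha,E)\rarrow\Ext^1_R(S_\alpha,K)\rarrow\Ext^1_R(S_\alpha,D)=0,
\]
where $\Ext^1_R(S_\alpha,D)=0$ because $D$ is contraadjusted and $S_\alpha$ is very flat. By surjectivity of the first map, $\Ext^1_R(S_\alpha,K)=0$. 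Applying the same reasoning with $R[s^{-1}]$ in place of $S_\alpha$ would require surjectivity of $\Hom_R(R[s^{-1}],D)\rarrow\Hom_R(R[s^{-1}],E)$, which is not given. So I would instead use the \v Cech approach: with $K_\alpha=\Hom_R(S_\alpha,K)$, the sequence $0\rarrow K_\alpha\rarrow\Hom_R(S_\alpha,D)\rarrow\Hom_R(S_\alpha,E)\rarrow0$ is exact, and by hypothesis $K_\alpha$ is contraadjusted (resp.\ cotorsion) over $S_\alpha$. I would then express $\Ext^1_R(R[s^{-1}],K)$ through a resolution of $R[s^{-1}]$ by the \v Cech coresolution~\eqref{module-cech-coresolution} tensored with $R[s^{-1}]$. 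Via the adjunction $\Ext^1_R(S_\alpha\ot_RF,K)\cong\Ext^1_{S_\alpha}(S_\alpha\ot_RF,\Hom_R(S_\alpha,K))$, valid since $\Ext^1_R(S_\alpha,K)=0$ and $\operatorname{pd}\le1$, the vanishing would reduce to the contraadjustedness of the $K_\alpha$ and their further restrictions. I expect this adjunction step to be the technically delicate point.

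For the cotorsion case the argument is identical, using flat $F$ and the complete flat cotorsion pair. Part~(c) is easier. Since $D$ is injective, the kernel $K$ is injective if and only if the sequence splits. By the same argument as above, $D\rarrow E$ is surjective with kernel $K$, and $\Hom_R(S_\alpha,K)$ is injective as a direct summand of the injective $\Hom_R(S_\alpha,D)$. Then $K$ is contraadjusted by the argument above, hence injective by Lemma~\ref{cotorsion-injective-colocal-if-contraadjusted}(b). So the sequence splits.
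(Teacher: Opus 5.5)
Your proof is correct in substance, and it is self-contained, whereas the paper gives no argument of its own here, only a reference to the book on contraherent cosheaves. The following steps are all fine:
\begin{itemize}
\item the ``only if'' directions;
\item the surjectivity step via $E/I$ and the \v Cech resolution;
\item the reduction of~(c) to~(a) together with Lemma~\ref{cotorsion-injective-colocal-if-contraadjusted}(b).
\end{itemize}
In the kernel step of~(a), the adjunction you flag as delicate is harmless. Since $\Ext^q_R(S_\alpha,K)=0$ for $q\ge1$, applying $\Hom_R(S_\alpha,{-})$ to an injective resolution of $K$ gives an injective resolution of $K_\alpha$. Hence $\Ext^n_R(N,K)\cong\Ext^n_{S_\alpha}(N,K_\alpha)$ for all $S_\alpha$-modules $N$. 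The dimension shift along the \v Cech coresolution of $R[s^{-1}]$ then works, because its cocycle modules are very flat and so have projective dimension $\le1$, which kills $\Ext^2_R(Z^1,K)$.

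The surjectivity you declared ``not given'' is in fact available, and it gives a lighter argument. The map $\Hom_R(R[s^{-1}],D)\rarrow\Hom_R(R[s^{-1}],E)$ is a map of contraadjusted $R[s^{-1}]$-modules, and the rings $S_\alpha[s^{-1}]$ cover $\Spec R[s^{-1}]$. Applying $\Hom_{R[s^{-1}]}(S_\alpha[s^{-1}],{-})$ gives, by adjunction, the map $\Hom_{S_\alpha}(S_\alpha[s^{-1}],\Hom_R(S_\alpha,D))\rarrow\Hom_{S_\alpha}(S_\alpha[s^{-1}],\Hom_R(S_\alpha,E))$. This map is surjective because $K_\alpha$ is a contraadjusted $S_\alpha$-module. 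So your own surjectivity step, run over $R[s^{-1}]$, shows that $\Hom_R(R[s^{-1}],D)\rarrow\Hom_R(R[s^{-1}],E)$ is surjective. Your long exact sequence then gives $\Ext^1_R(R[s^{-1}],K)=0$, with no change of rings for $\Ext$ at all.

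The claim that~(b) is ``identical'' is not accurate. For a flat $F$, shifting dimension along $0\rarrow F\rarrow C^0\rarrow Z^1\rarrow0$ needs $\Ext^2_R(Z^1,K)=0$. But $Z^1$ is merely flat, so the projective dimension bound that made this automatic in~(a) is no longer available. You would have to prove $\Ext^n_R(C^i,K)=0$ for all $n\ge1$ and all~$i$. The all-degree change of rings together with the cotorsionness of the $K_\alpha$ does give this, but it is more than you indicate.

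The simplest repair is the pattern you already used in~(c). Cotorsion modules are contraadjusted, so part~(a) already shows that $D\rarrow E$ is surjective with contraadjusted kernel~$K$. Since $\Hom_R(S_\alpha,K)=K_\alpha$ is cotorsion for every~$\alpha$, Lemma~\ref{cotorsion-injective-colocal-if-contraadjusted}(a) shows that $K$ is cotorsion. Completeness of the flat cotorsion pair plays no role.
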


\begin{proof}
 This is~\cite[Lemmas~1.4.1(b), 1.4.2(b), and~1.4.3(b)]{Pcosh}
(see also~\cite[Lemmas~2.17.10(b) and~2.17.11(b)]{Pform}).
\end{proof}

 The analogues of the assertions of
Lemma~\ref{colocality-of-epimorphisms} for admissible/split
monomorphisms instead of epimorphisms are \emph{not} true.
 See~\cite[Example~3.2.1]{Pcosh}.

\subsection{Noetherian commutative rings}
\label{noetherian-commutative-rings-subsecn}
 The aim of this section is to discuss flat contraadjusted and
flat cotorsion modules over Noetherian commutative rings, but some
of the definitions and results are applicable in greater generality.

\begin{prop} \label{flat-contraadjusted-colocalization}
 Let $R\rarrow S$ be a homomorphism of commutative rings such that
the induced morphism of affine schemes\/ $\Spec S\rarrow\Spec R$
is an open immersion.
 Assume that the ring $R$ is coherent.
 Let $G$ be a flat contraadjusted $R$\+module.
 Then\/ $\Hom_R(S,G)$ is a flat contraadjusted $S$\+module.
\end{prop}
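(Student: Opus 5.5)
Contraadjustedness of $\Hom_R(S,G)$ is immediate from the earlier lemmas. By Example~\ref{open-affine-very-flat-example}, combined with the observation that $S[s^{-1}]$ is the ring of functions on an affine open subscheme of the affine scheme $\Spec R$, the $R$\+module $S[s^{-1}]$ is very flat for every $s\in S$. Hence Lemma~\ref{contraadjusted-change-of-scalars-lemma}(b) applies. So the substance of the proposition is the flatness of the $S$\+module $\Hom_R(S,G)$. Since flatness of an $S$\+module is local on $\Spec S$ by Lemma~\ref{very-flatness-local}(a), I would first reduce to the case of a principal localization $S=R[s^{-1}]$.

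For the reduction, cover $\Spec S$ by finitely many principal affine opens $\Spec R[s_i^{-1}]$ of $\Spec R$ contained in $\Spec S$; this uses that $\Spec S$ is quasi-compact, being affine. Write $S_i=R[s_i^{-1}]=S[t_i^{-1}]$ for suitable $t_i\in S$. Then $S_i\ot_S\Hom_R(S,G)$ should be compared with $\Hom_R(S_i,G)$. One has $\Hom_R(S_i,G)=\Hom_S(S_i,\Hom_R(S,G))$. However, a localization $S_i\ot_S H$ differs in general from $\Hom_S(S_i,H)$. So to make the reduction I would not localize directly. Instead I would use the \v Cech resolution of Lemma~\ref{contraadjusted-cech-resolution} for $H=\Hom_R(S,G)$ over $S$ with the covering $\{S_i\}$. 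Its terms are $\Hom_R(S_{i_0}\ot\dotsb\ot S_{i_k},G)$, which are modules of the form $\Hom_R(R[u^{-1}],G)$ for principal localizations $R[u^{-1}]$, $u=s_{i_0}\dotsm s_{i_k}$. The resolution is a finite exact sequence ending in $H$. If each such term is flat, the sequence is a finite resolution of $H$ by flat modules with flat cycles computed from the left, and closure of flat modules under cokernels of monomorphisms fails. So this route is problematic as stated, and I would rather prove the principal case directly and then handle general $S$ via a dual trick.

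For the principal case, $\Hom_R(R[s^{-1}],G)$ is computed from the exact sequence $0\to\Hom_R(R[s^{-1}],G)\to\prod_{n}G\to\prod_n G\to \Ext^1_R(R[s^{-1}],G)=0$. This uses the telescope presentation $0\to\bigoplus_n R\to\bigoplus_n R\to R[s^{-1}]\to0$ and the contraadjustedness of $G$. Since $R$ is coherent, products of flat modules are flat (Chase), so $\prod_n G$ is flat. The map $\prod G\to\prod G$ is surjective, so its kernel $\Hom_R(R[s^{-1}],G)$ is flat as an $R$\+module, flat modules being closed under kernels of epimorphisms. Finally, an $R[s^{-1}]$\+module that is flat over $R$ is flat over $R[s^{-1}]$, since $M\ot_{R[s^{-1}]}{-}$ on $R[s^{-1}]$\+modules agrees with $M\ot_R{-}$.

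For general $S$, I would avoid the principal reduction and run the same argument with a flat resolution of $S$. Namely, $S$ is very flat by Example~\ref{open-affine-very-flat-example}, so it has projective dimension $\le1$. Take $0\to P_1\to P_0\to S\to0$ with $P_i$ free; then $0\to\Hom_R(S,G)\to\Hom_R(P_0,G)\to\Hom_R(P_1,G)\to\Ext^1_R(S,G)=0$, where the vanishing uses very flat versus contraadjusted. The middle and right terms are products of copies of $G$, hence flat by coherence. So $\Hom_R(S,G)$ is a flat $R$\+module. The main obstacle is this final step: passing from $R$\+flatness to $S$\+flatness. Here I would use that $S$ is a flat epimorphism of rings, so $S\ot_RS=S$ and $N\ot_S M=N\ot_R M$ for $S$\+modules $N$, $M$. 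Hence tensoring over $S$ with an $S$\+module that is $R$\+flat is exact on $S$\+modules.
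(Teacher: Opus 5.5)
Your proof is correct. The middle paragraphs (the abandoned \v Cech route, and the separate principal case, which the general case subsumes) can simply be deleted.

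The paper gives no argument of its own here; it only points to \cite[Corollary~1.7.6(a)]{Pcosh}. Your final paragraph is therefore a complete self-contained proof, and it runs as follows.
\begin{enumerate}
\item By Example~\ref{open-affine-very-flat-example}, $S$ is very flat over $R$. So it has a projective resolution $0\rarrow P_1\rarrow P_0\rarrow S\rarrow0$, and $\Ext^1_R(S,G)=0$ because $G$ is contraadjusted.
\item Hence $\Hom_R(S,G)$ is the kernel of a surjection $\Hom_R(P_0,G)\rarrow\Hom_R(P_1,G)$. Both terms are direct summands of products of copies of $G$, so they are flat by Chase's theorem over the coherent ring $R$. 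Thus $\Hom_R(S,G)$ is $R$\+flat.
\item An open immersion is a monomorphism of schemes, so $S\ot_RS\cong S$. Consequently any $S$\+module that is flat over $R$ is flat over $S$.
\end{enumerate}
The contraadjustedness part follows from Lemma~\ref{contraadjusted-change-of-scalars-lemma}(b), exactly as you say.

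One small point: you need not arrange for $P_1$ to be free. Projectivity suffices, since direct summands of flat modules are flat.
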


\begin{proof}
 This is~\cite[Corollary~1.7.6(a)]{Pcosh}.
\end{proof}

 Let $R$ be a commutative ring and $I\subset R$ be a finitely
generated ideal.
 An $R$\+module $C$ is said to be an \emph{$I$\+contramodule}
(or an \emph{$I$\+contramodule $R$\+module})~\cite[Sections~2
and~9]{Pcta} if $\Hom_R(R[s^{-1}],C)=0=\Ext^1_R(R[s^{-1}],C)$
for all $s\in I$.
 It suffices to check this condition for any chosen set of
generators~$s_j$ of the ideal~$I$ \,\cite[Theorem~5.1]{Pcta}.

 The full subcategory of $I$\+contramodule $R$\+modules
$R\Modl_{I\ctra}\subset R\Modl$ is closed under kernels, cokernels,
extensions, and infinite products in $R\Modl$
\,\cite[Proposition~1.1]{GL}, \cite[Theorem~1.2(a)]{Pcta}.
 Therefore, $R\Modl_{I\ctra}$ is an abelian category.
 The inclusion functor $R\Modl_{I\ctra}\rarrow R\Modl$ is exact
and preserves infinite products.

 We refer to the papers~\cite[Section~5.5]{PSl1}
and~\cite[Section~1]{Pdc} for a discussion of the full subcategory
of \emph{quotseparated} $I$\+contramodule $R$\+modules
$R\Modl_{I\ctra}^\qs\subset R\Modl_{I\ctra}$.
 The full subcategory $R\Modl_{I\ctra}^\qs$ is closed under subobjects,
quotient objects, and infinite products in $R\Modl_{I\ctra}$, but
\emph{not} under extensions.
 So $R\Modl_{I\ctra}^\qs$ is also an abelian category with an exact
inclusion functor $R\Modl_{I\ctra}^\qs\rarrow R\Modl$.

 The category of quotseparated $I$\+contramodule $R$\+modules
$R\Modl_{I\ctra}^\qs$ is naturally equivalent (in fact, isomorphic)
to the category of contramodules over a certain commutative topological
ring~$\widehat R$.
 Specifically, $\widehat R=\varprojlim_{n\ge1}R/I^n$ is the $I$\+adic
completion of the ring $R$, endowed with the $I$\+adic
topology~\cite[Proposition~1.5]{Pdc}.
 This is one reason why quotseparated $I$\+contramodules are often
more convenient to work with than arbitrary $I$\+contramodules.

 Both the abelian categories $R\Modl_{I\ctra}$ and $R\Modl_{I\ctra}^\qs$
are locally $\aleph_1$\+presenable (in the sense
of~\cite[Definition~1.17 and Theorem~1.20]{AR}) and have enough
projective objects~\cite[Section~1]{Pdc}.
 In particular, the projective objects of $R\Modl_{I\ctra}^\qs$ are
the direct summands of the \emph{free quotseparated $I$\+contramodule
$R$\+modules} $\widehat R[[X]]$.
 The latter are just the $I$\+adic completions of the free
$R$\+modules $R[X]=R^{(X)}$; so $\widehat R[[X]]=
\varprojlim_{n\ge1}(R/I)^{(X)}$.
 Here $X$ is an arbitrary set, and $A[X]=A^{(X)}$ is the direct sum
of $X$ copies of an abelian group~$A$.

 When the finitely generated ideal $I\subset R$ is \emph{weakly
proregular} in the sense of~\cite[Section~4]{PSY}, all $I$\+contramodule
$R$\+modules are quotseparated~\cite[Corollary~3.7 and Remark~3.8]{Pdc}.
 In particular, in a Noetherian commutative ring $R$, all finitely
generated ideals are weakly proregular~\cite[Theorem~4.34]{PSY}.

 As we are only interested in contramodules over Noetherian
commutative rings in this paper, we will usually omit the adjective
``quotseparated'' and the superindex $\qs$ from our terminology
and notation.
 In fact, we are only interested in $\m$\+contramodule $R$\+modules
for local Noetherian commutative rings $R$ with the maximal ideal~$\m$;
in this context, the classes of projective and free $\m$\+contramodule
$R$\+modules coincide~\cite[Corollary~10.7]{Pcta}.

\begin{lem} \label{free-contramodules-are-flat-cotorsion}
 Let $R$ be a Noetherian commutative ring and\/ $\m\subset R$ be
a maximal ideal.
 Then \par
\textup{(a)} all\/ $\m$\+contramodule $R$\+modules are cotorsion
$R$\+modules; \par
\textup{(b)} all free/projective\/ $\m$\+contramodule $R$\+modules
are flat $R$\+modules (moreover, an\/ $\m$\+contramodule $R$\+module
is free/projective if and only if it is flat as an $R$\+module).
\end{lem}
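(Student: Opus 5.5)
Part~(a) reduces to a general fact about $I$\+contramodules for a finitely generated ideal~$I$ in a commutative ring: every $I$\+contramodule is cotorsion. We first observe that $R\Modl_{\m\ctra}$ is closed under kernels and cokernels in $R\Modl$ and has enough projectives, namely the free contramodules $\widehat R[[X]]$. Hence every $\m$\+contramodule $C$ has a two-term presentation by free contramodules, and $C$ is the cokernel of a morphism between contramodules. Since $R\Modl^\cot$ is closed under cokernels of monomorphisms, it suffices to show that the free contramodules are cotorsion, and that the image of such a map is cotorsion as well.

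Rather than pursue this route, we would use a cleaner standard argument. For any flat $R$\+module $F$ and any $\m$\+contramodule $C$, we have $\Ext^1_R(F,C)=0$. To see this, write $F=\varinjlim F_i$ as a direct limit of finitely generated free modules (Lazard). The group $\Ext^1_R(\varinjlim F_i,C)$ is then controlled by $\varprojlim^1\Hom(F_i,C)$ and $\varprojlim\Ext^1(F_i,C)=0$. This is awkward in general, so instead we would use the characterization that $C$ is an $\m$\+contramodule precisely when $C\cong\boL\Lambda_\m(C)$, i.e. $C=\Ext^1_R(K^\bullet,C)$-type expressions via $\Hom_R(\check C,C)$, where $\check C$ is the \v Cech complex $R\to\bigoplus R[s_j^{-1}]\to\dotsb$. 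Concretely, $C$ is a contramodule iff $C\to\Ext^{d}_R(\check C[\cdot],C)$ is an isomorphism. From this, $C\cong\Hom_R(K,C')$-style for a torsion module $K$ relates to $\Hom$ into an injective.

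The cleanest route we would commit to is the following. Over Noetherian $R$, any $\m$\+contramodule $C$ is the cokernel of a monomorphism between free contramodules (kernel of $\widehat R[[Y]]\to\widehat R[[X]]$ is projective since the abelian category has projective dimension~$\le1$ here; \cite[Corollary~10.7]{Pcta} and related). So we reduce to showing $\widehat R[[X]]$ is cotorsion. The module $\widehat R[[X]]=\varprojlim_n (R/\m^n)^{(X)}$ is the inverse limit of a tower of surjections of $R/\m^n$\+modules. Each $(R/\m^n)^{(X)}$ is a module over the Artinian ring $R/\m^n$, hence pure-injective (over an Artinian ring every module is a direct sum of... rather: any module of finite length-type over an Artinian ring is cotorsion because $R/\m^n$ is perfect, so flat $R/\m^n$\+modules are projective). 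Then $\Ext^1_R(F,(R/\m^n)^{(X)})=\Ext^1_{R/\m^n}(F/\m^nF,\dots)=0$ for flat~$F$, using flatness to compute change of rings. The tower has surjective transition maps, so $\varprojlim^1$ of $\Hom_R(F,-)$ vanishes after checking Mittag-Leffler: $\Hom_R(F,(R/\m^{n+1})^{(X)})\to\Hom_R(F,(R/\m^n)^{(X)})$ is surjective since $F/\m^{n+1}F$ is projective over $R/\m^{n+1}$. Hence $\Ext^1_R(F,\varprojlim)=0$, proving~(a). This $\varprojlim^1$ step is where care is needed.

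For~(b), flatness of $\widehat R[[X]]$ follows since $\widehat R$ is flat over $R$ ($R$ Noetherian) and $\widehat R[[X]]$ is the $\m$\+adic completion of a free $\widehat R$\+module, which over the Noetherian complete local ring $\widehat R_\m$ is flat (completions of free modules over Noetherian rings are flat, via Artin--Rees: $\mathrm{Tor}$ with finitely generated modules computed as inverse limits). Conversely, a flat contramodule $C$ has $C/\m C$ free over $R/\m$; lift a basis to a map $\widehat R[[X]]\to C$, surjective by contramodule Nakayama, with flat kernel $K$ satisfying $K/\m K=0$, so $K=0$ by Nakayama for contramodules. The main obstacle is the flatness of $\widehat R[[X]]$.
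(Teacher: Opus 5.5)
Your proof of~(b) is fine, granting the standard fact that adic completions of free modules over a Noetherian ring are flat, which you only sketch. So is your proof that $\widehat R[[X]]=\varprojlim_n(R/\m^n)^{(X)}$ is cotorsion. Change of rings to the Artinian rings $R/\m^n$ kills $\Ext^1_R(F,(R/\m^n)^{(X)})$, and then $\Ext^1_R(F,\varprojlim_n M_n)\cong\varprojlim^1_n\Hom_R(F,M_n)$, which vanishes by the surjectivity you check.

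The gap is the reduction step in~(a). It is \emph{not} true that every $\m$\+contramodule is the cokernel of a monomorphism between free contramodules. The category of $\m$\+contramodule $R$\+modules does not have homological dimension~$\le1$ in general; \cite[Corollary~10.7]{Pcta} only says that projective contramodules over a local ring are free. For $R=k[x,y]$ and $\m=(x,y)$, the contramodule $k=R/\m$ has the Koszul complex $0\to\widehat R\to\widehat R^2\to\widehat R\to k\to0$ over $\widehat R=k[[x,y]]$ as a minimal free-contramodule resolution, so its projective dimension is~$2$. For $R=k[x]/(x^2)$, every module is an $\m$\+contramodule, and $k$ has no finite projective resolution at all. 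As written, your argument for~(a) only covers contramodules of projective dimension~$\le1$.

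The repair fits your own framework, because you never needed the kernel to be projective, only cotorsion.
\begin{itemize}
\item Take a surjection $P=\widehat R[[X]]\to C$ with kernel $K$. Then $K$ is an $\m$\+contramodule, since the class is closed under kernels.
\item $K$ is $\m$\+adically separated, because $\bigcap_n\m^nK\subset\bigcap_n\m^nP=0$.
\item Over a finitely generated ideal, every contramodule is adically complete: $K\to\varprojlim_nK/\m^nK$ is surjective, and only separatedness can fail. Hence $K\cong\varprojlim_nK/\m^nK$.
\item So $K$ is again the limit of a tower of surjections of modules annihilated by~$\m^n$, and your Mittag-Leffler argument applies to it verbatim.
\end{itemize}
Thus $P$ and $K$ are cotorsion, and so is $C=P/K$.

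Alternatively, dimension-shift along an arbitrary free-contramodule resolution. All contramodules here are $R_\m$\+modules, so $\Ext^n_R(F,-)=\Ext^n_{R_\m}(R_\m\ot_RF,-)$ on them. This vanishes for $n>\dim R_\m$ by Theorem~\ref{raynaud-gruson-theorem}(a), but the route is heavier.

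Once repaired, your argument is a self-contained proof, whereas the paper gives none and simply cites \cite[Theorems~9.3 and~10.5]{Pcta}. You should also delete the abandoned fragments at the start of~(a): the Lazard attempt, the $\boL\Lambda_\m$ remark and the garbled \v Cech-complex criterion. They contain inaccurate statements.
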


\begin{proof}
 Part~(a) is~\cite[Theorem~9.3]{Pcta}.
 Part~(b) is a special case of~\cite[Theorem~10.5]{Pcta}.
 A further discussion can be found in~\cite[Proposition~1.3.7]{Pcosh}
or~\cite[Corollary~2.19.3]{Pform}.
\end{proof}

 Given a commutative ring $R$ and a prime ideal $\p\subset R$, we
denote by $R_\p=(R\setminus\p)^{-1}R$ the localization of $R$ at~$\p$.
 So $R_\p$ is a local Noetherian commutative ring with the maximal
ideal $R_\p\p\subset R_\p$.
 The following classification of flat cotorsion modules over
Noetherian commutative rings is due to Enochs~\cite{En2}.

\begin{prop} \label{flat-cotorsion-over-Noetherian-classified}
 Let $R$ be a Noetherian commutative ring.
 Then an $R$\+module $C$ is flat and cotorsion if and only if $C$ has
the form of an infinite product $C\simeq\prod_{\p\in\Spec R}F_\p$,
where $F_\p$ is a free\/ $(R_\p\p)$\+contramodule $R_\p$\+module for
every prime ideal\/ $\p\subset R$.
\end{prop}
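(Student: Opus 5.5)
We prove the ``if'' direction first, and then the ``only if'' direction; the latter is the substantial part.

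For the ``if'' direction, let $C=\prod_\p F_\p$ with $F_\p$ a free $(R_\p\p)$\+contramodule $R_\p$\+module. The class of cotorsion $R$\+modules is closed under products. Over a Noetherian ring $R$, the class of flat $R$\+modules is also closed under products, since $R$ is coherent. So it suffices to treat one factor $F_\p$. By Lemma~\ref{free-contramodules-are-flat-cotorsion}, applied to the local Noetherian ring $R_\p$, the module $F_\p$ is a flat cotorsion $R_\p$\+module. It is flat over $R$ because $R_\p$ is flat over $R$. It is cotorsion over $R$ by Lemma~\ref{cotorsion-change-of-scalars-lemma}(a).

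For the ``only if'' direction, let $C$ be flat cotorsion. We follow Enochs' strategy: realize $C$ as the completion of a flat module and then decompose along primes. Let $E=\bigoplus_\p E(R/\p)$, which is an injective cogenerator over the Noetherian ring $R$. Consider the double dual $C\to\Hom_R(\Hom_R(C,E),E)$. Here $\Hom_R(C,E)$ is injective by Lemma~\ref{flat-cotorsion-injective-tensor-Hom-lemma}(d), so by Matlis it decomposes as $\bigoplus_\p E(R/\p)^{(X_\p)}$. Then
\[
\Hom_R(\Hom_R(C,E),E)\simeq\prod_\p\Hom_R(E(R/\p),E)^{X_\p},
\]
and $\Hom_R(E(R/\p),E)\simeq\Hom_R(E(R/\p),E(R/\p))$. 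By Matlis duality over $R_\p$, the latter is the completion $\widehat{R_\p}$ of $R_\p$ at $\p R_\p$. Thus the double dual is a product of products of copies of $\widehat{R_\p}$. The map $C\to C^{\vee\vee}$ is a pure monomorphism with flat cokernel, since both $C$ and $C^{\vee\vee}$ are flat and the map is pure. Because $C$ is cotorsion, this monomorphism splits. Hence $C$ is a direct summand of a product $P=\prod_\p P_\p$, where each $P_\p$ is a product of copies of $\widehat{R_\p}$. Such a product is a flat $\p R_\p$\+contramodule $R_\p$\+module, and so it is free by Lemma~\ref{free-contramodules-are-flat-cotorsion}(b).

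The main obstacle is showing that a direct summand of $\prod_\p P_\p$ again has the product form. We would argue as follows. Each factor satisfies $\Hom_R(P_\p,P_\q)=0$ unless $\p\subseteq\q$: for $\p\not\subseteq\q$, $P_\p$ is an $R_\p$\+module while $P_\q$ is $s$\+torsion-free and $s$\+adically separated for $s\in\q\setminus\p$. Using this, for a minimal prime $\p$ among those where an idempotent $e$ acts, one inducts on the poset of primes. More cleanly, define $C_\p$ as the ``$\p$\+component''. Concretely, take $C_\p=\Hom_R(R_\p,C)$ modulo the images of components at strictly larger primes. Alternatively, take $C_\p$ to be the $\p R_\p$\+adic completion of $C_\p$ computed via $R_\p\otimes_R C$, which is a flat $\p R_\p$\+contramodule and therefore free. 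One then shows that the natural map $C\to\prod_\p C_\p$ is an isomorphism by checking this for $C=P$ and passing to summands, since both sides are additive functors of $C$. Verifying that this functorial construction recovers each $P_\p$ from $P$ is where the prime-order vanishing of Homs is used. This is the step we expect to be hardest.
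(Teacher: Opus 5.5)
The paper gives no proof of its own here; it simply cites Enochs' theorem. Measured against what a proof requires, your proposal has a genuine gap in the ``only if'' direction.

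Your ``if'' direction is correct. The start of the ``only if'' direction is also the right one: you embed $C$ purely into $\Hom_R(\Hom_R(C,E),E)$, and the embedding splits because $C$ is cotorsion and the cokernel is flat. But your description of the double dual is wrong. $\Hom_R(E(R/\p),E)$ is not $\Hom_R(E(R/\p),E(R/\p))$, because the other summands of $E$ contribute. For $R=\boZ$ and $\p=0$ one has $\Hom_\boZ(\boQ,\boQ/\boZ)\ne0$, so $\Hom_\boZ(\boQ,E)\ne\Hom_\boZ(\boQ,\boQ)=\boQ$.

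This is repairable. The module $\Hom_R(E(R/\p),E)\simeq\Hom_{R_\p}(E(R/\p),\Hom_R(R_\p,E))$ is flat over $R_\p$, being a Hom between injectives over a Noetherian ring. It is a $\p R_\p$-contramodule, being a Hom from a $\p R_\p$-torsion module into an injective. Hence it is free by Lemma~\ref{free-contramodules-are-flat-cotorsion}(b). So you do get $C$ as a direct summand of $P=\prod_\p P_\p$ with free $\p R_\p$-contramodule $R_\p$-modules $P_\p$.

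The gap is the next step, which is the real content of Enochs' theorem: a direct summand of $\prod_\p P_\p$ again has product form. You leave this open, and the tools you sketch do not work.

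\emph{The Hom vanishing is reversed.} Your own argument with $s\in\mathfrak q\setminus\p$ shows $\Hom_R(P_\p,P_{\mathfrak q})=0$ unless $\mathfrak q\subseteq\p$. For example, $\Hom_\boZ(\boQ,\boZ_p)=0$ but $\Hom_\boZ(\boZ_p,\boQ)\ne0$.

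\emph{Completing $R_\p\ot_R C$ does not recover components.} The $\p R_\p$-adic completion of $R_\p\ot_RC$ fails already for $C=\boZ_p$ and $\p=0$. It gives $\boQ\ot_\boZ\boZ_p\ne0$, whereas the $(0)$-component of $\boZ_p$ is zero.

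\emph{Completing $\Hom_R(R_\p,C)$ recovers components but gives no comparison map.} For $C=P$ one has $\Hom_R(R_\p,P)=\prod_{\mathfrak q\subseteq\p}P_{\mathfrak q}$, and completion kills the factors at \emph{smaller} primes, so this recovers $P_\p$. For a summand $C$ it therefore produces free summands $Q_\p$ of $P_\p$. But there is no natural map between $C$ and $\prod_\p Q_\p$: evaluation goes $\Hom_R(R_\p,C)\to C$ and does not factor through the completion in general. So ``check the natural map on $P$ and pass to summands'' has nothing to apply to.

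\emph{The idempotent need not be diagonal.} On $\boQ\times\boZ_p$, the map $(x,y)\mapsto(\phi(y),y)$ with $0\ne\phi\colon\boZ_p\to\boQ$ is an idempotent with nonzero off-diagonal part. What is missing is an argument that the image of an idempotent which is triangular with respect to the order on primes is isomorphic to the product of the images of its diagonal components. Endomorphisms of an infinite product are not matrices, so this needs real work. For instance, one can use finite generation of $\mathfrak q$ to get $\Hom_R(\prod_{\p\not\supseteq\mathfrak q}P_\p,\,P_{\mathfrak q})=0$, and then an idempotent-lifting or Hom-semiorthogonality lemma of the kind the paper alludes to in the proof of Lemma~\ref{proj-lct-direct-summand-lemma}.

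Without this step, the ``only if'' direction is not proved.
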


\begin{proof}
 This is~\cite[Theorem in Section~2]{En2}.
 We refer to~\cite[Theorem~1.3.8]{Pcosh}
or~\cite[Proposition~2.19.5]{Pform} for a further discussion.
\end{proof}

 Let $R$ be an associative ring.
 The \emph{cotorsion dimension} of a left $R$\+module $M$ is
defined as the coresolution dimension of the object $M\in R\Modl$
with respect to the coresolving subcategory $R\Modl^\cot\subset
R\Modl$.
 Simply put, the cotorsion dimension of $M$ is the minimal length of
a coresolution of $M$ by cotorsion left $R$\+modules.

\begin{thm} \label{raynaud-gruson-theorem}
 Let $R$ be a Noetherian commutative ring of finite Krull dimension~$D$.
 Then \par
\textup{(a)} the projective dimensions of flat $R$\+modules do not
exceed~$D$; \par
\textup{(b)} the cotorsion dimensions of $R$\+modules do not exceed~$D$.
\end{thm}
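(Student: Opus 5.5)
Part~(a) is the theorem of Raynaud and Gruson~\cite[Corollaire~II.3.2.7]{RG}, and I would quote it rather than reprove it. Its standard proof has two steps. First, a flat module over a Noetherian ring of Krull dimension~$D$ has a filtration (transfinite d\'evissage) by countably generated flat modules. Second, a countably generated flat module is a countable direct limit of finitely generated free modules, so its projective dimension is at most~$1$. Combining these, one bounds the projective dimension by~$D$ via an induction on the dimension of the support. That induction is the genuinely hard part, and it is exactly why the paper cites~\cite{RG}.

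Part~(b) I would deduce from part~(a) using the flat cotorsion pair of Theorem~\ref{flat-cotorsion-pair}. Let $M$ be an $R$\+module. Applying Theorem~\ref{flat-cotorsion-pair}(b) repeatedly, I build a coresolution $0\rarrow M\rarrow C^0\rarrow C^1\rarrow\dotsb$. At each step the module of cocycles $Z^{i}$ embeds into a cotorsion module $C^i$, and the cokernel $Z^{i+1}$ is flat for $i\ge0$. Here $Z^0=M$, and $0\rarrow Z^i\rarrow C^i\rarrow Z^{i+1}\rarrow0$ is a special preenvelope sequence. By Proposition~\ref{co-resolution-dimension-prop}(b), applied to the coresolving subcategory $R\Modl^\cot\subset R\Modl$, it then suffices to show that $Z^D$ is cotorsion. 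That gives the coresolution $0\rarrow M\rarrow C^0\rarrow\dotsb\rarrow C^{D-1}\rarrow Z^D\rarrow0$ of length~$D$.

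To show $Z^D$ is cotorsion, take a flat module $F$ and compute $\Ext^1_R(F,Z^D)$ by dimension shifting along the sequences $0\rarrow Z^i\rarrow C^i\rarrow Z^{i+1}\rarrow0$. Since $\Ext^n_R(F,C^i)=0$ for all $n\ge1$ (Section~\ref{cotorsion-modules-subsecn}), we get
\[
\Ext^1_R(F,Z^D)\simeq\Ext^2_R(F,Z^{D-1})\simeq\dotsb\simeq\Ext^{D+1}_R(F,Z^0)=\Ext^{D+1}_R(F,M).
\]
This last group vanishes because the projective dimension of $F$ is at most~$D$ by part~(a). So $Z^D$ is cotorsion. (When $D=0$ this reads: $\Ext^1_R(F,M)=0$ for all flat $F$, so $M$ itself is cotorsion.)

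Note that the flatness of the $Z^i$ is never used; only the Ext-vanishing for cotorsion modules and the projective dimension bound matter. So part~(b) is a formal consequence of part~(a). The main obstacle is part~(a) itself, which I would cite from~\cite{RG} (see also~\cite[Corollary~II.3.2.7]{RG} via the Gruson--Jensen bound) rather than reprove.
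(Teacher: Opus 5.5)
Your proposal is correct and follows the paper: (a) is quoted from \cite{RG}, and (b) is deduced from (a) by the dimension-shifting argument that underlies the paper's appeal to \cite[Lemma~B.1.9]{Pcosh}. Any coresolution of $M$ by cotorsion modules (for instance an injective one) would do, and Proposition~\ref{co-resolution-dimension-prop}(b) is not needed, since once $Z^D$ is cotorsion your truncated sequence is itself the required coresolution.

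Your side remark on how Raynaud--Gruson is proved is inaccurate, though harmless because you only cite the result. If every flat module had a filtration by countably generated flat modules, those pieces would be countably presented over a Noetherian ring and hence of projective dimension $\le1$. Auslander's lemma would then give projective dimension $\le1$ for all flat modules, which fails, e.g.\ for $k(x,y)$ over $k[x,y]$ with $k$ uncountable.
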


\begin{proof}
 Parts~(a) and~(b) are easily seen to be equivalent restatements of
each other~\cite[Lemma~B.1.9]{Pcosh}.
 In fact, for any associative ring $R$, both the supremum of
the projective dimensions of flat left $R$\+modules and the supremum
of the cotorsion dimensions of all left $R$\+modules are equal
to the supremum of all integers~$n\ge0$ for which there exist
a flat left $R$\+module $F$ and a left $R$\+module $M$ with
$\Ext^n_R(F,M)\ne0$.

 Part~(a) is a celebrated result of Raynaud and
Gruson~\cite[Corollaire~II.3.2.7]{RG}.
 According to the previous paragraph, part~(b) follows from part~(a).
 See also~\cite[Section~1.5]{Pcosh} or~\cite[Proposition~2.13.8]{Pform}.
\end{proof}

\Section{Locally Contraherent Cosheaves}

\subsection{Copresheaves and cosheaves}
 Let $(X,\cO_X)$ be a ringed space.
 We skip the well-known definitions of presheaves and sheaves of
$\cO_X$\+modules on~$X$.
 The notation $(X,\cO_X)\Sh$ stands for the Grothendieck abelian
category of sheaves of $\cO_X$\+modules on~$X$.

 Let $\bB$ be a set of open subsets forming a base of the topology
of~$X$.
 We will view $\bB$ as a category: the objects are the open subsets
of $X$ belonging to $\bB$, and the morphisms are the identity
inclusions.

 By a \emph{presheaf of abelian groups on\/~$\bB$} one means
a contravariant functor $\bB^\sop\rarrow\Ab$ (where $\Ab=\boZ\Modl$
is the category of abelian groups).
 Given two open subsets $V\subset U\subset X$, \ $U$, $V\in\bB$,
and a presheaf $\cM$ on $\bB$, the homomorphism of abelian groups
$\cM(U)\rarrow\cM(V)$ assigned to the identity inclusion $V\rarrow U$
by the presheaf $\cM$ is called the \emph{restriction map}.

 A \emph{presheaf of $\cO_X$\+modules $\cM$ on\/~$\bB$} is a presheaf
of abelian groups such that, for every open subset $U\subset X$,
\ $U\in\bB$, the abelian group $\cM(U)$ is endowed with
an $\cO_X(U)$\+module structure, and for every pair of open subsets
$V\subset U\subset X$, \ $U$, $V\in\bB$, the restriction map
$\cM(U)\rarrow\cM(V)$ is a homomorphism of $\cO_X(U)$\+modules.
 Here the $\cO_X(V)$\+module $\cM(V)$ is endowed with
an $\cO_X(U)$\+module structure using the restriction homomorphism
of rings $\cO_X(U)\rarrow\cO_X(V)$.

 A presheaf of abelian groups $\cM$ on $\bB$ is called a \emph{sheaf
of abelian groups} (\emph{on\/~$\bB$}) if the following \emph{sheaf
axiom} is satisfied.
 Let $U\subset X$, \ $U\in\bB$, be an open subset, and let
$U=\bigcup_\alpha V_\alpha$ be an open covering of $U$ such that
$V_\alpha\in\bB$ for all the indices~$\alpha$.
 Furthermore, for every pair of indices $\alpha$ and~$\beta$, let
$V_\alpha\cap V_\beta=\bigcup_\gamma W_{\alpha,\beta,\gamma}$ be
an open covering of the intersection $V_\alpha\cap V_\beta$
such that $W_{\alpha\beta\gamma}\in\bB$ for all~$\gamma$.
 Then the sequence of abelian groups
\begin{equation} \label{sheaf-axiom-for-topology-base}
 0\lrarrow \cM(U)\lrarrow\prod\nolimits_\alpha\cM(V_\alpha)
 \lrarrow\prod\nolimits_{\alpha,\beta,\gamma}\cM(W_{\alpha\beta\gamma})
\end{equation}
should be left exact.

 A presheaf of $\cO_X$\+modules $\cM$ on $\bB$ is called a \emph{sheaf
of $\cO_X$\+modules} (\emph{on\/~$\bB$}) if its underlying presheaf
of abelian groups $\cM$ is a sheaf of abelian groups on~$\bB$.
 We denote the category of sheaves of $\cO_X$\+modules on $\bB$ by
$(\bB,\cO_X)\Sh$.

\begin{prop} \label{sheaves-determined-on-topology-base}
 The functor of restriction of sheaves of $\cO_X$\+modules on $X$
to a topology base\/ $\bB$ is an equivalence of categories
$$
 (X,\cO_X)\Sh\simeq(\bB,\cO_X)\Sh.
$$
 In other words, any sheaf of $\cO_X$\+modules on\/ $\bB$ can be
extended to a sheaf of $\cO_X$\+modules on the whole of~$X$ (i.~e.,
on all the open subsets of~$X$) in a unique way, and any morphism
of sheaves of $\cO_X$\+modules on\/ $\bB$ can be uniquely extended
to a morphism of sheaves of $\cO_X$\+modules on~$X$.
\end{prop}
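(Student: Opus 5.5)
The plan is to construct an explicit quasi-inverse to the restriction functor. Let $\cM$ be a sheaf of $\cO_X$\+modules on $\bB$. For an arbitrary open subset $W\subset X$ I will set
$$
 \widetilde\cM(W)=\varprojlim\nolimits_{U\in\bB,\,U\subset W}\cM(U),
$$
the limit taken over the full subcategory of $\bB$ consisting of the basic opens contained in~$W$. Concretely, $\widetilde\cM(W)$ is the group of compatible families $(m_U)_{U\subset W}$ with $m_U|_V=m_V$ for $V\subset U$. Restriction maps $\widetilde\cM(W)\rarrow\widetilde\cM(W')$ for $W'\subset W$ are the obvious projections, so $\widetilde\cM$ is a presheaf of abelian groups on~$X$. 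The $\cO_X(W)$\+module structure comes from the maps $\cO_X(W)\rarrow\cO_X(U)$ acting componentwise; compatibility holds because the restriction maps of $\cM$ are module homomorphisms.

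The first check is that $\widetilde\cM$ extends $\cM$. For $W\in\bB$ the category of basic opens inside $W$ has the terminal object $W$ itself, so the limit is $\cM(W)$, and the restriction maps agree with those of $\cM$.

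The next step is to verify that $\widetilde\cM$ satisfies the sheaf axiom on all open sets.
\begin{itemize}
\item Let $W=\bigcup_i W_i$ and take compatible sections $s_i\in\widetilde\cM(W_i)$.
\item For each basic $U\subset W$, cover $U$ by basic opens $V$ each lying in some $W_i$, and cover every $V\cap V'$ by basic opens.
\item The components $(s_i)_V$ agree on these basic opens, because the families $s_i$ agree on $W_i\cap W_j$ and each basic open in $V\cap V'$ lies in $W_i\cap W_j$.
\item The sheaf axiom~\eqref{sheaf-axiom-for-topology-base} for $\cM$ then yields a unique $m_U\in\cM(U)$ restricting to the $(s_i)_V$.
\item Uniqueness of gluing also gives that the $m_U$ are independent of the chosen covering and compatible under restriction.
\end{itemize}
This produces a unique section of $\widetilde\cM(W)$. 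The same uniqueness argument shows that a family restricting to zero on every $W_i$ vanishes. This bookkeeping, in particular the independence of choices and the need to refine intersections by basic opens, is the main obstacle, though it is routine.

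Finally I check the functor statements.
\begin{itemize}
\item Morphisms of sheaves on $\bB$ extend componentwise through the limit, and the extension is unique.
\item Any sheaf $\cF$ on $X$ satisfies $\cF(W)\simeq\varprojlim_{U\subset W,\,U\in\bB}\cF(U)$. This follows from the sheaf axiom applied to the covering of $W$ by all basic opens inside it, whose pairwise intersections are again covered by basic opens.
\item Hence $\cF\simeq\widetilde{\cF|_\bB}$ naturally.
\end{itemize}
So restriction is fully faithful and essentially surjective, i.e., an equivalence. Uniqueness of the extension is the same isomorphism.
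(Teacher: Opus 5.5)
Your proof is correct, and it is essentially the standard argument. The paper gives no proof of its own and only cites references, among them EGA~I, Section~0.3.2, whose construction of the extension as $\varprojlim_{U\in\bB,\,U\subset W}\cM(U)$ is the same as yours, including gluing through coverings of intersections by basic opens and the identification $\cF(W)\simeq\varprojlim_{U\in\bB,\,U\subset W}\cF(U)$ for sheaves on $X$.
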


\begin{proof}
 This is~\cite[Section~0.3.2]{EGAI}, \cite[Lemma Tag~009U]{SP},
\cite[Proposition~2.1.3]{Pcosh}, \cite[Theorem~2.1(a)]{Pdomc},
or~\cite[Section~3.2]{Pform}.
\end{proof}

 Now we turn to copresheaves and cosheaves.
 A \emph{copresheaf of abelian groups} on\/ $\bB$ is a covariant
functor $\bB\rarrow\Ab$.
 We denote the abelian group assigned by a copresheaf $\fP$ to
an open subset $U\subset X$, \ $U\in\bB$, by~$\fP[U]$.
 For any two open subsets $V\subset U\subset X$, \ $U$, $V\in\bB$,
and a copresheaf $\fP$ on $\bB$, the homomorphism of abelian groups
$\fP[V]\rarrow\fP[U]$ assigned to the identity inclusion $V\rarrow U$
by the copresheaf $\fP$ is called the \emph{corestriction map}.

 A \emph{copresheaf of $\cO_X$\+modules\/ $\fP$ on\/~$\bB$} is
a copresheaf of abelian groups such that, for every open subset
$U\subset X$, \ $U\in\bB$, the abelian group $\fP[U]$ is endowed with
an $\cO_X(U)$\+module structure, and for every pair of open subsets
$V\subset U\subset X$, \ $U$, $V\in\bB$, the corestriction map
$\fP[V]\rarrow\fP[U]$ is a homomorphism of $\cO_X(U)$\+modules.
 Here, as above, the $\cO_X(V)$\+module $\fP[V]$ is endowed with
an $\cO_X(U)$\+module structure using the restriction homomorphism
of rings $\cO_X(U)\rarrow\cO_X(V)$.

 A copresheaf of abelian groups $\fP$ on $\bB$ is called a \emph{cosheaf
of abelian groups} (\emph{on\/~$\bB$}) if the following \emph{cosheaf
axiom} is satisfied.
 As above, we consider an open subset $U\subset X$, \ $U\in\bB$
and an open covering $U=\bigcup_\alpha V_\alpha$ such that
$V_\alpha\in\bB$ for all the indices~$\alpha$.
 Furthermore, for every pair of indices $\alpha$ and~$\beta$,
we choose an open covering $V_\alpha\cap V_\beta=
\bigcup_\gamma W_{\alpha,\beta,\gamma}$ of the intersection
$V_\alpha\cap V_\beta$ such that $W_{\alpha\beta\gamma}\in\bB$
for all~$\gamma$.
 For any such choices of $U$, $V_\alpha$, and $W_{\alpha\beta\gamma}$,
the sequence of abelian groups
\begin{equation} \label{cosheaf-axiom-for-topology-base}
 \bigoplus\nolimits_{\alpha,\beta,\gamma}\fP[W_{\alpha\beta\gamma}]
 \lrarrow\bigoplus\nolimits_\alpha\fP[V_\alpha]\lrarrow\fP[U]\lrarrow0
\end{equation}
should be right exact.

 A copresheaf of $\cO_X$\+modules $\fP$ on $\bB$ is called
a \emph{cosheaf of $\cO_X$\+modules} (\emph{on\/~$\bB$}) if its
underlying copresheaf of abelian groups $\fP$ is a cosheaf of abelian
groups on~$\bB$.
 We denote the category of cosheaves of $\cO_X$\+modules on $\bB$ by
$(\bB,\cO_X)\Cosh$.

 A \emph{co}(\emph{pre})\emph{sheaf} (of abelian groups or
$\cO_X$\+modules) \emph{on~$X$} is a co(pre)sheaf on the topology base
consisting of all the open subsets of~$X$.
 The category of cosheaves of $\cO_X$\+modules on $X$ will be denoted
by $(X,\cO_X)\Cosh$.

\begin{prop} \label{cosheaves-determined-on-topology-base}
 The functor of restriction of cosheaves of $\cO_X$\+modules on $X$
to a topology base\/ $\bB$ is an equivalence of categories
$$
 (X,\cO_X)\Cosh\simeq(\bB,\cO_X)\Cosh.
$$
 In other words, any cosheaf of $\cO_X$\+modules on\/ $\bB$ can be
extended to a cosheaf of $\cO_X$\+modules on the whole of~$X$ (i.~e.,
on all the open subsets of~$X$) in a unique way, and any morphism of
cosheaves of $\cO_X$\+modules on\/ $\bB$ can be uniquely extended
to a morphism of cosheaves of $\cO_X$\+modules on~$X$.
\end{prop}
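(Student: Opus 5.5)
The plan is to construct an extension functor from cosheaves on $\bB$ to cosheaves on $X$, prove the cosheaf axiom for the result, and then check that it is quasi-inverse to restriction. Throughout, the key device is to rewrite the cosheaf axiom~\eqref{cosheaf-axiom-for-topology-base} as a colimit condition.

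For a cosheaf $\fP$ on $\bB$ and an arbitrary open subset $U\subset X$, I will define
$$
 \fP[U]=\varinjlim\nolimits_{V\in\bB,\,V\subset U}\fP[V],
$$
the colimit taken over the poset of basic open subsets contained in~$U$, with the corestriction maps as transition maps. This is the left Kan extension of the functor $\bB\rarrow\Ab$ along the inclusion of $\bB$ into the poset of all open subsets. The construction is formally dual to the limit description used for sheaves in Proposition~\ref{sheaves-determined-on-topology-base}. The $\cO_X(U)$\+module structure on $\fP[U]$ comes from the $\cO_X(U)$\+module structures on the $\fP[V]$ via the ring maps $\cO_X(U)\rarrow\cO_X(V)$, which are compatible with the corestrictions. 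Corestriction maps between arbitrary opens $U'\subset U$ are the induced maps of colimits.

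The first check is that this extension agrees with $\fP$ on $\bB$. For $U\in\bB$ the poset has the terminal object $U$ itself, so the colimit is $\fP[U]$. Uniqueness of the extension, together with uniqueness of extensions of morphisms, will follow from one observation. For any cosheaf $\fQ$ on $X$ and any open $U$, the cosheaf axiom applied to the covering of $U$ by all basic opens $V\subset U$ forces $\fQ[U]$ to equal this colimit. Indeed, the right exactness of~\eqref{cosheaf-axiom-for-topology-base} says exactly that $\fQ[U]$ is the coequalizer of $\bigoplus\fQ[W]\rightrightarrows\bigoplus\fQ[V_\alpha]$. When the covering is the whole poset of basic subsets, and the $W$'s range over basic subsets of pairwise intersections, this coequalizer computes the colimit over the poset. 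I expect this identification to need a short argument: the $W$'s refine the intersections, and comparable pairs $V'\subset V$ are handled by taking $W=V'$. Fully faithfulness then follows, because a morphism on $\bB$ induces maps on colimits, and these are forced.

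The main step, and the main obstacle, is verifying that the extended copresheaf satisfies the cosheaf axiom for an arbitrary open $U$ and an arbitrary covering $U=\bigcup V_\alpha$ by opens, with arbitrary coverings $V_\alpha\cap V_\beta=\bigcup W_{\alpha\beta\gamma}$. In the plan I state the axiom on $X$ with arbitrary open $V_\alpha$ and $W_{\alpha\beta\gamma}$, which is the definition for the base of all opens. I would reduce to basic opens by cofinality. Each $\fP[V_\alpha]$ is itself a colimit over basic subsets, so $\bigoplus_\alpha\fP[V_\alpha]$ is a colimit over pairs $(\alpha,V)$ with $V\subset V_\alpha$ basic. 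The remaining task is to show that the coequalizer of the two maps from the $W$-terms equals the colimit over basic $V\subset U$. Surjectivity onto $\fP[U]$ is straightforward. Given a basic $V\subset U$, cover $V$ by basic opens lying in various $V_\alpha$; the cosheaf axiom on $\bB$ for $V$ shows that $\fP[V]$ is generated by the images of these.

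Exactness in the middle is the delicate part. Suppose an element of $\bigoplus_\alpha\fP[V_\alpha]$ maps to zero in the colimit. Vanishing in a filtered-or-not colimit is witnessed by finitely many relations, each of the form of a pair $V'\subset V$. Each such relation has to be rewritten, using the sheaf axiom on $\bB$ applied to basic covers of intersections $V\cap V_\alpha\cap V_\beta$, as a combination of images from the $W_{\alpha\beta\gamma}$-terms. The ambiguity is the case of one basic $V$ lying in both $V_\alpha$ and $V_\beta$. I would handle it by covering $V$ by basic opens contained in some $W_{\alpha\beta\gamma}$ and applying the axiom on $\bB$ to that cover; this is where most of the bookkeeping lies.

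As a fallback, if the direct diagram chase becomes unwieldy, I would dualize. Apply $\Hom_\boZ({-},\boQ/\boZ)$, or more generally $\Hom({-},A)$ for varying abelian groups $A$. Under this, cosheaves on $\bB$ become sheaves on $\bB$, and the colimit extension becomes the limit extension. Right exactness of~\eqref{cosheaf-axiom-for-topology-base} is equivalent to left exactness of the $\Hom$-dualized sequences for all $A$, that is, for the injective cogenerator $\boQ/\boZ$. The cosheaf axiom on $X$ then follows from Proposition~\ref{sheaves-determined-on-topology-base} applied to the sheaves of abelian groups $\Hom(\fP,\boQ/\boZ)$. This requires checking that the dual of the colimit extension is the limit extension of the dual, which holds because $\Hom({-},\boQ/\boZ)$ turns colimits into limits.
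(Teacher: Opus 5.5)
Your proof is correct, but what actually carries it is your ``fallback'' argument, so I would reorganize around it. The paper gives no argument of its own here and only refers to the literature, so your proposal is a self-contained proof rather than a variant of something in the text.

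The components are sound:
\begin{itemize}
\item The extension $\fP[U]=\varinjlim_{V\in\bB,\,V\subset U}\fP[V]$ is the right one.
\item For any cosheaf $\fQ$ on $X$, apply the cosheaf axiom to the covering of $U$ by all basic $V\subset U$, choosing the trivial covering $\{V'\}$ of $V\cap V'=V'$ for comparable pairs. This identifies $\fQ[U]$ with the colimit, and so gives uniqueness of the extension and full faithfulness.
\item The cosheaf axiom on $X$ reduces to Proposition~\ref{sheaves-determined-on-topology-base} by applying $({-})^+=\Hom_\boZ({-},\boQ/\boZ)$.
\end{itemize}

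In the write-up, spell out the three facts the last step uses:
\begin{itemize}
\item A complex $A\rarrow B\rarrow C\rarrow0$ of abelian groups is exact iff $0\rarrow C^+\rarrow B^+\rarrow A^+$ is exact, and $(\bigoplus_i A_i)^+=\prod_i A_i^+$. Hence a copresheaf on $\bB$ is a cosheaf iff its character presheaf is a sheaf on $\bB$.
\item $({-})^+$ takes your colimit extension to the limit extension $U\mapsto\varprojlim_{V\in\bB,\,V\subset U}\fP[V]^+$.
\item The sheaf on $X$ supplied by Proposition~\ref{sheaves-determined-on-topology-base} is exactly this limit extension. This follows from the dual of your own uniqueness observation.
\end{itemize}
This is the same character-sheaf device the paper uses in Remarks~\ref{finite-coverings-suffice-remarks}(a) and throughout Section~\ref{coflasque-secn}.

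The direct proof of middle exactness, by contrast, is only sketched, and the sketch does not address the real difficulty. The relations presenting $\fP[U]$ as a colimit include $\iota_V(y')-\iota_{V'}(y)$, where $V'\subset V\subset U$ are basic, $y\in\fP[V']$, and $y'$ is its image under the corestriction $\fP[V']\rarrow\fP[V]$. Here $V$ may be contained in \emph{no} $V_\alpha$. Such a relation has no termwise lift to $\bigoplus_\alpha\fP[V_\alpha]$, so resolving the case of a basic $V$ lying in two of the $V_\alpha$ is not enough.

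To make the direct route work, you would have to construct a map from $\fP[U]$ into the cokernel of $\bigoplus\fP[W_{\alpha\beta\gamma}]\rarrow\bigoplus_\alpha\fP[V_\alpha]$. You would do this by decomposing each $\fP[V]$ through basic covers of $V$ subordinate to $\{V_\alpha\}$, then checking independence of all the choices. The dual argument avoids this bookkeeping entirely, so make it the main proof and drop the direct chase.
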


\begin{proof}
 This is~\cite[Theorem~2.1.2]{Pcosh}, \cite[Theorem~2.1(b)]{Pdomc},
or~\cite[Section~3.2]{Pform}.
\end{proof}

\begin{rem} \label{one-covering-of-the-intersections-is-enough}
 The sheaf axiom~\eqref{sheaf-axiom-for-topology-base} implies, in
particular, that the map $\cM(U)\rarrow\prod_\alpha\cM(V_\alpha)$
is injective for any open covering $U=\bigcup_\alpha V_\alpha$ with
$U$, $V_\alpha\in\bB$.
 Using this fact, one can show that, once the open covering
$U=\bigcup_\alpha V_\alpha$ is fixed, it is not necessary to check
 the sheaf axiom for \emph{all} coverings $V_\alpha\cap V_\beta=
\bigcup_\gamma W_{\alpha\beta\gamma}$ of the intersections
$V_\alpha\cap V_\beta$ by open subsets $W_{\alpha\beta\gamma}\in\bB$.
 Chosing \emph{one} such covering for every pair of indices $\alpha$,
$\beta$ and checking~\eqref{sheaf-axiom-for-topology-base} for such
chosen coverings of the intersections $V_\alpha\cap V_\beta$ is
enough to establish the validity of the sheaf axiom for~$\cM$.

 Similarly, the cosheaf axiom~\eqref{cosheaf-axiom-for-topology-base}
implies, in particular, that the map $\bigoplus_\alpha\fP[V_\alpha]
\rarrow\fP[U]$ is surjective for any open covering
$U=\bigcup_\alpha V_\alpha$ with $U$, $V_\alpha\in\bB$.
 Using this fact, one can show that it is not necessary to check
the cosheaf axiom for \emph{all} coverings $V_\alpha\cap V_\beta=
\bigcup_\gamma W_{\alpha\beta\gamma}$ of the intersections
$V_\alpha\cap V_\beta$ by open subsets $W_{\alpha\beta\gamma}\in\bB$.
 Chosing \emph{one} such covering for every pair of indices $\alpha$,
$\beta$ and checking~\eqref{cosheaf-axiom-for-topology-base} for such
chosen coverings of the intersections $V_\alpha\cap V_\beta$ is
sufficient.
\end{rem}

\begin{rems} \label{finite-coverings-suffice-remarks}
 Cosheaves are dual-analogous to sheaves (cf.~\cite[Preface]{Pcosh}).
 In what ways can one make this informal statement precise?

\smallskip
 (a)~Let $\fP$ be a cosheaf of $\cO_X$\+modules on $X$ and $A$
be an arbitrary abelian group.
 Comparing the axioms~\eqref{sheaf-axiom-for-topology-base}
and~\eqref{cosheaf-axiom-for-topology-base}, one can see that the rule
$\cM(U)=\Hom_\boZ(\fP[U],A)$ for all open subsets $U\subset X$
defines a sheaf of $\cO_X$\+modules $\cM$ on~$X$.
 In particular, one can define the \emph{character sheaf} $\fP^+$ of
a cosheaf $\fP$ by the rule $\fP^+(U)=\Hom_\boZ(\fP[U],\boQ/\boZ)$
for all open subsets $U\subset X$.

\smallskip
 (b)~Assume that the topological space $X$ has a topology base $\bB$
with the following properties:
\begin{itemize}
\item all open subsets $U\subset X$, \ $U\in\bB$ are quasi-compact
(in the topology on $U$ induced from the topology of~$X$);
\item for any open subset $U\subset X$, \ $U\in\bB$, and any two open
subsets $V$, $W\subset U$, \ $V$, $W\in\bB$, the intersection $V\cap W$
is quasi-compact.
\end{itemize}
 Then, for (co)presheaves on $\bB$, it suffices to check both the sheaf
axiom~\eqref{sheaf-axiom-for-topology-base} and the cosheaf
axiom~\eqref{cosheaf-axiom-for-topology-base} for \emph{finite} open
coverings $U=\bigcup_\alpha V_\alpha$ and $V_\alpha\cap V_\beta=
\bigcup_\gamma W_{\alpha\beta\gamma}$
\emph{only}~\cite[Remark~2.1.4]{Pcosh}, \cite[Remarks~3.2.1]{Pform}.

\smallskip
 (c)~In particular, let $X$ be a scheme.
 Then the topology base $\bB$ consisting of all the quasi-compact,
quasi-separated open subsets $U\subset X$ satisfies both the conditions
of~(b).
 Alternatively, the topology base consisting of all the affine open
subschemes $U\subset X$ also satisfies both the conditions of~(b).

\smallskip
 (d)~Under the assumptions of~(b), let $\cM$ be a sheaf of
$\cO_X$\+modules on $X$ and $J$ be an injective abelian group.
 Comparing the axioms~\eqref{sheaf-axiom-for-topology-base}
and~\eqref{cosheaf-axiom-for-topology-base} for \emph{finite} open
coverings, one can see that the rule $\fP[U]=\Hom_\boZ(\cM(U),J)$ for
all open subsets $U\subset X$, \ $U\in\bB$ defines a cosheaf of
$\cO_X$\+modules $\fP$ on~$\bB$.
 Using Proposition~\ref{cosheaves-determined-on-topology-base}, one
can uniquely extend this cosheaf of $\cO_X$\+modules on $\bB$ to
a cosheaf of $\cO_X$\+modules $\fP$ on~$X$.
 In particular, one can define the \emph{character cosheaf} $\cM^+$ of
a sheaf $\cM$ by the rule $\cM^+[U]=\Hom_\boZ(\cM(U),\boQ/\boZ)$
for all open subsets $U\subset X$, \ $U\in\bB$.

\smallskip
 (e)~It is also clear from the sheaf
axiom~\eqref{sheaf-axiom-for-topology-base} that infinite products
exist in the category $(X,\cO_X)\Sh$, and the functors $\cM\longmapsto
\cM(U)\:(X,\cO_X)\Sh\rarrow\cO_X(U)\Modl$ of sections over all open
subsets $U\subset X$ preserve infinite products.
 Similarly, it is clear from the cosheaf 
axiom~\eqref{cosheaf-axiom-for-topology-base} that infinite coproducts
exist in the category $(X,\cO_X)\Cosh$, and the functors $\fP\longmapsto
\fP[U]\:(X,\cO_X)\Cosh\rarrow\cO_X(U)\Modl$ of cosections over all open
subsets $U\subset X$ preserve infinite coproducts.

\smallskip
 (f)~Now, under the assumptions of~(b), it is clear from the sheaf
axiom~\eqref{sheaf-axiom-for-topology-base} \emph{for finite open
coverings} that infinite coproducts exist in the category
$(\bB,\cO_X)\Sh$, and the functors of sections over all open
subsets $U\subset X$, \ $U\in\bB$ preserve infinite coproducts.
 In view of
Proposition~\ref{sheaves-determined-on-topology-base}, it follows
that infinite coproducts exist in the category $(X,\cO_X)\Sh$,
and the functors of sections over open subsets $U\subset X$, \ $U\in\bB$
preserve infinite coproducts.
 (These are well-known facts for sheaves, of course.)

 Similarly, under the assumptions of~(b), it is clear from the cosheaf
axiom~\eqref{cosheaf-axiom-for-topology-base} \emph{for finite open
coverings} that infinite products exist in the category
$(\bB,\cO_X)\Cosh$, and the functors of cosections over all open
subsets $U\subset X$, \ $U\in\bB$ preserve infinite products.
 In view of
Proposition~\ref{cosheaves-determined-on-topology-base}, it follows
that infinite products exist in the category $(X,\cO_X)\Cosh$,
and the functors of cosections over open subsets $U\subset X$, \
$U\in\bB$ preserve infinite products.
\end{rems}

\subsection{The contraadjustedness and contraherence conditions}
\label{contraadj-contraher-subsecn}
 The definitions of a \emph{contraherent cosheaf} and a \emph{locally
contraherent cosheaf} on a scheme are dual analogues of the description
of quasi-coherent sheaves suggested by Enochs and Estrada
in~\cite{EE}.
 We start with spelling out the relevant formulation of quasi-coherence
before passing to contraherence.

 Let $X$ be a scheme with a chosen open covering~$\bW$.
 We will say that an open subscheme $U\subset X$ is \emph{subordinate
to\/~$\bW$} if there is an open subset $W\in\bW$ such that $U\subset W$.
 Denote by $\bB$ the topology base in $X$ consisting of all the affine
open subschemes $U\subset X$ subordinate to~$\bW$.

 Let $\cM$ be a presheaf of $\cO_X$\+modules on~$\bB$.
 The presheaf $\cM$ is said to be \emph{quasi-coherent} if
the following \emph{quasi-coherence axiom} is satisfied:
\begin{enumerate}
\renewcommand{\theenumi}{\roman{enumi}}
\item for any pair of affine open subschemes $U\subset V$ subordinate
to $\bW$ in $X$, the restriction map of $\cO_X(U)$\+modules
$\cM(U)\rarrow\cM(V)$ induces an isomorphism of $\cO_X(V)$\+modules
$$
 \cO_X(V)\ot_{\cO_X(U)}\cM(U)\simeq\cM(V).
$$
\end{enumerate}

 Any quasi-coherent presheaf $\cM$ on $\bB$ is a sheaf on~$\bB$ (as one
can see from the exactness of the leftmost fragment of the \v Cech sequence~\eqref{module-cech-coresolution}
together with Remarks~\ref{one-covering-of-the-intersections-is-enough}
and~\ref{finite-coverings-suffice-remarks}(b\+-c)).
 So, by Proposition~\ref{sheaves-determined-on-topology-base}, one
can uniquely extend $\cM$ to a sheaf of $\cO_X$\+modules on the whole
of~$X$ (i.~e., on all the open subsets of~$X$).

 Furthermore, the quasi-coherence property of sheaves of
$\cO_X$\+modules is \emph{local} in the following sense.
 Denote by $\bW_X=\{X\}$ the trivial open covering of $X$ and by
$\bB_X$ the topology base consisting of all the affine open
subschemes of~$X$.

 Let $\cM$ be a sheaf of $\cO_X$\+modules on~$X$.
 Then the restriction of $\cM$ to $\bB_X$ is quasi-coherent if and
and only if the restriction of $\cM$ to $\bB$ is quasi-coherent.
 In other words, the quasi-coherence axiom~(i) holds for all pairs
of affine open subschemes $V\subset U$ subordinate to $\bW$ in $X$
if and only if it holds for all pairs of affine open subschemes
$V\subset U$ in~$X$.
 (This is essentially a restatement of the classical theory of
quasi-coherent sheaves on affine
schemes~\cite[Th\'eor\`eme~I.1.4.1]{EGAI},
\cite[Corollary~II.5.5]{HarAG}, \cite[Lemma Tag~01IA]{SP};
cf.~\cite[Proposition~3.4.2]{Pform}.)
 If this is the case, the sheaf of $\cO_X$\+modules $\cM$ is called
\emph{quasi-coherent}.

 We denote the category of quasi-coherent sheaves on $X$ by $X\Qcoh$.
 It is a Grothendieck abelian category~\cite[Proposition Tag~077P]{SP}.

 Now let $\fP$ be a copresheaf of $\cO_X$\+modules on~$\bB$.
 The copresheaf $\fP$ is said to be \emph{contraherent} if
the following two axioms are satisfied:
\begin{enumerate}
\renewcommand{\theenumi}{\roman{enumi}}
\setcounter{enumi}{1}
\item for any pair of affine open subschemes $U\subset V$ subordinate
to $\bW$ in $X$, the corestriction map of $\cO_X(U)$\+modules
$\fP[V]\rarrow\fP[U]$ induces an isomorphism of $\cO_X(V)$\+modules
$$
 \fP[V]\simeq\Hom_{\cO_X(U)}(\cO_X(V),\fP[U]);
$$
\item for any pair of affine open subschemes $U\subset V$ subordinate
to $\bW$ in $X$, one has
$$
 \Ext^1_{\cO_X(U)}(\cO_X(V),\fP[U])=0.
$$
\end{enumerate}
 The projective dimension of the $\cO_X(U)$\+module $\cO_X(V)$ never
exceeds~$1$ \,\cite[Remark~5.1]{Pphil}; that is why no $\Ext^n$
vanishing condition for $n\ge2$ is needed in~(iii).

 Fix an affine open subscheme $U\subset X$ subordinate to~$\bW$.
 Then condition~(iii) holds for all affine open subschemes $V\subset U$
if and only if the $\cO_X(U)$\+module $\fP[U]$ is contraadjusted.
 This is clear from Example~\ref{open-affine-very-flat-example}.
 For this reason, condition~(iii) is called
the \emph{contraadjustedness axiom}.
 Condition~(ii) is called the \emph{contraherence axiom}.

 Any contraherent copresheaf $\fP$ on $\bB$ is a cosheaf on~$\bB$ (as
one can see from the exactness of the rightmost fragment of the \v Cech sequence~\eqref{cta-module-cech-resolution}
together with Remarks~\ref{one-covering-of-the-intersections-is-enough}
and~\ref{finite-coverings-suffice-remarks}(b\+-c)).
 Hence, by Proposition~\ref{cosheaves-determined-on-topology-base}, one
can uniquely extend $\fP$ to a cosheaf of $\cO_X$\+modules on the whole
of~$X$ (i.~e., on all the open subsets of~$X$).

 The contraadjustedness condition for cosheaves of $\cO_X$\+modules
on $X$ is local, but the contaherence condition is \emph{not local}.
 A cosheaf of $\cO_X$\+modules $\fP$ on $X$ is said to be
\emph{$\bW$\+locally contraherent} if the restriction of $\fP$ to
the topology base $\bB$ is contraherent, i.~e., if
the contraadjustedness and contraherence axioms~(ii\+-iii) hold
for all pairs of affine open subschemes $V\subset U$ subordinate
to $\bW$ in~$X$.
 A cosheaf of $\cO_X$\+modules $\fP$ is called \emph{locally
contraherent} if it is $\bW$\+locally contraherent for \emph{some}
open covering $\bW$ of~$X$.

 A cosheaf of $\cO_X$\+modules $\fP$ on $X$ is called
\emph{contraherent} if it is $\bW_X$\+locally contraherent for
the trivial open covering $\bW_X=\{X\}$, i.~e., if the restriction
of $\fP$ to the topology base $\bB_X$ is contraherent.
 In other words, a cosheaf $\fP$ on $X$ is contraherent if and only if
the contraadjustedness and contraherence axioms~(ii\+-iii) hold
for all pairs of affine open subschemes $V\subset U$ in~$X$.

 A counterexample of a locally contraherent, but not contraherent
cosheaf of $\cO_X$\+modules on a scheme $X$ can be found
in~\cite[Example~3.2.1]{Pcosh}.
 This counterexample is quite simple in nature; taking $X$ to be
the spectrum of the ring of integers $\boZ$ or the ring of polynomials
in one variable $k[x]$ over a field~$k$ is enough.
 The locality of the contraadjustedness condition is explained
in~\cite[paragraph after Example~3.2.1]{Pcosh}.

 We denote the additive category of $\bW$\+locally contraherent
cosheaves on $X$ by $X\Lcth_\bW$.
 The additive category of (globally) contraherent cosheaves is
denoted by $X\Ctrh=X\Lcth_{\{X\}}$.
 The additive category of all locally contraherent cosheaves on $X$
is denoted by $X\Lcth=\bigcup_\bW X\Lcth_\bW$.
 So we have inclusions of full subcategories
$$
 X\Ctrh\subset X\Lcth_\bW\subset X\Lcth.
$$

\subsection{Locally contraherent cosheaves}
\label{loc-contrah-subsecn}
 The categories of (locally) contraherent cosheaves are almost never
abelian; but they always carry natural exact category structures.
 Let us define these exact structures.

 Let $X$ be a scheme with an open covering~$\bW$.
 Let $\bW'$ be another open covering of~$X$.
 We say that $\bW'$ is \emph{subordinate to\/~$\bW$} if every open
subscheme belonging to $\bW'$ is subordinate to $\bW$ (in the sense
of the definition in Section~\ref{contraadj-contraher-subsecn}).
 In this case, we have $X\Lcth_\bW\subset X\Lcth_{\bW'}$.

 A composable pair of morphisms of $\bW$\+locally contraherent cosheaves
$\fP\rarrow\fQ\rarrow\fR$ is said to be an (\emph{admissible})
\emph{short exact sequence} $0\rarrow\fP\rarrow\fQ\rarrow\fR\rarrow0$
in $X\Lcth_\bW$ if, for every affine open subscheme $U\subset X$
subordinate to $\bW$, the short sequence of $\cO_X(U)$\+modules
$0\rarrow\fP[U]\rarrow\fQ[U]\rarrow\fR[U]\rarrow0$ is exact.

 In view of Lemma~\ref{short-exact-sequnces-of-cta-are-colocal}, it
suffices to check this condition for affine open subschemes $U$
belonging to any chosen affine open covering of the scheme $X$
subordinate to the open covering~$\bW$.
 In this sense, the property of a short sequence of $\bW$\+locally
contraherent cosheaves on $X$ to be exact is \emph{local}.
 Using Lemma~\ref{Hom-into-pushout-lemma}(b) together with
Example~\ref{open-affine-very-flat-example}, one can easily show
that the class of short exact sequences defined above is indeed
an exact category structure on $X\Lcth_\bW$.
 
 Given an open covering $\bW'$ subordinate to $\bW$, it follows
from the previous paragraph that a short sequence in $X\Lcth_\bW$
is exact in $X\Lcth_\bW$ if and only if it is exact as a short
sequence in $X\Lcth_{\bW'}$.

 It follows from Lemma~\ref{colocality-of-epimorphisms}(a) that
the full subcategory $X\Lcth_\bW$ is closed under kernels of
admissible epimorphisms in $X\Lcth_{\bW'}$.
 So a morphism in $X\Lcth_\bW$ is an admissible epimorphism in
$X\Lcth_\bW$ if and only if it is an admissible epimorphism in
$X\Lcth_{\bW'}$.
 Furthermore, the homological criterion of contraherence of
a locally contraherent cosheaf on an affine
scheme~\cite[Lemma~3.2.2]{Pcosh} (see also~\cite[Corollary~4.6.1]{Pcosh}
and/or~\cite[Proposition~3.6.1]{Pform}) implies that the full
subcategory $X\Lcth_\bW$ is closed under extensions in $X\Lcth_{\bW'}$.

 Let us emphasize, however, that the full subcategory $X\Lcth_\bW$
is usually \emph{not} closed under cokernels of admissible
monomorphisms in $X\Lcth_{\bW'}$.
 See~\cite[Example~3.2.1]{Pcosh}.

 A short sequence in the category $X\Lcth$ is called \emph{exact}
if it is a short exact sequence in $X\Lcth_\bW$ for some open
covering $\bW$ of~$X$.
 So the full subcategory $X\Lcth_\bW$ is closed under extensions
and kernels of admissible epimorphisms in the exact category
$X\Lcth$, and a short sequence in $X\Lcth_\bW$ is exact if and only
if it is exact in $X\Lcth$.

 Specializing the discussion above in this section to the case of
the trivial covering $\bW_X=\{X\}$, we obtain an exact category
structure on the category $X\Ctrh=X\Lcth_{\bW_X}$ of contraherent
cosheaves on~$X$.

\begin{lem} \label{ctrh-cosheaves-on-affine-scheme}
 Let $U$ be an affine scheme.
 Then the functor\/ $\fP\longmapsto\fP[U]$ establishes an exact
category equivalence $U\Ctrh\simeq\cO(U)\Modl^\cta$ between
the exact category of contraherent cosheaves on $U$ and the exact
category of contraadjusted modules over the ring\/~$\cO(U)$.
\end{lem}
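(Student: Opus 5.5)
We construct an inverse functor: given a contraadjusted $\cO(U)$\+module $C$, set $\fP_C[V]=\Hom_{\cO(U)}(\cO(V),C)$ for every affine open subscheme $V\subset U$. We then check that this copresheaf on the base $\bB_U$ of affine opens is contraherent. Proposition~\ref{cosheaves-determined-on-topology-base} and the discussion in Section~\ref{contraadj-contraher-subsecn} extend it uniquely to a contraherent cosheaf on~$U$.

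For axiom~(ii), take affine opens $W\subset V\subset U$. Since $\cO(W)=\cO(W)\ot_{\cO(V)}\cO(V)$, the tensor-Hom adjunction gives
$$
\Hom_{\cO(V)}\bigl(\cO(W),\Hom_{\cO(U)}(\cO(V),C)\bigr)\simeq\Hom_{\cO(U)}(\cO(W),C).
$$
For axiom~(iii), the module $\fP_C[V]$ is a contraadjusted $\cO(V)$\+module by Lemma~\ref{contraadjusted-change-of-scalars-lemma}(b). Its hypothesis holds because, for $s\in\cO(V)$, the scheme $\Spec\cO(V)[s^{-1}]$ is an affine open subscheme of $U$, so $\cO(V)[s^{-1}]$ is very flat over $\cO(U)$ by Example~\ref{open-affine-very-flat-example}. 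Then condition~(iii) follows from the remark after the axioms that~(iii) for all $V\subset U$ is equivalent to contraadjustedness of the module of cosections.

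Next we check that the two functors are mutually inverse. Clearly $\fP_C[U]=C$. Conversely, for a contraherent cosheaf $\fP$ on $U$, axiom~(ii) gives natural isomorphisms $\fP[V]\simeq\Hom_{\cO(U)}(\cO(V),\fP[U])$ compatible with corestrictions. So $\fP\simeq\fP_{\fP[U]}$ on $\bB_U$, hence on all of $U$ by the uniqueness part of Proposition~\ref{cosheaves-determined-on-topology-base}. Morphisms are handled the same way. A morphism of cosheaves $\fP\to\fQ$ is determined by its components on $\bB_U$, and each component is determined by the $U$\+component through the Hom description. Conversely, any module map $\fP[U]\to\fQ[U]$ induces compatible maps $\Hom(\cO(V),\fP[U])\to\Hom(\cO(V),\fQ[U])$. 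Thus the functor is fully faithful and essentially surjective. Finally, $\fP[U]$ is contraadjusted by axiom~(iii) applied to the pair $U\subset U$ together with the same remark, so the functor does land in $\cO(U)\Modl^\cta$.

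It remains to match the exact structures, which is the step needing the most care. A short sequence $0\to\fP\to\fQ\to\fR\to0$ in $U\Ctrh$ is exact by definition when $0\to\fP[V]\to\fQ[V]\to\fR[V]\to0$ is exact for every affine open $V\subset U$. Taking $V=U$ shows that exactness in $U\Ctrh$ implies exactness of the sequence of cosections over~$U$. Conversely, suppose $0\to C\to D\to E\to0$ is exact with $C$ contraadjusted. Since $\cO(V)$ is very flat over $\cO(U)$ by Example~\ref{open-affine-very-flat-example}, we have $\Ext^1_{\cO(U)}(\cO(V),C)=0$. Hence applying $\Hom_{\cO(U)}(\cO(V),{-})$ preserves exactness, which is the ``only if'' direction of Lemma~\ref{short-exact-sequnces-of-cta-are-colocal} applied to the one-element covering $\{V\}$ of $\Spec\cO(V)$. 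Therefore the equivalence identifies the two classes of admissible short exact sequences, and it is an equivalence of exact categories.
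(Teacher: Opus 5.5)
Your proposal is correct and is the argument the paper compresses into ``follows immediately from the definitions'': the inverse functor $C\longmapsto\bigl(V\mapsto\Hom_{\cO(U)}(\cO(V),C)\bigr)$, Lemma~\ref{contraadjusted-change-of-scalars-lemma}(b) with Example~\ref{open-affine-very-flat-example} for contraadjustedness of the cosections, and very flatness of $\cO(V)$ over $\cO(U)$ to match the exact structures. Two small wording fixes: contraadjustedness of $\fP[U]$ comes from axiom~(iii) for the pairs $\Spec\cO(U)[s^{-1}]\subset U$, since the pair $U\subset U$ gives nothing; and your direct $\Ext^1$ argument for exactness already suffices, so the muddled appeal to Lemma~\ref{short-exact-sequnces-of-cta-are-colocal} via a ``covering of $\Spec\cO(V)$'' can be dropped.
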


\begin{proof}
 Follows immediately from the definitions and the discussion above.
 One also has to use
Lemma~\ref{contraadjusted-change-of-scalars-lemma}(b) with
Example~\ref{open-affine-very-flat-example}.
\end{proof}

\subsection{The local cotorsion and local injectivity conditions}
\label{lct-lin-subsecn}
 Let $X$ be a scheme with an open covering~$\bW$.
 A $\bW$\+locally contraherent cosheaf $\fP$ on $X$ is said to be
\emph{locally cotorsion} if the $\cO_X(U)$\+module $\fP[U]$ is
cotorsion for every affine open subscheme $U\subset X$ subordinate
to~$\bW$.
 In view of
Lemma~\ref{cotorsion-injective-colocal-if-contraadjusted}(a), it
suffices to check this condition for affine open subschemes $U$
belonging to any chosen affine open covering of $X$ subordinate
to~$\bW$.

 In particular, let $\bW'$ be an open covering of $X$ subordinate
to~$\bW$.
 Then it follows from the previous paragraph that a $\bW$\+locally
contraherent cosheaf $\fP$ on $X$ is locally cotorsion if and only
if $\fP$ is a locally cotorsion $\bW'$\+locally contraherent
cosheaf on~$X$.

 We denote the full subcategory of locally cotorsion $\bW$\+locally
contraherent cosheaves on $X$ by $X\Lcth^\lct_\bW\subset X\Lcth_\bW$.
 The notation $X\Lcth^\lct=\bigcup_\bW X\Lcth^\lct_\bW\allowbreak
\subset X\Lcth$ stands for the full subcategory of locally cotorsion
locally contraherent cosheaves in $X\Lcth$.

 In particular, a contraherent cosheaf $\fP$ on $X$ is said to be
\emph{locally cotorsion} if it belongs to the full subcategory
$X\Ctrh^\lct=X\Lcth^\lct_{\{X\}}\subset X\Ctrh$.
 So we have inclusions of full subcategories
$$
  X\Ctrh^\lct\subset X\Lcth^\lct_\bW\subset X\Lcth^\lct.
$$

 Clearly, the full subcategory $X\Lcth^\lct_\bW$ is closed under
extensions and cokernels of admissible monomorphisms in $X\Lcth_\bW$.
 So the additive category $X\Lcth^\lct_\bW$ carries the exact
category structure inherited from $X\Lcth_\bW$.
 In particular, in the case of the trivial open covering
$\bW_X=\{X\}$, we obtain an exact structure on the category of
locally cotorsion contraherent cosheaves $X\Ctrh^\lct$.

 Similarly, the full subcategory $X\Lcth^\lct$ is closed under
extensions and cokernels of admissible monomorphisms in $X\Lcth$,
so we have the inherited exact category structure on $X\Lcth^\lct$.
 It follows that the full exact subcategories $X\Ctrh^\lct$, \
$X\Lcth^\lct_\bW$, and $X\Lcth^\lct$ are closed under extensions
and kernels of admissible epimorphisms in each other (see
also Lemma~\ref{colocality-of-epimorphisms}(b)).

 A $\bW$\+locally contraherent cosheaf $\fJ$ on $X$ is said to be
\emph{locally injective} if the $\cO_X(U)$\+module $\fJ[U]$ is
injective for every affine open subscheme $U\subset X$ subordinate
to~$\bW$.
 In view of
Lemma~\ref{cotorsion-injective-colocal-if-contraadjusted}(b), it
suffices to check this condition for affine open subschemes $U$
belonging to any chosen affine open covering of $X$ subordinate
to~$\bW$.
 In particular, it follows that a $\bW$\+locally contraherent cosheaf
$\fJ$ on $X$ is locally injective if and only if $\fJ$ is a locally
injective $\bW'$\+locally contraherent cosheaf on~$X$.

 We denote the full subcategory of locally injective $\bW$\+locally
contraherent cosheaves on $X$ by $X\Lcth^\lin_\bW\subset X\Lcth_\bW$.
 The notation $X\Lcth^\lin=\bigcup_\bW X\Lcth^\lin_\bW\allowbreak
\subset X\Lcth$ stands for the full subcategory of locally injective
locally contraherent cosheaves in $X\Lcth$.  {\hbadness=1150\par}

 In particular, a contraherent cosheaf $\fJ$ on $X$ is said to be
\emph{locally injective} if it belongs to the full subcategory
$X\Ctrh^\lin=X\Lcth^\lin_{\{X\}}\subset X\Ctrh$.
 So we have inclusions of full subcategories
$$
  X\Ctrh^\lin\subset X\Lcth^\lin_\bW\subset X\Lcth^\lin,
$$
as well as
$$
 X\Ctrh^\lin\subset X\Ctrh^\lct\subset X\Ctrh, \qquad
 X\Lcth^\lin_\bW\subset X\Lcth^\lct_\bW\subset X\Lcth_\bW,
$$
and
$$
 X\Lcth^\lin\subset X\Lcth^\lct\subset X\Lcth.
$$

 Clearly, the full subcategory $X\Lcth^\lin_\bW$ is closed under
extensions and cokernels of admissible monomorphisms in $X\Lcth_\bW$
and in $X\Lcth_\bW^\lct$.
 So the additive category $X\Lcth^\lin_\bW$ carries the exact
category structure inherited from $X\Lcth_\bW$ or $X\Lcth^\lct_\bW$.
 In particular, in the case of the trivial open covering
$\bW_X=\{X\}$, we obtain an exact structure on the category of
locally injective contraherent cosheaves $X\Ctrh^\lin$.

 Similarly, the full subcategory $X\Lcth^\lin$ is closed under
extensions and cokernels of admissible monomorphisms in $X\Lcth$
and in $X\Lcth^\lct$, so we have the inherited exact category
structure on $X\Lcth^\lin$.
 It follows that the full exact subcategories $X\Ctrh^\lin$, \
$X\Lcth^\lin_\bW$, and $X\Lcth^\lin$ are closed under extensions
and kernels of admissible epimorphisms in each other (see
also Lemma~\ref{colocality-of-epimorphisms}(c)).

\begin{lem} \label{ctrh-lct-lin-cosheaves-on-affine-scheme}
 Let $U$ be an affine scheme.
 Then the equivalence of exact categories $U\Ctrh\simeq\cO(U)\Modl^\cta$
from Lemma~\ref{ctrh-cosheaves-on-affine-scheme} restricts to \par
\textup{(a)} an equivalence $U\Ctrh^\lct\simeq\cO(U)\Modl^\cot$ between 
the exact category of locally cotorsion contraherent cosheaves on $U$
and the exact category of cotorsion $\cO(U)$\+modules; \par
\textup{(b)} an equivalence $U\Ctrh^\lin\simeq\cO(U)\Modl^\inj$ between
the split exact category of locally injective contraherent cosheaves
on $U$ and the additive (split exact) category of injective\/
$\cO(U)$\+modules. \qed
\end{lem}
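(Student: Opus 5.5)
The plan is to restrict the exact category equivalence $U\Ctrh\simeq\cO(U)\Modl^\cta$ of Lemma~\ref{ctrh-cosheaves-on-affine-scheme}, given by $\fP\longmapsto\fP[U]$, to the full subcategories on both sides. For a full subcategory closed under extensions with the inherited exact structure, an exact equivalence of ambient categories restricts to an exact equivalence of full subcategories, provided the two subcategories correspond to each other on objects. So it is enough to check that a contraherent cosheaf $\fP$ on $U$ is locally cotorsion (respectively, locally injective) if and only if the $\cO(U)$\+module $\fP[U]$ is cotorsion (respectively, injective).

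For the trivial covering $\bW_U=\{U\}$, the scheme $U$ is itself an affine open subscheme subordinate to $\bW_U$. The "only if" direction is therefore immediate from the definitions in Section~\ref{lct-lin-subsecn}: if $\fP$ is locally cotorsion (or locally injective), then in particular $\fP[U]$ is cotorsion (or injective).

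For the converse, suppose $\fP[U]$ is cotorsion (respectively, injective), and let $V\subset U$ be any affine open subscheme. By the contraherence axiom~(ii), $\fP[V]\simeq\Hom_{\cO(U)}(\cO(V),\fP[U])$. Here $\cO(V)$ is a flat $\cO(U)$\+module; indeed it is very flat by Example~\ref{open-affine-very-flat-example}.
\begin{itemize}
\item In the cotorsion case, Lemma~\ref{cotorsion-change-of-scalars-lemma}(b) shows that $\fP[V]$ is a cotorsion $\cO(V)$\+module.
\item In the injective case, Lemma~\ref{injective-change-of-scalars-lemma}(b) shows that $\fP[V]$ is an injective $\cO(V)$\+module.
\end{itemize}
Alternatively, one can invoke the remark in Section~\ref{lct-lin-subsecn} that it suffices to check the conditions on the one-element affine covering $\{U\}$, which follows from Lemma~\ref{cotorsion-injective-colocal-if-contraadjusted}.

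It remains to match the exact structures, which needs no extra work. Both $U\Ctrh^\lct$ and $\cO(U)\Modl^\cot$ carry the exact structures inherited from $U\Ctrh$ and $\cO(U)\Modl^\cta$, and the ambient equivalence is exact in both directions, so the restriction is an exact equivalence. In case~(b), the exact category $\cO(U)\Modl^\inj$ is split, because any short exact sequence of modules whose left term is injective splits. Transporting along the equivalence shows that $U\Ctrh^\lin$ is split exact too. I do not expect any step to be a real obstacle; the only point needing care is the converse direction, which rests on flatness of $\cO(V)$ over $\cO(U)$ together with the contraherence axiom.
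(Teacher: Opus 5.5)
Your proposal is correct and is essentially the argument the paper leaves implicit when it states this lemma without proof. The objects correspond by the definitions, the contraherence axiom, and the ``only if'' directions of Lemma~\ref{cotorsion-injective-colocal-if-contraadjusted}, namely Lemmas~\ref{cotorsion-change-of-scalars-lemma}(b) and~\ref{injective-change-of-scalars-lemma}(b); the exact structures are inherited on both sides. One small correction: flatness of $\cO(V)$ over $\cO(U)$ is needed only in the cotorsion case, since Lemma~\ref{injective-change-of-scalars-lemma}(b) holds for an arbitrary ring homomorphism.
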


\begin{rem} \label{character-cosheaf-ctrh-lct-remark}
 Let $X$ be a scheme and $\cM$ be a quasi-coherent sheaf on~$X$.
 Then the construction of
Remarks~\ref{finite-coverings-suffice-remarks}(b\+-d) provides
a cosheaf of $\cO_X$\+modules $\cM^+$ on $X$ defined by the rule
$\cM^+[U]=\Hom_\boZ(\cM(U),\boQ/\boZ)$ for all quasi-compact,
quasi-separated open subschemes $U\subset X$.
 Comparing the quasi-coherence axiom~(i) with the contraherence
axiom~(ii) from Section~\ref{contraadj-contraher-subsecn}, and taking
into account Lemma~\ref{flat-cotorsion-injective-tensor-Hom-lemma}(c),
one can see that $\cM^+$ is a locally cotorsion contraherent cosheaf
on $X$, that is, $\cM^+\in X\Ctrh^\lct$.
 For a flat quasi-coherent sheaf $\cF$ on $X$, the contraherent cosheaf
$\cF^+$ on $X$ is locally injective by
Lemma~\ref{flat-cotorsion-injective-tensor-Hom-lemma}(d).
 A further discussion of this construction can be found 
in~\cite[beginning of Section~2.4]{Pcosh}.
\end{rem}

 Before we finish this section, let us recall that, for any scheme $X$,
infinite products exist in the additive category $(X,\cO_X)\Cosh$,
and the functors of cosections $\fP\longmapsto\fP[U]\: 
(X,\cO_X)\Cosh\rarrow\cO_X(U)\Modl$ over quasi-compact, quasi-separated
open subschemes $U\subset X$ preserve infinite products by
Remarks~\ref{finite-coverings-suffice-remarks}(b,c,f).
 In particular, this holds for affine open subschemes $U\subset X$.
 It follows easily that, for any open covering $\bW$ of $X$,
the three full subcategories $X\Lcth^\lin_\bW\subset X\Lcth^\lct_\bW
\subset X\Lcth_\bW$ are preserved by the infinite products in
$(X,\cO_X)\Cosh$.
 So the exact categories of contraherent or locally contraherent
cosheaves (for a fixed covering~$\bW$), as well as their full
subcategories of locally cotorsion and locally injective (locally)
contraherent cosheaves, have infinite direct product functors,
and such direct products are preserved by the identity inclusion
functors between all those categories.

\Section{Direct Images of Locally Contraherent Cosheaves}

\subsection{Direct image of cosheaves of modules}
\label{direct-images-of-cosheaves-of-O-modules}
 Let $f\:(Y,\cO_Y)\rarrow(X,\cO_X)$ be a morphism of ringed spaces.
 Given a presheaf of $\cO_Y$\+modules $\cN$ on $Y$, the presheaf
of $\cO_X$\+modules $f_*\cN$ on $X$ is defined by the rule
$$
 (f_*\cN)(U)=\cN(f^{-1}(U)),
$$
where $f^{-1}(U)\subset Y$ is the preimage of an open subset
$U\subset X$ under the continuous map of topological spaces
$f\:Y\rarrow X$.
 The $\cO_Y(f^{-1}(U))$\+module $\cN(f^{-1}(U))$ is endowed with
an $\cO_X(U)$\+module structure using the ring homomorphism
$\cO_X(U)\rarrow\cO_Y(f^{-1}(U))$ that is a part of the structure of
a morphism of ringed spaces $f\:(Y,\cO_Y)\rarrow(X,\cO_X)$.
{\hbadness=1600\par}

 For any open covering $U=\bigcup_\alpha V_\alpha$ of an open
subset $U\subset X$, the open subsets $f^{-1}(V_\alpha)\subset Y$
form an open covering $f^{-1}(U)=\bigcup_\alpha f^{-1}(V_\alpha)$
of the open subset $f^{-1}(U)\subset Y$.
 Using this observation, one can readily check that the presheaf
$f_*\cN$ on $X$ satisfies the sheaf
axiom~\eqref{sheaf-axiom-for-topology-base} whenever so does
the presheaf $\cN$ on~$Y$.
 So we have the functor of direct image of sheaves of $\cO$\+modules
\begin{equation} \label{direct-image-of-sheaves-of-O-modules}
 f_*\:(Y,\cO_Y)\Sh\lrarrow(X,\cO_X)\Sh.
\end{equation}

 Dual-analogously, given a copresheaf of $\cO_Y$\+modules $\fQ$ on $Y$,
the copresheaf of $\cO_X$\+modules $f_!\fQ$ on $X$ is defined by
the rule
$$
 (f_!\fQ)[U]=\fQ[f^{-1}(U)]
$$
for all open subsets $U\subset X$.
 The $\cO_Y(f^{-1}(U))$\+module $\fQ[f^{-1}(U)]$ is endowed with
an $\cO_X(U)$\+module structure using the same ring homomorphism
$\cO_X(U)\rarrow\cO_Y(f^{-1}(U))$ as above.

 Using the same observation about the preimages of open coverings as
above, one can readily check that the copresheaf $f_!\fQ$ on $X$
satisfies the cosheaf axiom~\eqref{cosheaf-axiom-for-topology-base} 
whenever so does the copresheaf $\fQ$ on~$Y$.
 So we have the functor of direct image of cosheaves of $\cO$\+modules
\begin{equation} \label{direct-image-of-cosheaves-of-O-modules}
 f_!\:(Y,\cO_Y)\Cosh\lrarrow(X,\cO_X)\Cosh.
\end{equation}

 Let $f\:Y\rarrow X$ be a morphism of schemes.
 Assume additionally that the morphism~$f$ is quasi-compact and
quasi-separated.
 Then it is clear from the discussion in
Remarks~\ref{finite-coverings-suffice-remarks}(b,c,f) that
the functor of direct image of cosheaves of $\cO$\+modules $f_!$
\,\eqref{direct-image-of-cosheaves-of-O-modules} preserves infinite
direct products.

\subsection{Direct image and contraherence}
\label{direct-image-and-contraherence-subsecn}
 Let $f\:Y\rarrow X$ be a morphism of schemes.
 Whenever the morphism~$f$ is quasi-compact and quasi-separated,
the direct image functor $f_*\:(Y,\cO_Y)\Sh\rarrow(X,\cO_X)\Sh$
\,\eqref{direct-image-of-sheaves-of-O-modules} takes quasi-coherent
sheaves on $Y$ to quasi-coherent sheaves on~$X$
\,\cite[Proposition~I.6.7.1]{EGAI}, \cite[Lemma Tag~01LC]{SP}
(cf.~\cite[Proposition~3.9.1(b)]{Pform}).

 The theory of direct images of contraherent and locally contraherent
cosheaves is a bit more complicated.
 Generally speaking, for an open immersion $f\:Y\rarrow X$ of smooth
algebraic varieties (i.~e., smooth schemes of finite type) over a field,
even when the complement $X\setminus Y$ is smooth,
the direct image functor $f_!\:(Y,\cO_Y)\Cosh\rarrow(X,\cO_X)\Cosh$
\,\eqref{direct-image-of-cosheaves-of-O-modules} does not even take
contraherent cosheaves on $Y$ to locally contraherent cosheaves on~$X$
\,\cite[Remark~2.3.1 and Example~2.3.2]{Pcosh}.

 In order to make sure that the cosheaf $f_!\fQ$ is (locally)
contraherent, one needs to either assume the morphism~$f$ to be
affine (and more), or impose suitable adjustedness conditions
on a (locally) contraherent cosheaf~$\fQ$.
 In the rest of this section, we discuss affine morphisms~$f$, while
postponing the discussion of adjustedness conditions on $\fQ$ to
Sections~\ref{alf-subsecn}\+-\ref{homology-subsecn}
and~\ref{coflasque-secn}.

 Firstly, let $f\:Y\rarrow X$ be an affine morphism of schemes.
 Then the direct image functor
$f_!\:(Y,\cO_Y)\Cosh\rarrow(X,\cO_X)\Cosh$
\,\eqref{direct-image-of-cosheaves-of-O-modules}
takes contraherent cosheaves on $Y$ to contraherent cosheaves on $X$,
and induces an exact functor between the respective exact categories
\begin{equation} \label{ctrh-direct-image}
 f_!\:Y\Ctrh\lrarrow X\Ctrh.
\end{equation}
 Furthermore, the same direct image functor
$f_!\:(Y,\cO_Y)\Cosh\rarrow(X,\cO_X)\Cosh$ takes locally cotorsion
contraherent cosheaves on $Y$ to locally cotorsion contraherent
cosheaves on $X$, and induces an exact functor between their exact
categories
\begin{equation} \label{ctrh-lct-direct-image}
 f_!\:Y\Ctrh^\lct\lrarrow X\Ctrh^\lct.
\end{equation}
 Finally, if the morphism~$f$ is affine \emph{and} flat,
then the same functor of direct image of cosheaves of
$\cO$\+modules~$f_!$ takes locally injective contraherent cosheaves
on $Y$ to locally injective contraherent cosheaves on $X$, and induces
an exact functor between their exact categories
\begin{equation} \label{ctrh-lin-direct-image}
 f_!\:Y\Ctrh^\lin\lrarrow X\Ctrh^\lin.
\end{equation}
 The proofs of these assertions use
Lemmas~\ref{contraadjusted-change-of-scalars-lemma}(a),
\ref{cotorsion-change-of-scalars-lemma}(a),
and~\ref{injective-change-of-scalars-lemma}(a).
 We refer to~\cite[Section~2.3]{Pcosh} for the details
(see also~\cite[Section~2.4]{Pdomc}).

 Secondly, let $\bW$ be an open covering of the scheme $X$ and $\bT$
be an open covering of the scheme~$Y$.
 One says that a morphism of schemes $f\:Y\rarrow X$ is
\emph{$(\bW,\bT)$\+affine} if, for every affine open subscheme
$U\subset X$ subordinate to $\bW$, the preimage $f^{-1}(U)\subset Y$
is an affine open subscheme subordinate to~$\bT$.
 Every $(\bW,\bT)$\+affine morphism of schemes is
affine~\cite[D\'efinition~II.1.2.1 and Corollaire~II.1.3.2]{EGAII},
\cite[Lemma Tag~01S8]{SP}.

 Let $f\:Y\rarrow X$ be a $(\bW,\bT)$\+affine morphism of schemes.
 Then the direct image functor
$f_!\:(Y,\cO_Y)\Cosh\rarrow(X,\cO_X)\Cosh$
\,\eqref{direct-image-of-cosheaves-of-O-modules}
takes $\bT$\+locally contraherent cosheaves on $Y$ to $\bW$\+locally
contraherent cosheaves on $X$, and induces an exact functor between
the respective exact categories
\begin{equation} \label{lcth-W-T-direct-image}
 f_!\:Y\Lcth_\bT\lrarrow X\Lcth_\bW.
\end{equation}
 Furthermore, the same direct image functor
$f_!\:(Y,\cO_Y)\Cosh\rarrow(X,\cO_X)\Cosh$ takes locally cotorsion
$\bT$\+locally contraherent cosheaves on $Y$ to locally cotorsion
$\bW$\+locally contraherent cosheaves on $X$, and induces an exact
functor between their exact categories
\begin{equation} \label{lcth-W-T-lct-direct-image}
 f_!\:Y\Lcth^\lct_\bT\lrarrow X\Lcth^\lct_\bW.
\end{equation}
 Finally, if the morphism~$f$ is $(\bW,\bT)$\+affine \emph{and} flat,
then the same functor of direct image of cosheaves of
$\cO$\+modules~$f_!$ takes locally injective $\bT$\+locally
contraherent cosheaves on $Y$ to locally injective $\bW$\+locally
contraherent cosheaves on $X$, and induces an exact functor
between their exact categories
\begin{equation} \label{lcth-W-T-lin-direct-image}
 f_!\:Y\Lcth^\lin_\bT\lrarrow X\Lcth^\lin_\bW.
\end{equation}
 We refer to~\cite[Section~3.3]{Pcosh} for the details
(see also~\cite[Section~2.4]{Pdomc}).

\subsection{The direct image-restriction adjunction}
 In the context of the proofs of the main results of the present
paper, the only morphisms of schemes that we will use are open
immersions and the natural morphisms to a scheme from the spectra
of the stalks of its structure sheaf.
 For this reason, we do not go into an (otherwise rather complicated)
discussion of inverse images of locally contraherent cosheaves with
respect to arbitrary morphisms of schemes~\cite[Sections~2.3
and~3.3]{Pcosh}, \cite[Section~2.5]{Pdomc}, \cite[Section~3.12]{Pform}.
 For open immersion morphisms, the construction of the inverse
images of locally contraherent cosheaves (as well as of quasi-coherent
sheaves) is much more straightforward.

 Let $(X,\cO_X)$ be a ringed space and $Y\subset X$ be an open
subset.
 We endow the topological space $Y$ with the sheaf of rings
$\cO_Y=\cO_X|_Y$ obtained by restricting the sheaf of rings $\cO_X$
on $X$ to the open subset $Y\subset X$.
 Then there is a natural morphism of ringed spaces
$j\:(Y,\cO_Y)\rarrow(X,\cO_X)$.

 Given a sheaf of $\cO_X$\+modules $\cM$ on $X$, the restriction
$\cM|_Y$ of the sheaf $\cM$ to the open subset $Y\subset X$ is
the sheaf of $\cO_Y$\+modules on $Y$ defined by the rule
$$
 (\cM|_Y)(V)=\cM(V)
$$
for any open subset $V\subset Y\subset X$.
 This construction defines the restriction functor
\begin{equation} \label{restriction-of-sheaves-of-O-modules}
 j^*\:(X,\cO_X)\Sh\lrarrow(Y,\cO_Y)\Sh, \qquad j^*\cM=\cM|_Y.
\end{equation}

 For any two sheaves of $\cO$\+modules $\cM$ on $X$ and $\cN$ on $Y$,
there is a natural adjunction isomorphism of abelian groups
\begin{equation} \label{sheaves-direct-image-restriction-adjunction}
 \Hom_{\cO_X}(\cM,j_*\cN)\simeq\Hom_{\cO_Y}(j^*\cM,\cN),
\end{equation}
where the notation $\Hom_{\cO_X}$ and $\Hom_{\cO_Y}$ stands for
the groups of morphisms in the categories $(X,\cO_X)\Sh$ and
$(Y,\cO_Y)\Sh$.
 In other words, the restriction functor $j^*\:(X,\cO_X)\Sh\rarrow
(Y,\cO_Y)\Sh$ \,\eqref{restriction-of-sheaves-of-O-modules} is
left adjoint to the direct image functor $j_*\:(Y,\cO_Y)\Sh\rarrow
(X,\cO_X)\Sh$ \,\eqref{direct-image-of-sheaves-of-O-modules}.

 For any scheme $X$ and any open subscheme $Y\subset X$,
the restriction functor $j^*$
\,\eqref{restriction-of-sheaves-of-O-modules} takes quasi-coherent
sheaves on $X$ to quasi-coherent sheaves on~$X$.
 So we have the restriction functor
\begin{equation} \label{restriction-of-qcoh-sheaves}
 j^*\:X\Qcoh\lrarrow Y\Qcoh, \qquad j^*\cM=\cM|_Y.
\end{equation}
 The restriction functor of quasi-coherent sheaves $j^*$
\,\eqref{restriction-of-qcoh-sheaves} is exact as a functor between
Grothendieck abelian categories.

 Dual-analogously, given a cosheaf of $\cO_X$\+modules $\fP$ on $X$,
the restriction $\fP|_Y$ of the cosheaf $\fP$ to the open subset
$Y\subset X$ is the cosheaf of $\cO_Y$\+modules on $Y$ defined by
the rule
$$
 (\fP|_Y)[V]=\fP[V]
$$
for any open subset $V\subset Y\subset X$.
 This construction defines the restriction functor
\begin{equation} \label{restriction-of-cosheaves-of-O-modules}
 j^!\:(X,\cO_X)\Cosh\lrarrow(Y,\cO_Y)\Cosh, \qquad j^!\fP=\fP|_Y.
\end{equation}

 For any two cosheaves of $\cO$\+modules $\fP$ on $X$ and $\fQ$ on $Y$,
there is a natural adjunction isomorphism of abelian groups
\begin{equation} \label{cosheaves-direct-image-restriction-adjunction}
 \Hom^{\cO_X}(j_!\fQ,\fP)\simeq\Hom^{\cO_Y}(\fQ,j^!\fP),
\end{equation}
where the notation $\Hom^{\cO_X}$ and $\Hom^{\cO_Y}$ stands for
the groups of morphisms in the categories $(X,\cO_X)\Cosh$ and
$(Y,\cO_Y)\Cosh$.
 In other words, the restriction functor $j^!\:(X,\cO_X)\Cosh\rarrow
(Y,\cO_Y)\Cosh$ \,\eqref{restriction-of-cosheaves-of-O-modules} is
right adjoint to the direct image functor $j_!\:(Y,\cO_Y)\Cosh\rarrow
(X,\cO_X)\Cosh$ \,\eqref{direct-image-of-cosheaves-of-O-modules}.
 This is~\cite[formula~(2.7) in Section~2.3]{Pcosh},
\cite[formula~(19) in Section~2.4]{Pdomc}, or~\cite[formula~(30)
in Section~3.10]{Pform}.

 Now let $X$ be a scheme with an open covering $\bW$, and let
$Y\subset X$ be an open subscheme.
 Define the restriction $\bW|_Y$ of $\bW$ to $Y$ by the rule
$\bW|_Y=\{Y\cap W\mid W\in\bW\}$.
 Clearly, $\bW|_Y$ is an open covering of~$Y$.

 Then the restriction functor $j^!\:(X,\cO_X)\Cosh\rarrow
(Y,\cO_Y)\Cosh$ \,\eqref{restriction-of-cosheaves-of-O-modules}
takes $\bW$\+locally contraherent cosheaves on $X$ to
$\bW|_Y$\+locally contraherent cosheaves on $Y$, and induces
an exact functor between the respective exact categories
\begin{equation} \label{restriction-of-lcth-cosheaves}
 j^!\:X\Lcth_\bW\lrarrow Y\Lcth_{\bW|_Y}.
\end{equation}
 Furthermore, the same restriction functor $j^!\:(X,\cO_X)\Cosh
\rarrow(Y,\cO_Y)\Cosh$ takes locally cotorsion $\bW$\+locally
contraherent cosheaves on $X$ to locally cotorsion $\bW|_Y$\+locally
contraherent cosheaves on $Y$, and induces an exact functor between
their exact categories
\begin{equation} \label{restriction-of-lcth-lct-cosheaves}
 j^!\:X\Lcth^\lct_\bW\lrarrow Y\Lcth^\lct_{\bW|_Y}.
\end{equation}
 Finally, the same functor of restriction of cosheaves of
$\cO$\+modules to the open subset takes locally injective
$\bW$\+locally contraherent cosheaves on $X$ to locally injective
$\bW|_Y$\+locally contraherent cosheaves on $Y$, and induces
an exact functor between their exact categories
\begin{equation} \label{restriction-of-lcth-lin-cosheaves}
 j^!\:X\Lcth^\lin_\bW\lrarrow Y\Lcth^\lin_{\bW|_Y}.
\end{equation}

 Notice that if, in addition, the open immersion morphism
$j\:Y\rarrow X$ is affine, then it is $(\bW,\bW|_Y)$\+affine.
 In this case, the constructions of the functors of direct image
of (locally) contraherent cosheaves from
Section~\ref{direct-image-and-contraherence-subsecn} are applicable
to~$j$.

 One typical setting in which we will apply the constructions of direct
images and restrictions of locally contraherent cosheaves in the present
paper is that of an open subscheme subordinate to the covering.
 If the open subscheme $Y\subset X$ is subordinate to the open covering
$\bW$ of $X$, then the open covering $\bW|_Y$ of $Y$ contains the whole
scheme $Y$ as one of the open subsets in the covering, i.~e.,
$Y\in\bW|_Y$.
 In this case, all the $\bW|_Y$\+locally contraherent cosheaves on $Y$
are contraherent, that is, $Y\Lcth_{\bW|_Y}=Y\Ctrh$ (and similarly for
the locally cotorsion and locally injective locally contraherent
cosheaves).

\Section{Quasi-Compact Semi-Separated Schemes}  \label{qcss-secn}

\subsection{Admissible monomorphisms into locally injectives}
\label{qcoh-ssep-enough-lin-subsecn}
 The proof of the following proposition is based on the argument
dual-analogous to the proof of the existence of enough flat
quasi-coherent sheaves on a quasi-compact semi-separated scheme given
in~\cite[Lemma~A.1]{EP} (see also~\cite[Lemmas~4.1.1 and~4.1.8]{Pcosh}).

\begin{prop} \label{qcoh-ssep-enough-lin-prop}
 Let $X$ be a quasi-compact semi-separated scheme with an open
covering\/~$\bW$.
 Then, for any\/ $\bW$\+locally contraherent cosheaf\/ $\fP$ on $X$,
there exists a locally injective\/ $\bW$\+locally contraherent cosheaf\/
$\fJ$ on $X$ together with an admissible monomorphism\/ $\fP\rarrow\fJ$
in the exact category $X\Lcth_\bW$ of\/ $\bW$\+locally contraherent
cosheaves on~$X$.
\end{prop}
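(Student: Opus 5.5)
Choose a finite affine open covering $X=\bigcup_{\alpha=1}^N U_\alpha$ with every $U_\alpha$ subordinate to~$\bW$; this is possible because $X$ is quasi-compact. Since $X$ is semi-separated, each open immersion $j_\alpha\:U_\alpha\rarrow X$ is an affine morphism. Because $U_\alpha$ is subordinate to $\bW$, it is moreover $(\bW,\bW|_{U_\alpha})$\+affine and flat. We also have $U_\alpha\Lcth_{\bW|_{U_\alpha}}=U_\alpha\Ctrh$.

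For each $\alpha$, restrict $\fP$ to obtain $j_\alpha^!\fP\in U_\alpha\Ctrh$. By Lemma~\ref{ctrh-cosheaves-on-affine-scheme}, this corresponds to the contraadjusted $\cO(U_\alpha)$\+module $P_\alpha=\fP[U_\alpha]$. Embed $P_\alpha$ into an injective $\cO(U_\alpha)$\+module $J_\alpha$. The cokernel $J_\alpha/P_\alpha$ is a quotient of an injective (hence contraadjusted) module, so it is contraadjusted. Therefore $0\rarrow P_\alpha\rarrow J_\alpha\rarrow J_\alpha/P_\alpha\rarrow0$ is an admissible short exact sequence in $\cO(U_\alpha)\Modl^\cta$. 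Via Lemmas~\ref{ctrh-cosheaves-on-affine-scheme} and~\ref{ctrh-lct-lin-cosheaves-on-affine-scheme}(b), it yields an admissible monomorphism $j_\alpha^!\fP\rarrow\fJ_\alpha$ in $U_\alpha\Ctrh$ with $\fJ_\alpha\in U_\alpha\Ctrh^\lin$.

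Next apply the exact direct image functor~\eqref{lcth-W-T-lin-direct-image}, together with~\eqref{lcth-W-T-direct-image}. This gives admissible monomorphisms $j_{\alpha!}j_\alpha^!\fP\rarrow j_{\alpha!}\fJ_\alpha$ in $X\Lcth_\bW$, where $j_{\alpha!}\fJ_\alpha\in X\Lcth^\lin_\bW$. Set $\fJ=\bigoplus_{\alpha=1}^N j_{\alpha!}\fJ_\alpha$; this is a finite direct sum, equal to the product, so it is locally injective. Compose the adjunction units with these monomorphisms. The units are the morphisms $\fP\rarrow j_{\alpha!}j_\alpha^!\fP$ corresponding to the identity under~\eqref{cosheaves-direct-image-restriction-adjunction}; concretely they are $\fP[V]\rarrow\fP[V\cap U_\alpha]$, read as corestriction in the opposite direction — see below. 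Composing yields a morphism $\fP\rarrow\fJ$.

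Caution is needed on the direction. The adjunction~\eqref{cosheaves-direct-image-restriction-adjunction} has $j_!$ as the \emph{left} adjoint, so the natural morphism goes $j_{\alpha!}j_\alpha^!\fP\rarrow\fP$, given by the corestrictions $\fP[V\cap U_\alpha]\rarrow\fP[V]$. That direction is good for epimorphisms, not monomorphisms. Instead, I will use a \v Cech-type construction. Let $\fP'=\bigoplus_\alpha j_{\alpha!}j_\alpha^!\fP$. The surjection $\fP'\rarrow\fP$ is an admissible epimorphism, by the right end of the \v Cech resolution in Lemma~\ref{contraadjusted-cech-resolution}. Its kernel is handled by the rest of that finite \v Cech resolution
$$0\rarrow j_{1\dotsm N!}j^!_{1\dotsm N}\fP\rarrow\dotsb\rarrow\bigoplus_\alpha j_{\alpha!}j_\alpha^!\fP\rarrow\fP\rarrow0,$$
which is exact in $X\Lcth_\bW$ (checked on affines subordinate to $\bW$ by Lemma~\ref{contraadjusted-cech-resolution}). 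Here the intersections $U_{\alpha_1}\cap\dotsb\cap U_{\alpha_k}$ are affine by semi-separatedness. I then argue by descending induction along this resolution, dually to~\cite[Lemma~A.1]{EP}.

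For the leftmost term, embed $j_{1\dotsm N}^!\fP$ into a locally injective $\fJ$ on the affine intersection and push forward, as above. At each step, take the pushout of the sequence $0\rarrow\fK_{i+1}\rarrow\fC_i\rarrow\fK_i\rarrow0$ along an admissible monomorphism $\fK_{i+1}\rarrow\fL$ with $\fL$ locally injective. By Lemma~\ref{Hom-into-pushout-lemma}(b), pushouts are admissible in $X\Lcth_\bW$. This gives $0\rarrow\fL\rarrow\fC_i'\rarrow\fK_i\rarrow0$ together with an admissible monomorphism $\fC_i\rarrow\fC_i'$ whose cokernel equals the cokernel of $\fK_{i+1}\rarrow\fL$.

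I expect the main obstacle to be keeping this induction honest. At each stage one needs a monomorphism of $\fK_i$ into a locally injective object, while controlling cokernels so that they stay in $X\Lcth_\bW$. The key facts are that $j_!$ of a locally injective is locally injective, and that the terms $j_!j^!\fP$ embed into $j_!$ of injectives with cokernels $j_!$ of contraadjusted modules, which lie in $X\Lcth_\bW$. Combining these stage by stage, as in the dual argument for flat sheaves in~\cite[Lemma~4.1.1]{Pcosh}, should produce the desired $\fP\rarrow\fJ$.
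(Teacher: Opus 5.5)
Your \v Cech-resolution route stops at the step you yourself flag as ``the main obstacle'', and the proposal does not resolve it. After pushing out $0\rarrow\fK_{i+1}\rarrow\mathfrak C_i\rarrow\fK_i\rarrow0$ along $\fK_{i+1}\rarrow\fL$, you have $0\rarrow\fL\rarrow\mathfrak C_i'\rarrow\fK_i\rarrow0$ with $\fL$ locally injective. To embed $\fK_i$ you need an admissible monomorphism $\mathfrak C_i'\rarrow\fM$ with $\fM$ locally injective; then $\fK_i\rarrow\fM/\fL$ does the job. But $\mathfrak C_i'$ is not one of the \v Cech terms $j_{\sigma!}j_\sigma^!\fP$. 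It is an extension of the cokernel of $\fK_{i+1}\rarrow\fL$ by $\mathfrak C_i$, and embedding such an extension is no easier than the original problem. The two facts you list (direct images preserve local injectivity; the \v Cech terms embed into direct images of injectives) say nothing about extensions.

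To finish along your lines you need an extra ingredient. Take $\fJ$ locally injective, $j\colon U\rarrow X$ an affine open subscheme subordinate to $\bW$, and $\fR\in U\Ctrh$. Then every admissible sequence $0\rarrow\fJ\rarrow\fE\rarrow j_!\fR\rarrow0$ in $X\Lcth_\bW$ splits: apply $j^!$, use that $j^!\fJ$ corresponds to an injective $\cO(U)$\+module, and transport the section back via \eqref{cosheaves-direct-image-restriction-adjunction}. This must then be extended by d\'evissage to finitely iterated extensions of such direct images. You also have to track that all cokernels stay of that form. None of this is in the proposal, so as written it has a genuine gap.

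The paper needs none of it. The idea you are missing is that the counit direction $j_!j^!\fK\rarrow\fK$, which you discarded as ``good for epimorphisms'', is exactly what you push out along.

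\begin{itemize}
\item Induct on $\beta$, with $V=U_1\cup\dots\cup U_{\beta-1}$ and $U=U_\beta$. You are given an admissible monomorphism $\fP\rarrow\fK$ with $\fK$ locally injective over $V$.
\item Embed $j^!\fK\rarrow\fI$ in $U\Ctrh$, with cokernel $\fR$.
\item Apply the exact functor $j_!$ (exact because $j$ is affine), obtaining $0\rarrow j_!j^!\fK\rarrow j_!\fI\rarrow j_!\fR\rarrow0$.
\item Push this out along the counit. This gives an admissible monomorphism $\fK\rarrow\fL$ with cokernel $j_!\fR$.
\item Over $U$: $j^!\fL\simeq\fI$, which is locally injective.
\item Over $V$: $h^!\fL$ is an extension of $h^!j_!\fR\simeq j'_!h'{}^!\fR$ by $h^!\fK$. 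Here $h'{}^!\fR$ is the cokernel of an admissible monomorphism between locally injectives on $U\cap V$, and $j'$ is flat and affine, so $h^!j_!\fR$ is locally injective, and hence so is $h^!\fL$.
\end{itemize}

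This needs no \v Cech resolution and no $\Ext$-vanishing.
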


\begin{proof}
 This is a partial version of~\cite[Lemma~4.3.2(a)]{Pcosh}.
 For the sake of completeness of the exposition, we spell out
the proof below.

 Let $X=\bigcup_{\alpha=1}^N U_\alpha$ be a finite affine open covering
of $X$ subordinate to the open covering~$\bW$.
 Proceeding by induction on $\beta=0$,~\dots, $N$, we will construct
admissible monomorphisms $\fP\rarrow\fJ_\beta$ in $X\Lcth_\bW$ such
that the restriction of $\fJ_\beta$ to the open subset
$V_\beta=\bigcup_{\alpha=1}^\beta U_\alpha\subset X$ is a locally
injective $\bW|_{V_\beta}$\+locally contraherent cosheaf on~$V_\beta$.
 In the case of $\beta=0$ (the induction base), we have
$V_0=\varnothing$, so it suffices to take $\fJ_0=\fP$ with the identity
monomorphism $\fP\rarrow\fJ_0$.

 For $\beta\ge1$, assuming that an admissible monomorphism
$\fP\rarrow\fJ_{\beta-1}$ with the desired property has been
constructed already, we put $\fK=\fJ_{\beta-1}$, \ $V=V_{\beta-1}$,
and $U=U_\beta$.
 Denote the identity open immersion morphisms by $j\:U\rarrow X$
and $h\:V\rarrow X$.

 We need to use the fact that the assertion of the proposition holds
for the affine scheme $U=U_\beta$ with its trivial open covering
$\bW_U=\{U\}$.
 Specifically, there exists an admissible monomorphism
$j^!\fK\rarrow\fI$ in $U\Ctrh$ with a locally injective contraherent
cosheaf $\fI$ on~$U$.

 Indeed, by Lemmas~\ref{ctrh-cosheaves-on-affine-scheme}
and~\ref{ctrh-lct-lin-cosheaves-on-affine-scheme}(b), we have
an equivalence of exact categories $U\Ctrh\simeq\cO_X(U)\Modl^\cta$
identifying the full subcategories $U\Ctrh^\lin\subset U\Ctrh$
and $\cO_X(U)\Modl^\inj\subset\cO_X(U)\Modl$.
 Consider the contraadjusted $\cO_X(U)$\+module $(j^!\fK)[U]=\fK[U]$,
and pick an injective $\cO_X(U)$\+module $I$ together with
an injective morphism of $\cO_X(U)$\+modules $\fK[U]\rarrow I$.
 Then the quotient $\cO_X(U)$\+module $I/\fK[U]$ is contraadjusted,
so $\fK[U]\rarrow I$ is an admissible monomorphism in
$\cO_X(U)\Modl^\cta$.
 Let $j^!\fK\rarrow\fI$ be the related admissible monomorphism in
$U\Ctrh$.

 So we have an (admissible) short exact sequence $0\rarrow j^!\fK
\rarrow\fI\rarrow\fR\rarrow0$ in $U\Ctrh$.
 Applying the exact functor of direct image of contraherent
cosheaves $j_!$ \,\eqref{ctrh-direct-image} for the affine open
immersion morphism $j\:U\rarrow X$ (here we use the assumption that
the scheme $X$ is semi-separated), we obtain a short exact sequence
$0\rarrow j_!j^!\fK\rarrow j_!\fI\rarrow j_!\fR\rarrow0$ in
$X\Ctrh\subset X\Lcth_\bW$.
 The adjunction isomorphism of abelian
groups~\eqref{cosheaves-direct-image-restriction-adjunction} provides
an adjunction morphism $j_!j^!\fK\rarrow\fK$ in $X\Lcth_\bW$.
 Taking the pushout of this short exact sequence with respect to
this morphism, we obtain a short exact sequence
$0\rarrow\fK\rarrow\fL\rarrow j_!\fR\rarrow0$ in $X\Lcth_\bW$.

 Put $\fJ_\beta=\fL$.
 We claim that the $\bW$\+locally contraherent cosheaf $\fL$ on $X$
is locally injective in restriction to the open subscheme
$V_\beta=V\cup U$.
 In view of the discussion in Section~\ref{lct-lin-subsecn}, it
suffices to check that the contraherent cosheaf $j^!\fL$ is
locally injective on $U$ and the $\bW|_V$\+locally contraherent
cosheaf $h^!\fL$ is locally injective on~$V$.

 The restriction functor~$j^!$ takes the adjunction morphism
$j_!j^!\fK\rarrow\fK$ to an isomorphism.
 Hence we have $j^!\fL\simeq j^!j_!\fI\simeq\fI$, and $\fI$ is
a locally injective contraherent cosheaf on $U$ by construction.
 This takes care of the restriction to~$U$.

 The proof of the local injectivity of the locally contraherent
cosheaf $h^!\fL$ on $V$ is a bit more involved.
 Consider the intersection $U\cap V\subset X$, and denote
the identity open immersion morphisms by $j'\:U\cap V\rarrow V$
and $h'\:U\cap V\rarrow U$.

 We have a base change isomorphism $h^!j_!\fR\simeq j'_!h'{}^!\fR$,
which is obvious immediately from the construction of the direct image.
 Restricting the short exact sequence $0\rarrow j^!\fK\rarrow\fI
\rarrow\fR\rarrow0$ in $U\Ctrh$ to the open subscheme
$U\cap V\subset U$, we obtain a short exact sequence
$0\rarrow h'{}^!j^!\fK\rarrow h'{}^!\fI\rarrow h'{}^!\fR\rarrow0$
of contraherent cosheaves on $U\cap V$ (see
formula~\eqref{restriction-of-lcth-cosheaves}).

 Now the $\bW|_V$\+locally contraherent cosheaf $h^!\fK$ on $V$
is locally injective by the induction assumption; hence
the contraherent cosheaf $h'{}^!j^!\fK\simeq j'{}^!h^!\fK$ on
$U\cap V$ is locally injective, too.
 The contraherent cosheaf $h'{}^!\fI$ on $U\cap V$ is also locally
injective as a restriction of the locally injective contraherent
cosheaf $\fI$ to an open subscheme.
 Since the full subcategory $(U\cap\nobreak V)\Ctrh^\lin$ is
closed under cokernels of admissible monomorphisms in
$(U\cap\nobreak V)\Ctrh$ (see Section~\ref{lct-lin-subsecn}),
it follows that the contraherent cosheaf $h'{}^!\fR$ on $U\cap V$
is locally injective as well.
 Since the direct image functor~$j'_!$ with respect to the flat affine
morphism $j'\:U\cap V\rarrow V$ preserves the local injectivity
of contraherent cosheaves by formula~\eqref{ctrh-lin-direct-image},
we arrive to the conclusion that the contraherent cosheaf
$h^!j_!\fR\simeq j'_!h'{}^!\fR$ on $V$ is locally injective.

 Finally, restricting the short exact sequence $0\rarrow\fK\rarrow\fL
\rarrow j_!\fR\rarrow0$ in $X\Lcth_\bW$ to the open subscheme
$V\subset X$, we obtain a short exact sequence $0\rarrow h^!\fK\rarrow
h^!\fL\rarrow h^!j_!\fR\rarrow0$ in $V\Lcth_{\bW|_V}$ with locally
injective $\bW|_V$\+locally contraherent cosheaves $h^!\fK$ and
$h^!j_!\fR$.
 As the full subcategory $V\Lcth^\lin_{\bW|_V}$ is closed under
extensions in $V\Lcth_{\bW|_V}$ (see Section~\ref{lct-lin-subsecn}),
it follows that the $\bW|_V$\+locally contraherent cosheaf $h^!\fL$
on $V$ is also locally injective, as desired.
\end{proof}

\begin{rem}
 For any quasi-compact semi-separated scheme $X$ with an open covering
$\bW$, the full subcategory of locally injective $\bW$\+locally
contraherent cosheaves $X\Lcth^\lin_\bW$ is, actually, the right-hand
class of a complete cotorsion pair in the exact category $X\Lcth_\bW$.
 The objects from the left-hand class are called the \emph{antilocal}
contraherent cosheaves on $X$ (all such objects are globally
contraherent on $X$, and the full subcategory of antilocal contraherent
cosheaves on $X$ does not depend on the open covering~$\bW$).
 This is explained in~\cite[Section~4.3]{Pcosh} (see
also~\cite[Sections~4.4\+-4.5]{Pdomc} and Section~\ref{alf-subsecn}
below).
\end{rem}

\subsection{Projective contraherent cosheaves}
\label{projective-on-qcss-subsecn}
 In this section, which is based on~\cite[Section~4.5]{Pcosh}
(see also~\cite[Lemma~15.21]{Pdomc} or~\cite[Section~4.1]{Pform}),
we describe projective and projective locally cotorsion contraherent
cosheaves on quasi-compact semi-separated schemes.
 For the classification of projective locally cotorsion contraherent
cosheaves on Noetherian schemes, see
Section~\ref{noetherian-lct-prj-secn}, and for a discussion of
projective locally contraadjusted contraherent cosheaves on
Noetherian schemes of finite Krull dimension, see
Section~\ref{main-positive-result-secn}.

\begin{lem} \label{qcss-lcth-prj-lemma}
 Let $X$ be a quasi-compact semi-separated scheme with an open
covering\/~$\bW$.
 Let $X=\bigcup_{\alpha=1}^N U_\alpha$ be a finite affine open
covering of $X$ subordinate to\/~$\bW$.
 Then there are enough projective objects in the exact category
$X\Lcth_\bW$ of\/ $\bW$\+locally contraherent cosheaves on~$X$.
 A\/ $\bW$\+locally contraherent cosheaf on $X$ is projective if and
only if it is a direct summand of a finite direct sum of the direct
images of contraherent cosheaves on $U_\alpha$ corresponding to
very flat contraadjusted $\cO_X(U_\alpha)$\+modules.
\end{lem}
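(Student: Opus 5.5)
Write $j_\alpha\:U_\alpha\rarrow X$ for the open immersions; these are affine because $X$ is semi-separated, and $(\bW,\{U_\alpha\})$\+affine in the relevant sense. For a very flat contraadjusted $\cO_X(U_\alpha)$\+module $F$, let $\fF$ denote the corresponding contraherent cosheaf on $U_\alpha$ (Lemma~\ref{ctrh-cosheaves-on-affine-scheme}). The plan has three steps: show that the cosheaves $j_{\alpha!}\fF$ are projective in $X\Lcth_\bW$; show that every object receives an admissible epimorphism from a finite direct sum of such cosheaves; and deduce the description of all projectives. For the last step, note that if $\fP$ is projective and $\bigoplus_\alpha j_{\alpha!}\fF_\alpha\rarrow\fP$ is an admissible epimorphism, it splits, so $\fP$ is a direct summand. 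Direct summands of projectives are projective, so the characterization follows from the first two steps.

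\emph{Projectivity.} By the adjunction~\eqref{cosheaves-direct-image-restriction-adjunction}, $\Hom^{\cO_X}(j_{\alpha!}\fF,\fP)\simeq\Hom^{\cO_{U_\alpha}}(\fF,j_\alpha^!\fP)$. Since $U_\alpha$ is subordinate to $\bW$, the restriction $j_\alpha^!\fP$ is contraherent on $U_\alpha$, and by Lemma~\ref{ctrh-cosheaves-on-affine-scheme} this Hom is $\Hom_{\cO_X(U_\alpha)}(F,\fP[U_\alpha])$. The restriction $j_\alpha^!$ is exact, and admissible epimorphisms in $U_\alpha\Ctrh$ are surjections of contraadjusted modules with contraadjusted kernel. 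Since $\Ext^1(F,C)=0$ for $F$ very flat and $C$ contraadjusted, $\Hom(F,{-})$ takes such surjections to surjections. Hence $\Hom(j_{\alpha!}\fF,{-})$ takes admissible epimorphisms to surjections, which is projectivity. Admissibility of the adjunction counits is not needed for this step.

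\emph{Enough projectives.} Given $\fP\in X\Lcth_\bW$, put $P_\alpha=\fP[U_\alpha]$. Using Theorem~\ref{very-flat-cotorsion-pair}(a), choose surjections $F_\alpha\rarrow P_\alpha$ with $F_\alpha$ very flat and contraadjusted kernel. Then $F_\alpha$ is an extension of two contraadjusted modules, hence contraadjusted. This gives admissible epimorphisms $\fF_\alpha\rarrow j_\alpha^!\fP$ in $U_\alpha\Ctrh$. Their adjoints assemble to a morphism $\phi\:\bigoplus_\alpha j_{\alpha!}\fF_\alpha\rarrow\fP$, and we must show $\phi$ is an admissible epimorphism in $X\Lcth_\bW$.

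This last verification is the main obstacle, since it is only clear locally. By Lemma~\ref{colocality-of-epimorphisms}(a) applied over each $U_\beta$, and because the kernel will then be $\bW$\+locally contraherent, it suffices to check that $\phi[U_\beta]$ is surjective with contraadjusted kernel for each $\beta$. The summand with $\alpha=\beta$ already maps onto $\fP[U_\beta]$ through $F_\beta\rarrow P_\beta$ with contraadjusted kernel. The other summands contribute $\Hom_{\cO_X(U_\alpha)}(\cO_X(U_\alpha\cap U_\beta),F_\alpha)$, which is contraadjusted over $\cO_X(U_\beta)$ by Lemma~\ref{contraadjusted-change-of-scalars-lemma} and Example~\ref{open-affine-very-flat-example}. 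A surjection from a direct sum $A\oplus B$ of contraadjusted modules, whose restriction to $A$ is already surjective with contraadjusted kernel, has kernel that is an extension of $B$ by that kernel, hence contraadjusted. Therefore $\phi[U_\beta]$ is an admissible epimorphism in $\cO_X(U_\beta)\Modl^\cta$ for every $\beta$. By locality of exactness (Lemma~\ref{short-exact-sequnces-of-cta-are-colocal} and the discussion in Section~\ref{loc-contrah-subsecn}), $\phi$ is an admissible epimorphism in $X\Lcth_\bW$, which completes the plan.
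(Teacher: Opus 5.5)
Your proposal is correct and follows the paper's proof essentially step by step. It obtains projectivity of $j_{\alpha!}\fF$ from the adjunction with the exact restriction functor $j_\alpha^!$, builds $\bigoplus_\alpha j_{\alpha!}\fF_\alpha\rarrow\fP$ from local very flat approximations, checks admissibility locally via Lemma~\ref{colocality-of-epimorphisms}(a), and gets the characterization by splitting. The only difference is that you explicitly verify $\phi[U_\beta]$ is surjective with contraadjusted kernel (your extension argument for $A\oplus B\rarrow C$), whereas the paper simply asserts that $j_\beta^!(\phi)$ is an admissible epimorphism.
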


\begin{proof}
 This is~\cite[Lemma~4.5.1]{Pcosh}.
 For every index~$\alpha$, the exact category $U_\alpha\Ctrh$ of
contraherent cosheaves on $U_\alpha$ is equivalent to the exact
category $\cO_X(U_\alpha)\Modl^\cta$ of contraadjusted
$\cO_X(U_\alpha)$\+mod\-ules (see
Lemma~\ref{ctrh-cosheaves-on-affine-scheme}).
 By Theorem~\ref{very-flat-cotorsion-pair} and
Lemma~\ref{cotorsion-pair-injectives-projectives}(b), it follows
that the projective objects of $U_\alpha\Ctrh$ correspond to
very flat contraadjusted $\cO_X(U_\alpha)$\+modules, and there are
enough such projective objects. {\hbadness=1250\par}

 Let $j_\alpha\:U_\alpha\rarrow X$ denote the identity open immersion
morphism.
 As a particular case of the adjunction
formula~\eqref{cosheaves-direct-image-restriction-adjunction},
the direct image functor $j_\alpha{}_!\:U_\alpha\Ctrh\rarrow X\Lcth_\bW$
\,\eqref{ctrh-direct-image}, \eqref{lcth-W-T-direct-image} is left
adjoint to the restriction functor $j_\alpha^!\:X\Lcth_\bW\rarrow
U_\alpha\Ctrh$ \,\eqref{restriction-of-lcth-cosheaves}.
 Since the functor~$j_\alpha^!$ is exact, it follows that
the functor~$j_\alpha{}_!$ takes projective objects to projective
objects.

 In order to prove both assertions of the lemma, it remains to show
that for every $\bW$\+locally contraherent cosheaf $\fP$ on $X$ there
is an admissible epimorphism $\bigoplus_{\alpha=1}^N
j_\alpha{}_!\fF_\alpha\rarrow\fP$ in $X\Lcth_\bW$, where $\fF_\alpha$
are projective objects of $U_\alpha\Ctrh$.
 Indeed, given $\fP\in X\Lcth_\bW$, choose for every index~$\alpha$
an admissible epimorphism $\fF_\alpha\rarrow j_\alpha^!\fP$ in
$U_\alpha\Ctrh$ with a projective contraherent cosheaf $\fF_\alpha$
on~$U_\alpha$.
 By adjunction~\eqref{cosheaves-direct-image-restriction-adjunction},
we obtain the related morphism $f\:\bigoplus_{\alpha=1}^N
j_\alpha{}_!\fF_\alpha\rarrow\fP$ in $X\Lcth_\bW$.
 
 We still need to check that the morphism~$f$ is an admissible
epimorphism in $X\Lcth_\bW$.
 Here the point is that the property of a morphism of (locally)
contraherent cosheaves to be an admissible epimorphism is local
by Lemma~\ref{colocality-of-epimorphisms}(a).
 So it suffices to observe that the morphism $j_\alpha^!(f)$ is
an admissible epimorphism in $U_\alpha\Ctrh$ for every index~$\alpha$.
\end{proof}

\begin{cor} \label{qcss-lcth-prj-cor}
 Let $X$ be a quasi-compact semi-separated scheme with an open
covering\/~$\bW$.
 Then the classes of projective objects in the three exact categories
$X\Ctrh\subset X\Lcth_\bW\subset X\Lcth$ coincide with each other,
and there are enough of them in all the three categories.
 In particular, all the projective objects of $X\Lcth_\bW$ belong to
$X\Ctrh$, i.~e., they are contraherent cosheaves on the whole of~$X$.
\end{cor}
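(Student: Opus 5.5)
Lemma~\ref{qcss-lcth-prj-lemma} does most of the work here, applied with several open coverings. I fix a finite affine open covering $X=\bigcup_{\alpha=1}^N U_\alpha$ subordinate to $\bW$; it is automatically subordinate to any covering $\bW'$ that $\bW$ refines, and in particular to the trivial covering $\{X\}$. For both $\bW$ and $\{X\}$, Lemma~\ref{qcss-lcth-prj-lemma} then gives enough projectives. Moreover, the projectives are exactly the direct summands of finite sums $\bigoplus_\alpha j_\alpha{}_!\fF_\alpha$ with $\fF_\alpha$ projective in $U_\alpha\Ctrh$. This description depends only on the chosen affine covering, not on $\bW$. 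Since $j_\alpha$ is an affine open immersion ($X$ is semi-separated), each $j_\alpha{}_!\fF_\alpha$ lies in $X\Ctrh$ by \eqref{ctrh-direct-image}. A direct summand of a contraherent cosheaf is contraherent, because the axioms (ii)--(iii) pass to direct summands. So the projectives of $X\Ctrh$ and of $X\Lcth_\bW$ are the same class, and they all lie in $X\Ctrh$.

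For $X\Lcth=\bigcup_\bW X\Lcth_\bW$, I would first show that every projective $\fF$ of $X\Lcth_\bW$ is projective in $X\Lcth$. Let $\fQ\rarrow\fF$ be an admissible epimorphism in $X\Lcth$. It is admissible in $X\Lcth_{\bW'}$ for some covering $\bW'$, and after refining I may take $\bW'$ subordinate to $\bW$. By the first paragraph applied to $\bW'$, $\fF$ is projective in $X\Lcth_{\bW'}$, so the epimorphism splits.

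Conversely, let $\fF$ be projective in $X\Lcth$, with $\fF\in X\Lcth_{\bW''}$ for some $\bW''$. Lemma~\ref{qcss-lcth-prj-lemma} gives an admissible epimorphism from a projective $\fG$ of $X\Lcth_{\bW''}$ onto $\fF$. This epimorphism is also admissible in $X\Lcth$, so it splits, and $\fF$ is a direct summand of $\fG\in X\Ctrh$. By the first paragraph, $\fF$ is then contraherent and projective there. The same argument shows that $X\Lcth$ has enough projectives: any object lies in some $X\Lcth_{\bW''}$ and receives an admissible epimorphism from one of its projectives, which is projective in $X\Lcth$.

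The main obstacle I expect is the bookkeeping of exact structures. I need that admissible epimorphisms in $X\Lcth_\bW$, in $X\Lcth_{\bW'}$ and in $X\Lcth$ agree for objects of the smaller category; this is stated in Section~\ref{loc-contrah-subsecn}. I also need that a finite affine covering subordinate to $\bW$ remains subordinate to every coarser covering, which holds by definition.
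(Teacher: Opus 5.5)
Your argument is correct and is essentially the paper's proof written out: the paper simply says the corollary follows from Lemma~\ref{qcss-lcth-prj-lemma}, and you supply the details by applying that lemma with one fixed finite affine covering, viewed as subordinate to $\bW$, to $\{X\}$ and to arbitrary coverings, together with the compatibility of the exact structures recorded in Section~\ref{loc-contrah-subsecn}. The refinement to a covering subordinate to $\bW$ in your second paragraph does no harm, but it is unnecessary, since your first paragraph already applies to any covering $\bW'$.
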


\begin{proof}
 Follows from Lemma~\ref{qcss-lcth-prj-lemma}.
\end{proof}

\begin{lem} \label{qcss-lcth-lct-prj-lemma}
 Let $X$ be a quasi-compact semi-separated scheme with an open
covering\/~$\bW$.
 Let $X=\bigcup_{\alpha=1}^N U_\alpha$ be a finite affine open
covering of $X$ subordinate to\/~$\bW$.
 Then there are enough projective objects in the exact category
$X\Lcth^\lct_\bW$ of locally cotorsion\/ $\bW$\+locally contraherent
cosheaves on~$X$.
 A locally cotorsion\/ $\bW$\+locally contraherent cosheaf on $X$ is
projective if and only if it is a direct summand of a finite direct
sum of the direct images of contraherent cosheaves on $U_\alpha$
corresponding to flat cotorsion $\cO_X(U_\alpha)$\+modules.
\end{lem}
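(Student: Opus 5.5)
The plan is to repeat the proof of Lemma~\ref{qcss-lcth-prj-lemma} word for word, with the very flat cotorsion pair replaced by the flat cotorsion pair. First, for every index~$\alpha$, Lemma~\ref{ctrh-lct-lin-cosheaves-on-affine-scheme}(a) identifies the exact category $U_\alpha\Ctrh^\lct$ with $\cO_X(U_\alpha)\Modl^\cot$. By Theorem~\ref{flat-cotorsion-pair} the flat cotorsion pair is complete. So Lemma~\ref{cotorsion-pair-injectives-projectives}(b) tells us two things about $\cO_X(U_\alpha)\Modl^\cot$: there are enough projective objects in it, and these projective objects are exactly the flat cotorsion $\cO_X(U_\alpha)$\+modules. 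Hence the projectives of $U_\alpha\Ctrh^\lct$ are the contraherent cosheaves corresponding to flat cotorsion modules, and there are enough of them.

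Next, let $j_\alpha\:U_\alpha\rarrow X$ be the open immersion. It is affine and $(\bW,\bW|_{U_\alpha})$\+affine, because $X$ is semi-separated and $U_\alpha$ is subordinate to~$\bW$. So formula~\eqref{lcth-W-T-lct-direct-image} gives an exact functor $j_\alpha{}_!\:U_\alpha\Ctrh^\lct\rarrow X\Lcth^\lct_\bW$. Formula~\eqref{restriction-of-lcth-lct-cosheaves} gives an exact restriction functor $j_\alpha^!\:X\Lcth^\lct_\bW\rarrow U_\alpha\Ctrh^\lct$; here we use that $U_\alpha\Lcth^\lct_{\bW|_{U_\alpha}}=U_\alpha\Ctrh^\lct$ since $U_\alpha$ is subordinate to~$\bW$. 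The adjunction~\eqref{cosheaves-direct-image-restriction-adjunction} restricts to these full subcategories, so $j_\alpha{}_!$ is left adjoint to the exact functor $j_\alpha^!$. It therefore takes projectives to projectives, and every finite direct sum of such direct images, together with its direct summands, is projective in $X\Lcth^\lct_\bW$.

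Then, given $\fP\in X\Lcth^\lct_\bW$, choose for each~$\alpha$ an admissible epimorphism $\fF_\alpha\rarrow j_\alpha^!\fP$ in $U_\alpha\Ctrh^\lct$ with $\fF_\alpha$ projective. Form the adjoint morphism $f\:\bigoplus_\alpha j_\alpha{}_!\fF_\alpha\rarrow\fP$. The key point is that $f$ is an admissible epimorphism in $X\Lcth^\lct_\bW$. Each $j_\alpha^!(f)$ is a split-off sum of the chosen epimorphism and other maps, hence an admissible epimorphism in $U_\alpha\Ctrh^\lct$; concretely, on $\cO_X(U_\alpha)$\+modules it is a surjection of cotorsion modules with cotorsion kernel. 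Surjectivity of $f$ on cosections over each affine open subordinate to~$\bW$ then follows from Lemma~\ref{colocality-of-epimorphisms}(a), which also shows the kernel $\fK$ is $\bW$\+locally contraherent. To see that $\fK$ is locally cotorsion, apply Lemma~\ref{colocality-of-epimorphisms}(b) instead, or use that $X\Lcth^\lct_\bW$ is closed under kernels of admissible epimorphisms in $X\Lcth_\bW$ by Section~\ref{lct-lin-subsecn}. So $f$ is admissible in the inherited exact structure. This step is the only place where care is needed, and I expect no genuine obstacle there, since the colocality of admissible epimorphisms for cotorsion modules is already recorded.

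Finally, any projective object $\fP$ of $X\Lcth^\lct_\bW$ admits this admissible epimorphism $f$ from a projective, so $f$ splits. Thus $\fP$ is a direct summand of $\bigoplus_\alpha j_\alpha{}_!\fF_\alpha$, with each $\fF_\alpha$ corresponding to a flat cotorsion module. This proves both the existence of enough projectives and their description.
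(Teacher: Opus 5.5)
Your proposal is correct and is essentially the paper's proof. Like the paper, you transplant the proof of Lemma~\ref{qcss-lcth-prj-lemma} to the flat cotorsion pair, using Lemma~\ref{ctrh-lct-lin-cosheaves-on-affine-scheme}(a), Theorem~\ref{flat-cotorsion-pair}, the adjunction~\eqref{cosheaves-direct-image-restriction-adjunction} between $U_\alpha\Ctrh^\lct$ and $X\Lcth^\lct_\bW$, and Lemma~\ref{colocality-of-epimorphisms}(b).

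One correction: drop your alternative ``or use that $X\Lcth^\lct_\bW$ is closed under kernels of admissible epimorphisms in $X\Lcth_\bW$''. Section~\ref{lct-lin-subsecn} only gives that closure among $X\Ctrh^\lct$, $X\Lcth^\lct_\bW$, $X\Lcth^\lct$. The stronger claim fails already on an affine scheme, since a surjection of cotorsion modules can have a contraadjusted non-cotorsion kernel. So Lemma~\ref{colocality-of-epimorphisms}(b) is the argument that actually does the job.
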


\begin{proof}
 This is~\cite[Lemma~4.5.3]{Pcosh}.
 The argument is very similar to the proof of
Lemma~\ref{qcss-lcth-prj-lemma}.
 One needs to use
Lemma~\ref{ctrh-lct-lin-cosheaves-on-affine-scheme}(a),
Theorem~\ref{flat-cotorsion-pair} (for commutative rings),
and the fact that the direct image functor
$j_\alpha{}_!\:U_\alpha\Ctrh^\lct\rarrow X\Lcth^\lct_\bW$
\,\eqref{ctrh-lct-direct-image}, \eqref{lcth-W-T-lct-direct-image}
is left adjoint to the restriction functor
$j_\alpha^!\:X\Lcth^\lct_\bW\rarrow U_\alpha\Ctrh^\lct$
\,\eqref{restriction-of-lcth-lct-cosheaves} by
formula~\eqref{cosheaves-direct-image-restriction-adjunction}.
 Then one also needs to use Lemma~\ref{colocality-of-epimorphisms}(b).
\end{proof}

\begin{cor} \label{qcss-lcth-lct-prj-cor}
 Let $X$ be a quasi-compact semi-separated scheme with an open
covering\/~$\bW$.
 Then the classes of projective objects in the three exact categories
$X\Ctrh^\lct\subset X\Lcth^\lct_\bW\subset X\Lcth^\lct$ coincide with
each other, and there are enough of them in all the three categories.
 In particular, all the projective objects of $X\Lcth^\lct_\bW$ belong
to $X\Ctrh^\lct$, i.~e., they are (locally cotorsion) contraherent
cosheaves on the whole of~$X$.
\end{cor}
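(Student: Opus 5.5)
The plan is to argue exactly as Corollary~\ref{qcss-lcth-prj-cor} is deduced from Lemma~\ref{qcss-lcth-prj-lemma}, now using Lemma~\ref{qcss-lcth-lct-prj-lemma}. First fix a finite affine open covering $X=\bigcup_{\alpha=1}^N U_\alpha$ subordinate to~$\bW$. By Lemma~\ref{qcss-lcth-lct-prj-lemma}, $X\Lcth^\lct_\bW$ has enough projectives. They are precisely the direct summands of finite sums $\bigoplus_\alpha j_\alpha{}_!\fF_\alpha$, with $\fF_\alpha$ corresponding to flat cotorsion $\cO_X(U_\alpha)$\+modules. Since $X$ is semi-separated, each $j_\alpha$ is affine, so by~\eqref{ctrh-lct-direct-image} each $j_\alpha{}_!\fF_\alpha$ lies in $X\Ctrh^\lct$. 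The category $X\Ctrh^\lct$ is closed under direct summands, because the contraherence and cotorsion conditions pass to summands. Hence every projective object of $X\Lcth^\lct_\bW$ lies in $X\Ctrh^\lct$.

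Next I compare the three categories. Apply Lemma~\ref{qcss-lcth-lct-prj-lemma} to the trivial covering $\{X\}$, using the same affine covering $\{U_\alpha\}$, which is subordinate to $\{X\}$ as well. It describes the projectives of $X\Ctrh^\lct$ by the same class of objects. So the projectives of $X\Ctrh^\lct$ and of $X\Lcth^\lct_\bW$ coincide, and both categories have enough of them.

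For $X\Lcth^\lct=\bigcup_{\bW'}X\Lcth^\lct_{\bW'}$, I pass to refinements. Any object and any admissible short exact sequence of $X\Lcth^\lct$ live in $X\Lcth^\lct_{\bW'}$ for some covering $\bW'$. Refining by a common subordinate covering, I may assume $\bW'$ is subordinate to~$\bW$. Let $P$ be projective in $X\Lcth^\lct_\bW$. An admissible epimorphism $E\rarrow P$ in $X\Lcth^\lct$ is then an admissible epimorphism in $X\Lcth^\lct_{\bW'}$. By the previous step applied to $\bW'$, $P$ is projective there, so the epimorphism splits; hence $P$ is projective in $X\Lcth^\lct$. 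Enough projectives in $X\Lcth^\lct$ follows because every object lies in some $X\Lcth^\lct_{\bW'}$. There its projective cover-type epimorphism from an object of $X\Ctrh^\lct$ is admissible in $X\Lcth^\lct$, and that object is projective in $X\Lcth^\lct$ by the argument just given. Conversely, let $Q$ be projective in $X\Lcth^\lct$, with $Q\in X\Lcth^\lct_{\bW'}$. Take an admissible epimorphism onto $Q$ from a projective of $X\Lcth^\lct_{\bW'}$; it splits, so $Q$ is a summand of an object of $X\Ctrh^\lct$ that is projective in $X\Ctrh^\lct$. Therefore $Q$ is projective in $X\Ctrh^\lct$.

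The only delicate point is the transfer of exact structures. Admissible short exact sequences in $X\Lcth^\lct_\bW$ are exactly those in $X\Lcth^\lct_{\bW'}$ or in $X\Lcth^\lct$ whose terms lie in $X\Lcth^\lct_\bW$; this was established in Section~\ref{lct-lin-subsecn}. Projectivity then passes correctly between the categories, and I expect no real obstacle.
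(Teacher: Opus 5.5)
Your proposal is correct and takes the same route as the paper, whose proof is just the remark that the corollary follows from Lemma~\ref{qcss-lcth-lct-prj-lemma}. You spell out the details the paper leaves implicit: the affineness of the $j_\alpha$, closure of $X\Ctrh^\lct$ under direct summands, and applying the lemma both to the trivial covering $\{X\}$ and to refinements $\bW'$ in order to handle $X\Lcth^\lct$. One small correction to your last paragraph: the compatibility of exact structures under refinement of coverings is established in Section~\ref{loc-contrah-subsecn}, and Section~\ref{lct-lin-subsecn} only passes it to the locally cotorsion subcategories.
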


\begin{proof}
 Follows from Lemma~\ref{qcss-lcth-lct-prj-lemma}.
\end{proof}

 Given a quasi-compact semi-separated scheme $X$, we denote by
$X\Ctrh_\prj\subset X\Ctrh$ the full subcategory of projective
contraherent cosheaves on $X$, and by $X\Ctrh^\lct_\prj\subset
X\Ctrh^\lct$ the full subcategory of projective locally cotorsion
contraherent cosheaves on~$X$.
 Let us \emph{warn} the reader that our terminology and notation
are misleading: the projective objects of $X\Ctrh^\lct$ are usually
\emph{not} projective in $X\Ctrh$; so $X\Ctrh^\lct_\prj\varsubsetneq
X\Ctrh_\prj$.

\subsection{Antilocally flat contraherent cosheaves}
\label{alf-subsecn}
 Given a scheme $X$, we denote by $\Hom_X({-},{-})$ the groups of
morphisms in the abelian category $X\Qcoh$ and by $\Hom^X({-},{-})$
the groups of morphisms in the exact category $X\Lcth$.
 The $\Ext$ groups in the abelian category $X\Qcoh$ are denoted by
$\Ext_X^*({-},{-})$.
 We avoid references to $\Ext$ groups in the exact categories of
locally contraherent cosheaves before it has been established that
such $\Ext$ groups agree between various such exact (sub)categories
defined in Sections~\ref{loc-contrah-subsecn}\+-\ref{lct-lin-subsecn}
and the assumptions needed for that have been made.

 Let $X$ be a scheme with an open covering~$\bW$.
 Following the terminology of~\cite[Section~4.4]{Pcosh},
\cite[Section~15.3]{Pdomc}, and~\cite[Section~4.2]{Pform},
we will say that a $\bW$\+locally contraherent cosheaf $\fF$ on $X$
is \emph{antilocally flat} if, for every (admissible) short exact
sequence $0\rarrow\fP\rarrow\fQ\rarrow\fR\rarrow0$ of locally
cotorsion $\bW$\+locally contraherent cosheaves on $X$, the short
sequence of abelian groups
$$
 0\lrarrow\Hom^X(\fF,\fP)\lrarrow\Hom^X(\fF,\fQ)\lrarrow
 \Hom^X(\fF,\fR)\lrarrow0
$$
is exact.
 Notice that, according to this definition, the property of
a locally contraherent cosheaf on $X$ to be antilocally flat may be
affected by refinements of the covering~$\bW$ (so one has to
assume a fixed open covering~$\bW$).
 We will see below in this section that, over a quasi-compact
semi-separated scheme $X$, all antilocally flat $\bW$\+locally
contraherent cosheaves on $X$ are actually contraherent on
the whole of $X$, and the class of all such cosheaves does not
depend on~$\bW$.

\begin{lem} \label{antilocally-flat-basic-properties}
\textup{(a)} Let $U$ be an affine scheme with the trivial open
covering\/ $\bW_U=\{U\}$.
 Then a contraherent cosheaf\/ $\fF$ on $U$ is antilocally flat if
and only if the corresponding contraadjusted $\cO(U)$\+module\/
$\fF[U]$ is flat. \par
\textup{(b)} Let $X$ be a scheme and $Y\subset X$ be an open
subscheme such that the open immersion morphism $j\:Y\rarrow X$
is affine.
 Let\/ $\bW$ be an open covering of $X$ and\/ $\bW|_Y$ be its
restriction to~$Y$.
 Then the direct image functor $j_!\:Y\Lcth_{\bW|_Y}\rarrow
X\Lcth_\bW$ \,\eqref{lcth-W-T-direct-image} takes antilocally flat\/
$\bW|_Y$\+locally contraherent cosheaves on $Y$ to antilocally flat\/
$\bW$\+locally contraherent cosheaves on~$X$.
\end{lem}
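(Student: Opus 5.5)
For part~(a), I would use the equivalences of exact categories $U\Ctrh\simeq\cO(U)\Modl^\cta$ and $U\Ctrh^\lct\simeq\cO(U)\Modl^\cot$ from Lemmas~\ref{ctrh-cosheaves-on-affine-scheme} and~\ref{ctrh-lct-lin-cosheaves-on-affine-scheme}(a). Under these equivalences, $\fF$ is antilocally flat precisely when the module $F=\fF[U]$ has the property that $\Hom_{\cO(U)}(F,{-})$ takes short exact sequences of cotorsion modules to short exact sequences.

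If $F$ is flat, this holds: for any short exact sequence $0\rarrow P\rarrow Q\rarrow R\rarrow0$ of cotorsion modules we have $\Ext^1(F,P)=0$, so $\Hom(F,Q)\rarrow\Hom(F,R)$ is surjective.

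Conversely, suppose the Hom condition holds. Apply Theorem~\ref{flat-cotorsion-pair}(a) to get $0\rarrow C\rarrow G\rarrow F\rarrow0$ with $G$ flat and $C$ cotorsion. Since $F$ is contraadjusted and $C$ is cotorsion, hence contraadjusted, the module $G$ is contraadjusted as an extension. However, $G$ need not be cotorsion, so this sequence cannot be fed into the hypothesis directly. Instead, apply Theorem~\ref{flat-cotorsion-pair}(b) to $G$: there is $0\rarrow G\rarrow G'\rarrow G''\rarrow0$ with $G'$ cotorsion and $G''$ flat. Then $G'$ is flat as an extension of flat modules, so $G'$ is flat cotorsion.

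Form the pushout of $0\rarrow C\rarrow G\rarrow F\rarrow0$ along $G\rarrow G'$, or more simply take the composition $C\rarrow G\rarrow G'$. It is injective with cokernel $E$, where $E$ sits in $0\rarrow F\rarrow E\rarrow G''\rarrow0$. Then $0\rarrow C\rarrow G'\rarrow E\rarrow0$ has $C$ and $G'$ cotorsion, so $E$ is cotorsion, since the cotorsion modules are closed under cokernels of monomorphisms. This is a short exact sequence in $\cO(U)\Modl^\cot$.

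By hypothesis, $\Hom(F,G')\rarrow\Hom(F,E)$ is surjective. So the inclusion $F\rarrow E$ lifts to a map $F\rarrow G'$. This lift is injective, since its composition with $G'\rarrow E$ is the monomorphism $F\rarrow E$. Moreover, $F$ becomes a direct summand of $G'$: the lift splits the pullback of $0\rarrow C\rarrow G'\rarrow E\rarrow0$ along $F\rarrow E$, and that pullback is $0\rarrow C\rarrow G\rarrow F\rarrow0$, because $G$ is exactly the preimage of $F$ in $G'$. So the sequence $0\rarrow C\rarrow G\rarrow F\rarrow0$ splits, and $F$ is a direct summand of the flat module $G$, hence flat. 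This splitting/pullback identification is the main point to check carefully, but it is a routine diagram chase.

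For part~(b), I would use the adjunction~\eqref{cosheaves-direct-image-restriction-adjunction}, which gives $\Hom^X(j_!\fF,\fP)\simeq\Hom^Y(\fF,j^!\fP)$ naturally in $\fP$. Given a short exact sequence of locally cotorsion $\bW$\+locally contraherent cosheaves on $X$, the restriction functor $j^!$ in~\eqref{restriction-of-lcth-lct-cosheaves} carries it to a short exact sequence of locally cotorsion $\bW|_Y$\+locally contraherent cosheaves on $Y$. Since $\fF$ is antilocally flat, applying $\Hom^Y(\fF,{-})$ to the restricted sequence gives a short exact sequence. By naturality of the adjunction, this is the same as applying $\Hom^X(j_!\fF,{-})$ to the original sequence, so $j_!\fF$ is antilocally flat.

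It remains to check that $j_!\fF$ is $\bW$\+locally contraherent, which holds by~\eqref{lcth-W-T-direct-image} since $j$ is affine and hence $(\bW,\bW|_Y)$\+affine. No serious obstacle is expected in part~(b).
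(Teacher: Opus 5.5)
Your proof is correct. Part~(b) and the ``if'' half of part~(a) coincide with the paper's arguments.

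For the ``only if'' half of~(a) you take a different route. The paper uses only the cotorsion pair property from Theorem~\ref{flat-cotorsion-pair}, i.e., the equality $R\Modl_\fl={}^{\perp_1}(R\Modl^\cot)$. From the Hom condition it deduces $\Ext^1_{\cO(U)}(F,C)=0$ for every cotorsion module $C$: embed $C$ into an injective module $J$; then $0\rarrow C\rarrow J\rarrow J/C\rarrow0$ is a short exact sequence of cotorsion modules, so $\Hom(F,J)\rarrow\Hom(F,J/C)$ is surjective and the connecting map kills $\Ext^1(F,C)$. It then concludes that $F$ is flat. This is an instance of an abstract orthogonality lemma for cogenerating subcategories closed under cokernels of admissible monomorphisms, and the paper reuses that lemma later (Corollaries~\ref{qcss-alf-characterizations}(a) and~\ref{noetherian-findim-flat=alf}).

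You instead invoke the completeness of the flat cotorsion pair and show directly that a special flat precover of $F$ splits, which is the ``direct summand lemma'' pattern. This bypasses the description of flat modules as ${}^{\perp_1}(R\Modl^\cot)$, at the cost of relying on the harder completeness part of Theorem~\ref{flat-cotorsion-pair}.

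Your second approximation step is more than you need. The flatness of $G'$ is never used, only that $G'$ is cotorsion. So embedding $G$ into an injective module would do, and Theorem~\ref{flat-cotorsion-pair}(b) can be dropped. Likewise, the remark that $G$ is contraadjusted plays no role in the argument.
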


\begin{proof}
 Part~(a): the ``if'' assertion follows immediately from
the definition of a cotorsion module (one still needs
to keep in mind Lemmas~\ref{ctrh-cosheaves-on-affine-scheme}
and~\ref{ctrh-lct-lin-cosheaves-on-affine-scheme}(a)).
 The ``only if'' follows from the easy part of
Theorem~\ref{flat-cotorsion-pair}; a simple version
of~\cite[Lemma~A.2.2(b)]{Pform} is also relevant.

 Part~(b): this assertion actually holds in greater generality,
as mentioned in~\cite[beginning of Section~4.4]{Pcosh}.
 We restrict ourselves to open immersions~$j$ here because we have
not defined inverse images of locally contraherent cosheaves
with respect to any other morphisms of schemes in this paper.

 Let $\fG$ be an antilocally flat $\bW|_Y$\+locally contraherent
cosheaf on~$Y$, and let $0\rarrow\fP\rarrow\fQ\rarrow\fR\rarrow0$
be a short exact sequence of locally cotorsion $\bW$\+locally
contraherent cosheaves on~$X$.
 Then $0\rarrow j^!\fP\rarrow j^!\fQ\rarrow j^!\fR\rarrow0$ is
a short exact sequence of locally cotorsion $\bW|_Y$\+locally
contraherent cosheaves on $Y$,
by formula~\eqref{restriction-of-lcth-lct-cosheaves}.
 So the short sequence of abelian groups $0\rarrow
\Hom^Y(\fG,j^!\fP)\rarrow\Hom^Y(\fG,j^!\fQ)\rarrow\Hom^Y(\fG,j^!\fR)
\rarrow0$ is exact.
 Now, by
adjunction~\eqref{cosheaves-direct-image-restriction-adjunction},
this short sequence is isomorphic to the short sequence $0\rarrow
\Hom^X(j_!\fG,\fP)\rarrow\Hom^X(j_!\fG,\fQ)\rarrow\Hom^X(j_!\fG,\fR)
\rarrow0$, which is consequently exact as well.
\end{proof}

 Given an exact category $\sE$, a class of objects $\sS\subset\sE$,
and an object $E\in\sE$, we say that $E$ is a \emph{finitely iterated
extension of objects from\/ $\sS$ in\/~$\sE$} if the morphism
$0\rarrow E$ can be presented as the composition of a finite sequence
of admissible monomorphisms with the cokernels belonging to~$\sS$.
 Equivalently, $E$ is a finitely iterated extension of objects from
$\sS$ if and only if the morphism $E\rarrow0$ can be presented as
the composition of a finite sequence of admissible epimorphisms
with the kernels belonging to~$\sS$.

\begin{prop} \label{qcoh-ssep-lct-preenvelope-prop}
 Let $X$ be a quasi-compact semi-separated scheme with an open
covering\/~$\bW$.
 Let $X=\bigcup_{\alpha=1}^N U_\alpha$ be a finite affine open
covering of $X$ subordinate to\/~$\bW$.
 Then, for any\/ $\bW$\+locally contraherent cosheaf\/ $\fM$ on $X$,
there exists an (admissible) short exact sequence\/ $0\rarrow\fM
\rarrow\fQ\rarrow\fF\rarrow0$ in the exact category $X\Lcth_\bW$ with
a locally cotorsion\/ $\bW$\+locally contraherent cosheaf\/ $\fQ$ on
$X$ and a contraherent cosheaf\/ $\fF$ on $X$ that is a finitely
iterated extension of the direct images of contraherent cosheaves on
$U_\alpha$ corresponding to flat contraadjusted
$\cO_X(U_\alpha)$\+modules.
\end{prop}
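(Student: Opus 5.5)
We follow the inductive pattern of the proof of Proposition~\ref{qcoh-ssep-enough-lin-prop}, replacing injective envelopes by special cotorsion preenvelopes from Theorem~\ref{flat-cotorsion-pair}(b). Put $V_\beta=\bigcup_{\alpha=1}^\beta U_\alpha$. By induction on $\beta=0$,~\dots, $N$ we construct short exact sequences $0\rarrow\fM\rarrow\fQ_\beta\rarrow\fF_\beta\rarrow0$ in $X\Lcth_\bW$ such that the restriction of $\fQ_\beta$ to $V_\beta$ is locally cotorsion, and $\fF_\beta$ is a finitely iterated extension in $X\Ctrh$ of cosheaves $j_\alpha{}_!\fG_\alpha$, $\alpha\le\beta$, where $j_\alpha\:U_\alpha\rarrow X$ and $\fG_\alpha\in U_\alpha\Ctrh$ corresponds to a flat contraadjusted $\cO_X(U_\alpha)$\+module. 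For $\beta=0$ take $\fQ_0=\fM$, $\fF_0=0$; the case $\beta=N$ is the assertion.

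For the induction step, put $\fK=\fQ_{\beta-1}$, $U=U_\beta$, $V=V_{\beta-1}$, $j=j_\beta$. The $\cO_X(U)$\+module $\fK[U]$ is contraadjusted; by Theorem~\ref{flat-cotorsion-pair}(b) choose $0\rarrow\fK[U]\rarrow C\rarrow G\rarrow0$ with $C$ cotorsion and $G$ flat. Since $C$ is a quotient-extension of contraadjusted modules it is contraadjusted, and so is $G$ as a quotient of $C$; thus this is a short exact sequence in $\cO_X(U)\Modl^\cta$, i.e., via Lemma~\ref{ctrh-cosheaves-on-affine-scheme}, a sequence $0\rarrow j^!\fK\rarrow\fI\rarrow\fG\rarrow0$ in $U\Ctrh$ with $\fI$ locally cotorsion and $\fG$ corresponding to the flat contraadjusted module $G$. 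Apply the exact functor $j_!$ \eqref{ctrh-direct-image} (using semi-separatedness, $j$ is affine) and push out along the adjunction morphism $j_!j^!\fK\rarrow\fK$, obtaining $0\rarrow\fK\rarrow\fL\rarrow j_!\fG\rarrow0$ in $X\Lcth_\bW$. Set $\fQ_\beta=\fL$. The composition $\fM\rarrow\fK\rarrow\fL$ is an admissible monomorphism, and its cokernel $\fF_\beta$ sits in a short exact sequence $0\rarrow\fF_{\beta-1}\rarrow\fF_\beta\rarrow j_!\fG\rarrow0$ (the standard $3\times3$ argument in exact categories). Since $X\Ctrh$ is closed under extensions in $X\Lcth_\bW$, $\fF_\beta$ is contraherent and is an iterated extension of the required form.

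It remains to check that $\fL$ is locally cotorsion on $V\cup U$, exactly as in the proof of Proposition~\ref{qcoh-ssep-enough-lin-prop}. On $U$: $j^!\fL\simeq\fI$, which is locally cotorsion. On $V$: $h^!\fL$ is an extension of $h^!j_!\fG\simeq j'_!h'{}^!\fG$ by $h^!\fK$, the latter locally cotorsion by induction. Here is the main difference from the locally injective case: $h'{}^!\fG$ need not be locally cotorsion, so the previous argument (cokernel of lct by lct) breaks. Instead, I will use the sequence $0\rarrow h'{}^!j^!\fK\rarrow h'{}^!\fI\rarrow h'{}^!\fG\rarrow0$ on $U\cap V$, whose first two terms are locally cotorsion, so $h'{}^!\fG$ is locally cotorsion after all, as $(U\cap V)\Ctrh^\lct$ is closed under cokernels of admissible monomorphisms. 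Then $j'_!h'{}^!\fG$ is locally cotorsion by \eqref{ctrh-lct-direct-image} ($j'$ affine as $X$ is semi-separated), and closure of $V\Lcth^\lct_{\bW|_V}$ under extensions finishes the step. The only real care needed is the identification of the cokernel $\fF_\beta$ as an extension, which is routine.
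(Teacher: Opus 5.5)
Your proposal is correct and follows essentially the same route as the paper, which sketches exactly this induction: a special cotorsion preenvelope of $\fK[U]$ via Theorem~\ref{flat-cotorsion-pair}(b), the exact direct image along the affine immersion $j$, a pushout along $j_!j^!\fK\rarrow\fK$, and closure of locally cotorsion cosheaves under extensions, under cokernels of admissible monomorphisms, and under affine direct images. Two small expository corrections: $C$ is contraadjusted simply because it is cotorsion (the modules $R[s^{-1}]$ being flat), not by a ``quotient-extension'' argument; and the restriction $h'{}^!\fG$ is handled by precisely the same cokernel-closure argument as in the locally injective case, so nothing there actually ``breaks''.
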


\begin{proof}
 This is~\cite[Lemma~4.4.1]{Pcosh}.
 The proof is based on the same construction as the proof of
Proposition~\ref{qcoh-ssep-enough-lin-prop} above; one just has to
look into it a bit more carefully.
 Proceeding by induction on~$\beta$, one constructs admissible
monomorphisms $\fM\rarrow\fN_\beta$ in $X\Lcth_\bW$ such that
the restriction of $\fN_\beta$ to the open subset
$V_\beta=\bigcup_{\alpha=1}^\beta U_\alpha\subset X$ is a locally
cotorsion $\bW|_{V_\beta}$\+locally contrahent cosheaf on $V_\beta$,
while the cokernel of $\fM\rarrow\fN_\beta$ is a finitely iterated
extension of the direct images of contraherent cosheaves on $U_\alpha$,
\ $\alpha\le\beta$, corresponding to flat contraadjusted
$\cO_X(U_\alpha)$\+modules.

 In particular, for an affine scheme $U=U_\beta$ with the open
immersion morphism $j\:U\rarrow X$, one has to use the fact that
there exists an admissible monomorphism $j^!\fN_{\beta-1}\rarrow\fR$
in $U\Ctrh$ with a locally cotorsion contraherent cosheaf $\fR$ on $U$
such that the cokernel of $j^!\fN_{\beta-1}\rarrow\fR$ is
a contraherent cosheaf $\fG$ on $U$ corresponding to a flat
contraadjusted $\cO_X(U)$\+module $\fG[U]$.
 Indeed, by Lemmas~\ref{ctrh-cosheaves-on-affine-scheme}
and~\ref{ctrh-lct-lin-cosheaves-on-affine-scheme}(a), we have
an equivalence of exact categories $U\Ctrh\simeq\cO_X(U)\Modl^\cta$
identifying the full subcategories $U\Ctrh^\lct\subset U\Ctrh$
and $\cO_X(U)\Modl^\cot\subset\cO_X(U)\Modl$.
 Then it remains to apply Theorem~\ref{flat-cotorsion-pair}(b)
to the $\cO_X(U)$\+module $M=\fN_{\beta-1}[U]$, and recall that
any quotient module of a contraadjusted (in particular, cotorsion)
module is contraadjusted (Section~\ref{cta-modules-subsecn}).

 Then one has to use the locality of the locally cotorsion property
of locally contraherent cosheaves, the fact that the full subcategory
they form is closed under extensions and cokernels of admissible
monomorphisms (all of this as per Section~\ref{lct-lin-subsecn}),
and finally, the fact that the local cotorsion property of
contraherent cosheaves is preserved by the direct images with respect
to affine morphisms (formula~\eqref{ctrh-lct-direct-image}).

 The construction produces a sequence of admissible monomorphisms
$\fM=\fN_0\rarrow\fN_1\rarrow\dotsb\rarrow\fN_N=\fQ$ in $X\Lcth_\bW$
such that the cokernel of the admissible monomorphism
$\fN_{\beta-1}\rarrow\fN_\beta$ is isomorphic to the direct image
$j_\beta{}_!\fG_\beta$ for the identity open immersion morphism
$j_\beta\:U_\beta\rarrow X$ and a contraherent cosheaf $\fG_\beta$
on $U_\beta$ corresponding to a flat contraadjusted module over
$\cO_X(U_\beta)$.
\end{proof}

 For a quasi-compact semi-separated scheme $X$, we can observe that
the $\Ext$ groups computed in the exact categories $X\Ctrh$, \
$X\Lcth_\bW$, and $X\Lcth$ agree.
 Indeed, the $\Ext$ groups in an exact category and its
(co)resolving subcategory always agree by~\cite[Lemma~6.3]{Pfltp}
(which is based on~\cite[beginning of Section~1]{Pfltp} and~\cite[Theorem~12.1(b)]{Kel},
\cite[Proposition~13.2.2(i)]{KS},
or~\cite[Proposition~A.2.1 or~A.3.1(a)]{Pcosh}).
 It remains to point out that the full subcategories $X\Ctrh\subset
X\Lcth_\bW\subset X\Lcth$ are resolving in each other, since they
are closed under extensions and kernels of admissible epimorphisms
in each other as per Section~\ref{loc-contrah-subsecn}, and
the full subcategory $X\Ctrh$ is generating in $X\Lcth_\bW$ and
$X\Lcth$ by Corollary~\ref{qcss-lcth-prj-cor}.

 On the other hand, under the same assumptions on $X$, the full
subcategories $X\Lcth^\lin_\bW\subset X\Lcth^\lct_\bW\subset X\Lcth_\bW$
are coresolving in each other.
 Indeed, they are closed under extensions and cokernels of
admissible monomorphisms in each other as per
Section~\ref{lct-lin-subsecn}, and the full subcategory
$X\Lcth^\lin_\bW$ is cogenerating in $X\Lcth^\lct_\bW$ and $X\Lcth_\bW$
by Proposition~\ref{qcoh-ssep-enough-lin-prop}.
 So the $\Ext$ groups in these three exact categories also agree,
and the $\Ext$ gropus in the exact categories $X\Lcth^\lin\subset
X\Lcth^\lct\subset X\Lcth$ agree as well.
 Based on the observations in this and the previous paragraph, we
will denote the $\Ext$ groups computed in any one of the mentioned
exact categories of cosheaves by $\Ext^{X,*}({-},{-})$.

\begin{cor} \label{qcss-alf-characterizations}
 Let $X$ be a quasi-compact semi-separated scheme with an open
covering\/~$\bW$.
 In this setting: \par
\textup{(a)} The following three conditions on a\/ $\bW$\+locally
contraherent cosheaf\/ $\fF$ on $X$ are equivalent:
\begin{enumerate}
\item $\fF$ is antilocally flat (for the given open covering\/~$\bW$);
\item $\Ext^{X,1}(\fF,\fQ)=0$ for all locally cotorsion\/
$\bW$\+locally contraherent cosheaves\/ $\fQ$ on~$X$;
\item $\Ext^{X,n}(\fF,\fQ)=0$ for all locally cotorsion\/
$\bW$\+locally contraherent cosheaves\/ $\fQ$ on $X$ and all $n\ge1$.
\end{enumerate} \par
\textup{(b)} The full subcategory of antilocally flat\/
$\bW$\+locally contraherent cosheaves is closed under extensions
and kernels of admissible epimorphisms in $X\Lcth_\bW$.
\end{cor}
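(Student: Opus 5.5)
The plan is to prove (a) by the cycle (3)$\Rightarrow$(2)$\Rightarrow$(1)$\Rightarrow$(3), with (1)$\Rightarrow$(3) carrying the substance, and then to deduce (b) from (a).

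The implication (3)$\Rightarrow$(2) is trivial. For (2)$\Rightarrow$(1), take a short exact sequence $0\rarrow\fP\rarrow\fQ\rarrow\fR\rarrow0$ in $X\Lcth^\lct_\bW$; it is also exact in $X\Lcth_\bW$. The long exact sequence of $\Ext^{X,*}(\fF,{-})$ in $X\Lcth_\bW$ then gives
$$
 0\rarrow\Hom^X(\fF,\fP)\rarrow\Hom^X(\fF,\fQ)\rarrow\Hom^X(\fF,\fR)\rarrow\Ext^{X,1}(\fF,\fP).
$$
The last term vanishes by (2), so the sequence is exact.

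For (1)$\Rightarrow$(3), I would argue as follows.

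First, (1) gives $\Ext^{X,1}(\fF,\fQ)=0$ for every $\fQ\in X\Lcth^\lct_\bW$. Indeed, a class in $\Ext^{X,1}(\fF,\fQ)$ is represented by an admissible sequence $0\rarrow\fQ\rarrow\fE\rarrow\fF\rarrow0$ in $X\Lcth_\bW$. By Proposition~\ref{qcoh-ssep-enough-lin-prop}, embed $\fQ$ into a locally injective cosheaf $\fJ$ via an admissible monomorphism. The cokernel $\fJ/\fQ$ is locally cotorsion, since $X\Lcth^\lct_\bW$ is coresolving in $X\Lcth_\bW$. Hence $0\rarrow\fQ\rarrow\fJ\rarrow\fJ/\fQ\rarrow0$ lies in $X\Lcth^\lct_\bW$.

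Second, apply $\Ext^{X,*}(\fF,{-})$ to this sequence. Condition (1) says $\Hom^X(\fF,\fJ)\rarrow\Hom^X(\fF,\fJ/\fQ)$ is surjective. So $\Ext^{X,1}(\fF,\fQ)$ injects into $\Ext^{X,1}(\fF,\fJ)$.

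Third, show $\Ext^{X,n}(\fF,\fJ)=0$ for all $n\ge1$ and all locally injective $\fJ$. I expect this to be the main obstacle. My approach is to compute $\Ext^{X,*}$ inside the coresolving subcategory $X\Lcth^\lin_\bW$, where the groups agree as explained above. I would then try to show that locally injective cosheaves on a quasi-compact semi-separated scheme are direct summands of finite direct sums $\bigoplus_\alpha j_\alpha{}_!j_\alpha^!\fJ$; for this I would use the \v Cech coresolution of $\fJ$ together with a splitting argument. Each summand $j_\alpha{}_!\fI$ with $\fI$ locally injective on $U_\alpha$ is injective in $X\Lcth_\bW$, by the adjunction~\eqref{cosheaves-direct-image-restriction-adjunction}, exactness of $j_\alpha^!$, and the fact that $U_\alpha\Ctrh^\lin$ is split. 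Having enough injectives makes the higher Ext vanish.

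If this splitting argument fails, the fallback is a dimension shift. Since the cokernel $\fJ/\fQ$ is again locally cotorsion, iterating the same argument gives $\Ext^{X,n}(\fF,\fQ)\simeq\Ext^{X,1}(\fF,\fQ')$ for some locally cotorsion $\fQ'$ (up to the injectivity of the boundary maps), reducing everything to the case $n=1$.

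Part (b) then follows from (a)(3) by the long exact Ext sequence. For a sequence $0\rarrow\fF'\rarrow\fF\rarrow\fF''\rarrow0$ in $X\Lcth_\bW$ with $\fF''$ antilocally flat:
\begin{itemize}
\item if $\fF'$ is also antilocally flat, then $\Ext^{X,n}(\fF,\fQ)=0$ for all $n\ge1$, which gives closure under extensions;
\item if $\fF$ is antilocally flat, then $\Ext^{X,1}(\fF',\fQ)$ embeds into $\Ext^{X,2}(\fF'',\fQ)=0$, which gives closure under kernels of admissible epimorphisms. This is exactly where the higher vanishing in (3) is needed.
\end{itemize}
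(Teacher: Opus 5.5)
Your implications (3)$\Rightarrow$(2)$\Rightarrow$(1) are fine, and so is your derivation of (b) from (3). But (1)$\Rightarrow$(3) has a genuine gap, already for $n=1$.

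Embedding $\fQ$ into a locally injective $\fJ$ only shows that $\Ext^{X,1}(\fF,\fQ)\rarrow\Ext^{X,1}(\fF,\fJ)$ is injective. Since $\fJ$ is itself locally cotorsion, the vanishing of $\Ext^{X,1}(\fF,\fJ)$ is a special case of the very claim you are proving, so nothing has been reduced.

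Your proposed proof of that vanishing does not use hypothesis (1) at all. Instead it tries to show that locally injective cosheaves are injective in $X\Lcth_\bW$, and this is false in general. By the remark after Proposition~\ref{qcoh-ssep-enough-lin-prop}, locally injective cosheaves form the right class of a cotorsion pair whose left class consists of the antilocal cosheaves, and not every cosheaf is antilocal. For instance, antilocal cosheaves have vanishing $\boL_{\ge1}\Delta$, whereas on $X=\mathbb P^1$ one has $\boL_1\Delta(X,\cO(-2)^+)\simeq H^1(X,\cO(-2))^+\ne0$.

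Both ingredients of your argument also point the wrong way:
\begin{itemize}
\item The adjunction~\eqref{cosheaves-direct-image-restriction-adjunction} makes $j_{\alpha!}$ a \emph{left} adjoint of the exact functor $j_\alpha^!$. So $j_{\alpha!}$ preserves projectives, as used in Lemma~\ref{qcss-lcth-prj-lemma}, not injectives.
\item The \v Cech complex of a cosheaf is a left resolution $\dotsb\rarrow\bigoplus_\alpha j_{\alpha!}j_\alpha^!\fJ\rarrow\fJ\rarrow0$. It provides no morphism $\fJ\rarrow\bigoplus_\alpha j_{\alpha!}j_\alpha^!\fJ$ that could split.
\end{itemize}
Your fallback dimension shift needs the same vanishing on $\fJ$, so it is circular.

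The missing idea is to choose the embedding depending on the extension: embed the middle term, not $\fQ$.
\begin{itemize}
\item \emph{Case $n=1$.} Given $0\rarrow\fQ\rarrow\fE\rarrow\fF\rarrow0$, take an admissible monomorphism $\fE\rarrow\fJ$ with $\fJ$ locally injective (Proposition~\ref{qcoh-ssep-enough-lin-prop}). The composition $\fQ\rarrow\fJ$ is an admissible monomorphism whose cokernel $\fJ/\fQ$ is locally cotorsion. The given extension is the pullback of $0\rarrow\fQ\rarrow\fJ\rarrow\fJ/\fQ\rarrow0$ along the induced map $\fF\rarrow\fJ/\fQ$. By (1) this map lifts to $\fF\rarrow\fJ$, so the extension splits. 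In your language: the given class does die in $\Ext^{X,1}(\fF,\fJ)$ for this particular $\fJ$, and your injectivity observation then finishes the argument.
\item \emph{Case $n\ge2$.} Write a class as a Yoneda product $\eta\xi$ with $\xi\in\Ext^{X,n-1}(\fF,\fK)$ and $\eta\in\Ext^{X,1}(\fK,\fQ)$. The same construction gives $\eta=\theta g$ with $\theta\in\Ext^{X,1}(\fQ'',\fQ)$, where $\fQ''$ is locally cotorsion and $g\colon\fK\rarrow\fQ''$. Then $\eta\xi=\theta(g\xi)=0$, because $g\xi\in\Ext^{X,n-1}(\fF,\fQ'')$ vanishes by induction.
\end{itemize}
This is exactly the abstract lemma the paper applies to $X\Lcth^\lct_\bW$. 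It uses only that this subcategory is cogenerating and closed under cokernels of admissible monomorphisms in $X\Lcth_\bW$. No injectivity of locally injective cosheaves is needed.
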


\begin{proof}
 This is~\cite[Corollary~4.4.2]{Pcosh}.
 Part~(a) can be obtained by applying~\cite[Lemma~A.2.2(b)]{Pform}
to the full subcategory $X\Lcth_\bW^\lct\subset X\Lcth_\bW$
(which is cogenerating by Proposition~\ref{qcoh-ssep-enough-lin-prop}
or~\ref{qcoh-ssep-lct-preenvelope-prop} and closed under cokernels
of admissible monomorphisms by Section~\ref{lct-lin-subsecn}).
 Part~(b) follows from part~(a).
\end{proof}

\begin{lem} \label{qcoh-ssep-alf-precover-lem}
 Let $X$ be a quasi-compact semi-separated scheme with an open
covering\/~$\bW$.
 Let $X=\bigcup_{\alpha=1}^N U_\alpha$ be a finite affine open
covering of $X$ subordinate to\/~$\bW$.
 Then, for any\/ $\bW$\+locally contraherent cosheaf\/ $\fM$ on $X$,
there exists an (admissible) short exact sequence\/ $0\rarrow\fQ
\rarrow\fF\rarrow\fM\rarrow0$ in the exact category $X\Lcth_\bW$ with
a locally cotorsion\/ $\bW$\+locally contraherent cosheaf\/ $\fQ$ on
$X$ and a contraherent cosheaf\/ $\fF$ on $X$ that is a finitely
iterated extension of the direct images of contraherent cosheaves on
$U_\alpha$ corresponding to flat contraadjusted
$\cO_X(U_\alpha)$\+modules.
\end{lem}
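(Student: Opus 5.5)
The plan is to deduce this special precover sequence from the special preenvelope sequence of Proposition~\ref{qcoh-ssep-lct-preenvelope-prop}, via the Salce-lemma trick (Lemma~\ref{salce-lemma}(a)), applied to a suitable generating object. Let $\sF$ denote the class of contraherent cosheaves on $X$ that are finitely iterated extensions of direct images $j_\alpha{}_!\fG_\alpha$, where $\fG_\alpha\in U_\alpha\Ctrh$ corresponds to a flat contraadjusted $\cO_X(U_\alpha)$\+module. By construction, $\sF$ is closed under extensions in $X\Lcth_\bW$.

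First, I would produce an admissible epimorphism onto $\fM$ from an object of $\sF$. By Lemma~\ref{qcss-lcth-prj-lemma}, there is an admissible epimorphism $\fP=\bigoplus_{\alpha}j_\alpha{}_!\fF_\alpha\rarrow\fM$ in $X\Lcth_\bW$, where each $\fF_\alpha$ corresponds to a very flat contraadjusted $\cO_X(U_\alpha)$\+module. Very flat modules are flat, and a finite direct sum is a finitely iterated (split) extension, so $\fP\in\sF$. Let $\fK$ be the kernel of $\fP\rarrow\fM$; it lies in $X\Lcth_\bW$, since admissible epimorphisms in this exact category have kernels there.

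Next, I would apply Proposition~\ref{qcoh-ssep-lct-preenvelope-prop} to $\fK$. This gives a short exact sequence $0\rarrow\fK\rarrow\fQ\rarrow\fF'\rarrow0$ with $\fQ$ locally cotorsion and $\fF'\in\sF$. I then form the pushout of $0\rarrow\fK\rarrow\fP\rarrow\fM\rarrow0$ along $\fK\rarrow\fQ$. This yields $0\rarrow\fQ\rarrow\fF\rarrow\fM\rarrow0$, together with a short exact sequence $0\rarrow\fP\rarrow\fF\rarrow\fF'\rarrow0$; both are admissible because pushouts of admissible monomorphisms exist in exact categories. The second sequence shows that $\fF$ is an extension of $\fF'$ by $\fP$, both in $\sF$, so $\fF\in\sF$. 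In particular $\fF$ is a finitely iterated extension of the required direct images: concatenate the filtration of $\fP$ with the pullback of the filtration of $\fF'$. Since $\fQ$ is locally cotorsion, this is the desired sequence.

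The main point needing care is checking that the concatenated filtration really consists of admissible monomorphisms with cokernels of the required form in $X\Lcth_\bW$. For this I would pull back the successive admissible monomorphisms of the filtration of $\fF'$ along $\fF\rarrow\fF'$; each preimage contains $\fP$, and its successive quotients coincide with those of $\fF'$. I would also confirm that $\fF$ is globally contraherent. This holds because each $j_\alpha{}_!\fG_\alpha$ lies in $X\Ctrh$ (the open immersions $j_\alpha$ are affine, since $X$ is semi-separated) and $X\Ctrh$ is closed under extensions in $X\Lcth_\bW$.
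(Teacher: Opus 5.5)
Your proposal is correct and follows the paper's route: the paper deduces this lemma from Proposition~\ref{qcoh-ssep-lct-preenvelope-prop} and Lemma~\ref{qcss-lcth-prj-lemma} by citing Lemma~\ref{salce-lemma}(a). Your kernel-and-pushout construction, with the check that the class of finitely iterated extensions is generating and closed under extensions, is that Salce-lemma argument written out explicitly.
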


\begin{proof}
 This is~\cite[Lemma~4.4.3]{Pcosh}.
 The assertion follows from
Proposition~\ref{qcoh-ssep-lct-preenvelope-prop} and
Lemma~\ref{qcss-lcth-prj-lemma} by virtue of Lemma~\ref{salce-lemma}(a).
\end{proof}

\begin{cor} \label{antilocally-flat-cor}
 Let $X$ be a quasi-compact semi-separated scheme with an open
covering\/~$\bW$.
 Let $X=\bigcup_{\alpha=1}^N U_\alpha$ be a finite affine open
covering of $X$ subordinate to\/~$\bW$.
 Then a\/ $\bW$\+locally contraherent cosheaf on $X$ is antilocally
flat if and only if it is a direct summand of a finitely iterated
extension of the direct images of contraherent cosheaves on $U_\alpha$
corresponding to flat contraadjusted
$\cO_X(U_\alpha)$\+modules.
\end{cor}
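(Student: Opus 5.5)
The argument proves the two implications separately. Throughout, let $j_\alpha\:U_\alpha\rarrow X$ denote the identity open immersions. These are affine because $X$ is semi-separated, so the direct image functors $j_\alpha{}_!$ of Section~\ref{direct-image-and-contraherence-subsecn} are available.

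\emph{``If''.} Let $\fG_\alpha$ be a contraherent cosheaf on $U_\alpha$ corresponding to a flat contraadjusted $\cO_X(U_\alpha)$\+module. By Lemma~\ref{antilocally-flat-basic-properties}(a), $\fG_\alpha$ is antilocally flat on $U_\alpha$ with respect to the trivial covering. Since $U_\alpha$ is subordinate to $\bW$, we have $U_\alpha\Lcth_{\bW|_{U_\alpha}}=U_\alpha\Ctrh$. Lemma~\ref{antilocally-flat-basic-properties}(b) therefore shows that $j_\alpha{}_!\fG_\alpha$ is an antilocally flat $\bW$\+locally contraherent cosheaf on $X$. Corollary~\ref{qcss-alf-characterizations}(b) says the class of antilocally flat cosheaves is closed under extensions in $X\Lcth_\bW$. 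Hence every finitely iterated extension of such direct images is antilocally flat. Finally, antilocal flatness passes to direct summands: the defining short sequence of $\Hom$ groups for a summand is a direct summand of the corresponding exact sequence, so it is exact too.

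\emph{``Only if''.} Let $\fF$ be antilocally flat. By Lemma~\ref{qcoh-ssep-alf-precover-lem} there is an admissible short exact sequence $0\rarrow\fQ\rarrow\fE\rarrow\fF\rarrow0$ in $X\Lcth_\bW$, where $\fQ$ is locally cotorsion and $\fE$ is a finitely iterated extension of the direct images $j_\alpha{}_!\fG_\alpha$ with flat contraadjusted $\fG_\alpha[U_\alpha]$. By Corollary~\ref{qcss-alf-characterizations}(a), condition~(2), we have $\Ext^{X,1}(\fF,\fQ)=0$. So this sequence splits, and $\fF$ is a direct summand of $\fE$.

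Alternatively, one can avoid $\Ext$ altogether: apply the definition of antilocal flatness to a short exact sequence of locally cotorsion cosheaves containing $\fQ$, such as one obtained from Proposition~\ref{qcoh-ssep-lct-preenvelope-prop}. Invoking Corollary~\ref{qcss-alf-characterizations}(a) directly is cleaner, though.

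The main point needing care is identifying $\Ext^{X,1}$ with Yoneda $\Ext^1$ computed in $X\Lcth_\bW$. By the discussion preceding Corollary~\ref{qcss-alf-characterizations}, the $\Ext$ groups in $X\Ctrh$, $X\Lcth_\bW$ and $X\Lcth$ agree, so the vanishing of $\Ext^{X,1}(\fF,\fQ)$ does imply that our sequence in $X\Lcth_\bW$ splits. Beyond that, the proof is a direct assembly of the earlier results.
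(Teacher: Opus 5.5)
Your proposal is correct and follows essentially the same proof as the paper. The ``if'' direction uses Lemma~\ref{antilocally-flat-basic-properties}(a--b) with closure under extensions from Corollary~\ref{qcss-alf-characterizations}(b) and closure under direct summands, and the ``only if'' direction splits the precover sequence of Lemma~\ref{qcoh-ssep-alf-precover-lem} using the vanishing of $\Ext^{X,1}$ from Corollary~\ref{qcss-alf-characterizations}(a). Your $\Ext$-free aside only works if the preenvelope is taken of the middle term $\fE$ rather than of $\fQ$, but your main argument does not rely on it.
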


\begin{proof}
 This is~\cite[Corollary~4.4.4(c)]{Pcosh}.
 
 ``If'': by Lemma~\ref{antilocally-flat-basic-properties}(a\+-b),
the direct images to $X$ of contraherent cosheavers on $U_\alpha$
corresponding to flat contraadjusted $\cO_X(U_\alpha)$\+modules are
antilocally flat $\bW$\+locally contraherent cosheaves on~$X$.
 By Corollary~\ref{qcss-alf-characterizations}(b), the full subcategory
of antilocally flat $\bW$\+locally contraherent cosheaves is closed
under extensions in $X\Lcth_\bW$; it is also obviously closed under
direct summands.

 ``Only if'': let $\fG$ be an antilocally flat $\bW$\+locally
contraherent cosheaf on~$X$.
 Consider a short exact sequence $0\rarrow\fQ\rarrow\fF\rarrow\fG
\rarrow0$ from Lemma~\ref{qcoh-ssep-alf-precover-lem}.
 By Corollary~\ref{qcss-alf-characterizations}(a), we have
$\Ext^{X,1}(\fG,\fQ)=0$.
 Hence our short exact sequence splits, and $\fG$ is a direct summand
of~$\fF$.
\end{proof}

\begin{cor}
 Let $X$ be a quasi-compact semi-separated scheme.
 Then the class of antilocally flat\/ $\bW$\+locally contraherent
cosheaves on $X$ does not depend on an open covering\/ $\bW$ of
the scheme~$X$.
 All such cosheaves are (globally) contraherent on~$X$.
\end{cor}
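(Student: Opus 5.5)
Both assertions follow from Corollary~\ref{antilocally-flat-cor}, applied to a suitable choice of finite affine covering. Given an open covering $\bW$ of $X$, choose a finite affine open covering $X=\bigcup_{\alpha=1}^N U_\alpha$ subordinate to~$\bW$; such a covering exists because $X$ is quasi-compact. By Corollary~\ref{antilocally-flat-cor}, the antilocally flat $\bW$\+locally contraherent cosheaves are exactly the direct summands, in $X\Lcth_\bW$, of finitely iterated extensions of the cosheaves $j_\alpha{}_!\fG_\alpha$. Here $\fG_\alpha$ is the contraherent cosheaf on $U_\alpha$ corresponding to a flat contraadjusted $\cO_X(U_\alpha)$\+module. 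Each $j_\alpha{}_!\fG_\alpha$ lies in $X\Ctrh$ by \eqref{ctrh-direct-image}, since $j_\alpha$ is affine when $X$ is semi-separated. The subcategory $X\Ctrh$ is closed under extensions in $X\Lcth_\bW$ (Section~\ref{loc-contrah-subsecn}), so the finitely iterated extensions are contraherent. Direct summands of contraherent cosheaves are again contraherent, because conditions~(ii) and~(iii) pass to direct summands. This gives the second assertion.

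For independence of $\bW$, I would compare an arbitrary covering $\bW$ with the trivial covering $\{X\}$. Take a finite affine covering $\{U_\alpha\}$ subordinate to $\bW$; it is automatically subordinate to $\{X\}$. Applying Corollary~\ref{antilocally-flat-cor} twice with the same $\{U_\alpha\}$, once for $\bW$ and once for $\{X\}$, the two classes are described by the same generating objects $j_\alpha{}_!\fG_\alpha$.

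The remaining point is that ``finitely iterated extension'' and ``direct summand'' mean the same thing whether they are taken in $X\Lcth_\bW$ or in $X\Ctrh$. Everything involved lies in $X\Ctrh$, and $X\Ctrh$ is closed under extensions in $X\Lcth_\bW$. Its exact structure is inherited, since short exact sequences in both categories are tested on affine opens and Lemma~\ref{short-exact-sequnces-of-cta-are-colocal} makes these tests agree. Hence iterated extensions in $X\Lcth_\bW$ with terms in $X\Ctrh$ are the same as those in $X\Ctrh$. Direct summands agree as well, since $X\Ctrh$ is a full subcategory closed under summands. Therefore the class for $\bW$ equals the class for $\{X\}$, so it does not depend on~$\bW$.

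I expect the only delicate step to be this compatibility of exact structures, and it is settled by the discussion in Section~\ref{loc-contrah-subsecn}.
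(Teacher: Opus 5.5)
Your proof is correct and follows the paper's argument: both deduce the statement from Corollary~\ref{antilocally-flat-cor}, using that direct images along the affine open immersions $U_\alpha\rarrow X$ are contraherent and that $X\Ctrh$ is closed under extensions (with compatible exact structure) and direct summands in $X\Lcth_\bW$. The only cosmetic difference is that you compare each $\bW$ with the trivial covering $\{X\}$, noting that an affine covering subordinate to $\bW$ is automatically subordinate to $\{X\}$, whereas the paper compares two arbitrary coverings via a common finite affine refinement.
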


\begin{proof}
 This is~\cite[Corollary~4.4.5]{Pcosh}.
 The assertion follows from Corollary~\ref{antilocally-flat-cor};
the point is that, for any two open coverings $\bW'$ and $\bW''$ of
$X$, there exists a finite affine open covering
$X=\bigcup_\alpha U_\alpha$ subordinate to both $\bW'$ and~$\bW''$.
 Furthermore, the direct images of contraherent cosheaves from
$U_\alpha$ are globally contraherent on $X$ by
formula~\eqref{ctrh-direct-image}, and the full subcategory
$X\Lcth_\bW$ (in particular, $X\Ctrh$) is closed under
extensions in $X\Lcth$ by Section~\ref{loc-contrah-subsecn}.
\end{proof}

 We will denote the full subcategory of antilocally flat
contraherent cosheaves by $X\Ctrh_\alf\subset X\Ctrh$.
 By the definition, both the full subcategories $X\Ctrh_\prj$
and $X\Ctrh^\lct_\prj$ are contained in $X\Ctrh_\alf$.
 In fact, one can see that the exact category $X\Ctrh_\alf$ has
\emph{both} enough projective and enough injective objects.
 In $X\Ctrh_\alf$, the full subcategory of projective objects
is $X\Ctrh_\prj$, and the full subcategory of injective objects
is $X\Ctrh^\lct_\prj=X\Ctrh_\alf\cap X\Ctrh^\lct$ (cf.\
Lemma~\ref{cotorsion-pair-injectives-projectives}(a) and
the following Corollary~\ref{antilocally-flat-cotorsion-pair}).

\begin{cor} \label{antilocally-flat-cotorsion-pair}
 Let $X$ be a quasi-compact semi-separated scheme with an open
covering\/~$\bW$.
 Then the pair of classes (antilocally flat contraherent cosheaves
on $X$, locally cotorsion\/ $\bW$\+locally contraherent cosheaves
on~$X$) is a complete cotorsion pair in the exact category $X\Lcth_\bW$.
 In other words: \par
\textup{(a)} for any\/ $\bW$\+locally contraherent cosheaf\/ $\fM$
on $X$, there exists a short exact sequence\/ $0\rarrow\fM\rarrow
\fQ\rarrow\fF\rarrow0$ in $X\Lcth_\bW$ with locally cotorsion\/
$\bW$\+locally contraherent cosheaf\/ $\fQ$ and an antilocally flat
contraherent cosheaf\/~$\fF$; \par
\textup{(b)} for any\/ $\bW$\+locally contraherent cosheaf\/ $\fM$
on $X$, there exists a short exact sequence\/ $0\rarrow\fQ\rarrow
\fF\rarrow\fM\rarrow0$ in $X\Lcth_\bW$ with an antilocally flat
contraherent cosheaf\/ $\fF$ and a locally cotorsion\/ $\bW$\+locally
contraherent cosheaf\/~$\fQ$.
\end{cor}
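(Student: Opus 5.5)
We have to check the two defining equalities of a cotorsion pair, and then the two approximation sequences (a) and (b).

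For the approximation sequences, part~(a) is essentially Proposition~\ref{qcoh-ssep-lct-preenvelope-prop}. Pick a finite affine open covering $X=\bigcup_{\alpha=1}^N U_\alpha$ subordinate to~$\bW$. That proposition gives $0\rarrow\fM\rarrow\fQ\rarrow\fF\rarrow0$ with $\fQ$ locally cotorsion and $\fF$ a finitely iterated extension of direct images $j_\alpha{}_!\fG_\alpha$ of flat contraadjusted modules. By Corollary~\ref{antilocally-flat-cor}, $\fF$ is antilocally flat, and it is contraherent by the subsequent corollary. Part~(b) is likewise Lemma~\ref{qcoh-ssep-alf-precover-lem}, combined with Corollary~\ref{antilocally-flat-cor} again.

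Next, the orthogonality relations. Write $\sF$ for the antilocally flat cosheaves and $\sC=X\Lcth^\lct_\bW$. Corollary~\ref{qcss-alf-characterizations}(a) gives $\sF={}^{\perp_1}\sC$ in $X\Lcth_\bW$ directly, since condition~(1) is equivalent to condition~(2). Here I use that the $\Ext^{X,1}$ groups agree across the various exact categories, as explained before that corollary.

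It remains to show $\sC=\sF^{\perp_1}$. The inclusion $\sC\subset\sF^{\perp_1}$ is clear from the definition of $\sF$. Conversely, let $\fM\in\sF^{\perp_1}$. Take the sequence $0\rarrow\fM\rarrow\fQ\rarrow\fF\rarrow0$ from part~(a). It splits because $\Ext^{X,1}(\fF,\fM)=0$, so $\fM$ is a direct summand of~$\fQ$. The locally cotorsion class is closed under direct summands, since being cotorsion is closed under summands at each affine $U$; hence $\fM\in\sC$.

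I expect no serious obstacle. The only points needing care are the identification of $\Ext^1$ in $X\Lcth_\bW$ with $\Ext^{X,1}$, which is exactly the discussion preceding Corollary~\ref{qcss-alf-characterizations}, and the fact that $\fF$ lands in $X\Ctrh$ and so qualifies as an antilocally flat \emph{contraherent} cosheaf; both are already established earlier in the paper.
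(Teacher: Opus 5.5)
Your proposal is correct and follows the paper's proof. Parts (a) and (b) come from Proposition~\ref{qcoh-ssep-lct-preenvelope-prop} and Lemma~\ref{qcoh-ssep-alf-precover-lem} via the ``if'' part of Corollary~\ref{antilocally-flat-cor}, the equality $\sF={}^{\perp_1}\sC$ comes from Corollary~\ref{qcss-alf-characterizations}(a), and $\sC=\sF^{\perp_1}$ follows from exactly the direct-summand argument using (a) that the paper cites. One small wording fix: the inclusion $\sC\subset\sF^{\perp_1}$ follows from $\sF={}^{\perp_1}\sC$, i.e.\ from Corollary~\ref{qcss-alf-characterizations}(a), and not directly from the Hom-exactness definition of antilocal flatness.
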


\begin{proof}
 This is~\cite[Corollary~4.4.4(a\+-b)]{Pcosh}.
 In view of the ``if'' assertion of
Corollary~\ref{antilocally-flat-cor}, part~(a) follows from
Proposition~\ref{qcoh-ssep-lct-preenvelope-prop} and part~(b) from
Lemma~\ref{qcoh-ssep-alf-precover-lem}.
 One has $\Ext^{X,1}(\fF,\fQ)=0$ for all $\fF\in X\Ctrh_\alf$ and
$\fQ\in X\Lcth^\lct_\bW$ by
Corollary~\ref{qcss-alf-characterizations}(a).
 By the same corollary, the class $X\Ctrh_\alf$ is the maximal class
with this property with respect to $X\Lcth^\lct_\bW$.
 To prove that $X\Lcth^\lct_\bW\subset X\Lcth_\bW$ is the maximal
class with the $\Ext^1$\+orthogonality property with respect to
$X\Ctrh_\alf$, one needs to use part~(a); see the ``direct summand
lemma'', \cite[Lemma~B.1.2]{Pcosh} or~\cite[Lemma~A.2.3]{Pform}.
\end{proof}

\begin{cor} \label{alf-open-restriction-cor}
 Let $X$ be a semi-separated Noetherian scheme and $Y\subset X$ be
an open subscheme such that the open immersion morphism $h\:Y\rarrow X$
is affine.
 Then the restriction functor $h^!\:X\Ctrh\rarrow Y\Ctrh$ takes
antilocally flat contraherent cosheaves on $X$ to antilocally flat
contraherent cosheaves on~$Y$.
\end{cor}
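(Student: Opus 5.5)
By Corollary~\ref{antilocally-flat-cor}, an antilocally flat contraherent cosheaf $\fF$ on $X$ is a direct summand of a finitely iterated extension of cosheaves $j_\alpha{}_!\fG_\alpha$. Here $j_\alpha\:U_\alpha\rarrow X$ are the open immersions of a finite affine open covering, and $\fG_\alpha$ corresponds to a flat contraadjusted $\cO_X(U_\alpha)$\+module. The restriction functor $h^!$ is exact (formula~\eqref{restriction-of-lcth-cosheaves}) and additive, so it preserves finitely iterated extensions and direct summands. The scheme $Y$ is semi-separated and quasi-compact, so Corollary~\ref{qcss-alf-characterizations}(b) applies on $Y$: antilocally flat cosheaves on $Y$ are closed under extensions, and they are obviously closed under direct summands. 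It therefore suffices to show that each $h^!j_\alpha{}_!\fG_\alpha$ is antilocally flat on~$Y$.

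For this I would use base change. Put $U=U_\alpha$, $j=j_\alpha$, and write $j'\:U\cap Y\rarrow Y$ and $h'\:U\cap Y\rarrow U$ for the open immersions. As in the proof of Proposition~\ref{qcoh-ssep-enough-lin-prop}, there is a base change isomorphism $h^!j_!\fG\simeq j'_!h'{}^!\fG$, which is immediate from the definitions. Both $h$ and $j$ are affine, since $X$ is semi-separated and $h$ is affine by assumption. Hence $U\cap Y=h'^{-1}(U)$ is affine, and $j'$ is an affine open immersion (its base change of $j$). By Lemma~\ref{antilocally-flat-basic-properties}(b), $j'_!$ preserves antilocal flatness. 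It therefore remains to show that $h'{}^!\fG$ is antilocally flat on the affine scheme $U\cap Y$.

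By Lemma~\ref{antilocally-flat-basic-properties}(a), this reduces to a module statement. Let $R=\cO_X(U)$ and $S=\cO_X(U\cap Y)$, so that $\Spec S\rarrow\Spec R$ is an affine open immersion. The contraherent cosheaf $h'{}^!\fG$ corresponds to the $S$\+module $\Hom_R(S,G)$, where $G=\fG[U]$, by the contraherence axiom. The ring $R$ is Noetherian, hence coherent, so Proposition~\ref{flat-contraadjusted-colocalization} shows that $\Hom_R(S,G)$ is a flat contraadjusted $S$\+module. This completes the argument.

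The main obstacle is exactly this last colocalization step, that $\Hom_R(S,G)$ stays flat. It is where the Noetherian (coherence) hypothesis enters, and here it is supplied by Proposition~\ref{flat-contraadjusted-colocalization}. The remaining work is checking affineness of $U\cap Y$ so that Lemma~\ref{antilocally-flat-basic-properties}(b) applies to $j'$.
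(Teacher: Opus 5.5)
Your proof is correct and follows essentially the paper's argument. You reduce via Corollary~\ref{antilocally-flat-cor} and exactness of $h^!$ to the generators $j_\alpha{}_!\fG_\alpha$, use the base change isomorphism $h^!j_\alpha{}_!\simeq j'_{\alpha!}h_\alpha^{\prime\,!}$, and apply Proposition~\ref{flat-contraadjusted-colocalization}; the paper finishes with the ``if'' part of Corollary~\ref{antilocally-flat-cor} on $Y$, which is exactly what you unfold through Lemma~\ref{antilocally-flat-basic-properties} and Corollary~\ref{qcss-alf-characterizations}(b). One small slip: the affineness of $U\cap Y$ comes from $U\cap Y=h^{-1}(U)$ with $h$ affine, not from $h'^{-1}(U)$.
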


\begin{proof}
 This is~\cite[Corollary~4.4.9]{Pcosh}.
 The proof is based on Corollary~\ref{antilocally-flat-cor} together
with Proposition~\ref{flat-contraadjusted-colocalization}.
 The point is that if $X=\bigcup_\alpha U_\alpha$ is a finite affine
open covering of $X$, then $Y=\bigcup_\alpha (Y\cap U_\alpha)$ is
a finite affine open covering of~$Y$.
 The functor~$h^!$ is exact, so it preserves finitely iterated
extensions; and there is an obvious base change isomorphism
$h^!j_\alpha{}_!\simeq j'_{\alpha!}h_\alpha^{\prime\,!}$ for
the open immersion morphisms $j_\alpha\:U_\alpha\rarrow X$, \
$j_\alpha'\:Y\cap U_\alpha\rarrow Y$, and $h_\alpha'\:Y\cap U_\alpha
\rarrow U_\alpha$.
 The Noetherianity assumption and
Proposition~\ref{flat-contraadjusted-colocalization} are needed here
in order to claim that the restriction functor $h_\alpha^{\prime\,!}\:
U_\alpha\Ctrh\rarrow(Y\cap\nobreak U_\alpha)\Ctrh$ takes contraherent
cosheaves corresponding to flat contraadjusted
$\cO_X(U_\alpha)$\+modules to contraherent cosheaves corresponding
to flat contraadjusted $\cO_X(Y\cap\nobreak U_\alpha)$\+modules.
\end{proof}

\subsection{Homology of locally contraherent cosheaves}
\label{homology-subsecn}
 We only discuss the homology of locally contraherent cosheaves on
quasi-compact semi-separated schemes in this paper, and only for
the auxiliary purpose of proving Proposition~\ref{alf-direct-images}
below.
 A more substantial and detailed discussion can be found
in~\cite[Sections~4.6 and~6.3]{Pcosh}; see also~\cite[Sections~4.3
and~4.8]{Pform}.

 Similarly to the notation $\Gamma(X,\cM)=\cM(X)$ for the global
sections of a sheaf $\cM$ on a topological space $X$, we will use
the notation $\Delta(X,\fP)=\fP[X]$ for the global cosections of
a cosheaf $\fP$ on~$X$.

 For any scheme $X$, the functor $\Delta(X,{-})\:X\Lcth\rarrow\Ab$
is right exact on the exact category $X\Lcth$.
 In other words, for any (admissible) short exact sequence
$0\rarrow\fP\rarrow\fQ\rarrow\fR\rarrow0$ of locally contraherent
cosheaves on $X$, one has a right exact sequence
$$
 \Delta(X,\fP)\lrarrow\Delta(X,\fQ)\lrarrow\Delta(X,\fR)\lrarrow0.
$$
 This follows from the fact that the functor $\Delta(X,{-})\:
X\Lcth_\bW\rarrow\Ab$ can be computed as the cokernel of a map
of exact functors (of direct sums of cosections over affine open
subschemes subordinate to~$\bW$) using the cosheaf
axiom~\eqref{cosheaf-axiom-for-topology-base}.

 Let $X$ be a quasi-compact semi-separated scheme with an open
covering~$\bW$.
 Then the left derived functor $\boL_*\Delta(X,{-})$ of the functor of
global cosections $\Delta(X,{-}\:X\Lcth_\bW\rarrow\Ab$ is constructed,
as usually, using projective resolutions in the exact category
$X\Lcth_\bW$.
 The fact that enough projective objects exist in $X\Lcth_\bW$
(Lemma~\ref{qcss-lcth-prj-lemma}) is used here.

 By Corollary~\ref{qcss-lcth-prj-cor}, the class of projective
objects in $X\Lcth_\bW$ does not change when the open covering $\bW$
varies.
 According to Section~\ref{loc-contrah-subsecn}, the property of
a morphism in $X\Lcth_\bW$ to be an admissible epimorphism does not
change as $\bW$ varies, either; and the kernels of admissible
epimorphisms do not change.
 It follows that the left derived functors $\boL_*\Delta(X,{-})$
computed in all the exact categories $X\Ctrh\subset X\Lcth_\bW\subset
X\Lcth$ agree with each other.

 To any short exact sequence $0\rarrow\fP\rarrow\fQ\rarrow\fR\rarrow0$
in $X\Lcth$, the derived functor $\boL_*\Delta(X,{-})$ assigns
the usual long exact sequence of homology groups.

 On an affine scheme $U$, the functor $\Delta(U,{-})\:U\Ctrh\rarrow
\Ab$ is exact.
 Hence one has $\boL_n\Delta(U,\fP)=0$ for all contraherent cosheaves
$\fP$ on $U$ and all integers $n\ge1$ (this only applies to contraherent
and \emph{not} to locally contraherent cosheaves on~$U$;
cf.~\cite[Corollary~4.6.1]{Pcosh}).

 Now let $U\subset X$ be an affine open subscheme with the open
immersion morphism $j\:U\rarrow X$.
 Then the direct image functor $j_!\:U\Ctrh\rarrow X\Lcth_\bW$ is
exact and takes projective objects to projective objects (by
the proof of Lemma~\ref{qcss-lcth-prj-lemma}).
 Therefore, one has $\boL_n\Delta(X,j_!\fP)=0$ for all contraherent
cosheaves $\fP$ on $U$ and all $n\ge1$.

 Consequently, one has $\boL_n\Delta(X,\fE)=0$ for all $n\ge1$ whenever
a contraherent cosheaf $\fE$ on $X$ is (a direct summand of) a finitely
iterated extension of the direct images of contraherent cosheaves from
affine open subschemes of~$X$.
 In particular, one has $\boL_n\Delta(X,\fF)=0$ for all $n\ge1$ and
all antilocally flat contraherent cosheaves $\fF$ on~$X$ (in view of
Corollary~\ref{antilocally-flat-cor}).

\begin{prop} \label{alf-direct-images}
 Let $X$ be a quasi-compact semi-separated scheme and $Y\subset X$
be a quasi-compact open subscheme with the open immersion morphism
$j\:Y\rarrow X$.
 Then the functor of direct image of cosheaves of $\cO$\+modules
$j_!\:(Y,\cO_Y)\Cosh\rarrow(X,\cO_X)\Cosh$ takes antilocally flat
contraherent cosheaves on $Y$ to antilocally flat contraherent
cosheaves on $X$, and induces an exact functor between the respective
exact categories $j_!\:Y\Ctrh_\alf\rarrow X\Ctrh_\alf$.
\end{prop}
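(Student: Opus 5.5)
We reduce to the generating objects and then check two things: contraherence of $j_!\fF$ via a homology vanishing computation, and antilocal flatness via adjunction.

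\emph{Reduction.} The scheme $Y$ is quasi-compact and semi-separated (as an open subscheme of $X$). So we pick a finite affine open covering $Y=\bigcup_{\alpha=1}^N U_\alpha$. Since $X$ is semi-separated, each open immersion $U_\alpha\rarrow X$ is affine, and so is $U_\alpha\rarrow Y$. By Corollary~\ref{antilocally-flat-cor}, every antilocally flat contraherent cosheaf $\fF$ on $Y$ is a direct summand of a finitely iterated extension of cosheaves $k_\alpha{}_!\fG_\alpha$. Here $k_\alpha\:U_\alpha\rarrow Y$ are the inclusions and $\fG_\alpha$ correspond to flat contraadjusted $\cO(U_\alpha)$\+modules. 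Since $j\circ k_\alpha\:U_\alpha\rarrow X$ is an affine open immersion, we have $j_!k_\alpha{}_!\fG_\alpha=(jk_\alpha)_!\fG_\alpha$. This cosheaf is contraherent on $X$ by~\eqref{ctrh-direct-image} and antilocally flat by Lemma~\ref{antilocally-flat-basic-properties}(a\+-b).

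\emph{Exactness on $Y\Ctrh_\alf$.} This is the key step. Let $0\rarrow\fF'\rarrow\fF\rarrow\fF''\rarrow0$ be a short exact sequence in $Y\Ctrh_\alf$, and fix an affine open $V\subset X$. We need $0\rarrow\fF'[V\cap Y]\rarrow\fF[V\cap Y]\rarrow\fF''[V\cap Y]\rarrow0$ to be exact. Now $V\cap Y$ is a quasi-compact semi-separated open subscheme of $Y$, and its inclusion $h\:V\cap Y\rarrow Y$ is affine, because $V\rarrow X$ is affine ($X$ being semi-separated). Restricting along $h$, the sequence $0\rarrow h^!\fF'\rarrow h^!\fF\rarrow h^!\fF''\rarrow0$ is exact in $(V\cap Y)\Ctrh$. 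We claim $h^!\fF''$ is again a direct summand of a finite iterated extension of direct images from affines. Indeed, by base change, $h^!k_\alpha{}_!\fG_\alpha$ is the direct image from $V\cap U_\alpha$ of a contraherent cosheaf. No flatness of its module is needed here, because the homology vanishing in Section~\ref{homology-subsecn} holds for direct images of arbitrary contraherent cosheaves from affine open subschemes. Hence $\boL_1\Delta(V\cap Y,h^!\fF'')=0$, and the long exact homology sequence shows that $\Delta(V\cap Y,{-})$ preserves exactness of our sequence. Applying the same argument to a single $\fF$, with $\fF'=0$ and the reduction above, shows that $j_!$ lands in contraherent cosheaves at all: exactness of cosections over intersections in \v Cech sequences gives both the contraadjustedness and contraherence axioms. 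Alternatively, contraherence follows at once since $j_!$ commutes with finite iterated extensions (by the exactness just proved) and with direct summands, and the generators are contraherent.

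\emph{Antilocal flatness.} For a general $\fF$ this follows from the generator case: $X\Ctrh_\alf$ is closed under extensions (Corollary~\ref{qcss-alf-characterizations}(b)) and under direct summands, and $j_!$ preserves iterated extensions, being exact on $Y\Ctrh_\alf$ by the previous step. Alternatively, adjunction~\eqref{cosheaves-direct-image-restriction-adjunction} reduces the Hom-exactness to that of $\fF$ against $j^!$ of locally cotorsion sequences, as in Lemma~\ref{antilocally-flat-basic-properties}(b).

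The main obstacle is the exactness step: $j$ is not affine, so~\eqref{ctrh-direct-image} does not apply, and we must control $\fF[V\cap Y]$ on the non-affine open $V\cap Y$ through the vanishing of $\boL_1\Delta$.
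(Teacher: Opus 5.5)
Your proposal is correct and follows essentially the same route as the paper. You reduce via Corollary~\ref{antilocally-flat-cor} to direct images from affines, get exactness of $\fF\longmapsto\fF[V\cap Y]$ by restricting to $V\cap Y$ and using the vanishing of $\boL_1\Delta$ for antilocal cosheaves (no flatness needed), and then propagate contraherence and antilocal flatness along finitely iterated extensions and direct summands. The one step you pass over too quickly is the ``at once'' contraherence claim: as in the paper, one must check that $j_!\fF$ satisfies the contraadjustedness and contraherence axioms whenever $j_!\fF'$ and $j_!\fF''$ do. This uses that contraadjusted modules are closed under extensions and that $\Hom_{\cO(U)}(\cO(U'),{-})$ preserves short exact sequences of contraadjusted modules, since $\cO(U')$ is very flat.
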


\begin{proof}
 This is a special case of the more general result
of~\cite[Corollary~4.6.3(c)]{Pcosh}, which is applicable to all
flat morphisms of quasi-compact semi-separated schemes, and not only
to open immersions (see also~\cite[Corollary~4.3.2]{Pform}).
 Notice that, given an antilocally flat contraherent cosheaf~$\fG$
on $Y$, the assertion that the cosheaf of $\cO_X$\+modules
$j_!\fG$ on $X$ is contraherent (or locally contraherent) is
\emph{not} covered by the results of
Section~\ref{direct-image-and-contraherence-subsecn}, and already
requires a proof (cf.\ a counterexample in~\cite[Remark~2.3.1 and
Example~2.3.2]{Pcosh}).

 By Corollary~\ref{antilocally-flat-cor}, \,$\fG$ is a direct summand
of a finitely iterated extension of the direct images of antilocally
flat contraherent cosheaves from affine open subschemes of~$Y$.
 Let $T\subset Y$ be an affine open subscheme with the open
immersion morphism $l\:T\rarrow Y$.
 According to Lemma~\ref{antilocally-flat-basic-properties}(b)
or Corollary~\ref{antilocally-flat-cor}, the functor
$j_!\circ l_!=(j\circ l)_!\:T\Ctrh\rarrow X\Ctrh$ takes antilocally
flat contraherent cosheaves on $T$ to antilocally flat contraherent
cosheaves on~$X$.

 Given an affine open subscheme $U\subset X$, consider the open
subscheme $V=Y\cap U\subset Y$.
 Then the open immersion morphism $k\:V\rarrow Y$ is affine, since
the open immersion morphism $h\:U\rarrow X$ is affine (even though
the scheme $V$ need not be affine).
 If the scheme $Y$ is Noetherian, then the contraherent cosheaf
$k^!\fG$ on $V$ is antilocally flat by
Corollary~\ref{alf-open-restriction-cor}.
 In the general case, following the proof of that corollary,
the contraherent cosheaf $k^!\fG$ on $V$ is a direct summand of
a finitely iterated extension of the direct images of contraherent
cosheaves from affine open subschemes of~$V$ (such contraherent
cosheaves are called \emph{antilocal} in~\cite[Section~4.3]{Pcosh}).
 In any case, according to the discussion above, it follows that
$\boL_n\Delta(V,k^!\fG)=0$ for all $n\ge1$.
 Therefore, the functor $\fG\longmapsto(j_!\fG)[U]=\fG[V]$ is
exact on the exact category of antilocally flat contraherent
cosheaves $Y\Ctrh_\alf$.

 We still need to show that the functor $j_!\:(Y,\cO_Y)\Cosh\rarrow
(X,\cO_X)\Cosh$ takes $Y\Ctrh_\alf$ into $X\Ctrh$ and extensions in
$Y\Ctrh_\alf$ to extensions in $X\Ctrh$.
 Arguing by induction on the length of a finitely iterated extension,
given a short exact sequence $0\rarrow\fF\rarrow\fG\rarrow\fH
\rarrow0$ in $Y\Ctrh_\alf$ such that $j_!\fF$ and $j_!\fH\in X\Ctrh$,
we need to prove that $j_!\fG\in X\Ctrh$ and $0\rarrow j_!\fF
\rarrow j_!\fG\rarrow j_!\fH\rarrow0$ is a short exact sequence
in $X\Ctrh$.
 On the level of cosections over affine open subschemes $U\subset X$,
we have already shown that in the previous paragraph.
 It remains to point out that the contraadjustedness axiom~(iii)
and the contraherence axiom~(ii) from
Section~\ref{contraadj-contraher-subsecn} hold for the cosheaf $j_!\fG$
whenever they hold for the cosheaves $j_!\fF$ and~$j_!\fH$.
 The facts that the full subcategory of contraadjusted $R$\+modules
is closed under extensions in $R\Modl$ and the functor $\Hom_R(F,{-})$
preserves exactness of short exact sequences of contraadjusted
$R$\+modules for any commutative ring $R$ and any very flat
$R$\+module~$F$ (together with
Example~\ref{open-affine-very-flat-example}) need to be used here.
\end{proof}

\subsection{Flat contraherent cosheaves} \label{qcss-flat-subsecn}
 Let $X$ be a scheme with an open covering~$\bW$.
 Following~\cite[Sections~3.4 and~3.7]{Pcosh}, we say that a cosheaf
of $\cO_X$\+modules $\fF$ on $X$ is \emph{$\bW$\+flat} if, for
any affine open subscheme $U\subset X$ subordinate to $\bW$,
the $\cO_X(U)$\+module $\fF[U]$ is flat.
 A cosheaf $\fF$ on $X$ is said to be \emph{flat} if it is
$\bW_X$\+flat for the trivial open covering $\bW_X=\{X\}$.

 We denote the full subcategory of $\bW$\+flat $\bW$\+locally
contraherent cosheaves on $X$ by $X\Lcth_\bW^\fl\subset X\Lcth_\bW$.
 It is clear that the full subcategory $X\Lcth_\bW^\fl$ is closed under
extensions and kernels of admissible epimorphisms in $X\Lcth_\bW$.
 So the full subcategory $X\Lcth_\bW^\fl$ inherits an exact category
structure from $X\Lcth_\bW$.
 In particular, the full exact category of flat contraherent cosheaves
on $X$ is denoted by $X\Ctrh^\fl\subset X\Ctrh$.

 The notion of flatness or $\bW$\+flatness of locally contraherent
cosheaves is mostly useful for Noetherian (or at least, coherent)
schemes.
 Given a Noetherian affine scheme $U$ and a contraherent cosheaf $\fG$
on $U$, the cosheaf $\fG$ on $U$ is flat if and only if
the corresponding $\cO(U)$\+module $\fG[U]$ is flat.
 This follows from Proposition~\ref{flat-contraadjusted-colocalization}.

 Let $Y$ be a scheme with an open covering~$\bT$.
 It is clear from the definitions that if $f\:Y\rarrow X$ is
a flat $(\bW,\bT)$\+affine morphism of schemes, then the direct image
functor~$j_!$ takes $\bT$\+flat cosheaves of $\cO_Y$\+modules to
$\bW$\+flat cosheaves of $\cO_X$\+modules.
 In particular, flatness of cosheaves of $\cO$\+modules is preserved
by the direct images with respect to flat affine morphisms of schemes.

\begin{cor} \label{antilocally-flat-are-flat}
 Let $X$ be a semi-separated Noetherian scheme.
 Then all antilocally flat contraherent cosheaves on $X$ are flat.
\end{cor}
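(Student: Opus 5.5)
We use the description of antilocally flat contraherent cosheaves from Corollary~\ref{antilocally-flat-cor}. Pick a finite affine open covering $X=\bigcup_{\alpha=1}^N U_\alpha$. Then every antilocally flat contraherent cosheaf $\fG$ on $X$ is a direct summand of a finitely iterated extension in $X\Ctrh$ of cosheaves of the form $j_\alpha{}_!\fF_\alpha$. Here $j_\alpha\:U_\alpha\rarrow X$ is the open immersion and $\fF_\alpha$ is the contraherent cosheaf on $U_\alpha$ corresponding to a flat contraadjusted $\cO_X(U_\alpha)$\+module. It therefore suffices to show three things: these generators are flat, the class $X\Ctrh^\fl$ is closed under extensions in $X\Ctrh$, and the class $X\Ctrh^\fl$ is closed under direct summands.

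For the generators: $U_\alpha$ is an affine Noetherian scheme, and $\fF_\alpha[U_\alpha]$ is a flat contraadjusted module. So by Proposition~\ref{flat-contraadjusted-colocalization}, for every affine open $V\subset U_\alpha$ the module $\fF_\alpha[V]=\Hom_{\cO(U_\alpha)}(\cO(V),\fF_\alpha[U_\alpha])$ is flat. This is the remark in Section~\ref{qcss-flat-subsecn} that $\fF_\alpha$ is a flat contraherent cosheaf on $U_\alpha$. Since $X$ is semi-separated, the morphism $j_\alpha$ is affine and flat. The observation at the end of Section~\ref{qcss-flat-subsecn} then gives that $j_\alpha{}_!$ preserves flatness. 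Concretely, $(j_\alpha{}_!\fF_\alpha)[U]=\fF_\alpha[U\cap U_\alpha]$ for an affine open $U\subset X$, where $U\cap U_\alpha$ is affine. This module is flat over $\cO(U\cap U_\alpha)$, which is flat over $\cO_X(U)$, so it is flat over $\cO_X(U)$.

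Closure under extensions and direct summands is immediate from the definitions. A short exact sequence in $X\Ctrh$ gives short exact sequences of cosections over every affine open $U$, and flat modules are closed under extensions and direct summands. This finishes the argument.

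The only step that needs real input is flatness of the generators on the affine pieces, which uses the Noetherian (coherent) hypothesis through Proposition~\ref{flat-contraadjusted-colocalization}. Everything else is formal.
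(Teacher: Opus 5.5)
Your proposal is correct and follows the paper's proof: combine the description of antilocally flat cosheaves in Corollary~\ref{antilocally-flat-cor} with Proposition~\ref{flat-contraadjusted-colocalization} and the preservation of flatness under direct images along flat affine morphisms (Section~\ref{qcss-flat-subsecn}). Closure of flat contraherent cosheaves under extensions and direct summands then finishes the argument, just as in the paper.
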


\begin{proof}
 This is~\cite[Corollary~4.4.7]{Pcosh} or a particular case
of~\cite[Corollary~4.2.9]{Pform}.
 The assertion follows from Corollary~\ref{antilocally-flat-cor}
together with the discussion above in this section.
 Let us emphasize once again the crucial role of
Proposition~\ref{flat-contraadjusted-colocalization}.
\end{proof}

\Section{Main Negative Result}

 We start with a fairly standard lemma about quasi-coherent sheaves.

\begin{lem} \label{quasi-compact-intersected-with-affine}
 Let $X$ be an affine scheme, $V\subset X$ be a quasi-compact open
subscheme, and $U\subset X$ be an affine open subscheme.
 Let $\cM$ be a quasi-coherent sheaf on~$X$.
 Then the restriction map of $\cO(X)$\+modules
$\cM(V)\rarrow\cM(U\cap V)$ induces an isomorphism of
$\cO_X(U)$\+modules
$$
 \cO_X(U)\ot_{\cO(X)}\cM(V)\simeq\cM(U\cap V).
$$
\end{lem}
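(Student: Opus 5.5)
Write $R=\cO(X)$ and $S=\cO_X(U)$. The plan is to reduce to a finite \v Cech computation and use flatness of $S$ over $R$.

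\emph{Step 1: the \v Cech presentation.} Since $V$ is quasi-compact, choose a finite covering $V=\bigcup_{i=1}^n V_i$ by principal affine opens $V_i=\Spec R[s_i^{-1}]\subset X$. The pairwise intersections $V_i\cap V_k=\Spec R[(s_is_k)^{-1}]$ are again affine, because $X$ is affine, hence separated. The sheaf axiom for $\cM$ then gives an exact sequence of $R$\+modules
$$
 0\lrarrow\cM(V)\lrarrow\bigoplus\nolimits_i\cM(V_i)
 \lrarrow\bigoplus\nolimits_{i,k}\cM(V_i\cap V_k).
$$
Let $M=\cM(X)$. Quasi-coherence on the affine scheme $X$ identifies $\cM(V_i)\simeq R[s_i^{-1}]\ot_RM$, and similarly for the double intersections.

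\emph{Step 2: tensor with $S$.} The ring $S$ is a flat $R$\+module, since $U\subset X$ is an open immersion. So applying $S\ot_R{-}$ preserves left exactness of the sequence above. This gives an exact sequence
$$
 0\lrarrow S\ot_R\cM(V)\lrarrow\bigoplus\nolimits_i S\ot_R\cM(V_i)
 \lrarrow\bigoplus\nolimits_{i,k}S\ot_R\cM(V_i\cap V_k).
$$

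\emph{Step 3: identify the terms.} We have
$$
 S\ot_R R[s_i^{-1}]\ot_R M\simeq S[s_i^{-1}]\ot_R M
 \simeq \cO_X(U\cap V_i)\ot_{S}(S\ot_RM)\simeq\cM(U\cap V_i),
$$
where the last isomorphism is quasi-coherence of $\cM$ on the affine open $U\cap V_i$, using $\cM(U)\simeq S\ot_RM$. The double intersections are handled in the same way.

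\emph{Step 4: compare with $U\cap V$.} The opens $U\cap V_i$ form a finite affine covering of $U\cap V$, with affine pairwise intersections $U\cap V_i\cap V_k$. The sheaf axiom therefore exhibits $\cM(U\cap V)$ as the kernel of the corresponding map
$$
 \bigoplus\nolimits_i\cM(U\cap V_i)\lrarrow\bigoplus\nolimits_{i,k}\cM(U\cap V_i\cap V_k).
$$

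\emph{Step 5: conclude.} The identifications in Step~3 must be compatible with the \v Cech differentials. This holds because all the maps involved are induced by restriction maps, and it is the only point needing care. Granting this, the natural map $S\ot_R\cM(V)\rarrow\cM(U\cap V)$ is an isomorphism by comparing the two kernels. This natural map is the one induced by restriction, as in the statement. I expect no serious obstacle: the only essential inputs are flatness of $S$ over $R$ and the finiteness of the covering provided by quasi-compactness of $V$, which is needed because tensor products do not commute with infinite products.
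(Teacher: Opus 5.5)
Your proof is correct and follows essentially the same route as the paper: write $\cM(V)$ and $\cM(U\cap V)$ as kernels of finite \v Cech maps for the coverings $\{V_i\}$ and $\{U\cap V_i\}$, identify the second map with $\cO_X(U)\ot_{\cO(X)}{-}$ applied to the first, and conclude by flatness of $\cO_X(U)$ over $\cO(X)$. The only cosmetic difference is that you use principal opens and express everything through $M=\cM(X)$, whereas the paper allows an arbitrary finite affine covering of $V$ and uses the identification $\cM(U\cap W)\simeq\cO_X(U)\ot_{\cO(X)}\cM(W)$ for affine $W$.
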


\begin{proof}
 Let $V=\bigcup_{\alpha=1}^N V_\alpha$ be a finite affine open covering
of the scheme~$V$.
 Then the sheaf axiom~\eqref{sheaf-axiom-for-topology-base} tells us
that the $\cO(X)$\+module $\cM(V)$ can be computed as the kernel of
the $\cO(X)$\+module map
\begin{equation} \label{M-V-sheaf-axiom-computing}
 \bigoplus\nolimits_{\alpha=1}^N \cM(V_\alpha)\lrarrow
 \bigoplus\nolimits_{1\le\alpha<\beta\le N} \cM(V_\alpha\cap V_\beta).
\end{equation}
 Now $U\cap V=\bigcup_{\alpha=1}^N U\cap V_\alpha$ is a finite affine
open covering of the scheme $U\cap V$.
 Hence the $\cO_X(U)$\+module $\cM(U\cap V)$ can be similarly computed
as the kernel of the $\cO_X(U)$\+module map
\begin{equation} \label{M-U-cap-V-sheaf-axiom-computing}
 \bigoplus\nolimits_{\alpha=1}^N \cM(U\cap V_\alpha)\lrarrow
 \bigoplus\nolimits_{1\le\alpha<\beta\le N}
 \cM(U\cap V_\alpha\cap V_\beta).
\end{equation}
 It remains to observe that
the map~\eqref{M-U-cap-V-sheaf-axiom-computing} can be obtained by
applying the functor $\cO_X(U)\ot_{\cO(X)}{-}$ to
the map~\eqref{M-V-sheaf-axiom-computing}, since
for any affine open subschemes $U$, $W\subset X$ one has
$\cO_X(U\cap W)\simeq\cO_X(U)\ot_{\cO(X)}\cO_X(W)$ and
\begin{multline*}
 \cM(U\cap W)\simeq\cO_X(U\cap W)\ot_{\cO_X(W)}\cM(W) \\
 \simeq\cO_X(U)\ot_{\cO(X)}\cO_X(W)\ot_{\cO_X(W)}\cM(W)\simeq
 \cO_X(U)\ot_{\cO(X)}\cM(W).
\end{multline*}
 Finally, this tensor product functor preserves kernels (since
the $\cO(X)$\+module $\cO_X(U)$ is flat, and in fact, very flat).
\end{proof}

\begin{lem} \label{co-generation-square-lemma}
 Let $R\rarrow S$ be a homomorphism of associative rings. \par
\textup{(a)} Suppose given a commutative square diagram of
left $R$\+module maps
\begin{equation} \label{generation-square-diagram}
\begin{gathered}
 \xymatrix{
  F \ar[r]^-q \ar@{->>}[d]_f & F' \ar[d]^{f'} \\
  M \ar[r]_-r & S\ot_RM 
 }
\end{gathered}
\end{equation}
where the $R$\+module map\/ $r\:M\rarrow S\ot_RM$ is induced
by the ring homomorphism $R\rarrow S$, the $R$\+module map
$f\:F\rarrow M$ is surjective, $F'$ is the underlying $R$\+module of
an $S$\+module, and\/ $f'\:F'\rarrow S\ot_RM$ is an $S$\+module map.
 Then the map $f'\:F'\rarrow S\ot_RM$ is surjective. \par
\textup{(b)} Suppose given a commutative square diagram of
left $R$\+module maps
\begin{equation} \label{cogeneration-square-diagram}
\begin{gathered}
 \xymatrix{
  J & J' \ar[l]_-d \\
  G \ar@{>->}[u]^g & \Hom_R(S,G) \ar[l]^-c \ar[u]_{g'}
 }
\end{gathered}
\end{equation}
where the $R$\+module map\/ $c\:\Hom_R(S,G)\rarrow G$ is induced
by the ring homomorphism $R\rarrow S$, the $R$\+module map
$g\:G\rarrow J$ is injective, $J'$ is the underlying $R$\+module of
an $S$\+module, and\/ $g'\:\Hom_R(S,G)\rarrow J'$ is an $S$\+module map.
 Then the map $g'\:\Hom_R(S,G)\rarrow J'$ is injective.
\end{lem}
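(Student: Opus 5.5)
Part~(a) comes down to a universal property argument. By the tensor--Hom adjunction, the $S$\+module map $f'\:F'\rarrow S\ot_RM$ is determined by the composition $F\overset q\rarrow F'\overset{f'}\rarrow S\ot_RM$. Since $F'$ is an $S$\+module, the map $q$ factors as $F\rarrow S\ot_RF\overset{\tilde q}\rarrow F'$, where $\tilde q\:S\ot_RF\rarrow F'$ is the $S$\+linear extension $s\ot x\mapsto sq(x)$.

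The $S$\+linear composition $f'\circ\tilde q\:S\ot_RF\rarrow S\ot_RM$ restricts along $F\rarrow S\ot_RF$ to $f'\circ q=r\circ f$. Now $S\ot_Rf$ is the unique $S$\+linear map with this restriction. Hence $f'\circ\tilde q=S\ot_Rf$.

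The functor $S\ot_R{-}$ is right exact, so $S\ot_Rf$ is surjective because $f$ is. It follows that $f'$ is surjective. This is a one-line check once the factorization through $\tilde q$ is written down; I do not expect any obstacle here.

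Part~(b) is the exact dual. The $S$\+module $J'$ maps $R$\+linearly to $J$ via $d$. By the Hom adjunction, $d$ corresponds to the $S$\+module map $\tilde d\:J'\rarrow\Hom_R(S,J)$, $y\mapsto(s\mapsto d(sy))$, and $d=c_J\circ\tilde d$, where $c_J$ is evaluation at~$1$.

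Consider the two $S$\+linear maps $\tilde d\circ g'$ and $\Hom_R(S,g)$ from $\Hom_R(S,G)$ to $\Hom_R(S,J)$. Their compositions with $c_J$ agree:
\[
 c_J\circ\tilde d\circ g'=d\circ g'=g\circ c=c_J\circ\Hom_R(S,g).
\]
By the uniqueness in the adjunction, the two maps are equal, so $\tilde d\circ g'=\Hom_R(S,g)$.

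The functor $\Hom_R(S,{-})$ is left exact, so $\Hom_R(S,g)$ is injective because $g$ is. It follows that $g'$ is injective. The only point needing care is to set up the adjunction correctly, in particular the $S$\+module structure on $\Hom_R(S,{-})$ and the fact that $c$ is evaluation at~$1$; otherwise the argument is routine.
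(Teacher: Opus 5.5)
Your proof is correct and is essentially the paper's argument in adjunction language: evaluating your identity $\tilde d\circ g'=\Hom_R(S,g)$ at $v\in\Hom_R(S,G)$ and then at $s\in S$ gives $d(g'(sv))=g(v(s))=g(c(sv))$, which is exactly the paper's elementwise computation (if $g'(v)=0$ then $g'(sv)=0$, so $c(sv)=v(s)=0$ for all $s$). Likewise, for part~(a) the paper simply observes that $S\ot_RM$ is generated over $S$ by $r(M)=r(f(F))\subset f'(F')$. One small slip: your opening claim that $f'$ itself is determined by $f'\circ q$ is false in general (only $f'\circ\tilde q$ is), but your argument uses only the latter.
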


\begin{proof}
 Part~(a), which is included here as an illustration for part~(b),
is clear and intuitive.
 The point is that the $S$\+module $S\ot_RM$ is generated by
the image of the map~$r$, which is contained in the image of
the $S$\+module map~$f'$.
 Let us prove part~(b), which we will really use.

 Let $v\:S\rarrow G$ be a left $R$\+module map; so $v\in\Hom_R(S,G)$.
 Assume that $g'(v)=0$.
 Then we also have $g'(sv)=0$ for all $s\in S$, since $g'$~is
an $S$\+module map.
 Since the map~$g$ is injective, it follows that $c(sv)=0$.
 By construction, we have $c(sv)=(sv)(1)=v(s)\in G$.
 Thus $v(s)=0$ for all $s\in S$; so $v=0$.
\end{proof}

 The following lemma is a dual analogue of~\cite[Lemma~2.1]{SS}.

\begin{lem} \label{slavik-stovicek-lemma}
 Let $X$ be an affine scheme, $U\subset X$ be a quasi-compact open
subscheme, and\/ $\fJ\in X\Ctrh^\lin$ be a locally injective
contraherent cosheaf on~$X$.
 Consider the homomorphism of $\cO_X(U)$\+modules
\begin{equation} \label{qcomp-in-affine-ctrh-module-map}
 \fJ[U]\lrarrow\Hom_{\cO(X)}(\cO_X(U),\fJ[X])
\end{equation}
induced by the corestriction map of $\cO(X)$\+modules\/
$\fJ[U]\rarrow\fJ[X]$.
 Then the map~\eqref{qcomp-in-affine-ctrh-module-map} is
an isomorphism.
\end{lem}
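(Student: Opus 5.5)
Since $X$ is affine, set $R=\cO(X)$ and $J=\fJ[X]$. By Lemma~\ref{ctrh-lct-lin-cosheaves-on-affine-scheme}(b) the module $J$ is injective. My plan is to compute both sides of \eqref{qcomp-in-affine-ctrh-module-map} through a finite affine covering and the cosheaf axiom, and then compare. Local injectivity is used to identify the cosections over affine opens with Hom modules, and injectivity of $J$ makes $\Hom_R({-},J)$ exact. In this way the statement becomes the dual of Lemma~\ref{quasi-compact-intersected-with-affine} applied to the structure sheaf.

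First, choose a finite affine open covering $U=\bigcup_{\alpha=1}^N U_\alpha$. Since $X$ is affine, hence separated, the intersections $U_\alpha\cap U_\beta$ are affine. The cosheaf axiom~\eqref{cosheaf-axiom-for-topology-base}, in its finite form (Remarks~\ref{finite-coverings-suffice-remarks}(b,c)), presents $\fJ[U]$ as the cokernel of
$$\bigoplus\nolimits_{\alpha<\beta}\fJ[U_\alpha\cap U_\beta]\lrarrow\bigoplus\nolimits_\alpha\fJ[U_\alpha].$$
Here $\fJ$ is contraherent on $X$, so the contraherence axiom~(ii) gives $\fJ[W]\simeq\Hom_R(\cO_X(W),J)$ for every affine open $W\subset X$, compatibly with the corestrictions. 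Hence $\fJ[U]$ is the cokernel of $\bigoplus_{\alpha<\beta}\Hom_R(\cO(U_\alpha\cap U_\beta),J)\rarrow\bigoplus_\alpha\Hom_R(\cO(U_\alpha),J)$.

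Second, apply $\Hom_R({-},J)$ to the left exact sequence $0\rarrow\cO_X(U)\rarrow\bigoplus_\alpha\cO(U_\alpha)\rarrow\bigoplus_{\alpha<\beta}\cO(U_\alpha\cap U_\beta)$, which is the sheaf axiom for $\cO_X$ on $U$. Because $J$ is injective, $\Hom_R({-},J)$ is exact, and so $\Hom_R(\cO_X(U),J)$ is the cokernel of the same map of Hom modules. This gives $\fJ[U]\simeq\Hom_R(\cO_X(U),J)$ as $R$-modules.

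The step that needs care is to check that this abstract isomorphism coincides with the map~\eqref{qcomp-in-affine-ctrh-module-map} and is $\cO_X(U)$-linear. For this I would verify naturality: on each summand $\fJ[U_\alpha]$, the composite $\fJ[U_\alpha]\rarrow\fJ[U]\rarrow\Hom_R(\cO_X(U),J)$ is the map induced by the corestriction $\fJ[U_\alpha]\rarrow\fJ[X]$. Under the identification $\fJ[U_\alpha]=\Hom_R(\cO(U_\alpha),J)$ this is precomposition with the restriction $\cO_X(U)\rarrow\cO(U_\alpha)$. That is exactly the map used in the second step, because the contraherence isomorphisms are defined via corestriction to $X$.

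Since the summands $\fJ[U_\alpha]$ surject onto $\fJ[U]$, the two maps agree on all of $\fJ[U]$. Linearity over $\cO_X(U)$ holds because~\eqref{qcomp-in-affine-ctrh-module-map} is by construction the adjoint of an $R$-linear map from an $\cO_X(U)$-module, and a bijective map of this form is an isomorphism of $\cO_X(U)$-modules.
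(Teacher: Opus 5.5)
Your proof is correct, but it takes a different route from the paper.

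The paper does not compute anything directly. It reduces to the case $\fJ=\cF^+$ for a free quasi-coherent sheaf $\cF$, using that every injective $R$-module is a direct summand of $F^+$ for a free $F$. It then obtains the isomorphism by applying the character functor $({-})^+$ to the Sl\'avik--\v St\!'ov\'\i\v cek isomorphism $\cO_X(U)\ot_{\cO(X)}\cF(X)\simeq\cF(U)$ from \cite[Lemma~2.1]{SS}.

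You work with $\fJ$ itself. You present $\fJ[U]$ as a cokernel via the cosheaf axiom for a finite affine cover of $U$, whose pairwise intersections are affine because $X$ is. You identify the affine cosections with $\Hom_R(\cO_X(W),J)$ via the contraherence axiom, compatibly with corestrictions. Finally you compare with $\Hom_R({-},J)$ applied to the \v Cech sequence of $\cO_X$ on $U$, which becomes right exact because $J$ is injective. Your compatibility check on the summands $\fJ[U_\alpha]$ correctly pins down that the induced isomorphism of cokernels is the map~\eqref{qcomp-in-affine-ctrh-module-map}.

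Your argument is essentially the dual of the proof of \cite[Lemma~2.1]{SS}, and of Lemma~\ref{quasi-compact-intersected-with-affine}. What it buys is self-containedness. It avoids the direct-summand reduction and the character duality. It also avoids checking that $({-})^+$ carries the tensor map to the map~\eqref{qcomp-in-affine-ctrh-module-map}, and that $\cF^+[U]=\cF(U)^+$ for the quasi-compact quasi-separated open $U$. The paper's version is shorter given the citation and makes the duality with the quasi-coherent theorem explicit, which fits the paper's theme.

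Two minor wording points. In your overview, it is the contraherence axiom, not local injectivity, that identifies affine cosections with Hom modules; local injectivity enters only through the injectivity of $J$, which is how you actually use it in the body. Also, you do not need Remarks~\ref{finite-coverings-suffice-remarks}(b,c): $\fJ$ is a cosheaf on all open subsets of $X$, so the cosheaf axiom holds for your finite covering directly.
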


\begin{proof}
 Put $R=\cO(X)$.
 By Lemma~\ref{ctrh-lct-lin-cosheaves-on-affine-scheme}(b),
locally injective contraherent cosheaves $\fJ$ on $X$ correspond to
injective $R$\+modules $J=\fJ[X]$.
 For any injective $R$\+module $J$, there exists a flat (and even
free/projective) $R$\+module $F$ such that $J$ is a direct summand of
the $R$\+module $F^+=\Hom_\boZ(F,\boQ/\boZ)$.
 Denoting by $\cF$ the flat/projective/free quasi-coherent sheaf on
$X$ corresponding to the $R$\+module $F$, we conclude that $\fJ$ is
a direct summand of the locally injective contraherent cosheaf $\cF^+$
on $X$, in the notation of
Remarks~\ref{finite-coverings-suffice-remarks}(b\+-d)
and~\ref{character-cosheaf-ctrh-lct-remark}.
 So it suffices to consider the case of $\fJ=\cF^+$.
 In this case, the desired
isomorphism~\eqref{qcomp-in-affine-ctrh-module-map} can be obtained
by applying the character module functor
$({-})^+=\Hom_\boZ({-},\boQ/\boZ)$ to the isomorphism of
$\cO_X(U)$\+modules $\cO_X(U)\ot_{\cO(X)}\cF(X)\simeq\cF(U)$
from~\cite[Lemma~2.1]{SS}.
\end{proof}

 The following theorem is the first main result of this paper.
 It is a dual analogue of the theorem of Sl\'avik
and \v St\!'ov\'\i\v cek~\cite[Theorem~2.2]{SS}.

\begin{thm} \label{non-ssep-not-enough-lin-theorem}
 Let $X$ be a quasi-compact, quasi-separated scheme that is \emph{not}
semi-separated.
 Then there is a locally cotorsion contraherent cosheaf\/
$\fG\in X\Ctrh^\lct$ on $X$ for which there \emph{does not exist}
an admissible monomorphism\/ $\fG\rarrow\fJ$ in the exact category
$X\Lcth$ or $X\Lcth^\lct$ from\/ $\fG$ into any locally injective
locally contraherent cosheaf\/ $\fJ\in X\Lcth^\lin$.
\end{thm}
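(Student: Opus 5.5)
The plan is to dualize the proof of Sl\'avik and \v St\!'ov\'\i\v cek~\cite[Theorem~2.2]{SS}, with Lemma~\ref{slavik-stovicek-lemma} in the role of~\cite[Lemma~2.1]{SS} and Lemma~\ref{co-generation-square-lemma}(b) in the role of its ``generation'' counterpart.

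\emph{Step~1: locate the failure.} Since $X$ is quasi-compact and quasi-separated but not semi-separated, there are affine open subschemes $U$, $V\subset X$ for which $U\cap V$ misbehaves. Specifically, the restriction map $\cO_X(U)\ot_\boZ\cO_X(V)\rarrow\cO_X(U\cap V)$ is not surjective, or $U\cap V$ is not affine. I would follow~\cite{SS} in extracting from this a point $x\in U\cap V$ and a quasi-coherent sheaf $\cM$ on $X$ (in~\cite{SS}, essentially a direct image of a sheaf supported near~$x$) that is not a quotient of any flat sheaf. The key feature I want is that the obstruction is witnessed over every pair of affine neighborhoods of~$x$ contained in $U$ and~$V$.

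\emph{Step~2: define $\fG$.} Put $\fG=\cM^+$. By Remark~\ref{character-cosheaf-ctrh-lct-remark}, $\fG$ is a locally cotorsion contraherent cosheaf, so $\fG\in X\Ctrh^\lct$. Its cosections are $\fG[W]=\cM(W)^+$ for quasi-compact quasi-separated $W$.

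\emph{Step~3: assume an embedding and pass to a refined covering.} Suppose $\fG\rarrow\fJ$ is an admissible monomorphism in $X\Lcth$ with $\fJ\in X\Lcth^\lin_{\bW}$ for some covering~$\bW$. Shrink $U$ and $V$ to affine neighborhoods $U'\subset U$ and $V'\subset V$ of $x$, each subordinate to~$\bW$. By Step~1 the obstruction persists for the pair $U'$, $V'$. Over each of $U'$ and $V'$, the restriction of $\fJ$ is contraherent and locally injective. So Lemma~\ref{slavik-stovicek-lemma}, applied on the affine schemes $U'$ and $V'$ to the quasi-compact open $W=U'\cap V'$, gives
\[
\fJ[W]\simeq\Hom_{\cO(U')}(\cO(W),\fJ[U'])\simeq\Hom_{\cO(V')}(\cO(W),\fJ[V']).
\]

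\emph{Step~4: derive the contradiction.} The map $\fG[W]\rarrow\fJ[W]$ is injective, since admissible monomorphisms are injective on cosections over affines and hence, by the \v Cech presentation, on qcqs opens compatibly. Lemma~\ref{co-generation-square-lemma}(b) then shows that the natural maps $\Hom(\cO(W),\fG[U'])\rarrow\fJ[W]$ (and the same with $V'$ in place of $U'$) are injective. Combining this with the corestriction maps $\fG[W]\rarrow\fG[U']$ and $\fG[W]\rarrow\fG[V']$, I expect to force the cosections $\fG[W]=\cM(W)^+$ to embed into, or be determined by, data over the ring $\cO(U')\ot\cO(V')$. Applying the character duality $({-})^+$, this becomes exactly the statement about $\cM$ that is refuted in~\cite[proof of Theorem~2.2]{SS}, namely surjectivity-type behavior of $\cO(U')\ot\cO(V')\ot\cM\rarrow\cM(W)$. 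The same argument covers $X\Lcth^\lct$, since the exact structure there is inherited.

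\emph{Main obstacle.} The hardest step is Step~3: $\fJ$ is only $\bW$\+locally contraherent with $\bW$ unknown in advance, so $\fG$ has to be chosen so that the obstruction survives arbitrary shrinking around a point. I would first try to reproduce the local argument of~\cite{SS} around a point $x$, checking that non-semi-separatedness is detected on arbitrarily small affine neighborhoods. If that fails, I would attempt to build $\fG$ as a product over all such pairs of neighborhoods; products preserve $X\Ctrh^\lct$ by the end of Section~\ref{lct-lin-subsecn}.
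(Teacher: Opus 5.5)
Your outline has the right ingredients ($\fG=\cM^+$, Lemma~\ref{slavik-stovicek-lemma} on a refined affine $U'$, Lemma~\ref{co-generation-square-lemma}(b)), but the step that carries all the weight is asserted incorrectly. In Step~4 you claim that $\fG[W]\rarrow\fJ[W]$ is injective for the non-affine open $W=U'\cap V'$ ``by the \v Cech presentation''. This is false. The cosheaf axiom presents $\fP[W]$ as a \emph{cokernel}, so $\fP\longmapsto\fP[W]$ is only right exact on $X\Lcth_\bW$. Admissible monomorphisms are only guaranteed to be injective on cosections over affine opens subordinate to~$\bW$. (On $\mathbb P^1$ over a field~$k$, dualizing $0\rarrow\cO(-2)\rarrow\cO(-1)^2\rarrow\cO\rarrow0$ gives an admissible monomorphism $\cO^+\rarrow(\cO(-1)^+)^2$ whose map of global cosections is $k^+\rarrow0$.)

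Proving this injectivity is the heart of the paper's argument, and it depends on a specific choice of $\cM$ that your proposal leaves open. You also misattribute to~\cite{SS} a choice involving a point~$x$; no such point appears there. The paper takes affine $U$, $V$ with $T=U\cap V=\bigcup_i\Spec R[f_i^{-1}]$ non-affine, where $R=\cO_X(U)$. It sets $I=(f_1,\dotsc,f_n)$, lets $\cI$ be its sheaf on~$U$, and puts $\fG=(j_*\cI)^+$. With this choice, the corestriction $\fG[U\cap V']\rarrow\fG[V']$ is an isomorphism, so injectivity over the affine $V'$ gives injectivity over $U\cap V'$. Next, $\fG[U'\cap V']\simeq\Hom_R(\cO_X(U'),\fG[U\cap V'])$ by Lemma~\ref{quasi-compact-intersected-with-affine}. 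Lemma~\ref{co-generation-square-lemma}(b), with $S=\cO_X(U')$, then transfers injectivity to $U'\cap V'$. That transfer is what Lemma~\ref{co-generation-square-lemma}(b) is for. The injectivity you extract from it, of $\Hom(\cO(W),\fG[U'])\rarrow\fJ[W]$, is automatic from left exactness of $\Hom$ and does not help.

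Your localization in Steps~1 and~3 also fails. There need be no point~$x$ at which every pair of small affine neighborhoods has non-affine intersection. In the affine plane with doubled origin, every $x\in U\cap V$ lies in some $D(f)\subset U\cap V$, and taking $U'=V'=D(f)$ gives the affine intersection $D(f)$. Your fallback product construction is also unnecessary.

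What is true, with $\fG$ fixed before $\bW$, is the following. For affine coverings $U=\bigcup U_\alpha$ and $V=\bigcup V_\beta$ subordinate to~$\bW$, some $T'=U_\alpha\cap V_\beta$ is non-affine. This holds because $T\rarrow U\times V$ is not an affine morphism and affineness is local on the target. Moreover, $T'\rarrow T$ is affine, so $T'$ is covered by the affines $T'\cap\Spec R[f_i^{-1}]$. Hence the $f_i$ cannot generate the unit ideal of $\cO_X(T')$.

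The contradiction is then one-sided and involves no $\cO(U')\ot\cO(V')$. Lemma~\ref{slavik-stovicek-lemma}, applied to $\fJ|_{U'}$, makes $\fG[T']\rarrow\Hom_{\cO_X(U')}(\cO_X(T'),\fG[U'])$ injective. This dualizes to surjectivity of $\cO_X(T')\ot_RI\rarrow\cI(T')=\cO_X(T')$, which says the $f_i$ do generate the unit ideal of $\cO_X(T')$.
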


\begin{proof}
 We follow the argument from~\cite[proof of Theorem~2.2]{SS}, with
suitable enhancements.
 Let $U$, $V\subset X$ be two affine open subschemes whose intersection
$T=U\cap V$ is not affine.
 Since $X$ is quasi-separated, the scheme $T$ is quasi-compact.
 Put $R=\cO_X(U)$, so $U=\Spec R$.
 Then there is a finite collection of elements $f_1$,~\dots, $f_n\in R$
such that $T=\Spec R[f_1^{-1}]\cup\dotsb\cup\Spec R[f_n^{-1}]\subset U$.

 Let $I=(f_1,\dotsc,f_n)\subset R$ be the ideal spanned by $f_1$,
\dots, $f_n$ in $R$, and let $\cI$ be the quasi-coherent sheaf
(of ideals) on $U$ corresponding to the $R$\+module~$I$.
 Denote the identity open immersion morphism by $j\:U\rarrow X$,
and consider the locally cotorsion contraherent cosheaf
$\fG=(j_*\cI)^+\simeq j_!(\cI^+)$ on~$X$ (in the notation of
Remark~\ref{character-cosheaf-ctrh-lct-remark} and
Section~\ref{direct-images-of-cosheaves-of-O-modules}).
 Assuming that there exists an open covering $\bW$ of $X$,
a locally injective $\bW$\+locally contraherent cosheaf $\fJ$ on $X$,
and an admissible monomorphism $\fG\rarrow\fJ$ in $X\Lcth_\bW$,
we will come to a contradiction.

 Let $U=\bigcup_\alpha U_\alpha$ be a covering of $U$ by affine open
subschemes $U_\alpha$ subordinate to $\bW$, and let $V=\bigcup_\beta
V_\beta$ be a covering of $V$ by affine open subschemes $V_\beta$
subordinate to~$\bW$.
 Our next aim is to show that there exists a pair of indices
$(\alpha,\beta)$ such that the open subscheme $U_\alpha\cap V_\beta
\subset X$ is not affine.

 We will consider Cartesian products of schemes over $\Spec\boZ$;
so, throughout the rest of this proof, the Cartesian product
sign~$\times$ without subindex means $\times_{\Spec\boZ}$.
 The pair of open immersion morphisms $T\rarrow U$ and $T\rarrow V$
induces a morphism of schemes $\delta\:T\rarrow U\times V$.
 The affine scheme $U\times V$ is covered by its affine open
subschemes $U_\alpha\times V_\beta$, that is, $U\times V=
\bigcup_{\alpha,\beta}(U_\alpha\times V_\beta)$.
 For any pair of indices $\alpha$ and~$\beta$, we have
$\delta^{-1}(U_\alpha\times V_\beta)=U_\alpha\cap V_\beta\subset T$.
 Since the scheme $T$ is not affine, the scheme morphism~$\delta$
cannot be affine.
 By~\cite[D\'efinition~II.1.2.1 and Corollaire~II.1.3.2]{EGAII}
or~\cite[Lemma Tag~01S8]{SP}, it follows that a desired pair
of indices $\alpha$ and~$\beta$ with a nonaffine intersection
$U_\alpha\cap V_\beta$ exists.

 For the chosen pair of indices $\alpha$ and~$\beta$, we put
$U'=U_\alpha$, \ $V'=V_\beta$, and $T'=U_\alpha\cap V_\beta$.
 Then the open immersion morphism $T'\rarrow T$ is affine as
a base change of the morphism of affine schemes $U'\times V'
\rarrow U\times V$; specifically,
$T'=(U'\times V')\times_{(U\times V)}T$.
 Hence, restricting to $T'$ the affine open covering
$T=\bigcup_{i=1}^n\Spec R[f_i^{-1}]$, we obtain
an affine open covering $T'=\bigcup_{i=1}^n(T'\cap\Spec R[f_i^{-1}])$
of the scheme~$T'$.
 To repeat, the point is that the open subschemes
$T'\cap\Spec R[f_i^{-1}]$ are affine.
 Hence, according to~\cite[Chapter~II, Section~II.2,
Exercise~2.17(b)]{HarAG}, the fact that the scheme $T'$ is not affine
implies that the restrictions of the elements $f_1$,~\dots,
$f_n\in\cO_X(U)$ to the open subscheme $T'\subset U$ \emph{do not}
generate the unit ideal of the ring $\cO_X(T')$.

 The following commutative square diagram is a dual analogue
of the leftmost square of the main commutative diagram
in~\cite[proof of Theorem~2.2]{SS}:
\begin{equation} \label{slavik-stovicek-leftmost-square}
\begin{gathered}
 \xymatrix{
  \fJ[V'] & \fJ[U\cap V'] \ar[l] \\
  \fG[V'] \ar@{>->}[u] \ar@{=}[r] & \fG[U\cap V'] \ar[u]
 }
\end{gathered}
\end{equation}
 Here the upper horizontal map $\fJ[U\cap V']\rarrow\fJ[V']$
is the corestriction map in the locally injective locally
contraherent cosheaf~$\fJ$.
 The corestriction map $\fG[U\cap V']\rarrow\fG[V']$ in
the contraherent cosheaf $\fG=(j_*\cI)^+$ is an isomorphism because
the restriction map $(j_*\cI)(V')\rarrow(j_*\cI)(U\cap V')$ is
an isomorphism by the construction of the direct image functor
$j_*\:U\Qcoh\rarrow X\Qcoh$.
 We are using the fact that the scheme $U\cap V'$ is quasi-compact
and quasi-separated here in order to compute
$(j_*\cI)^+[U\cap\nobreak V']$.

 The map of cosections $\fG[V']\rarrow\fJ[V']$ induced by
the admissible monomorphism of $\bW$\+locally contraherent cosheaves
$\fG\rarrow\fJ$ on $X$ is injective because $V'\subset X$ is
an affine open subscheme subordinate to~$\bW$.
 It follows from the commutativity of
the diagram~\eqref{slavik-stovicek-leftmost-square} that the map of
cosections $\fG[U\cap V']\rarrow\fJ[U\cap V']$ induced by the morphism
of cosheaves $\fG\rarrow\fJ$ is injective, too.

 Now let us consider the commutative square diagram
\begin{equation} \label{inserted-middle-square}
\begin{gathered}
 \xymatrix{
  \fJ[U\cap V'] & \fJ[U'\cap V'] \ar[l] \\
  \fG[U\cap V'] \ar@{>->}[u] & \fG[U'\cap V'] \ar[l] \ar[u]
 }
\end{gathered}
\end{equation}
 Here the horizontal maps are the corestriction maps in the cosheaves
$\fJ$ and $\fG$, while the vertical maps are the maps of cosections
induced by the morphism of cosheaves $\fG\rarrow\fJ$.
 The leftmost vertical map in~\eqref{inserted-middle-square} is
injective according to the argument above.

 Put $\cM=j_*\cI$; so $\fG=\cM^+$.
 The corestriction map $\fG[U'\cap V']\rarrow\fG[U\cap V']$ is
obtained by applying the character module functor $({-})^+$
to the restriction map $\cM(U\cap V')\rarrow\cM(U'\cap V')$.
 Now we use Lemma~\ref{quasi-compact-intersected-with-affine} for
the affine scheme $U$, its quasi-compact open subscheme $U\cap V'
\subset U$, and the affine open subscheme $U'\subset U$.
 According to the lemma, we have an isomorphism of $\cO_X(U')$\+modules
$\cO_X(U')\ot_{\cO_X(U)}\cM(U\cap V')\simeq\cM(U'\cap V')$.
 Applying the functor $({-})^+$, we obtain an isomorphism
$$
 \fG[U'\cap V']\simeq\Hom_{\cO_X(U)}(\cO_X(U'),\>\fG[U\cap V']).
$$

 Put $R=\cO_X(U)$ and $S=\cO_X(U')$.
 Now the diagram~\eqref{inserted-middle-square} is a special case
of the diagram~\eqref{cogeneration-square-diagram}
from Lemma~\ref{co-generation-square-lemma}(b).
 According to that lemma, it follows that the rightmost vertical
map $\fG[U'\cap V']\rarrow\fJ[U'\cap V']$
in~\eqref{inserted-middle-square} is injective as well.

 Finally, the following commutative square diagram is a dual analogue
of the rightmost square of the main commutative diagram
in~\cite[proof of Theorem~2.2]{SS}:
\begin{equation} \label{slavik-stovicek-rightmost-square}
\begin{gathered}
 \xymatrix{
  \fJ[U'\cap V'] \ar[r] & \fJ[U'] \\
  \fG[U'\cap V'] \ar@{>->}[u] \ar[r] & \fG[U'] \ar@{>->}[u]
 }
\end{gathered}
\end{equation}
 Once again, the horizontal maps are the corestriction maps in
the cosheaves $\fJ$ and $\fG$, while the vertical maps are the maps of
cosections induced by the morphism of cosheaves $\fG\rarrow\fJ$.
 The leftmost vertical map
in~\eqref{slavik-stovicek-rightmost-square} is injective according to
the argument above, while the rightmost vertical map
in~\eqref{slavik-stovicek-rightmost-square} is injective because
$\fG\rarrow\fJ$ is an admissible monomorphism in $X\Lcth_\bW$ and
$U'\subset X$ is an affine open subscheme subordinate to~$\bW$.

 The argument finishes similarly to the proof in~\cite{SS}.
 The commutative diagram of $\cO_X(U')$\+module
maps~\eqref{slavik-stovicek-rightmost-square} induces a commutative
diagram of $\cO_X(U'\cap\nobreak V')$\+module maps
\begin{equation} \label{slavik-stovicek-subsequent-diagram}
\begin{gathered}
 \xymatrix{
  \fJ[U'\cap V'] \ar[r]
   & \Hom_{\cO_X(U')}(\cO_X(U'\cap V'),\>\fJ[U']) \\
  \fG[U'\cap V'] \ar@{>->}[u] \ar[r]
   & \Hom_{\cO_X(U')}(\cO_X(U'\cap V'),\>\fG[U']) \ar@{>->}[u]
 }
\end{gathered}
\end{equation}
 The upper horizontal map in~\eqref{slavik-stovicek-subsequent-diagram}
is an isomorphism by Lemma~\ref{slavik-stovicek-lemma} applied to
the quasi-compact open subscheme $U'\cap V'$ in the affine scheme $U'$
and the locally injective contraherent cosheaf $\fJ|_{U'}$ on~$U'$.
 Since the leftmost vertical map
in~\eqref{slavik-stovicek-subsequent-diagram} is injective, it follows
that the lower horizontal map is injective, too.

 The lower horizontal map $\fG[U'\cap V']\rarrow
\Hom_{\cO_X(U')}(\cO_X(U'\cap V'),\>\fG[U'])$
in~\eqref{slavik-stovicek-subsequent-diagram} can be obtained by
applying the functor $({-})^+$ to the natural map
\begin{equation} \label{slavik-stovicek-surjective-map}
 \cO_X(U'\cap V')\ot_{\cO_X(U')}\cI(U')\lrarrow\cI(U'\cap V').
\end{equation}
 Consequently, the map~\eqref{slavik-stovicek-surjective-map} is
surjective.

 Now we recall that $\cI$ is a quasi-coherent sheaf of ideals on
$U=\Spec R$.
 By construction, the restriction of $\cI$ to $\Spec R[f_i^{-1}]
\subset U$ coincides with the structure sheaf of $\Spec R[f_i^{-1}]$
for every $1\le i\le n$.
 Therefore, the restriction of the sheaf of ideals $\cI$ to
the open subscheme $U\cap V=\bigcup_{i=1}^n\Spec R[f_i^{-1}]$
coincides with the structure sheaf of $U\cap V$.
 So we have $\cI(U'\cap V')=\cO_X(U'\cap V')$.

 On the other hand, we have $\cI(U')=\cO_X(U')\ot_{\cO_X(U)}\cI(U)$.
 Hence the map~\eqref{slavik-stovicek-surjective-map} is isomorphic
to the natural map
\begin{equation} \label{surjective-map-rewritten}
 \cO_X(U'\cap V')\ot_{\cO_X(U)}\cI(U)\lrarrow\cO_X(U'\cap V').
\end{equation}
 Recall also the notation $\cI(U)=I=(f_1,\dotsc,f_n)\subset R$
and $T'=U'\cap V'$.
 It remains to point out that surjectivity of
the map~\eqref{surjective-map-rewritten} contradicts our previous
observation that restrictions of the elements $f_1$,~\dots,
$f_n\in\cO_X(U)$ to the open subscheme $T'\subset U$ do not
generate the unit ideal of the ring $\cO_X(T')$.
 We have arrived at a contradiction, proving the theorem.
\end{proof}

 The following corollary summarizes the results of
Section~\ref{qcoh-ssep-enough-lin-subsecn} and the present
section, together with~\cite[Theorem~2.2]{SS}.

\begin{cor} \label{slavik-stovicek-style-main-corollary}
 Let $X$ be a quasi-compact, quasi-separated scheme with an open
covering\/~$\bW$.
 Then the following conditions are equivalent:
\begin{enumerate}
\item the scheme $X$ is semi-separated;
\item every quasi-coherent sheaf on $X$ is a quotient sheaf of
a flat quasi-coherent sheaf;
\item for every contraherent cosheaf\/ $\fP\in X\Ctrh$, there exists
an admissible monomorphism\/ $\fP\rarrow\fJ$ in the exact category
$X\Ctrh$ from\/ $\fP$ to a locally injective contraherent cosheaf\/
$\fJ\in X\Ctrh^\lin$;
\item for every locally cotorsion contraherent cosheaf\/
$\fP\in X\Ctrh^\lct$, there exists an admissible monomorphism\/
$\fP\rarrow\fJ$ in the exact category $X\Ctrh^\lct$ (or equivalently,
in $X\Ctrh$) from\/ $\fP$ to a locally injective contraherent
cosheaf\/ $\fJ\in X\Ctrh^\lin$;
\item for every\/ $\bW$\+locally contraherent cosheaf\/
$\fP\in X\Lcth_\bW$, there exists an admissible monomorphism\/
$\fP\rarrow\fJ$ in the exact category $X\Lcth_\bW$ from\/ $\fP$
to a locally injective\/ $\bW$\+locally contraherent cosheaf\/
$\fJ\in X\Lcth^\lin_\bW$;
\item for every locally cotorsion\/ $\bW$\+locally contraherent
cosheaf\/ $\fP\in X\Lcth^\lct_\bW$, there exists an admissible
monomorphism\/ $\fP\rarrow\fJ$ in the exact category $X\Lcth^\lct_\bW$
(or equivalently, in $X\Lcth_\bW$) from\/ $\fP$ to a locally injective\/
$\bW$\+locally contraherent cosheaf\/ $\fJ\in X\Lcth^\lin_\bW$;
\item for every locally contraherent cosheaf\/ $\fP\in X\Lcth$, there
exists an admissible monomorphism\/ $\fP\rarrow\fJ$ in the exact
category $X\Lcth$ from\/ $\fP$ to a locally injective locally
contraherent cosheaf\/ $\fJ\in X\Lcth^\lin$;
\item for every locally cotorsion locally contraherent cosheaf\/
$\fP\in X\Lcth^\lct$, there exists an admissible monomorphism\/
$\fP\rarrow\fJ$ in the exact category $X\Lcth^\lct$ (or equivalently,
in $X\Lcth$) from\/ $\fP$ to a locally injective locally contraherent
cosheaf\/ $\fJ\in X\Lcth^\lin$.
\item for every contraherent cosheaf\/ $\fP\in X\Ctrh$, there exists
an admissible monomorphism\/ $\fP\rarrow\fJ$ in the exact category
$X\Lcth$ from\/ $\fP$ to a locally injective locally contraherent
cosheaf\/ $\fJ\in X\Lcth^\lin$;
\item for every locally cotorsion contraherent cosheaf\/
$\fP\in X\Ctrh^\lct$, there exists an admissible monomorphism\/
$\fP\rarrow\fJ$ in the exact category $X\Lcth^\lct$ (or equivalently,
in $X\Lcth$) from\/ $\fP$ to a locally injective locally contraherent
cosheaf\/ $\fJ\in X\Lcth^\lin$.
\end{enumerate}
\end{cor}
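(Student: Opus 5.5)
We prove the equivalence by showing that (1) implies all of (2)--(10), and that each of (2)--(10) implies (1). The direction (1)$\Rightarrow$(2) is the classical result quoted in the introduction (\cite[Lemma~A.1]{EP}), and (2)$\Rightarrow$(1) is \cite[Theorem~2.2]{SS}. So the quasi-coherent condition~(2) needs no new argument, and the task is to connect the eight cosheaf conditions with~(1).

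For (1)$\Rightarrow$(5), we apply Proposition~\ref{qcoh-ssep-enough-lin-prop} directly. Condition~(3) is the special case $\bW=\{X\}$ of~(5).

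For (6), let $\fP\in X\Lcth^\lct_\bW$ and let $\fP\rarrow\fJ$ be an admissible monomorphism in $X\Lcth_\bW$ with $\fJ$ locally injective. Its cokernel is a quotient of the cotorsion cosections by injective ones, computed over affines subordinate to~$\bW$. Since the class $X\Lcth^\lct_\bW$ is closed under cokernels of admissible monomorphisms, the cokernel is locally cotorsion, so the sequence is admissible in $X\Lcth^\lct_\bW$. This justifies the ``or equivalently'' clauses in (4), (6), (8) and~(10). Condition~(4) is then the case $\bW=\{X\}$ of~(6).

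For (7) and~(8), a locally contraherent cosheaf is $\bW$\+locally contraherent for some covering~$\bW$, so we apply (5) or~(6) for that $\bW$. An admissible monomorphism in $X\Lcth_\bW$ is also admissible in $X\Lcth$. Conditions (9) and~(10) are the special cases of (7) and~(8) for contraherent cosheaves, i.e.\ for $\bW=\{X\}$.

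In the reverse direction, each of (3)--(9) trivially implies~(10). Condition~(3) is contained in~(9), since $X\Ctrh\subset X\Lcth$ with the inherited exact structure and $X\Ctrh^\lin\subset X\Lcth^\lin$. Similarly (4), (6) and~(8) specialize to~(10), because $X\Ctrh^\lct\subset X\Lcth^\lct_\bW$. Conditions (5) and~(7) restricted to locally cotorsion contraherent cosheaves also give~(10). Finally, (10)$\Rightarrow$(1) is the contrapositive of Theorem~\ref{non-ssep-not-enough-lin-theorem}: if $X$ is not semi-separated, the cosheaf $\fG\in X\Ctrh^\lct$ constructed there admits no admissible monomorphism into any object of $X\Lcth^\lin$.

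The only point needing care is consistency of exact structures: an admissible monomorphism in $X\Ctrh$, $X\Lcth_\bW$ or $X\Lcth^\lct$ is admissible in $X\Lcth$, and the converse holds within the locally cotorsion classes. Both follow from Section~\ref{loc-contrah-subsecn} and Section~\ref{lct-lin-subsecn}. Once those are checked, the corollary is purely bookkeeping.
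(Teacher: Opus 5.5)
Your proposal is correct and follows essentially the same argument as the paper. You use Proposition~\ref{qcoh-ssep-enough-lin-prop} (for arbitrary coverings) to get (1)$\Rightarrow$(3),(5),(7), closure of the locally cotorsion classes under cokernels of admissible monomorphisms for the locally cotorsion variants, inclusions of exact categories for the weakening implications, Theorem~\ref{non-ssep-not-enough-lin-theorem} for (10)$\Rightarrow$(1), and the literature for (1)$\Leftrightarrow$(2). The only differences are that you route the reverse implications straight into (10) rather than partly through (9), and one wording slip: the cokernel's cosections are $\fJ[U]/\fP[U]$, a quotient of injective modules by cotorsion ones, not the other way round.
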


\begin{proof}
 (1)~$\Longrightarrow$~(2) This is~\cite[Section~2.4]{M-n}, 
\cite[Section~3.2]{M-th}, \cite[Lemma~A.1]{EP},
or~\cite[Lemma~4.1.8 or Corollary~4.1.11(a)]{Pcosh}
(see~\cite[Lemma~4.1.1 or Corollary~4.1.4(a)]{Pcosh} for
a stronger very flat version).

 (2)~$\Longrightarrow$~(1) This is~\cite[Theorem~2.2]{SS}.

 (3)~$\Longrightarrow$~(4) Holds because the full exact subcategory
$X\Ctrh^\lct$ is closed under cokernels of admissible monomorphisms
in $X\Ctrh$.

 (5)~$\Longrightarrow$~(6) Holds because the full exact subcategory
$X\Lcth^\lct_\bW$ is closed under cokernels of admissible monomorphisms
in $X\Lcth_\bW$.

 (7)~$\Longrightarrow$~(8) Holds because the full exact subcategory
$X\Lcth^\lct$ is closed under cokernels of admissible monomorphisms
in $X\Lcth$.

 (9)~$\Longrightarrow$~(10) Holds because the full exact subcategory
$X\Lcth^\lct$ is closed under cokernels of admissible monomorphisms
in $X\Lcth$.

%

 (3)~$\Longrightarrow$~(9) Holds due to the inclusions of exact
categories $X\Ctrh\subset X\Lcth$ and $X\Ctrh^\lin\subset X\Lcth^\lin$.

 (4)~$\Longrightarrow$~(10) Holds due to the inclusions of exact
categories $X\Ctrh^\lct\subset X\Lcth^\lct$
and $X\Ctrh^\lin\subset X\Lcth^\lin$.

 (5)~$\Longrightarrow$~(9) Holds due to the inclusions of exact
categories $X\Ctrh\subset X\Lcth_\bW\subset X\Lcth$
and $X\Lcth^\lin_\bW\subset X\Lcth^\lin$.

 (6)~$\Longrightarrow$~(10) Holds due to the inclusions of exact
categories $X\Ctrh^\lct\subset X\Lcth^\lct_\bW\subset X\Lcth^\lct$
and $X\Lcth^\lin_\bW\subset X\Lcth^\lin$.

 (7)~$\Longrightarrow$~(9) Holds due to the inclusion of
categories $X\Ctrh\subset X\Lcth$.

 (8)~$\Longrightarrow$~(10) Holds due to the inclusion of
categories $X\Ctrh^\lct\subset X\Lcth^\lct$.

 (1)~$\Longrightarrow$~(5) This is
Proposition~\ref{qcoh-ssep-enough-lin-prop}.

 (1)~$\Longrightarrow$~(3) This is
Proposition~\ref{qcoh-ssep-enough-lin-prop} for $\bW=\{X\}$.

 (1)~$\Longrightarrow$~(7) Follows from
Proposition~\ref{qcoh-ssep-enough-lin-prop}.

 (10)~$\Longrightarrow$~(1) This is
Theorem~\ref{non-ssep-not-enough-lin-theorem}.
\end{proof}

\Section{Coflasque Contraherent Cosheaves}  \label{coflasque-secn}

 We recall that a sheaf of abelian groups $\cM$ on a topological
space $X$ is said to be \emph{flasque}~\cite[Section~II.3.1]{God}
if the restriction map $\cM(U)\rarrow\cM(V)$ is surjective for any pair
of open subsets $V\subset U\subset X$.
 A cosheaf of abelian groups $\fP$ on $X$ is said to be
\emph{coflasque}~\cite[Section~3.4]{Pcosh}, \cite[Section~4.4]{Pform}
if the corestriction map $\fP[V]\rarrow\fP[U]$ is injective for all
$V\subset U\subset X$.

 The property of a sheaf of abelian groups $\cM$ on $X$ to be
flasque is \emph{local}, i.~e., it suffices to check it for
the restrictions of $\cM$ to open subsets forming any chosen open
covering of~$X$ \,\cite[Section~II.3.1]{God}.
 Using the construction of
Remark~\ref{finite-coverings-suffice-remarks}(a) in order to pass from
the cosheaves to the sheaves, one deduces the fact that the flasqueness
property of cosheaves of abelian groups on $X$ is similarly
local~\cite[Lemma~3.4.1(a)]{Pcosh}.

 Let us emphasize that, in the context of a scheme $X$, even for
quasi-coherent sheaves or contraherent cosheaves, it is \emph{not}
sufficient to check the surjectivity/injectivity condition for pairs
of \emph{affine} open subschemes $V\subset U\subset X$ in order to
establish the (co)flasqueness.
 See the discussion in~\cite[Example~3.4.3 with the preceding
paragraphs, and Remark~3.4.5]{Pcosh}.

\begin{lem} \label{coflasque-lcth-are-contraherent}
 Let $X$ be a scheme with an open covering\/~$\bW$.
 Then any coflasque\/ $\bW$\+locally contraherent cosheaf on $X$
is (globally) contraherent.
\end{lem}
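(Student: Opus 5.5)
Fix a coflasque $\bW$\+locally contraherent cosheaf $\fP$ on $X$ and a pair of affine open subschemes $V\subset U\subset X$, not necessarily subordinate to~$\bW$. The plan is to verify the contraadjustedness axiom~(iii) and the contraherence axiom~(ii) for this pair. Everything reduces to affine schemes: restricting to $U$ gives a coflasque $\bW|_U$\+locally contraherent cosheaf $\fP|_U$ on the affine scheme $U$. So I will assume $X=U$ affine, $R=\cO(X)$, and show that $C=\fP[X]$ is contraadjusted and $\fP[V]\simeq\Hom_R(\cO(V),C)$.

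Choose a finite affine open covering $X=\bigcup_{\alpha=1}^N U_\alpha$ subordinate to $\bW$, with $U_\alpha$ principal affine opens $\Spec R[s_\alpha^{-1}]$ (principal opens form a base, so such a covering exists). Since $X$ is quasi-compact semi-separated, all finite intersections $U_{\alpha_0}\cap\dotsb\cap U_{\alpha_k}$ are affine and subordinate to $\bW$, and $\fP$ is contraherent on each $U_\alpha$. The cosheaf axiom for this covering, refined to the \v Cech complex, should give an exact resolution
$$\dotsb\rarrow\bigoplus_{\alpha<\beta}\fP[U_\alpha\cap U_\beta]\rarrow\bigoplus_\alpha\fP[U_\alpha]\rarrow\fP[X]\rarrow0.$$
The key input is coflasqueness. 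I plan to use it to show that this whole \v Cech complex is exact, not just its rightmost part, by comparing with the dual situation: pass to the character sheaf $\fP^+$ of Remark~\ref{finite-coverings-suffice-remarks}(a), which is flasque. Flasque sheaves have vanishing higher \v Cech cohomology for any covering, so the dual \v Cech complex is exact, and dualizing back gives the exactness above. Set $C_\alpha=\fP[U_\alpha]$ and $C_{\alpha\beta}=\fP[U_\alpha\cap U_\beta]$, and so on. By contraherence on $U_\alpha$ we have $C_{\alpha_0\dots\alpha_k}\simeq\Hom_{R}(\cO(U_{\alpha_0}\cap\dotsb\cap U_{\alpha_k}),C_{\alpha_0})$, and these are contraadjusted as $R$\+modules by Lemma~\ref{contraadjusted-change-of-scalars-lemma}(a).

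For~(iii), move along the resolution from the left end. The objects of cycles are quotients of contraadjusted modules, and any quotient of a contraadjusted module is contraadjusted; hence $C=\fP[X]$, the last quotient, is contraadjusted.

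For~(ii), apply $\Hom_R(\cO(V),{-})$ to the resolution. Since $\cO(V)$ is very flat (Example~\ref{open-affine-very-flat-example}) and all terms and cycles are contraadjusted, the sequence stays exact. Term by term, $\Hom_R(\cO(V),C_{\alpha_0\dots})\simeq\fP[V\cap U_{\alpha_0}\cap\dotsb]$ by contraherence on the subordinate affines. The resulting sequence is the analogous \v Cech resolution of $\fP[V]$ for the covering $V=\bigcup_\alpha V\cap U_\alpha$, which is exact because $\fP|_V$ is again coflasque. Comparing cokernels via the corestriction maps gives $\fP[V]\simeq\Hom_R(\cO(V),\fP[X])$.

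The main obstacle is justifying the full \v Cech exactness from coflasqueness; the reduction to the flasque character sheaf is the step I would check most carefully.
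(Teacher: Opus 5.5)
Your proposal is correct and follows the same route as the paper, which combines the homological (\v Cech) criterion of contraherence for a locally contraherent cosheaf on an affine scheme with the vanishing of higher \v Cech homology of coflasque cosheaves, the latter obtained by passing to the flasque character sheaf. The only difference is that you prove that criterion by hand rather than citing it. Your check that the cycle modules are contraadjusted, so that exactness survives $\Hom_R(\cO(V),{-})$, together with the term-by-term identifications with the \v Cech complex of $V$, is exactly the content of that criterion.
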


\begin{proof}
 This is~\cite[Corollary~3.4.2]{Pcosh} or~\cite[Lemma~4.4.1]{Pform}.
 The assertion follows from the homological criterion of contraherence
of a locally contraherent cosheaf on an affine
scheme~\cite[Lemma~3.2.2]{Pcosh}, \cite[Proposition~3.6.1]{Pform}
together with the fact that the higher \v Cech homology of
a coflasque cosheaf with respect to any open covering
vanish~\cite[Th\'eor\`eme~II.5.2.3(a)]{God},
\cite[Lemma~3.4.1(b)]{Pcosh}.
 Once again, one can use the construction of the character sheaf from
Remark~\ref{finite-coverings-suffice-remarks}(a) above in order to pass
from a cosheaf to a sheaf in the latter assertion.
\end{proof}

\begin{lem} \label{coflasque-closedness-properties-and-cosections}
 Let $X$ be a scheme with an open covering\/~$\bW$.
 Then \par
\textup{(a)} the full subcategory of coflasque contraherent cosheaves
is closed under extensions in $X\Lcth_\bW$; \par
\textup{(b)} the full subcategory of coflasque contraherent cosheaves
is closed under kernels of admissible epimorphisms in $X\Lcth_\bW$; \par
\textup{(c)} for any (admissible) short exact sequence $0\rarrow\fP
\rarrow\fQ\rarrow\fE\rarrow0$ in $X\Lcth_\bW$ with a coflasque
contraherent cosheaf\/ $\fE$, and for any open subscheme $Y\subset X$,
the short sequence of abelian groups\/ $0\rarrow\fP[Y]\rarrow\fQ[Y]
\rarrow\fE[Y]\rarrow0$ is exact.
\end{lem}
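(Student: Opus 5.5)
The plan is to prove (c) first, by passing to character sheaves, and then deduce (a) and (b) from it.

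For part (c), fix an open subscheme $Y\subset X$ and choose an open covering $Y=\bigcup_\gamma U_\gamma$ by affine open subschemes subordinate to~$\bW$. On each such $U_\gamma$, and on the affine open subschemes subordinate to $\bW$ inside the intersections, the sequence of cosections is exact by the definition of the exact structure on $X\Lcth_\bW$. The idea is to compute $\fP[Y]$, $\fQ[Y]$, $\fE[Y]$ by \v Cech-type complexes of cosections over these affines.

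Both $\fQ$ and $\fE$ need not be coflasque, so I would argue as follows. Apply the character construction of Remark~\ref{finite-coverings-suffice-remarks}(a). This turns $0\rarrow\fP\rarrow\fQ\rarrow\fE\rarrow0$ into a sequence of sheaves of abelian groups $0\rarrow\fE^+\rarrow\fQ^+\rarrow\fP^+\rarrow0$ on~$X$. This sequence is exact on sections over all affine open subschemes subordinate to $\bW$, hence exact as a sequence of sheaves, since those affines form a topology base. The sheaf $\fE^+$ is flasque because $\fE$ is coflasque. By the classical result \cite[Th\'eor\`eme~II.3.1.2]{God}, that $\Gamma(Y,{-})$ preserves exactness of short exact sequences of sheaves whose left term is flasque, the sequence $0\rarrow\fE^+(Y)\rarrow\fQ^+(Y)\rarrow\fP^+(Y)\rarrow0$ is exact. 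Since $\fP^+(Y)=\fP[Y]^+$ and the functor $({-})^+=\Hom_\boZ({-},\boQ/\boZ)$ is faithfully exact, dualizing back gives exactness of $0\rarrow\fP[Y]\rarrow\fQ[Y]\rarrow\fE[Y]\rarrow0$.

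For part (a), let $\fP$ and $\fE$ be coflasque contraherent, with $\fQ$ an extension of them in $X\Lcth_\bW$. By (c) the cosections over every open $Y$ form short exact sequences. For opens $V\subset U$, the corestriction maps give a morphism of short exact sequences whose outer components are injective, so the middle one is injective by the five lemma. Thus $\fQ$ is coflasque, and Lemma~\ref{coflasque-lcth-are-contraherent} makes it contraherent.

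For part (b), let $\fQ$ and $\fE$ be coflasque with $\fP$ the kernel. Again (c) gives short exact sequences of cosections for all opens. The map $\fP[V]\rarrow\fP[U]$ is injective because it is a restriction of the injective map $\fQ[V]\rarrow\fQ[U]$ along the injections $\fP[{-}]\rarrow\fQ[{-}]$. Hence $\fP$ is coflasque, and Lemma~\ref{coflasque-lcth-are-contraherent} again finishes.

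The main obstacle is the passage in (c) from exactness on affine cosections to exactness of the sheaf sequence. Exactness on the base and the left exactness of sections give that the sequence of sheaves is exact: exactness on stalks follows since stalks are colimits over the base. The cosections over arbitrary open $Y$ are recovered from the base via Proposition~\ref{cosheaves-determined-on-topology-base}, so the identification $(\fP^+)(Y)=\fP[Y]^+$ holds. I would check these two identifications carefully.
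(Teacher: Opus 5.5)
Your proposal is correct and is essentially the paper's alternative argument. You pass to character sheaves (exactness over the affine base subordinate to $\bW$ gives exactness on stalks) and apply Godement's theorem on flasque sheaves to get (c); your diagram chases for (a) and (b), together with Lemma~\ref{coflasque-lcth-are-contraherent} for contraherence, do the work of the corresponding corollary the paper cites, with one small slip: in your second paragraph you mean that $\fP$ and $\fQ$ need not be coflasque, since $\fE$ is coflasque by hypothesis.
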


\begin{proof}
 This is~\cite[Corollary~3.4.4]{Pcosh} or~\cite[Lemma~4.4.2]{Pform}.
 See~\cite{Pcosh} for a direct proof.
 Alternatively, one can notice that the construction of the character
sheaves from Remark~\ref{finite-coverings-suffice-remarks}(a) takes
any short exact sequence in $X\Lcth_\bW$ to a short exact sequence of
sheaves of $\cO_X$\+modules (since, for sheaves of $\cO_X$\+modules,
exactness of the short sequences of sections over affine open subschemes
subordinate to $\bW$ implies exactness of the short sequence of stalks)
in order to pass to the similar assertions for sheaves~\cite[Theorem
and Corollary~II.3.1.2]{God}.
\end{proof}

\begin{lem} \label{coflasque-resol-dim}
 Let $X$ be a scheme with an open covering\/~$\bW$.
 Assume that the underlying topological space of $X$ is Noetherian
of finite Krull dimension~$D$.
 Let\/ $0\rarrow\fE\rarrow\fE_{D-1}\rarrow\dotsb\rarrow\fE_1\rarrow
\fE_0\rarrow\fP\rarrow0$ be an exact sequence in the exact category
$X\Lcth_\bW$ such that the cosheaves\/ $\fE_i$ are coflasque for
all\/ $0\le i\le D-1$.
 Then the cosheaf\/ $\fE$ is coflasque as well.
\end{lem}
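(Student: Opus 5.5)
The plan is to pass from cosheaves to sheaves with the character construction of Remarks~\ref{finite-coverings-suffice-remarks}(a), and then to use the classical statement that flasque coresolutions on a Noetherian space of dimension~$D$ stabilize after $D$ steps.

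\emph{Step 1: dualize.} Put $\fQ^+(U)=\Hom_\boZ(\fQ[U],\boQ/\boZ)$ for every cosheaf $\fQ$. As noted in the proof of Lemma~\ref{coflasque-closedness-properties-and-cosections}, this takes admissible short exact sequences in $X\Lcth_\bW$ to short exact sequences of sheaves. The reason is that cosections over affines subordinate to~$\bW$ go to sections over a base, and exactness of sections over a base implies exactness of stalks. Splitting the given long sequence into admissible short exact sequences, we obtain an exact sequence of sheaves of abelian groups
$$
0\rarrow\fP^+\rarrow\fE_0^+\rarrow\dotsb\rarrow\fE_{D-1}^+\rarrow\fE^+\rarrow0.
$$
Since $\boQ/\boZ$ is an injective cogenerator, a map of abelian groups is injective if and only if its character dual is surjective. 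Hence a cosheaf $\fQ$ is coflasque if and only if $\fQ^+$ is flasque. So the $\fE_i^+$ are flasque, and it suffices to prove that $\fE^+$ is flasque.

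\emph{Step 2: the sheaf statement.} Let $\cN$ be any sheaf of abelian groups on~$X$ (here $\cN=\fP^+$), and let $0\rarrow\cN\rarrow\cE^0\rarrow\dotsb\rarrow\cE^{D-1}\rarrow\cK\rarrow0$ be exact with flasque~$\cE^i$. We must show that $\cK$ is flasque.

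Let $U\subset X$ be open and $Z\subset U$ closed. The local cohomology sequence
$$
\Gamma(U,\cK)\rarrow\Gamma(U\setminus Z,\cK)\rarrow H^1_Z(U,\cK)
$$
is exact. So it suffices to prove $H^1_Z(U,\cK)=0$ for all such $U$ and~$Z$.

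Flasque sheaves are acyclic for the functors $\Gamma_Z(U,{-})$ (by \cite[Chapitre~II]{God}). Dimension shifting along the coresolution therefore gives
$$
H^1_Z(U,\cK)\simeq H^{D+1}_Z(U,\cN).
$$

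\emph{Step 3: vanishing.} The open subset $U$ is again a Noetherian space of dimension $\le D$. By Grothendieck's vanishing theorem, $H^n(W,{-})=0$ for $n>D$ for every open $W\subset X$. The local cohomology groups sit in the long exact sequence
$$
H^{D}(U\setminus Z,\cN)\rarrow H^{D+1}_Z(U,\cN)\rarrow H^{D+1}(U,\cN).
$$
The right-hand term vanishes, but the left-hand term need not, so this sequence alone does not give $H^{D+1}_Z(U,\cN)=0$.

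This is the main obstacle. What is needed is the sharper form of Grothendieck's theorem: on a Noetherian space of dimension~$D$, $H^n_Z(U,{-})=0$ for $n>D$. I would prove it by rerunning Grothendieck's induction on dimension (Hartshorne~III.2.7) with supports in~$Z$. The ingredients are:
\begin{itemize}
\item reduction to sheaves of the form $\boZ_V$ via direct limits (cohomology commutes with them on Noetherian spaces);
\item reduction to irreducible $U$;
\item the exact sequence $0\rarrow\boZ_V\rarrow\boZ_U\rarrow\boZ_{U\setminus V}\rarrow0$, combined with the dimension drop on the proper closed subset $U\setminus V$.
\end{itemize}
If that induction turns out awkward, the alternative is to construct directly a flasque coresolution of length~$D$ of~$\cN$, using the canonical Godement-type truncation along the dimension filtration. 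Flasque sheaves are closed under extensions and cokernels of monomorphisms and contain the injectives, so they form a coresolving subcategory. Proposition~\ref{co-resolution-dimension-prop}(b) then shows that $\cK$ is flasque.

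Once $\cK=\fE^+$ is flasque, Step~1 gives that $\fE$ is coflasque.
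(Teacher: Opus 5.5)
Your proposal is correct and follows the route the paper indicates. You pass to character sheaves via Remark~\ref{finite-coverings-suffice-remarks}(a) and then apply a strengthened form of Grothendieck's vanishing theorem, $H^n_Z(U,{-})=0$ for $n>D$ (equivalently, surjectivity of $H^D(U,{-})\rarrow H^D(V,{-})$), which is exactly the ``suitable version'' the paper defers to the proof of Lemma~3.4.7 in the contraherent cosheaves manuscript; Hartshorne's induction does go through with supports, provided you also treat the subsheaves $\mathcal R\subset\boZ_U$ and use that $\boZ$ is flasque on an irreducible space.
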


\begin{proof}
 This is~\cite[Lemma~3.4.7(b)]{Pcosh} or~\cite[Lemma~4.4.4(b)]{Pform}.
 The assertion is provable by passing from the cosheaves to
the sheaves using the construction of the character sheaves from
Remark~\ref{finite-coverings-suffice-remarks}(a), and then using
a suitable version of Grothendieck's vanishing
theorem~\cite[Th\'eor\`eme~3.6.5]{Toh}, \cite[Theorem~III.2.7]{HarAG},
\cite[Proposition Tag~02UZ]{SP}, which can be found in the proof
in~\cite{Pcosh}.
\end{proof}

 Given a scheme $X$, we denote by $X\Ctrh_\cfq\subset X\Ctrh$
the full subcategory of coflasque contraherent cosheaves on~$X$.
 We also put $X\Ctrh^\lct_\cfq=X\Ctrh_\cfq\cap X\Ctrh^\lct$; so
$X\Ctrh^\lct_\cfq$ is the full subcategory of coflasque locally
cotorsion contraherent cosheaves on~$X$.
 It follows from
Lemma~\ref{coflasque-closedness-properties-and-cosections}(a\+-b)
that the full subcategories $X\Ctrh_\cfq$ and $X\Ctrh^\lct_\cfq$
inherit exact category structures from the ambient exact categories
$X\Ctrh$ and $X\Ctrh^\lct$.

\begin{prop} \label{coflasque-direct-images}
 Let $f\:Y\rarrow X$ be a quasi-compact quasi-separated morphism of
schemes.
 Then \par
\textup{(a)} the functor of direct image of cosheaves of $\cO$\+modules
$f_!\:(Y,\cO_Y)\Cosh\rarrow(X,\cO_X)\Cosh$ takes coflasque
contraherent cosheaves on $Y$ to coflasque contraherent cosheaves on
$X$, and induces an exact functor between the respective exact 
categories $f_!\:Y\Ctrh_\cfq\rarrow X\Ctrh_\cfq$; \par
\textup{(b)} the functor of direct image of cosheaves of $\cO$\+modules
$f_!\:(Y,\cO_Y)\Cosh\rarrow(X,\cO_X)\Cosh$ takes coflasque locally
cotorsion contraherent cosheaves on $Y$ to coflasque locally cotorsion
contraherent cosheaves on $X$, and induces an exact functor between
the respective exact categories $f_!\:Y\Ctrh^\lct_\cfq\rarrow
X\Ctrh^\lct_\cfq$.
\end{prop}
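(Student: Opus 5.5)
The plan is to verify directly, for a coflasque contraherent cosheaf $\fQ$ on $Y$, the contraadjustedness axiom~(iii) and the contraherence axiom~(ii) for $f_!\fQ$ over pairs of affine open subschemes $U'\subset U$ of $X$. Coflasqueness of $f_!\fQ$ is immediate: $(f_!\fQ)[V]=\fQ[f^{-1}(V)]\rarrow\fQ[f^{-1}(U)]$ is a corestriction of $\fQ$, hence injective. Exactness is the easy part. For a short exact sequence $0\rarrow\fP\rarrow\fQ\rarrow\fE\rarrow0$ in $Y\Ctrh_\cfq$, Lemma~\ref{coflasque-closedness-properties-and-cosections}(c) with $\fE$ coflasque gives exactness of cosections over every open subscheme $f^{-1}(U)\subset Y$. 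These cosections are precisely the cosections of $f_!$ over $U$, so once contraherence is known, $f_!$ will be exact. Part~(b) then needs only one more input: the modules $\fQ[f^{-1}(U)]$ must be cotorsion over $\cO_X(U)$.

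The key step is to reduce to $X=U$ affine and to compute $\fQ[W]$ for a quasi-compact quasi-separated open $W=f^{-1}(U)$. Cover $W$ by finitely many affines $W_1,\dots,W_m$, and cover each $W_a\cap W_b$ by finitely many affines, and so on. Since $\fQ$ is coflasque, its higher \v Cech homology vanishes by the argument cited in Lemma~\ref{coflasque-lcth-are-contraherent}. Hence $\fQ[W]$ has a finite left resolution
$$
 0\lrarrow C_d\lrarrow\dotsb\lrarrow C_1\lrarrow C_0\lrarrow\fQ[W]\lrarrow0,
$$
where each $C_k$ is a finite direct sum of cosections $\fQ[T]$ over affine opens $T\subset Y$. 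There is one subtlety: the intersections need not be affine. If so, I iterate, using coflasqueness on each quasi-compact intersection, which yields a finite double-complex (Čech–Čech) resolution. Alternatively, one can use the fact that $Y$ is quasi-separated over an affine, so intersections of affines are quasi-compact and finitely many steps suffice.

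Each $\fQ[T]$ is a contraadjusted (respectively cotorsion) $\cO_Y(T)$-module, and so it is contraadjusted (respectively cotorsion) over $\cO_X(U)$ by Lemma~\ref{contraadjusted-change-of-scalars-lemma}(a) (respectively Lemma~\ref{cotorsion-change-of-scalars-lemma}(a)). Both classes are closed under cokernels of monomorphisms. Running along the resolution from left to right, $\fQ[W]$ is therefore contraadjusted (respectively cotorsion) over $\cO_X(U)$. This proves~(iii) and the local cotorsion property.

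For the contraherence axiom~(ii), take $U'\subset U$ and put $W'=f^{-1}(U')\subset W$. The resolution above can be chosen compatibly: intersect the covering with $W'$, which gives affines $T\cap W'$, since $f^{-1}(U')\cap T$ is affine in $T$ when $U'$ is affine in $U$. The contraherence of $\fQ$ on $Y$ gives $\fQ[T\cap W']\simeq\Hom_{\cO_X(U)}(\cO_X(U'),\fQ[T])$, because $\cO_Y(T\cap W')=\cO_X(U')\ot_{\cO_X(U)}\cO_Y(T)$ and Hom-tensor adjunction applies. Since $\cO_X(U')$ is very flat (Example~\ref{open-affine-very-flat-example}) and all cycle modules are contraadjusted, the functor $\Hom_{\cO_X(U)}(\cO_X(U'),{-})$ preserves exactness of the resolution. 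Comparing it with the corresponding resolution of $\fQ[W']$ gives $\fQ[W']\simeq\Hom_{\cO_X(U)}(\cO_X(U'),\fQ[W])$. I expect the main obstacle to be making the \v Cech resolution finite and correctly indexed when intersections are not affine, and keeping it functorial under restriction to $W'$. I will handle this by an induction on the number of affines needed to cover $W$, with coflasqueness invoked at each stage through Lemma~\ref{coflasque-closedness-properties-and-cosections}(c).
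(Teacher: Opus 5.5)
Your proposal is correct and follows essentially the same route as the paper's proof. The shared steps are: restrict to an affine target $U$; use exactness of \v Cech complexes of coflasque cosheaves to get finite left resolutions of $\fQ[f^{-1}(U)]$ by contraadjusted (resp.\ cotorsion) $\cO_X(U)$\+modules; and transport the contraherence isomorphism to $U'\subset U$ using very flatness of $\cO_X(U')$, which makes $\Hom_{\cO_X(U)}(\cO_X(U'),{-})$ exact on such sequences. Your two-stage treatment of non-affine intersections is the paper's ``first semi-separated, then quasi-separated'' organization: quasi-compact opens of an affine are separated, so their \v Cech resolutions involve only affine cosections. State it that way rather than as an induction on the number of affines, because for non-semi-separated $W$ the pieces $W_1\cap W_j$ are not affine.
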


\begin{proof}
 This is~\cite[Corollary~3.4.8]{Pcosh}
or~\cite[Corollary~4.4.5(b\+-c)]{Pform}.
 Similarly to Proposition~\ref{alf-direct-images} above, given
a coflasque contraherent cosheaf $\fE$ on $Y$, the assertion that
the cosheaf of $\cO_X$\+modules $f_!\fE$ on $X$ is contraherent
(or locally contraherent) is \emph{not} covered by the results
of Section~\ref{direct-image-and-contraherence-subsecn}, and
already requires a proof.

 All the conditions and properties assumed or claimed in the present
proposition are local in $X$, so one can assume the scheme $X$ to be
affine; then the scheme $Y$ is quasi-compact and quasi-separated.
 One first considers the case of a semi-separated scheme $Y$ before
passing to the general case when $Y$ is quasi-separated.
 The argument is based on the fact of exactness of the \v Cech
complexes of abelian groups/modules for coflasque
cosheaves~\cite[Lemma~3.4.1(b)]{Pcosh}, which was already mentioned
above in the proof of Lemma~\ref{coflasque-lcth-are-contraherent}.
 One also needs to observe that any module admitting a finite
(left) resolution by contraadjusted/cotorsion modules is
contraadjusted/cotorsion, and that the functor $\Hom_R(F,{-})$
for a very flat $R$\+module $F$ preserves exactness of sequences
of contraadjusted modules (over any commutative ring~$R$).
\end{proof}

\Section{Projective Locally Cotorsion Contraherent Cosheaves}
\label{noetherian-lct-prj-secn}

 We state the following theorem for Noetherian schemes only, rather
than for locally Noetherian schemes, as it is stated
in~\cite[Theorem~6.1.1]{Pcosh}.
 The reason is that the proof in the Noetherian (i.~e., quasi-compact)
case is simpler than in the general case of a locally Noetherian
scheme, and we only need quasi-compact schemes for the purposes of
the present paper, anyway.

 This is the contraherent dual analogue of Hartshorne's classification
of injective quasi-coherent sheaves on locally Noetherian
schemes~\cite[Proposition~II.7.17]{HarRD}.
 Just as the proof of Hartshorne's theorem is based on Matlis'
classification of injective modules over Noetherian commutative
rings~\cite[Section~3]{Mat}, the contraherent version of the theorem
is based on Enochs' classification of flat cotorsion modules over
Noetherian commutative rings~\cite[Section~2]{En2} (see
Proposition~\ref{flat-cotorsion-over-Noetherian-classified} above).

 The generalizations of these results to locally Noetherian formal
schemes can be found in~\cite[Section~4.5]{Pform} (for Hartshorne's
theorem) and~\cite[Section~4.6]{Pform} (for the contraherent
dual analogue).

 Recall the standard notation $\widetilde M$ for the quasi-coherent
sheaf on an affine scheme $\Spec R$ corresponding to an $R$\+module~$M$
(for any commutative ring~$R$).
 Similarly, we will denote by $\widecheck P$ the contraherent cosheaf
on $\Spec R$ corresponding to a contraadjusted $R$\+module $P$ under
the equivalence of categories from
Lemma~\ref{ctrh-cosheaves-on-affine-scheme}.

 Given a scheme $X$ and a scheme point $x\in X$, we denote by
$\cO_{x,X}$ the stalk of the structure sheaf $\cO_X$ at the point~$x$.
 So $\cO_{x,X}$ is a commutative local ring; let
$\m_{x,X}\subset\cO_{x,X}$ denote its maximal ideal.
 Furthermore, there is a natural morphism of schemes $\iota_x\:
\Spec\cO_{x,X}\rarrow X$.
 If the scheme $X$ is Noetherian, then so are the rings~$\cO_{x,X}$.
 The exposition in this section is based on the commutative algebra
preliminaries from Section~\ref{noetherian-commutative-rings-subsecn};
the definition and discussion of $I$\+contramodule (or
$\m$\+contramodule) $R$\+modules can be found there.

\begin{thm} \label{noetherian-lcth-lct-prj-theorem}
 Let $X$ be a (not necessarily semi-separated) Noetherian scheme with
an open covering\/~$\bW$.
 In this setting: \par
\textup{(a)} There are enough projective objects in the exact
categories of locally cotorsion locally contraherent cosheaves
$X\Ctrh^\lct\subset X\Lcth^\lct_\bW\subset X\Lcth^\lct$.
 All these projective objects belong to the full subcategory of
locally cotorsion contraherent cosheaves $X\Ctrh^\lct$, and
the classes of projective objects in the three exact categories
coincide. \par
\textup{(b)} A locally cotorsion contraherent cosheaf on $X$ is
projective if and only if it is isomorphic to the infinite direct
product\/ $\prod_{x\in X}\iota_x{}_!\widecheck F_x$ of the direct
images $\iota_x{}_!\widecheck F_x$ of contraherent cosheaves
$\widecheck F_x$ on\/ $\Spec\cO_{x,X}$ corresponding to some
free/projective/flat\/ $\m_{x,X}$\+contramodule
$\cO_{x,X}$\+modules.
\end{thm}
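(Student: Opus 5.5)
Given a locally cotorsion $\bW$\+locally contraherent cosheaf $\fP$, the plan is to construct directly an admissible epimorphism onto $\fP$ from a product of the form $\prod_{x\in X}\iota_x{}_!\widecheck F_x$, and then to show that such products are projective in all three categories. Throughout I will view $\Spec\cO_{x,X}$ as the intersection (projective limit) of the affine open neighborhoods of~$x$. For an affine open $U\ni x$, the morphism $\Spec\cO_{x,X}\rarrow U$ is flat and affine, so by~\eqref{ctrh-lct-direct-image} the direct image is locally cotorsion contraherent over affine opens. The first thing to check is that $\iota_x{}_!$ of a contraherent cosheaf $\widecheck F$ on $\Spec\cO_{x,X}$ really is a contraherent cosheaf on $X$. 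Its cosections are $F$ over opens containing~$x$ and $0$ otherwise; this is the contraherent analogue of a skyscraper-type injective sheaf $\iota_{x*}\widetilde{E(k(x))}$. Contraherence on an affine $U\ni x$ should reduce to $F\simeq\Hom_{\cO(U)}(\cO(V),F)$ for affine $V\ni x$. This in turn follows from $F$ being an $\m_{x,X}$\+contramodule over $\cO_{x,X}$ (so $\Hom_{\cO(U)}(\cO(V),F)=\Hom_{\cO_{x,X}}(\cO_{x,X}\ot_{\cO(U)}\cO(V),F)=F$). Contraadjustedness is given by Lemma~\ref{free-contramodules-are-flat-cotorsion}(a), which says $F$ is cotorsion. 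Products exist and are preserved by cosections over affines, so the product over $x\in X$ is again in $X\Ctrh^\lct$.

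The central step is projectivity. I intend to prove an adjunction-type formula $\Hom^X(\iota_x{}_!\widecheck F,\fQ)\simeq\Hom_{\cO_{x,X}}(F,\,\fQ[\{x\}])$, where $\fQ[\{x\}]$ denotes the ``colocalization'' $\varprojlim_{U\ni x}\fQ[U]$ over affine neighborhoods subordinate to $\bW$. This object is the contraherent analogue of a stalk, and equals $\Hom_{\cO(U)}(\cO_{x,X},\fQ[U])$ for any single affine $U\ni x$. Next I will show that, for a short exact sequence $0\rarrow\fP\rarrow\fQ\rarrow\fR\rarrow0$ in $X\Lcth^\lct_\bW$, the sequence of colocalizations is exact. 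The argument: $\cO_{x,X}$ is flat over $\cO(U)$ and the kernels are cotorsion, so $\Ext^1_{\cO(U)}(\cO_{x,X},\fP[U])=0$. The resulting modules are cotorsion $\cO_{x,X}$\+modules by Lemma~\ref{cotorsion-change-of-scalars-lemma}(b). Then $\Hom_{\cO_{x,X}}(F,-)$ preserves this exactness, because $F$ is flat (Lemma~\ref{free-contramodules-are-flat-cotorsion}(b)) and the kernel is cotorsion. Since $\Hom$ from a product does not obviously split, I will instead pass to the map out of $\prod_x$. For this I need the identification $\Hom^X(\prod_x\iota_x{}_!\widecheck F_x,\fQ)=\prod_x\Hom^X(\iota_x{}_!\widecheck F_x,\fQ)$, which is what makes these products behave like coproducts. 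I expect to get it from the Noetherian hypothesis: over each affine $U$, the module $\prod_{x\in U}F_x$ is exactly Enochs' flat cotorsion module of Proposition~\ref{flat-cotorsion-over-Noetherian-classified}. There I will try to show that any $\cO(U)$\+linear map out of it into a cotorsion module is determined by its components, by a Noetherian induction on the closure of points, peeling off the generic points of closed subsets.

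For enough projectives, given $\fP$ I choose for each $x$ a free contramodule $F_x$ surjecting onto the cotorsion module $\fP[\{x\}]$. Such an $F_x$ exists because the projective cover in $\m_{x,X}$\+contramodules surjects onto the contramodule part, and I take the flat cotorsion precover otherwise. This gives a map $\prod_x\iota_x{}_!\widecheck F_x\rarrow\fP$. To see it is an admissible epimorphism I check over each affine $U$ subordinate to $\bW$, using Lemma~\ref{colocality-of-epimorphisms}(b), and reduce via Enochs' classification of flat cotorsion covers on $\Spec\cO(U)$ to surjectivity at each point. The point-by-point surjectivity needs a Noetherian induction on $\Spec\cO(U)$, and I expect this to be the main obstacle, together with the product-Hom identification above. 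Once it is done, the three classes of projectives coincide, because the generating projectives lie in $X\Ctrh^\lct$ and the exact structures are compatible. Part~(b) then follows by a direct-summand argument: a summand of such a product is again such a product, since over each affine the summands of Enochs' products are of the same form by Proposition~\ref{flat-cotorsion-over-Noetherian-classified}.
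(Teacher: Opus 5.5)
Your check that each $\iota_x{}_!\widecheck F_x$ is a locally cotorsion contraherent cosheaf is essentially the paper's Lemma~\ref{iota-direct-image-of-contamodules-lemma}. (The contramodule property is needed for affine $V\not\ni x$, not for $V\ni x$.) Your adjunction $\Hom^X(\iota_x{}_!\widecheck F,\fQ)\simeq\Hom_{\cO_{x,X}}(F,\Hom_{\cO(U)}(\cO_{x,X},\fQ[U]))$ is correct, and it does show that each single factor is projective in $X\Lcth^\lct_\bW$.

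The gap is the passage to the infinite product. Your projectivity argument and your construction of a map $\prod_x\iota_x{}_!\widecheck F_x\rarrow\fP$ from pointwise data both rest on the identification $\Hom^X(\prod_x\iota_x{}_!\widecheck F_x,\fQ)=\prod_x\Hom^X(\iota_x{}_!\widecheck F_x,\fQ)$. This identification is false already on an affine scheme. Take $X=\Spec\boZ$, $F_p=\boZ_p$ (the free $p$\+contramodule of rank one) for every prime $p$, $F_{(0)}=0$, and $\fQ=\widecheck\boQ$. The element $(1,1,1,\dots)$ has infinite order in $\prod_p\boZ_p/\bigoplus_p\boZ_p$. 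Since $\boQ$ is injective, there is then a nonzero homomorphism $\prod_p\boZ_p\rarrow\boQ$ vanishing on every factor.

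So maps from Enochs' products into cotorsion modules are \emph{not} determined by their components, and no Noetherian induction can prove that they are. Projectivity of the factors therefore does not formally give projectivity of the product: a lift assembled factor by factor agrees with the given map only on the factors. The comparison map is in fact surjective, because $\bigoplus_pF_p\rarrow\prod_pF_p$ is a pure monomorphism with flat cokernel. But existence without uniqueness is not enough.

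The missing idea is to make the product finite.
\begin{itemize}
\item Choose a finite affine open covering $X=\bigcup_{\alpha=1}^N U_\alpha$ subordinate to $\bW$, with $j_\alpha\colon U_\alpha\rarrow X$, and put $S_\alpha=U_\alpha\setminus\bigcup_{\gamma<\alpha}U_\gamma$.
\item The $S_\alpha$ partition $X$, and $j_\alpha{}_!$ commutes with products. Hence $\prod_{x\in X}\iota_x{}_!\widecheck F_x\simeq\bigoplus_{\alpha=1}^N j_\alpha{}_!\fG_\alpha$, where $\fG_\alpha$ is the contraherent cosheaf on $U_\alpha$ corresponding to $\prod_{z\in S_\alpha}F_z$.
\item That module is flat and cotorsion by Proposition~\ref{flat-cotorsion-over-Noetherian-classified}, so $\fG_\alpha$ is projective in $U_\alpha\Ctrh^\lct$.
\item The adjunction~\eqref{cosheaves-direct-image-restriction-adjunction}, together with exactness of $j_\alpha^!$, makes each $j_\alpha{}_!\fG_\alpha$ projective in $X\Lcth^\lct_\bW$, and hence the finite sum as well.
\end{itemize}

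The same device replaces your pointwise-surjection step, which you rightly flagged as the main obstacle.
\begin{itemize}
\item Take admissible epimorphisms $\fH_\alpha\rarrow j_\alpha^!\fP$ from projectives of $U_\alpha\Ctrh^\lct$.
\item Form $\bigoplus_\alpha j_\alpha{}_!\fH_\alpha\rarrow\fP$ by adjunction; this is a map out of a finite coproduct, so no Hom-of-product issue arises.
\item Check admissibility on each $U_\alpha$ using Lemma~\ref{colocality-of-epimorphisms}(b).
\item Rewrite the source, via Enochs' classification of each $\fH_\alpha[U_\alpha]$, as $\prod_x\iota_x{}_!\widecheck F_x$ with $F_x=\bigoplus_{\alpha\colon x\in U_\alpha}H_{\alpha,x}$.
\end{itemize}
No Noetherian induction is needed.

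Finally, in part~(b), knowing that a direct summand is of Enochs' form over each $U_\alpha$ does not by itself glue to a global product decomposition. That gluing is the content of Lemma~\ref{proj-lct-direct-summand-lemma}, which requires a separate Hom semiorthogonality argument; the paper also only cites it.
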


\begin{proof}
 This is a particular case of~\cite[Theorem~6.1.1]{Pcosh}; an even
more general result can be found in~\cite[Theorem~4.6.1]{Pform}.
 For a quasi-compact semi-separated scheme $X$, the assertions of
part~(a) were proved in Section~\ref{projective-on-qcss-subsecn};
see Lemma~\ref{qcss-lcth-lct-prj-lemma} and
Corollary~\ref{qcss-lcth-lct-prj-cor}.
 The argument below, based on
Proposition~\ref{flat-cotorsion-over-Noetherian-classified}, proves
parts~(a) and~(b) simultaneously for a Noetherian scheme~$X$.

 Recall that the classes of free, projective, and flat
$\m_{x,X}$\+contramodule $\cO_{x,X}$\+modules $F_x$ coincide by
Lemma~\ref{free-contramodules-are-flat-cotorsion}(b) and
the preceding discussion in
Section~\ref{noetherian-commutative-rings-subsecn} (where ``free''
and ``projective means ``free/projective as an $\m_{x,X}$\+contramodule
$\cO_{x,X}$\+module'', while ``flat'' means ``flat as
an $\cO_{x,X}$\+module'').
 Furthermore, all $\m_{x,X}$\+contramodule $\cO_{x,X}$\+modules are
cotorsion $\cO_{x,X}$\+modules by
Lemma~\ref{free-contramodules-are-flat-cotorsion}(a).
 Consequently, $\widecheck F_x$ is a projective locally cotorsion
contraherent cosheaf on $\Spec\cO_{x,X}$ by
Lemmas~\ref{ctrh-lct-lin-cosheaves-on-affine-scheme}(a)
and~\ref{qcss-lcth-lct-prj-lemma}.  {\hbadness=1400\par}

 What does the notation in the formulation of part~(b) of the theorem
mean?
 The infinite product over the points $x\in X$ is taken in the exact
category $X\Ctrh^\lct$ or $X\Lcth^\lct_\bW$, or equivalently, in
the exact category $X\Ctrh$ or $X\Lcth_\bW$, or equivalently, in
the additive category $(X,\cO_X)\Cosh$.
 Infinite direct products exist in all these categories and agree with
each other according to the discussion at the end of
Section~\ref{lct-lin-subsecn}.

 Notice, however, that any morphism from an affine scheme to
a semi-separated scheme is affine; but for a non-semi-separated
scheme this need not be the case.
 When the scheme $X$ is not semi-separated, the morphism
$\iota_x\:\Spec\cO_{x,X}\rarrow X$ is \emph{not} affine in general,
and the construction of the direct image functor acting between
the exact categories of contraherent
cosheaves~\eqref{ctrh-direct-image} or locally cotorsion contraherent
cosheaves~\eqref{ctrh-lct-direct-image} is \emph{not} applicable to it.
 So the assertion that the cosheaf of $\cO_X$\+modules
$\iota_x{}_!\widecheck F_x$ on $X$ belongs to $X\Ctrh^\lct$
requires a proof.

\begin{lem} \label{iota-direct-image-of-contamodules-lemma}
 Let $X$ be a Noetherian scheme, $x\in X$ be a point, and $P_x$
be an\/ $\m_{x,X}$\+contramodule $\cO_{x,X}$\+module.
 Then \par
\textup{(a)} for any affine open subscheme $U\subset X$ such that
$x\notin U$, one has $(\iota_x{}_!\widecheck P_x)|_U=0$; \par
\textup{(b)} the cosheaf of $\cO_X$\+modules
$\iota_x{}_!\widecheck P_x$ on $X$ is a locally cotorsion
contraherent cosheaf.
\end{lem}

\begin{proof}
 Part~(a): the point is that the module of cosections
$(\widecheck P_x)[V]$ vanishes for any open subscheme $V\subset
\Spec\cO_{x,X}$ that does not contain the closed point of
$\Spec\cO_{x,X}$.
 Indeed, it suffices to consider the case of a principal affine
open subscheme $V=\Spec\cO_{x,X}[s^{-1}]$, where $s\in\cO_{x,X}$
and the closed point of $\Spec\cO_{x,X}$ does not belong to~$V$.
 The latter condition means that $s\in\m_{x,X}$.
 So we need to check that the $\cO_{x,X}[s^{-1}]$\+module
$(\widecheck P_x)[V]=\Hom_{\cO_{x,X}}(\cO_{x,X}[s^{-1}],P_x)$
vanishes.
 By the definition, this holds for any
$\m_{x,X}$\+contramodule $\cO_{x,X}$\+module~$P_x$;
see Section~\ref{noetherian-commutative-rings-subsecn}.

 Part~(b): by the definition, it suffices to check that
$(\iota_x{}_!\widecheck P_x)|_U$ is a locally cotorsion contraherent
cosheaf on $U$ for any affine open subscheme $U\subset X$.
 Now the case when $x\notin U$ is covered by part~(a).
 When $x\in U$, the morphism~$\iota_x$ factorizes as
$\Spec\cO_{x,X}\rarrow U\rarrow X$.
 Denote by~$\kappa_x$ the morphism $\Spec\cO_{x,X}\rarrow U$.
 Then we have $(\iota_x{}_!\widecheck P_x)|_U=\kappa_x{}_!
\widecheck P_x$.
 Finally, $\kappa_x$~is an affine morphism (as any morphism of affine
schemes), and we can refer to formula~\eqref{ctrh-lct-direct-image}
for the assertion that the functor~$\kappa_x{}_!$ takes locally
cotorsion contraherent cosheaves on $\Spec\cO_{x,X}$ to locally
cotorsion contraherent cosheaves on~$U$.
\end{proof}

 We have shown that $\fF=\prod_{x\in X}\iota_x{}_!\widecheck F_x$ is
a locally cotorsion contraherent cosheaf on~$X$.
 Let us prove that it is a projective object of the exact category
$X\Lcth^\lct_\bW$.
 Let $X=\bigcup_{\alpha=1}^N U_\alpha$ be a finite affine open
covering of $X$ subordinate to~$\bW$.
 For every index $1\le\alpha\le N$, put $S_\alpha=U_\alpha\setminus
\bigcup_{\gamma=1}^{\alpha-1} U_\gamma$ (so $S_\alpha$ is a locally
closed subset in $X$, and the underlying topological space of $X$
is a disjoint union of its subsets~$S_\alpha$).
 Put $\fF_\alpha=\prod_{z\in S_\alpha}\iota_z{}_!\widecheck F_z$.
 Then we have $\fF=\bigoplus_{\alpha=1}^N\fF_\alpha$ in
$X\Ctrh^\lct$.

 Introduce the notation $j_\alpha\:U_\alpha\rarrow X$ for the identity
open immersion morphisms and $\kappa_{\alpha,z}\:\Spec\cO_{z,X}\rarrow
U_\alpha$ for the natural morphisms of schemes such that
$j_\alpha\circ\kappa_{\alpha,z}=\iota_z$.
 Then we have $\fF_\alpha=j_\alpha{}_!\fG_\alpha$, where
$\fG_\alpha=\prod_{z\in S_\alpha}\kappa_{\alpha,z}{}_!\widecheck F_z$.
 The observation that the functors of direct image of cosheaves
of $\cO$\+modules~$j_\alpha{}_!$ preserve infinite direct products
(since the morphisms~$j_\alpha$ are quasi-compact and quasi-separated)
is used here; see the end of
Section~\ref{direct-images-of-cosheaves-of-O-modules}.

 By Lemma~\ref{free-contramodules-are-flat-cotorsion} or
Proposition~\ref{flat-cotorsion-over-Noetherian-classified},
the $\cO_X(U_\alpha)$\+module $\fG_\alpha[U_\alpha]=
\prod_{z\in S_\alpha}F_z$ corresponding to the contraherent cosheaf
$\fG_\alpha$ on $U_\alpha$ is flat and cotorsion.
 By Lemma~\ref{qcss-lcth-lct-prj-lemma}, it follows that $\fG_\alpha$
is a projective locally cotorsion contraherent cosheaf on~$U_\alpha$.
 Now one can see from
the adjunction~\eqref{cosheaves-direct-image-restriction-adjunction}
together with the existence and exactness of the restriction
functor $j_\alpha^!\:X\Lcth^\lct_\bW\rarrow U_\alpha\Ctrh$
\,\eqref{restriction-of-lcth-lct-cosheaves} that
$\fF_\alpha=j_\alpha{}_!\fG_\alpha$ is a projective object of
the exact category $X\Lcth^\lct_\bW$.
 Thus $\fF=\bigoplus_{\alpha=1}^N\fF_\alpha$ is a projective locally
cotorsion $\bW$\+locally contraherent cosheaf on $X$, too.

 Now let us construct, for every locally cotorsion $\bW$\+locally
contraherent cosheaf $\fQ$ on $X$, an admissible epimorphism
$\fF\rarrow\fQ$ in $X\Lcth^\lct_\bW$, where
$\fF=\prod_{x\in X}\iota_x{}_!\widecheck F_x$ for some
free $\m_{x,X}$\+contramodule $\cO_{x,X}$\+modules~$F_x$.
 For this purpose, we consider a finite affine open covering
$X=\bigcup_{\alpha=1}^N U_\alpha$ as above; and for every~$\alpha$,
pick an admissible epimorphism $\fH_\alpha\rarrow j_\alpha^!\fQ$
in $U_\alpha\Ctrh^\lct$ with a projective locally cotorsion
contraherent cosheaf $\fH_\alpha$ on~$U_\alpha$.
 Such an admissible epimorphism exists by
Lemma~\ref{qcss-lcth-lct-prj-lemma} (for the affine scheme~$U_\alpha$).
 By adjunction~\eqref{cosheaves-direct-image-restriction-adjunction},
we have the related morphism of cosheaves of $\cO_X$\+modules
$\bigoplus_{\alpha=1}^N j_\alpha{}_!\fH_\alpha\rarrow\fQ$ on~$X$.

 We need to show that the cosheaf $\fF=\bigoplus_{\alpha=1}^N
j_\alpha{}_!\fH_\alpha$ has the desired form
$\fF\simeq\prod_{x\in X}\iota_x{}_!\widecheck F_x$ for some
free $\m_{x,X}$\+contramodule $\cO_{x,X}$\+modules~$F_x$.
 Indeed, by Proposition~\ref{flat-cotorsion-over-Noetherian-classified},
for every index~$\alpha$ we have
$\fH_\alpha\simeq\prod_{z\in U_\alpha}\kappa_{\alpha,z}{}_!
\widecheck H_{\alpha,z}$, where $H_{\alpha,z}$ are some free
$\m_{z,X}$\+contramodule $\cO_{z,X}$\+modules.
 According to the discussion above, it follows that
$j_\alpha{}_!\fH_\alpha\simeq\prod_{z\in U_\alpha}\iota_z{}_!
\widecheck H_{\alpha,z}$, hence
$\bigoplus_{\alpha=1}^N j_\alpha{}_!\fH_\alpha\simeq
\prod_{x\in X}\iota_x{}_!\widecheck F_x$, where
$F_x=\bigoplus_{1\le\alpha\le N}^{x\in U_\alpha} H_{\alpha,x}$
for every $x\in X$.

 Hence, in particular, $\fF$ is a (projective) locally cotorsion
contraherent cosheaf on $X$, as we have shown in the beginning of
this proof.
 Finally, similarly to the proof of
Lemma~\ref{qcss-lcth-lct-prj-lemma}, the morphism
$\bigoplus_{\alpha=1}^N j_\alpha{}_!\fH_\alpha\rarrow\fQ$ is
an admissible epimorphism in $X\Lcth^\lct_\bW$, since its restriction
to $U_\alpha$ is an admissible epimorphism in $U_\alpha\Ctrh^\lct$
for every~$\alpha$.
 The result of Lemma~\ref{colocality-of-epimorphisms}(b)
is relevant here.

 The preceding part of this proof is sufficient to establish
part~(a) of the theorem, and it also ``almost'' proves part~(b).
 In order to finish the proof of part~(b), it remains to show
that the class of cosheaves $\fF$ described in it is closed under
direct summands in $X\Ctrh^\lct$ (or equivalently, in
$(X,\cO_X)\Cosh$).
 For ease of reference, we formulate this part of the proof as
a separate lemma, but skip its proof, referring the reader to
the book manuscripts~\cite{Pcosh,Pform} instead.

\begin{lem} \label{proj-lct-direct-summand-lemma}
 Let $X$ be a Noetherian scheme, and\/ $\fP$ be a (locally cotorsion
contraherent) cosheaf of $\cO_X$\+modules on $X$ of the form\/
$\fP=\prod_{x\in X}\iota_x{}_!\widecheck P_x$, where $P_x$ are
some\/ $m_{x,X}$\+contramodule $\cO_{x,X}$\+modules.
 Then any direct summand\/ $\fQ$ of the cosheaf\/ $\fP$ is
isomorphic to the infinite product\/
$\prod_{x\in X}\iota_x{}_!\widecheck Q_x$, where, for every point
$x\in X$, the $\cO_{x,X}$\+module $Q_x$ is a direct summand of~$P_x$.
\end{lem}

\begin{proof}
 This is explained in~\cite[proof of Theorem~6.1.1]{Pcosh}, and,
in a greater generality, in~\cite[proof of Theorem~4.6.1]{Pform}.
 These proofs are based on the $\Hom$ semiorthogonality
lemmas, \cite[Lemma~6.1.2]{Pcosh} and~\cite[Lemma~4.6.2]{Pform}.
 Notice that the arguments in~\cite{Pcosh,Pform} are more complicated
than needed for the purposes of our present lemma, as those arguments
are applicable to arbitrary locally Noetherian (formal) schemes.
 For a Noetherian (i.~e., quasi-compact) scheme $X$, one application
of~\cite[Lemma~6.1.3]{Pcosh} or~\cite[Lemma~4.5.6]{Pform} is
sufficient, while in the locally Noetherian case, this abstract
category-theoretic lemma has to be applied twice.
\end{proof}

 Skipping the result of Lemma~\ref{proj-lct-direct-summand-lemma}
altogether, based on what is shown in the proof of the present theorem
above in this paper, one can assert that part~(a) of the theorem holds,
and in part~(b), a locally cotorsion contraherent cosheaf on $X$ is
projective if and only if it is a \emph{direct summand of} a cosheaf of
the desired form $\prod_{x\in X}\iota_x{}_!\widecheck F_x$ with
free $\m_{x,X}$\+contramodule $\cO_{x,X}$\+modules~$F_x$.
 Following the proofs below in this paper, the reader can easily see
that such a weak form of part~(b) is sufficient for our purposes in
the rest of this paper.
 (The only partial exception is
Corollary~\ref{lct-prj-local-coflasque}(c), whose proof we do not
spell out in detail in this paper, either.)
\end{proof}

 Following the notation of Section~\ref{projective-on-qcss-subsecn},
for any Noetherian scheme $X$, we denote by $X\Ctrh^\lct_\prj\subset
X\Ctrh^\lct$ the full subcategory of projective objects in $X\Ctrh^\lct$
(or equivalently, in $X\Lcth^\lct_\bW$, or in $X\Lcth^\lct$).
 The following corollary claims that being a projective locally
cotorsion contraherent cosheaf on a Noetherian scheme is a local
property.
 Furthermore, all such cosheaves are coflasque.

\begin{cor} \label{lct-prj-local-coflasque}
 Let $X$ be a Noetherian scheme. \par
\textup{(a)} Let $Y\subset X$ be an open subscheme.
 Then, for any cosheaf\/ $\fF\in X\Ctrh^\lct_\prj$, the cosheaf\/
$\fF|_Y$ belongs to $Y\Ctrh^\lct_\prj$. \par
\textup{(b)} In the context of part~\textup{(a)}, the corestriction
map\/ $\fF[Y]\rarrow\fF[X]$ is injective. \par
\textup{(c)} Let $X=\bigcup_\alpha Y_\alpha$ be an open covering
of~$X$.
 Then a locally contraherent cosheaf on $X$ belongs to
$X\Ctrh^\lct_\prj$ if and only if its restrictions to $Y_\alpha$
belong to $Y_\alpha\Ctrh^\lct_\prj$ for all indices\/~$\alpha$.
\end{cor}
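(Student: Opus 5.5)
The plan is to use the explicit description from Theorem~\ref{noetherian-lcth-lct-prj-theorem}(b), in its weak form: every $\fF\in X\Ctrh^\lct_\prj$ is a direct summand of a product $\prod_{x\in X}\iota_x{}_!\widecheck F_x$ with free $\m_{x,X}$\+contramodule $\cO_{x,X}$\+modules~$F_x$. All three assertions are stable under direct summands, so it suffices to treat $\fF=\prod_{x\in X}\iota_x{}_!\widecheck F_x$.

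For part~(a), restriction to $Y$ commutes with infinite products of cosheaves, since cosections over each open subset commute with products on the topology base of quasi-compact opens. For $x\in Y$, the morphism $\iota_x$ factors through $Y$, and $(\iota_x{}_!\widecheck F_x)|_Y=\iota^Y_x{}_!\widecheck F_x$, because $\cO_{x,Y}=\cO_{x,X}$. For $x\notin Y$, we claim $(\iota_x{}_!\widecheck F_x)|_Y=0$. Indeed, for any open $V\subset Y$ the preimage $\iota_x^{-1}(V)$ avoids the closed point of $\Spec\cO_{x,X}$, and the cosections of $\widecheck F_x$ vanish on any such open set. This holds on affine opens by the argument of Lemma~\ref{iota-direct-image-of-contamodules-lemma}(a), and then on all opens by the cosheaf axiom. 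Hence $\fF|_Y\simeq\prod_{y\in Y}\iota^Y_y{}_!\widecheck F_y$. Since $Y$ is Noetherian, Theorem~\ref{noetherian-lcth-lct-prj-theorem}(b) applied to $Y$ shows that $\fF|_Y$ is projective.

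For part~(b), the corestriction $\fF[Y]\to\fF[X]$ is the product over $x$ of the maps $(\iota_x{}_!\widecheck F_x)[Y]\to(\iota_x{}_!\widecheck F_x)[X]$. When $x\notin Y$ the source is zero. When $x\in Y$ the map is $\widecheck F_x[\Spec\cO_{x,X}]\to\widecheck F_x[\Spec\cO_{x,X}]$, because $\iota_x^{-1}(Y)=\Spec\cO_{x,X}$: any open set containing the closed point of a local scheme is the whole scheme. So this map is the identity. A product of injective maps is injective, which proves~(b); more strongly, each $\iota_x{}_!\widecheck F_x$ is coflasque by the same computation applied to any pair $V\subset U$.

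Part~(c) is the step I expect to be the main obstacle. The ``only if'' direction is part~(a). For ``if'', pass to a finite subcovering by quasi-compactness; each $Y_\alpha$ is then Noetherian. A locally contraherent cosheaf whose restrictions are locally cotorsion and contraherent is locally cotorsion. It is also coflasque, because each restriction is coflasque by~(b) and coflasqueness is local. By Lemma~\ref{coflasque-lcth-are-contraherent} it is therefore globally contraherent. Next, take the admissible epimorphism $\fF\to\fQ$ constructed in the proof of Theorem~\ref{noetherian-lcth-lct-prj-theorem}, with kernel $\fK$. Restricted to each $Y_\alpha$, this sequence splits, since $\fQ|_{Y_\alpha}$ is projective. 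I would then try to glue the local splittings. The obstruction to doing so lies in an $\Ext^1$ between projective locally cotorsion cosheaves over the intersections. My first attempt would be to avoid gluing altogether: use Lemma~\ref{proj-lct-direct-summand-lemma} to write each $\fQ|_{Y_\alpha}$ as $\prod_{y}\iota_y{}_!\widecheck Q_y$, check that the modules $Q_y$ are canonically determined by $\fQ$ (for instance via a colocalization at~$y$), and conclude that $\fQ\simeq\prod_{x\in X}\iota_x{}_!\widecheck Q_x$.
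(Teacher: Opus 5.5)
\textbf{Parts (a) and (b).} These are correct and follow the paper's proof. Reducing to the weak form of Theorem~\ref{noetherian-lcth-lct-prj-theorem}(b) via retracts is legitimate, since both properties pass to direct summands; the paper itself notes that the weak form suffices everywhere except in~(c).

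\textbf{The gap is the ``if'' direction of (c).} Your preliminary observations there are fine: the cosheaf is locally cotorsion and coflasque, hence contraherent by Lemma~\ref{coflasque-lcth-are-contraherent}. But the final step, ``conclude that $\fQ\simeq\prod_{x\in X}\iota_x{}_!\widecheck Q_x$'', is the whole content of the claim, and neither route you sketch reaches it.
\begin{itemize}
\item Knowing each $Q_y$ up to isomorphism only gives separate isomorphisms $\fQ|_{Y_\alpha}\simeq\prod_{y\in Y_\alpha}\iota_y{}_!\widecheck Q_y$. These need not agree on $Y_\alpha\cap Y_\beta$, because such products have many non-diagonal automorphisms: $\Hom(\iota_x{}_!\widecheck P_x,\iota_y{}_!\widecheck Q_y)$ can be nonzero when $y$ is a generization of~$x$. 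For example, on $\Spec\boZ$ one has $\Hom_\boZ(\boZ_p,\boQ)\ne0$.
\item Your local splittings of $\fF\rarrow\fQ$ differ on overlaps by morphisms $\fQ|_{Y_\alpha\cap Y_\beta}\rarrow\fK|_{Y_\alpha\cap Y_\beta}$. The resulting \v Cech $1$-cocycle is left uncontrolled.
\end{itemize}
The paper also only sketches~(c). It reduces to $X=W\cup Y$, then invokes a Hom semiorthogonality lemma from its references to match the components over $W\cap Y$ and assemble a global product. Your proposal supplies neither that lemma nor a substitute.

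\textbf{One way to close the gap with tools you already have.}
\begin{enumerate}
\item Refine to a finite affine covering $X=\bigcup_{\beta=1}^N U_\beta$ subordinate to $\{Y_\alpha\}$. By~(a), each $\fQ[U_\beta]$ is flat cotorsion, so Proposition~\ref{flat-cotorsion-over-Noetherian-classified} gives $\fQ|_{U_\beta}\simeq\prod_{z\in U_\beta}\kappa_{\beta,z}{}_!\widecheck Q_{\beta,z}$.
\item Put $V_\beta=U_1\cup\dotsb\cup U_\beta$ and $S_\beta=U_\beta\setminus V_{\beta-1}$. Let $\fQ_\beta=h_{\beta!}h_\beta^!\fQ$ for $h_\beta\colon V_\beta\rarrow X$, so that $\fQ_\beta[U]=\fQ[U\cap V_\beta]$. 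This is a subcosheaf of $\fQ$ because $\fQ$ is coflasque, and it lies in $X\Ctrh^\lct$ by Proposition~\ref{coflasque-direct-images}(b).
\item Apply the Mayer--Vietoris sequence to $U\cap V_\beta=(U\cap U_\beta)\cup(U\cap V_{\beta-1})$; it is exact on the left by coflasqueness. It identifies $\fQ_\beta[U]/\fQ_{\beta-1}[U]$ with $\fQ[U\cap U_\beta]/\fQ[U\cap U_\beta\cap V_{\beta-1}]$. The decomposition from step~1 computes this as $\prod_{z\in S_\beta\cap U}Q_{\beta,z}$.
\item Hence there are short sequences $0\rarrow\fQ_{\beta-1}\rarrow\fQ_\beta\rarrow j_\beta{}_!\fG_\beta\rarrow0$, exact on all cosections, where $\fG_\beta=\prod_{z\in S_\beta}\kappa_{\beta,z}{}_!\widecheck Q_{\beta,z}$. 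This $j_\beta{}_!\fG_\beta$ is exactly the cosheaf shown to be projective in $X\Lcth^\lct_\bW$ in the proof of Theorem~\ref{noetherian-lcth-lct-prj-theorem}.
\item So $0=\fQ_0\subset\dotsb\subset\fQ_N=\fQ$ is a finite filtration by admissible monomorphisms with projective successive quotients. It splits, giving $\fQ\simeq\prod_{x\in X}\iota_x{}_!\widecheck Q_x\in X\Ctrh^\lct_\prj$. No semiorthogonality lemma is needed.
\end{enumerate}
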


\begin{proof}
 This is a partial version of~\cite[Corollary~6.1.4]{Pcosh}
or~\cite[Corollary~4.6.3]{Pform}.

 Part~(a): denote by $j\:Y\rarrow X$ the open immersion morphism.
 It is clear from Remarks~\ref{finite-coverings-suffice-remarks}(b,c,f)
(or from
the adjunction~\eqref{cosheaves-direct-image-restriction-adjunction})
that the functors of restriction of cosheaves of $\cO$\+modules to open
subschemes preserve infinite products.
 So, by Theorem~\ref{noetherian-lcth-lct-prj-theorem}(b), we have
$j^!\fF\simeq j^!\prod_{x\in X}\iota_x{}_!\widecheck F_x\simeq
\prod_{x\in X}j^!\iota_x{}_!\widecheck F_x$.
 By Lemma~\ref{iota-direct-image-of-contamodules-lemma}(a),
\,$j^!\iota_x{}_!\widecheck F_x=0$ for $x\notin Y$.
 Denote by~$\kappa_y$ the natural morphism of schemes
$\Spec\cO_{y,Y}\rarrow Y$ for a point $y\in Y$.
 Then we have $j^!\iota_y{}_!\widecheck F_y=\kappa_y{}_!\widecheck F_y$.
 Thus $\prod_{x\in X}j^!\iota_x{}_!\widecheck F_x\simeq
\prod_{y\in Y}\kappa_y{}_!\widecheck F_y$ is a projective locally
cotorsion contraherent cosheaf on $Y$ by
Theorem~\ref{noetherian-lcth-lct-prj-theorem}(b).
 
 Part~(b): by the definition, we have
$(\iota_x{}_!\widecheck F_x)[X]=F_x$.
 Since the cosection functors over quasi-compact quasi-separated
open subschemes preserve the infinite products of cosheaves of
$\cO$\+modules, we have $\fF[X]=\prod_{x\in X}F_x$ in the notation
above, and similarly $\fF[Y]=(j^!\fF)[Y]=\prod_{y\in Y}F_y$.
 So the $\cO(X)$\+module map $\fF[Y]\rarrow\fF[X]$ is actually
the inclusion of a direct summand.

 Part~(c): the ``only if'' implication holds by part~(a).
 To prove the ``if'', we assume without loss of generality that
the set of indices~$\{\alpha\}$ is finite (as the scheme $X$ is
quasi-compact in our setting), and proceed by induction.
 Then it suffices to prove the following assertion.

 Suppose $X$ is represented as the union of two open subschemes,
$X=W\cup Y$.
 Let $\fP$ be a locally contraherent cosheaf on~$X$.
 Suppose further that the restriction of $\fP$ onto $W$ is isomorphic
to the direct product $\prod_{w\in W}\lambda_w{}_!\widecheck P_w$,
where $P_w$ are some $\m_{w,X}$\+contramodule $\cO_{w,X}$\+modules,
while $\lambda_w\:\Spec\cO_{w,X}\rarrow W$ are the natural morphisms
of schemes.
 Similarly, suppose that the restriction of $\fP$ onto $Y$ is
isomorphic to the direct product $\prod_{y\in Y}\kappa_y{}_!
\widecheck Q_y$, where $Q_y$ are some $\m_{y,X}$\+contramodule
$\cO_{y,X}$\+modules.
 Then it is claimed that, for every point $v\in W\cap Y$, there is
an isomorphism of $\m_{v,X}$\+contramodule $\cO_{v,X}$\+modules
$P_v\simeq Q_v$.
 Furthermore, put $P_y=Q_y$ for all $y\in X\setminus W\subset Y$.
 Then the cosheaf $\fP$ on $X$ is isomorphic to
$\prod_{x\in X}\iota_x{}_!\widecheck P_x$.

 We suppress the rest of the technical argument proving
the assertion in question.
 It is based on the same Hom semiorthogonality lemma on which
the proof of Lemma~\ref{proj-lct-direct-summand-lemma} is based.
 A stronger version of argument can be found in~\cite[proof of
Corollary~6.1.4(c)]{Pcosh} or~\cite[proof of Corollary~4.6.3(d)]{Pform}
(see~\cite[proof of Corollary~4.5.7(d)]{Pform} for some additional
explanation).
\end{proof}

 We refer to Section~\ref{qcss-flat-subsecn} for the definitions
of flat and $\bW$\+flat cosheaves of $\cO$\+modules on schemes.

\begin{cor} \label{lct-prj=flat}
 Let $X$ be a Noetherian scheme with an open covering\/~$\bW$.
 Then the classes of projective locally cotorsion contraherent
cosheaves and\/ $\bW$\+flat locally cotorsion\/ $\bW$\+locally
contraherent cosheaves on $X$ coincide.
 Consequently, any\/ $\bW$\+flat locally cotorsion\/ $\bW$\+locally
contraherent cosheaf on $X$ is flat and contraherent.
\end{cor}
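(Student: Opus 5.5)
We have to prove two inclusions between the class $X\Ctrh^\lct_\prj$ and the class of $\bW$\+flat locally cotorsion $\bW$\+locally contraherent cosheaves.

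\emph{Projective implies $\bW$\+flat.} Let $\fF\in X\Ctrh^\lct_\prj$ and let $U\subset X$ be an affine open subscheme subordinate to~$\bW$. By Corollary~\ref{lct-prj-local-coflasque}(a), the restriction $\fF|_U$ lies in $U\Ctrh^\lct_\prj$. The scheme $U$ is affine, hence semi-separated, so Lemma~\ref{qcss-lcth-lct-prj-lemma} applies with the trivial covering. It shows that $\fF|_U$ is a direct summand of a cosheaf corresponding to a flat cotorsion $\cO_X(U)$\+module. Hence $\fF[U]$ is flat. One could instead compute directly from Theorem~\ref{noetherian-lcth-lct-prj-theorem}(b): one gets $\fF[U]=\prod_{x\in U}F_x$, which is flat by Proposition~\ref{flat-cotorsion-over-Noetherian-classified}.

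\emph{$\bW$\+flat implies projective.} Let $\fP$ be a $\bW$\+flat locally cotorsion $\bW$\+locally contraherent cosheaf, and choose a finite affine open covering $X=\bigcup_\alpha U_\alpha$ subordinate to~$\bW$. The restriction $\fP|_{U_\alpha}$ is contraherent on $U_\alpha$, because $U_\alpha\in\bW|_{U_\alpha}$. It corresponds to the module $\fP[U_\alpha]$, which is cotorsion and, by $\bW$\+flatness, flat. By Lemma~\ref{qcss-lcth-lct-prj-lemma} for the affine scheme $U_\alpha$, the cosheaf $\fP|_{U_\alpha}$ is therefore a projective locally cotorsion contraherent cosheaf on~$U_\alpha$. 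Corollary~\ref{lct-prj-local-coflasque}(c) then gives $\fP\in X\Ctrh^\lct_\prj$.

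\emph{Flat and contraherent.} By Theorem~\ref{noetherian-lcth-lct-prj-theorem}(a), every object of $X\Ctrh^\lct_\prj$ is globally contraherent. Flatness means $\bW_X$\+flatness, so it remains to see that $\fF[U]$ is flat for every affine open $U\subset X$. This follows from the first inclusion applied to the trivial covering $\{X\}$: the class $X\Ctrh^\lct_\prj$ does not depend on the covering, by Theorem~\ref{noetherian-lcth-lct-prj-theorem}(a).

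The main obstacle is the second inclusion, because it relies on the gluing statement Corollary~\ref{lct-prj-local-coflasque}(c), whose proof was only sketched. If we want to avoid it, the fallback is as follows. Present $\fP$ as the target of an admissible epimorphism $\fF\to\fP$ in $X\Lcth^\lct_\bW$ with $\fF$ projective, as in the proof of Theorem~\ref{noetherian-lcth-lct-prj-theorem}. Let $\fK$ be its kernel. Then $\fK[U]$ is flat for all $U$ subordinate to~$\bW$, since $\fF[U]$ and $\fP[U]$ are flat and $\fF[U]\to\fP[U]$ is surjective. Next compute $\Ext^1(\fP,\fK)$ locally, using the Čech-type description over the finite covering and the vanishing $\Ext^{\ge1}$ between flat cotorsion modules. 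If that Ext group vanishes, the sequence splits, and $\fP$ is a direct summand of~$\fF$. Carrying out this local Ext computation without semi-separatedness is the delicate point, so I would rely on Corollary~\ref{lct-prj-local-coflasque}(c) as stated.
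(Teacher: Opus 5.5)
Your proposal is correct and follows essentially the same route as the paper. You get projective $\Rightarrow$ flat from Corollary~\ref{lct-prj-local-coflasque}(a) together with Lemma~\ref{qcss-lcth-lct-prj-lemma} on affine opens, and the converse from projectivity on each $U_\alpha$ of a subordinate affine covering glued via Corollary~\ref{lct-prj-local-coflasque}(c), which the paper relies on in exactly the same way, so your fallback argument is unnecessary.
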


\begin{proof}
 This is a partial version of~\cite[Corollary~6.1.5]{Pcosh}
or~\cite[Corollary~4.6.4]{Pform}.
 All projective locally cotorsion contraherent cosheaves on $X$ are
flat by Corollary~\ref{lct-prj-local-coflasque}(a) (see also
Lemma~\ref{qcss-lcth-lct-prj-lemma}
and/or Corollary~\ref{antilocally-flat-are-flat}; recall that
all projective locally cotorsion contraherent cosheaves on
quasi-compact semi-separated schemes are antilocally flat as per
the discussion in Section~\ref{alf-subsecn}).
 To prove that every $\bW$\+flat locally cotorsion $\bW$\+locally
contraherent cosheaf $\fF$ on $X$ belongs to $X\Ctrh^\lct_\prj$,
choose an affine open covering $X=\bigcup_\alpha U_\alpha$ of
the scheme $X$ subordinate to~$\bW$.
 Then the $\cO_X(U_\alpha)$\+modules $\fF[U_\alpha]$ are flat
(since $\fF$ is $\bW$\+locally flat) and cotorsion (since $\fF$ is
$\bW$\+locally cotorsion).
 Hence the locally cotorsion contraherent cosheaves $\fF|_{U_\alpha}$
are projective by Lemma~\ref{qcss-lcth-lct-prj-lemma}, and it remains
to refer to Corollary~\ref{lct-prj-local-coflasque}(c).
\end{proof}

\Section{Main Positive Result}
\label{main-positive-result-secn}

 The exposition in this section largely
follows~\cite[Section~6.2]{Pcosh}, with an important improvement
achieved in~\cite[Section~4.7]{Pform} in the context of formal schemes.
 In the context of schemes, this improvement constitutes the proof
of the main positive result of this paper.

 We recall the notation $X\Lcth_\bW^\fl\subset X\Lcth_\bW$ and
$X\Ctrh^\fl\subset X\Ctrh$ for the full exact subcategories of
\emph{$\bW$\+flat\/ $\bW$\+locally contraherent cosheaves} and
\emph{flat contraherent cosheaves}
(see Section~\ref{qcss-flat-subsecn}).

\begin{lem} \label{ssep-lct-lct-prj-coresol-dim}
 Let $X$ be a semi-separated Noetherian scheme of Krull dimension $D$
with an open covering\/~$\bW$.
 In this context: \par
\textup{(a)} The full subcategory of locally cotorsion\/ $\bW$\+locally
contraherent cosheaves $X\Lcth^\lct_\bW$ is coresolving in the exact
category of\/ $\bW$\+locally contraherent cosheaves $X\Lcth_\bW$.
 The coresolution dimension of any object of $X\Lcth_\bW$ with respect
to the coresolving subcategory $X\Lcth^\lct_\bW$ does not exceed~$D$.
\par
\textup{(b)} The full subcategory of projective locally cotorsion
contraherent cosheaves $X\Ctrh^\lct_\prj$ is coresolving in
the exact category of\/ $\bW$\+flat\/ $\bW$\+locally contraherent
cosheaves $X\Lcth_\bW^\fl$.
 The coresolution dimension of any object of $X\Lcth_\bW^\fl$ with
respect to the coresolving subcategory $X\Ctrh^\lct_\prj$ does not
exceed~$D$.
\end{lem}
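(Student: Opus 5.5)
We first establish cogeneration: for part~(a), apply Corollary~\ref{antilocally-flat-cotorsion-pair}(a) to get $0\rarrow\fM\rarrow\fQ\rarrow\fF\rarrow0$ with $\fQ$ locally cotorsion and $\fF$ antilocally flat. Since $X\Lcth^\lct_\bW$ is closed under extensions and cokernels of admissible monomorphisms in $X\Lcth_\bW$ (Section~\ref{lct-lin-subsecn}), it is then coresolving. For part~(b), take $\fM\in X\Lcth^\fl_\bW$ and the same sequence. By Corollary~\ref{antilocally-flat-are-flat}, $\fF$ is flat, so $\fQ$ is $\bW$\+flat as an extension of $\bW$\+flat cosheaves; the cosections over affines subordinate to $\bW$ form short exact sequences of modules, and flatness is closed under extensions. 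Thus $\fQ$ is $\bW$\+flat and locally cotorsion, hence lies in $X\Ctrh^\lct_\prj$ by Corollary~\ref{lct-prj=flat}. The cokernel $\fF$ lies in $X\Lcth^\fl_\bW$. Closure of $X\Ctrh^\lct_\prj$ under extensions in $X\Lcth^\fl_\bW$ follows from the same corollary, since $\bW$\+flatness and the locally cotorsion property are both extension closed. For closure under cokernels of admissible monomorphisms, note that the cokernel of a monomorphism between flat cotorsion modules is cotorsion, and flat by the assumption that it lies in $X\Lcth^\fl_\bW$; this again gives membership via Corollary~\ref{lct-prj=flat}.

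For the dimension bounds, iterate these special preenvelope sequences. Given $\fM$, we obtain $0\rarrow\fM\rarrow\fQ^0\rarrow\fQ^1\rarrow\dotsb\rarrow\fQ^{D-1}\rarrow\fC\rarrow0$, with the $\fQ^i$ locally cotorsion (respectively in $X\Ctrh^\lct_\prj$). We must show $\fC$ is locally cotorsion. By Lemma~\ref{cotorsion-injective-colocal-if-contraadjusted}(a) it suffices to check this on each affine $U$ subordinate to $\bW$. Restricting to $U$, which is exact by \eqref{restriction-of-lcth-cosheaves}, and taking cosections over $U$ yields an exact sequence of $\cO_X(U)$\+modules $0\rarrow\fM[U]\rarrow\fQ^0[U]\rarrow\dotsb\rarrow\fQ^{D-1}[U]\rarrow\fC[U]\rarrow0$ whose middle terms are cotorsion. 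The Krull dimension of $\cO_X(U)$ is at most $D$, so Theorem~\ref{raynaud-gruson-theorem}(b) bounds the cotorsion dimension of $\fM[U]$ by $D$. Proposition~\ref{co-resolution-dimension-prop}(b), applied in $\cO_X(U)\Modl$ with the coresolving class of cotorsion modules, then shows that $\fC[U]$ is cotorsion. This proves (a).

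For (b), the same argument shows the final term $\fC$ is locally cotorsion. It is also $\bW$\+flat, because all terms are taken in $X\Lcth^\fl_\bW$: each cokernel in the iteration is antilocally flat, hence flat. Corollary~\ref{lct-prj=flat} then gives $\fC\in X\Ctrh^\lct_\prj$.

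The main obstacle is ensuring that the exactness transferred to modules over $U$ is valid and that Proposition~\ref{co-resolution-dimension-prop}(b) applies at the level of modules rather than cosheaves. The former holds by the definition of admissible exact sequences in $X\Lcth_\bW$; the latter works because local cotorsionness is tested affine-locally.
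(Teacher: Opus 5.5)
Your proof is correct and follows the paper's route. You get cogeneration in both parts from the preenvelope sequence of Corollary~\ref{antilocally-flat-cotorsion-pair}(a), together with flatness of antilocally flat cosheaves and the identification $X\Ctrh^\lct_\prj=X\Lcth^\fl_\bW\cap X\Lcth^\lct_\bW$ from Corollary~\ref{lct-prj=flat}, and the dimension bound from Theorem~\ref{raynaud-gruson-theorem}(b) applied to cosections over affine opens subordinate to~$\bW$. The only difference is in~(b): you verify the closure properties and the dimension bound by hand, via an explicit coresolution in $X\Lcth^\fl_\bW$ whose last term is flat and, by the module-level argument of~(a), locally cotorsion. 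The paper instead deduces these formally from~(a) using a general lemma on coresolving subcategories from~\cite{Pform}, so your version just unpacks that lemma.
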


\begin{proof}
 This is~\cite[Lemma~6.2.1]{Pcosh} and a particular case
of~\cite[Lemma~4.7.1]{Pform}.

 Part~(a): for any scheme $X$ with an open covering $\bW$, the full
subcategory $X\Lcth^\lct_\bW\subset X\Lcth_\bW$ is closed under
extensions and cokernels of admissible monomorphisms according to
Section~\ref{lct-lin-subsecn}.
 For a quasi-compact semi-separated scheme $X$, the same full
subcategory is cogenerating by
Proposition~\ref{qcoh-ssep-enough-lin-prop},
Proposition~\ref{qcoh-ssep-lct-preenvelope-prop},
or Corollary~\ref{antilocally-flat-cotorsion-pair}(a).
 For a semi-separated Noetherian scheme, the coresolution dimension
bound follows from Theorem~\ref{raynaud-gruson-theorem}(b).

 Part~(b): one has $X\Ctrh^\lct_\prj=X\Lcth_\bW^\fl\cap
X\Lcth^\lct_\bW$ for any Noetherian scheme $X$ by
Corollary~\ref{lct-prj=flat}.
 Let us show that the full subcategory $X\Lcth^\lct_\prj$ is
cogenerating in $X\Lcth_\bW^\fl$ when the scheme $X$ is semi-separated
and Noetherian.

 Let $\fG$ be a $\bW$\+flat $\bW$\+locally contraherent cosheaf on~$X$.
 By Corollary~\ref{antilocally-flat-cotorsion-pair}(a), there exists
an (admissible) short exact sequence $0\rarrow\fG\rarrow\fP
\rarrow\fF\rarrow0$ in $X\Lcth_\bW$ with a locally cotorsion
$\bW$\+locally contraherent cosheaf $\fP$ and an antilocally flat
contraherent cosheaf $\fF$ on~$X$.
 By Corollary~\ref{antilocally-flat-are-flat}, the cosheaf $\fF$ is
flat.
 As the full subcategory $X\Lcth_\bW^\fl$ is closed under extensions
in $X\Lcth_\bW$, it follows that $\fP\in X\Lcth_\bW^\fl\cap
X\Lcth^\lct_\bW=X\Ctrh^\lct_\prj$.
 So $0\rarrow\fG\rarrow\fP\rarrow\fF\rarrow0$ is a short exact
sequence in $X\Lcth_\bW^\fl$ with $\fP\in X\Ctrh^\lct_\prj$,
as desired.

 The rest of the assertions of part~(b) follows purely formally
from the respective assertions of part~(a) by virtue
of~\cite[Lemma~A.1.1(b) and Corollary~A.1.3(b)]{Pform} or the dual
version of~\cite[Corollary~A.5.5]{Pcosh}.
\end{proof}

 A detailed discussion of \emph{antilocally flat} ($\bW$\+locally)
contraherent cosheaves was presented in Section~\ref{alf-subsecn}.
 For the definitions and discussion of the full exact subcategories
of \emph{coflasque} contraherent cosheaves $X\Ctrh_\cfq\subset X\Ctrh$
and coflasque locally cotorsion contraherent cosheaves 
$X\Ctrh^\lct_\cfq\subset X\Ctrh^\lct$ we refer to
Section~\ref{coflasque-secn}.

\begin{cor} \label{flat-are-alf-contraherent-and-coflasque}
\textup{(a)} On a semi-separated Noetherian scheme of finite Krull
dimension with an open covering\/ $\bW$, the classes of\/
$\bW$\+flat\/ $\bW$\+locally contraherent cosheaves and antilocally
flat contraherent cosheaves coincide. \par
\textup{(b)} On a Noetherian scheme of finite Krull dimension with
an open covering\/ $\bW$, any\/ $\bW$\+flat\/ $\bW$\+locally
contraherent cosheaf is flat and contraherent. \par
\textup{(c)} On a Noetherian scheme of finite Krull dimension,
all flat contraherent cosheaves are coflasque.
\end{cor}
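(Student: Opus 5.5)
We prove the three parts in order, each building on the previous ones.

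For part~(a), let $X$ be semi-separated Noetherian of Krull dimension~$D$. The ``if'' direction is Corollary~\ref{antilocally-flat-are-flat}: antilocally flat contraherent cosheaves are flat, hence $\bW$\+flat (flatness on all affines implies flatness on affines subordinate to~$\bW$). For the converse, let $\fG\in X\Lcth^\fl_\bW$. By Lemma~\ref{ssep-lct-lct-prj-coresol-dim}(b) there is a finite coresolution
\[
0\rarrow\fG\rarrow\fP^0\rarrow\dotsb\rarrow\fP^D\rarrow0
\]
in $X\Lcth^\fl_\bW$ with $\fP^i\in X\Ctrh^\lct_\prj$. Each $\fP^i$ is antilocally flat, since projectivity in $X\Lcth^\lct_\bW$ is exactly the Hom-exactness defining antilocal flatness. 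By Corollary~\ref{qcss-alf-characterizations}(b), the antilocally flat class is closed under kernels of admissible epimorphisms. Splitting the coresolution into short exact sequences and inducting from the right end, we conclude that $\fG$ is antilocally flat. By the corollary following Corollary~\ref{antilocally-flat-cor}, $\fG$ is then globally contraherent.

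For part~(b), let $X$ be Noetherian of finite Krull dimension, not necessarily semi-separated, and let $\fG$ be $\bW$\+flat. Pick a finite affine open covering $X=\bigcup U_\alpha$ subordinate to~$\bW$. Each $U_\alpha$ is affine, hence semi-separated Noetherian of finite dimension. The restriction $\fG|_{U_\alpha}$ is $\bW|_{U_\alpha}$\+flat and $\bW|_{U_\alpha}$\+locally contraherent, so by part~(a) it is contraherent and antilocally flat on~$U_\alpha$. By Corollary~\ref{antilocally-flat-are-flat} it is flat on~$U_\alpha$.

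Flatness is checked on affines, so $\fG$ is flat. Contraherence of~$\fG$ is the harder half, because the contraherence condition is not local. We will first show that $\fG$ is coflasque; Lemma~\ref{coflasque-lcth-are-contraherent} then gives contraherence. Coflasqueness is a local property, so it suffices to show that each $\fG|_{U_\alpha}$ is coflasque. That is, we reduce to proving part~(c) on an affine scheme for a flat contraherent cosheaf.

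For part~(c) on an affine (or semi-separated) Noetherian $X$, let $\fG$ be flat contraherent. Use the coresolution $0\rarrow\fG\rarrow\fP^0\rarrow\dotsb\rarrow\fP^D\rarrow0$ from part~(a). Each $\fP^i$ is coflasque by Corollary~\ref{lct-prj-local-coflasque}(a,b): the corestriction $\fP[V]\rarrow\fP[U]$ for $V\subset U$ is injective, by applying part~(b) of that corollary on~$U$. Now use Lemma~\ref{coflasque-closedness-properties-and-cosections}(b) iteratively from the right: $\fP^{D-1}\rarrow\fP^D$ is an admissible epimorphism between coflasque objects, so its kernel is coflasque, and so on down to~$\fG$.

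For general Noetherian $X$ of finite dimension, a flat contraherent $\fG$ is flat on each $U_\alpha$. Hence each $\fG|_{U_\alpha}$ is coflasque by the semi-separated case, and $\fG$ is coflasque by locality of coflasqueness. This also completes the proof of part~(b).

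The main obstacle is the bookkeeping in part~(a). We must verify that projective objects of $X\Lcth^\lct_\bW$ are antilocally flat in the sense defined with respect to~$\bW$, and that the coresolution of Lemma~\ref{ssep-lct-lct-prj-coresol-dim}(b) consists of admissible short exact sequences in $X\Lcth_\bW$, so that Corollary~\ref{qcss-alf-characterizations}(b) applies. The rest is a routine use of locality.
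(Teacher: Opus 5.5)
Your parts~(a) and~(c) follow the paper's argument. Part~(b), however, has a gap at the sentence ``Flatness is checked on affines, so $\fG$ is flat.''

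By definition, flatness of $\fG$ means that $\fG[U]$ is a flat $\cO_X(U)$\+module for \emph{every} affine open $U\subset X$. You have only established this for affine $U$ contained in some $U_\alpha$, and that was already given by $\bW$\+flatness. Applying part~(a) on $U_\alpha$ adds nothing: since $U_\alpha$ is subordinate to $\bW$, one has $U_\alpha\in\bW|_{U_\alpha}$, so $\fG|_{U_\alpha}$ is contraherent and flat directly from the hypotheses. For an affine $U$ not subordinate to $\bW$, there is no formal local-to-global principle for flatness. Your coflasque argument does show that $\fG|_U$ is contraherent. But then the \v Cech sequence of Lemma~\ref{contraadjusted-cech-resolution}, for a cover of $U$ by affines inside the $U_\alpha$, has $\fG[U]$ as its rightmost term. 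So it only gives a finite resolution of $\fG[U]$ by flat modules, and flat modules are not closed under cokernels of monomorphisms. Proving that $\fG[U]$ is flat is exactly the nontrivial Raynaud--Gruson input that part~(a) packages. As written, your proof of~(b) never supplies it.

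The fix is to apply part~(a) to an \emph{arbitrary} affine open $U\subset X$. The restriction $\fG|_U$ is $\bW|_U$\+flat and $\bW|_U$\+locally contraherent, for the generally nontrivial covering $\bW|_U$ of the affine (hence semi-separated) Noetherian scheme $U$ of finite Krull dimension. Part~(a) then makes $\fG|_U$ antilocally flat and contraherent, hence flat by Corollary~\ref{antilocally-flat-are-flat}. Since flatness and contraherence of $\fG$ are both, by definition, conditions on all affine opens, this proves~(b) outright. So contraherence is not the harder half; it comes from the same application of~(a). Your coflasque route (part~(c) on the $U_\alpha$, locality of coflasqueness, then Lemma~\ref{coflasque-lcth-are-contraherent}) is a valid alternative proof of contraherence, but it does not address flatness.
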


\begin{proof}
 This is~\cite[Corollary~6.2.2]{Pcosh} and a particular case
of~\cite[Corollary~4.7.2]{Pform}.

 Part~(a): all antilocally flat contraherent cosheaves on
a semi-separated Noetherian scheme $X$ are flat by
Corollary~\ref{antilocally-flat-are-flat}.
 Conversely, if the Krull dimension of $X$ is finite, then
Lemma~\ref{ssep-lct-lct-prj-coresol-dim}(b) tells us that any
$\bW$\+flat $\bW$\+locally contraherent cosheaf $\fF$ on $X$ has
a finite coresolution by projective locally cotorsion contraherent
cosheaves in the exact category $X\Lcth_\bW^\fl$.
 All projective locally cotorsion contraherent cosheaves are
antilocally flat by definition.
 Applying Corollary~\ref{qcss-alf-characterizations}(b) iteratively,
we conclude that the cosheaf $\fF$ is antilocally flat.

 Part~(b): by the definition, both the flatness and contraherence
of locally contraherent cosheaves are checked over affine open
subschemes.
 For a $\bW$\+flat $\bW$\+locally contraherent cosheaf $\fF$ on $X$
and an affine open subscheme $U\subset X$, the cosheaf $\fF|_U$
is $\bW|_U$\+flat and $\bW|_U$\+locally contraherent, where
$\bW|_U=\{U\cap W\mid W\in\bW\}$ is the restriction to $U$ of
the open covering $\bW$ of~$X$.
 This reduces the question to the case of an affine Noetherian
scheme~$X$ (with a nontrivial open covering!), and this case
is covered by part~(a).

 Part~(c): coflasqueness of cosheaves is a local property
by~\cite[Lemma~3.4.1(a)]{Pcosh} (see some details in the beginning of
Section~\ref{coflasque-secn}), and flatness of cosheaves is preserved
by restrictions to open subschemes by the definition.
 So once again it suffices to consider the case of an affine
Noetherian scheme~$X$.
 Then we know that any flat contraherent cosheaf $\fF$ on $X$ has
a finite coresolution by projective locally cotorsion contraherent
cosheaves.
 All projective locally cotorsion contraherent cosheaves on
a Noetherian scheme are coflasque by
Corollary~\ref{lct-prj-local-coflasque}(b).
 Applying Lemma~\ref{coflasque-closedness-properties-and-cosections}(b)
iteratively, we see that the cosheaf $\fF$ is coflasque.
\end{proof}

 In the next lemma, the semi-sepatedness assumption on $Y$ can be
dropped, and an open immersion~$j$ can be replaced by an arbitrary
flat morphism~$f$; see~\cite[Corollary~6.2.12(b)]{Pcosh} or
the even more general~\cite[Corollary~4.7.16(b)]{Pform}.
 We only state the very special case that is needed for our purposes
in this section.

\begin{lem} \label{flat-direct-images}
 Let $X$ be a Noetherian scheme, $Y$ be a semi-separated Noetherian
scheme of finite Krull dimension, and $j\:Y\rarrow X$ be an open
immersion of schemes.
 Then the functor of direct image of cosheaves of $\cO$\+modules
$j_!\:(Y,\cO_Y)\Cosh\rarrow(X,\cO_X)\Cosh$ takes flat contraherent
cosheaves on $Y$ to flat contraherent cosheaves on $X$, and induces
an exact functor between the respective exact categories
$j_!\:Y\Ctrh^\fl\rarrow X\Ctrh^\fl$.
\end{lem}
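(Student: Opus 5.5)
The plan is to reduce everything to Proposition~\ref{alf-direct-images} via Corollary~\ref{flat-are-alf-contraherent-and-coflasque}(a), after first making the target affine. Contraherence, flatness, and exactness of short sequences on $X$ are all checked on cosections over affine open subschemes $U\subset X$. So fix an affine open $U\subset X$ with open immersion $h\:U\rarrow X$, and put $V=Y\cap U$ with open immersions $j'\:V\rarrow U$ and $k\:V\rarrow Y$. There is an obvious base change isomorphism $h^!j_!\simeq j'_!k^!$. Also, $k^!$ takes flat contraherent cosheaves on $Y$ to flat contraherent cosheaves on $V$, and exact sequences to exact sequences. It therefore suffices to prove the following: for the affine Noetherian scheme $U$ and its open subscheme $V$, the functor $j'_!$ takes flat contraherent cosheaves on $V$ to flat contraherent cosheaves on $U$, exactly. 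Note that $V$ is quasi-compact (since $X$ is Noetherian) and semi-separated (as an open subscheme of the semi-separated scheme $Y$). It is also Noetherian of finite Krull dimension, since $\dim V\le\dim Y$.

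On $V$, Corollary~\ref{flat-are-alf-contraherent-and-coflasque}(a) applies with the trivial covering: flat contraherent cosheaves on $V$ coincide with antilocally flat contraherent cosheaves, and the exact structures agree since both are inherited from $V\Ctrh$. Now $U$ is affine, hence semi-separated, and $V\subset U$ is quasi-compact. Proposition~\ref{alf-direct-images} then shows that $j'_!$ takes $V\Ctrh_\alf$ into $U\Ctrh_\alf$ as an exact functor. Finally, $U$ is an affine Noetherian scheme, so it is semi-separated, and by Corollary~\ref{antilocally-flat-are-flat} (or again Corollary~\ref{flat-are-alf-contraherent-and-coflasque}(a)) antilocally flat contraherent cosheaves on $U$ are flat. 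This gives the affine case.

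To globalize, let $\fG\in Y\Ctrh^\fl$. For every affine $U$, the restriction $h^!j_!\fG\simeq j'_!k^!\fG$ is a flat contraherent cosheaf on $U$. Hence $j_!\fG$ is a flat cosheaf that satisfies axioms~(ii)--(iii) on all pairs of affine opens; that is, it is flat and contraherent on $X$. The same reasoning applied to a short exact sequence in $Y\Ctrh^\fl$ shows its image is exact on cosections over every affine $U$, hence exact in $X\Ctrh$.

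The main obstacle is the middle step: checking that the hypotheses of Proposition~\ref{alf-direct-images} and Corollary~\ref{flat-are-alf-contraherent-and-coflasque}(a) really hold for $V$. This requires $V$ to be semi-separated, quasi-compact, and of finite Krull dimension, which is where the assumptions on $Y$ and the Noetherianity of $X$ are used. One must also make sure the identification of flat with antilocally flat cosheaves respects the exact structures, so that exactness of $j'_!$ transfers; I expect this to be immediate since both categories are full exact subcategories of $V\Ctrh$ with the inherited structure.
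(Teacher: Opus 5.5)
Your proof is correct and follows essentially the paper's argument: reduce to affine $X$, then apply Corollary~\ref{flat-are-alf-contraherent-and-coflasque}(a) on the open subscheme, then Proposition~\ref{alf-direct-images}, then Corollary~\ref{antilocally-flat-are-flat} on the affine target. The explicit base change $h^!j_!\simeq j'_!k^!$ and the check that $V=Y\cap U$ is quasi-compact, semi-separated, Noetherian and finite-dimensional only spell out what the paper leaves implicit. One small caution: your parenthetical alternative of using Corollary~\ref{flat-are-alf-contraherent-and-coflasque}(a) on $U$ needs $U$ to have finite Krull dimension, which is not assumed for $X$, so Corollary~\ref{antilocally-flat-are-flat} is the correct reference at that step.
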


\begin{proof}
 This is~\cite[Lemma~6.2.3]{Pcosh} or~\cite[Lemma~4.7.3]{Pform}
(with the generality level restricted to open immersions of schemes).
 Since contraherence and flatness are checked over affine open
subschemes, it suffices to consider the case of an affine Noetherian
scheme~$X$.
 By Corollary~\ref{flat-are-alf-contraherent-and-coflasque}(a),
all flat contraherent cosheaves on $Y$ are antilocally flat.
 According to Proposition~\ref{alf-direct-images}, we have an exact
direct image functor $j_!\:Y\Ctrh_\alf\rarrow X\Ctrh_\alf$.
 By Corollary~\ref{antilocally-flat-are-flat} (or
Lemma~\ref{antilocally-flat-basic-properties}(a) and
Proposition~\ref{flat-contraadjusted-colocalization}), all antilocally
flat contraherent cosheaves on $X$ are flat.
\end{proof}

\begin{cor} \label{noetherian-lcth-prj-cor}
 Let $X$ be a Noetherian scheme with an open covering\/~$\bW$.
 Let $X=\bigcup_{\alpha=1}^N U_\alpha$ be a finite affine open
covering of $X$ subordinate to\/~$\bW$. \par
\textup{(a)} There are enough projective objects in the exact
categories of locally contraherent cosheaves $X\Ctrh\subset
X\Lcth_\bW\subset X\Lcth$.
 All these projective objects belong to the full subcategory of
contraherent cosheaves $X\Ctrh$, and the classes of projective objects
in the three exact categories coincide. \par
\textup{(b)} A\/ $\bW$\+locally contraherent cosheaf on $X$ is
projective if and only if it is a direct summand of a finite direct
sum of the direct images of projective contraherent cosheaves
from~$U_\alpha$. \par
\textup{(c)} All projective contraherent cosheaves on $X$ are flat
(and consequently, coflasque). \hbadness=1450
\end{cor}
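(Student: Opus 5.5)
The plan is to imitate the proof of Lemma~\ref{qcss-lcth-prj-lemma}, with the semi-separatedness of $X$ replaced by the affineness of the $U_\alpha$ together with Lemma~\ref{flat-direct-images}.

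For each $\alpha$, let $j_\alpha\:U_\alpha\rarrow X$ be the open immersion. By Lemma~\ref{ctrh-cosheaves-on-affine-scheme}, Theorem~\ref{very-flat-cotorsion-pair} and Lemma~\ref{cotorsion-pair-injectives-projectives}(b), the projective objects of $U_\alpha\Ctrh$ are the cosheaves $\widecheck F$ with $F$ a very flat contraadjusted $\cO_X(U_\alpha)$\+module, and there are enough of them. Such $F$ are flat, so $\widecheck F$ is a flat contraherent cosheaf on the affine Noetherian scheme $U_\alpha$ (Section~\ref{qcss-flat-subsecn}).

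The key difficulty is that $j_\alpha$ need not be affine when $X$ is not semi-separated, so \eqref{ctrh-direct-image} is not available to show that $j_\alpha{}_!\widecheck F$ is contraherent. This is exactly the gap that Lemma~\ref{flat-direct-images} closes, applied with $Y=U_\alpha$, which is semi-separated, Noetherian, and of finite Krull dimension. Here I would need to check that the finite Krull dimension hypothesis holds: a Noetherian affine scheme need not have finite dimension. Two remedies come to mind.
\begin{itemize}
\item One could assume $\dim X<\infty$, as the section's main theorem presumably does.
\item Alternatively, one could replace the appeal to Lemma~\ref{flat-direct-images} by Proposition~\ref{alf-direct-images} applied to $U_\alpha$ inside an affine $V\subset X$. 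Here $\widecheck F$ is antilocally flat on $U_\alpha$ by Lemma~\ref{antilocally-flat-basic-properties}(a), and contraherence of $j_\alpha{}_!\widecheck F$ is a local question checked on affine $V\subset X$, where $(j_\alpha{}_!\widecheck F)|_V=j'_!(\widecheck F|_{U_\alpha\cap V})$. The restriction $\widecheck F|_{U_\alpha\cap V}$ is antilocally flat by Corollary~\ref{alf-open-restriction-cor}. Proposition~\ref{alf-direct-images} then applies on the semi-separated scheme $V$ and gives a flat (by Corollary~\ref{antilocally-flat-are-flat}) contraherent cosheaf on $V$.
\end{itemize}
The second route needs no dimension assumption, so I would use it. The same local argument also gives flatness of $j_\alpha{}_!\widecheck F$.

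Once $j_\alpha{}_!\widecheck F\in X\Ctrh$, the adjunction \eqref{cosheaves-direct-image-restriction-adjunction} and exactness of $j_\alpha^!\:X\Lcth_\bW\rarrow U_\alpha\Ctrh$ show that $j_\alpha{}_!$ takes projectives to projectives of $X\Lcth_\bW$, and also of $X\Lcth$ and $X\Ctrh$, since each $U_\alpha$ is subordinate to every relevant covering after refinement. For any $\fP\in X\Lcth_\bW$, choose admissible epimorphisms $\fF_\alpha\rarrow j_\alpha^!\fP$ with $\fF_\alpha$ projective in $U_\alpha\Ctrh$. The adjoint morphism $\bigoplus_\alpha j_\alpha{}_!\fF_\alpha\rarrow\fP$ restricts on each $U_\alpha$ to a morphism having $\fF_\alpha\rarrow j_\alpha^!\fP$ as a split-off summand, hence to an admissible epimorphism. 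By Lemma~\ref{colocality-of-epimorphisms}(a) it is then an admissible epimorphism in $X\Lcth_\bW$.

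This proves (a) and (b). Any projective object is a direct summand of such a sum, so it lies in $X\Ctrh$. Since admissible epimorphisms and their kernels agree among $X\Ctrh\subset X\Lcth_\bW\subset X\Lcth$ (Section~\ref{loc-contrah-subsecn}), the classes of projectives coincide; for $X\Lcth$ one uses a finite affine covering subordinate to the given $\bW'$. For (c), each $j_\alpha{}_!\fF_\alpha$ is flat by the local argument above, and flatness passes to finite sums and direct summands. Coflasqueness then follows from Corollary~\ref{flat-are-alf-contraherent-and-coflasque}(c) under finite Krull dimension; without that hypothesis I would instead argue directly, by locality of coflasqueness, from coflasqueness of direct images as in Proposition~\ref{coflasque-direct-images}.
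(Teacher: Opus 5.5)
Your first remedy is the paper's proof. The overall architecture is the same: projectives of $U_\alpha\Ctrh$ are flat, $j_\alpha{}_!$ takes them to projectives by the adjunction~\eqref{cosheaves-direct-image-restriction-adjunction}, and the adjoint map $\bigoplus_\alpha j_\alpha{}_!\fF_\alpha\rarrow\fP$ is an admissible epimorphism by Lemma~\ref{colocality-of-epimorphisms}(a). You are also right that the key input, Lemma~\ref{flat-direct-images}, needs $U_\alpha$ to have finite Krull dimension, which the statement omits. The paper's proof invokes exactly that lemma, and Corollary~\ref{flat-are-alf-contraherent-and-coflasque}(c) for coflasqueness. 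So it implicitly works under that section's standing hypothesis $\dim X<\infty$.

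The dimension-free route you actually choose has a gap. You apply Corollary~\ref{alf-open-restriction-cor} to the open immersion $U_\alpha\cap V\rarrow U_\alpha$. That corollary requires the immersion to be affine, and since $U_\alpha$ is affine this holds exactly when $U_\alpha\cap V$ is affine. If $U_\alpha\cap V$ were affine for every affine $V\subset X$, then $j_\alpha$ would be an affine morphism, and \eqref{ctrh-direct-image} would already give contraherence of $j_\alpha{}_!\widecheck F$. So the only case your detour must handle is the one the corollary excludes.

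Nothing else in the paper makes the restriction of $\widecheck F$ to a non-affine quasi-compact open of $U_\alpha$ antilocally flat. The paper obtains antilocal flatness on such semi-separated schemes only from flatness, via Corollary~\ref{flat-are-alf-contraherent-and-coflasque}(a). That result rests on Theorem~\ref{raynaud-gruson-theorem} and needs finite Krull dimension, and this is precisely where the hypothesis enters Lemma~\ref{flat-direct-images}. So your second route does not remove the dimension assumption; it moves it into an inapplicable citation. Both the contraherence and the flatness of $j_\alpha{}_!\widecheck F$ are left unproved on that route.

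The coflasqueness fallback has the same problem. Proposition~\ref{coflasque-direct-images} needs $\widecheck F_\alpha$ to be coflasque on $U_\alpha$, and the paper gets that only from Corollary~\ref{flat-are-alf-contraherent-and-coflasque}(c), again under finite dimension. A genuinely dimension-free version would need a new ingredient, such as a direct proof that $j'_!(\widecheck F|_{U_\alpha\cap V})$ is contraherent and flat on $V$, and the paper does not supply one. Otherwise, state and prove the corollary under $\dim X<\infty$, as in your first remedy.

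A minor point: projectivity of $j_\alpha{}_!\widecheck F$ in $X\Lcth$ does not need $U_\alpha$ to be subordinate to other coverings $\bW'$. It follows because $\widecheck F$ is projective in $U_\alpha\Lcth_{\bW'|_{U_\alpha}}$ by Corollary~\ref{qcss-lcth-prj-cor}, and $j_\alpha^!$ is exact.
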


\begin{proof}
 This is~\cite[Corollary~6.2.4]{Pcosh} and a particular case
of~\cite[Corollary~4.7.4]{Pform}.
 The argument is similar to the proofs of
Lemma~\ref{qcss-lcth-prj-lemma} and
Corollary~\ref{qcss-lcth-prj-cor}.
 The difference is that the scheme $X$ need not be semi-separated now,
so the open immersion morphisms $j_\alpha\:U_\alpha\rarrow X$ need not
be affine.
 That is why we need Lemma~\ref{flat-direct-images}.

 Let us elaborate a little bit on this difference.
 \emph{Unlike} in Lemma~\ref{qcss-lcth-prj-lemma}, the restriction
functors $j_\alpha^!\:X\Lcth_\bW\rarrow U_\alpha\Ctrh$ \emph{do not}
seem to have left adjoint functors in the context of the present
corollary.
 See a counterexample in~\cite[Remark~2.3.1 and Example~2.3.2]{Pcosh}.
 Rather, Lemma~\ref{flat-direct-images} and the adjunction
formula~\eqref{cosheaves-direct-image-restriction-adjunction}
provide a \emph{partially defined} functor $U_\alpha\Ctrh\supset
U_\alpha\Ctrh^\fl\overset{j_\alpha{}_!}\rarrow X\Ctrh^\fl\subset
X\Lcth_\bW$ that is \emph{partially left adjoint} to
the functor~$j_\alpha^!$, in an obvious sense.
 Cf.\ the discussion of partially defined functors between exact
categories in~\cite[Section~5.6]{Pphil} and~\cite[Section~1.10
of the Introduction]{Pcosh}.
 Nevertheless, it follows from
the adjunction~\eqref{cosheaves-direct-image-restriction-adjunction}
that the functor $j_\alpha{}_!\:U_\alpha\Ctrh^\fl\rarrow X\Ctrh^\fl$
from Lemma~\ref{flat-direct-images} takes the projective objects
of $U_\alpha\Ctrh$ (which all belong to $U_\alpha\Ctrh^\fl$ by
Corollary~\ref{antilocally-flat-are-flat} or
Lemma~\ref{qcss-lcth-prj-lemma} with
Proposition~\ref{flat-contraadjusted-colocalization}) to projective
objects in $X\Lcth_\bW$.

 This suffices to establish all the claims in~(a\+-c).
 The parenthetical assertion in part~(c) follows by
Corollary~\ref{flat-are-alf-contraherent-and-coflasque}(c).
\end{proof}

 Following the notation of Section~\ref{projective-on-qcss-subsecn},
for any Noetherian scheme $X$ of finite Krull dimension, we denote by 
$X\Ctrh_\prj\subset X\Ctrh$ the full subcategory of projective objects
in $X\Ctrh$ (or equivalently, in $X\Lcth_\bW$, or in $X\Lcth$).

\begin{prop} \label{coflasque-lct-preenvelope}
 Let $X$ be a Noetherian scheme of finite Krull dimension.
 Let $X=\bigcup_{\alpha=1}^N U_\alpha$ be a finite affine open
covering of~$X$.
 Then, for any coflasque contraherent cosheaf\/ $\fE$ on $X$,
there exists an (admissible) short exact sequence\/ $0\rarrow\fE
\rarrow\fQ\rarrow\fF\rarrow0$ in the exact category $X\Ctrh_\cfq$
with a coflasque locally cotorsion contraherent cosheaf\/ $\fQ$
on $X$ and a contraherent cosheaf\/ $\fF$ on $X$ that is a finitely
iterated extension of the direct images of flat contraherent
cosheaves from~$U_\alpha$.
\end{prop}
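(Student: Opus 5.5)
We would imitate the inductive construction from the proof of Proposition~\ref{qcoh-ssep-lct-preenvelope-prop}, with the affineness of the open immersions replaced by coflasqueness and flatness. Put $V_\beta=\bigcup_{\alpha=1}^\beta U_\alpha$. By induction on $\beta=0,\dots,N$ we construct admissible monomorphisms $\fE=\fN_0\rarrow\fN_1\rarrow\dotsb\rarrow\fN_\beta$ in $X\Ctrh_\cfq$ with three properties. First, $\fN_\beta$ is coflasque. Second, $\fN_\beta|_{V_\beta}$ is locally cotorsion. Third, the cokernel of $\fN_{\beta-1}\rarrow\fN_\beta$ is $j_\beta{}_!\fG_\beta$, where $\fG_\beta$ is a flat contraherent cosheaf on $U_\beta$. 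Setting $\fQ=\fN_N$ then finishes the proof. The cokernel $\fF$ of $\fE\rarrow\fQ$ is a finitely iterated extension of the $j_\beta{}_!\fG_\beta$, because compositions of admissible monomorphisms are admissible and their cokernels form iterated extensions (octahedral argument).

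For the inductive step, put $\fK=\fN_{\beta-1}$, $U=U_\beta$, and let $j\:U\rarrow X$ be the open immersion. Apply Theorem~\ref{flat-cotorsion-pair}(b) to the module $\fK[U]$. This gives $0\rarrow\fK[U]\rarrow R\rarrow G\rarrow0$ with $R$ cotorsion and $G$ flat; $G$ is also contraadjusted, being a quotient of the cotorsion module $R$. This translates to a short exact sequence $0\rarrow j^!\fK\rarrow\fR\rarrow\fG\rarrow0$ in $U\Ctrh$ with $\fR$ locally cotorsion. Here $\fG$ is flat by Proposition~\ref{flat-contraadjusted-colocalization}, and it is coflasque by Corollary~\ref{flat-are-alf-contraherent-and-coflasque}(c).

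Since $j$ need not be affine, we cannot apply $j_!$ to the whole sequence. Instead we argue via Ext. Lemma~\ref{flat-direct-images} shows that $j_!\fG$ is a flat contraherent cosheaf on $X$, hence coflasque; this requires $U$ to be affine, hence semi-separated, Noetherian and of finite Krull dimension, which holds. We need an extension $0\rarrow\fK\rarrow\fL\rarrow j_!\fG\rarrow0$ in $X\Ctrh$ whose restriction to $U$ is the given extension, so we seek a map $\Ext^1(j_!\fG,\fK)\rarrow\Ext^1_U(\fG,j^!\fK)$ and want it to be surjective. Our plan is to build $\fL$ directly. The map $\fR\rarrow\fG$ is an admissible epimorphism onto a flat cosheaf, and we want to realize the extension as a pullback of $j_!$ of a flat resolution. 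Resolve $\fG$ by projectives of $U\Ctrh$, which are flat: take $0\rarrow\fH\rarrow\fP\rarrow\fG\rarrow0$ with $\fP$ projective. Then $\fH$ is flat, since $U\Ctrh^\fl$ is closed under kernels of admissible epimorphisms. The given extension is the pushout of this sequence along some map $\fH\rarrow j^!\fK$. Lemma~\ref{flat-direct-images} gives an exact sequence $0\rarrow j_!\fH\rarrow j_!\fP\rarrow j_!\fG\rarrow0$ in $X\Ctrh$. Composing the adjoint map $j_!\fH\rarrow\fK$ from \eqref{cosheaves-direct-image-restriction-adjunction} with this sequence and pushing out yields $\fL$, and since $j^!j_!=\mathrm{id}$ we get $j^!\fL$ equal to the original extension, that is, $j^!\fL\simeq\fR$.

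The remaining checks are the following. Coflasqueness of $\fL$ follows from Lemma~\ref{coflasque-closedness-properties-and-cosections}(a), since both $\fK$ and $j_!\fG$ are coflasque. On $U$, $\fL|_U\simeq\fR$ is locally cotorsion. On $V=V_{\beta-1}$, $\fL|_V$ is an extension of $(j_!\fG)|_V$ by the locally cotorsion cosheaf $\fK|_V$. We have $(j_!\fG)|_V=j'_!(\fG|_{U\cap V})$, where $j'\:U\cap V\rarrow V$. Restricting the sequence $0\rarrow j^!\fK\rarrow\fR\rarrow\fG\rarrow0$ to $U\cap V$ shows that $\fG|_{U\cap V}$ is a cokernel of locally cotorsion cosheaves, hence flat, locally cotorsion and coflasque. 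Then $j'_!$ of it is locally cotorsion by Proposition~\ref{coflasque-direct-images}(b), and we conclude by closedness under extensions.

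We expect the main obstacle to be the construction of the global extension $\fL$ without affineness of $j$. The pushout along $j_!\fH\rarrow\fK$ must be admissible in $X\Ctrh$, which requires Lemma~\ref{Hom-into-pushout-lemma} together with exactness of $j_!$ on flat cosheaves. We also need the local cotorsion property of $j'_!$ of a coflasque cosheaf, which relies on Proposition~\ref{coflasque-direct-images} for a non-affine immersion.
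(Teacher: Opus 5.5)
Your argument is correct, but the obstruction you build it around does not exist in this setting, and the paper does not make your detour. You say that $j_!$ cannot be applied to the whole sequence $0\rarrow j^!\fK\rarrow\fR\rarrow\fG\rarrow0$ because $j$ need not be affine. However, all three terms are coflasque contraherent cosheaves on $U$:
\begin{itemize}
\item $j^!\fK$, as a restriction of the coflasque cosheaf $\fK$;
\item $\fG$, by Corollary~\ref{flat-are-alf-contraherent-and-coflasque}(c);
\item $\fR$, as an extension of these, by Lemma~\ref{coflasque-closedness-properties-and-cosections}(a).
\end{itemize}
Hence Proposition~\ref{coflasque-direct-images}(a) and~(b) gives a short exact sequence $0\rarrow j_!j^!\fK\rarrow j_!\fR\rarrow j_!\fG\rarrow0$ in $X\Ctrh_\cfq$ with $j_!\fR$ locally cotorsion. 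The paper pushes this out along the adjunction morphism $j_!j^!\fK\rarrow\fK$, exactly as in the proof of Proposition~\ref{qcoh-ssep-enough-lin-prop}. That is what ``performing the whole construction within the realm of coflasque contraherent cosheaves'' means in the paper's proof.

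You instead apply $j_!$ only to flat cosheaves. You take a projective presentation $0\rarrow\fH\rarrow\fP\rarrow\fG\rarrow0$ in $U\Ctrh$, use Lemma~\ref{flat-direct-images} for exactness of $j_!$ on it, and push out along the adjoint of $\fH\rarrow j^!\fK$. You then recover $j^!\fL\simeq\fR$ from $j^!j_!=\mathrm{id}$ and the exactness of $j^!$. This works. The pushout exists by the axioms of the exact category $X\Ctrh$, so no separate appeal to Lemma~\ref{Hom-into-pushout-lemma} is needed. Your verification over $V$ is the same as the paper's use of Proposition~\ref{coflasque-direct-images}(b). One small correction: the flatness and coflasqueness of $\fG|_{U\cap V}$ come from restricting $\fG$, not from its being a cokernel of locally cotorsion cosheaves.

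The two routes trade off as follows.
\begin{itemize}
\item The paper's route is shorter and needs only the coflasque direct images of Section~\ref{coflasque-secn}.
\item Your route additionally invokes Lemma~\ref{flat-direct-images}, and through it Proposition~\ref{alf-direct-images}.
\item In exchange, your route uses the coflasqueness of $\fE$ only to conclude that $\fQ$ is coflasque. Start the same induction from an arbitrary $\fM\in X\Lcth_\bW$, with the $U_\alpha$ subordinate to $\bW$ and pushouts formed in $X\Lcth_\bW$. It then appears to produce $0\rarrow\fM\rarrow\fQ\rarrow\fF\rarrow0$ with $\fQ$ locally cotorsion and $\fF$ flat. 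That is Proposition~\ref{noetherian-lcth-lct-preenvelope} directly, without passing through Lemmas~\ref{coflasque-resol-dim} and~\ref{approximable-closed-under-cokers-of-admmonos}.
\end{itemize}
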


\begin{proof}
 This is~\cite[Lemma~6.2.5(a)]{Pcosh} and a particular case
of~\cite[Lemma~4.7.5(a)]{Pform}.
 The argument is similar to the proofs of
Propositions~\ref{qcoh-ssep-enough-lin-prop}
and~\ref{qcoh-ssep-lct-preenvelope-prop}.
 Once again, the only difference is that the scheme $X$ need not be
semi-separated now, so the open immersion morphisms
$j_\alpha\:U_\alpha\rarrow X$ need not be affine.
 For this reason, one needs to perform the whole construction within
the realm of coflasque contraherent cosheaves, and use
Proposition~\ref{coflasque-direct-images}(a\+-b).
 It is important that all flat contraherent cosheaves on $X$ are
coflasque by Corollary~\ref{flat-are-alf-contraherent-and-coflasque}(c).

 In particular, for an affine scheme $U=U_\beta$ and a coflasque
contraherent cosheaf $\fN$ on $U$, one uses
Lemmas~\ref{ctrh-cosheaves-on-affine-scheme}
and~\ref{ctrh-lct-lin-cosheaves-on-affine-scheme}(a) together with
Theorem~\ref{flat-cotorsion-pair}(b) in order to construct
a short exact sequence $0\rarrow\fN\rarrow\fR\rarrow\fG\rarrow0$
in $U\Ctrh$ with a flat contraherent cosheaf~$\fG$
(Proposition~\ref{flat-contraadjusted-colocalization} or Corollary~\ref{antilocally-flat-are-flat} is also relevant here).
 Then the contraherent cosheaf $\fG$ on $U$ is coflasque by
Corollary~\ref{flat-are-alf-contraherent-and-coflasque}(c), and
it follows that the contraherent cosheaf $\fR$ is also coflasque
by Lemma~\ref{coflasque-closedness-properties-and-cosections}(a).
\end{proof}

\begin{cor} \label{flat-lct-prj-preenvelope}
 Let $X$ be a Noetherian scheme of finite Krull dimension.
 Let $X=\bigcup_{\alpha=1}^N U_\alpha$ be a finite affine open
covering of~$X$.
 Then, for any flat contraherent cosheaf\/ $\fG$ on $X$, there exists
a short exact sequence\/ $0\rarrow\fG\rarrow\fQ\rarrow\fF\rarrow0$ in
the exact category $X\Ctrh^\fl$ with a projective locally cotorsion
contraherent cosheaf\/ $\fQ$ on $X$ and a contraherent cosheaf\/ $\fF$
on $X$ that is a finitely iterated extension of the direct images of
flat contraherent cosheaves from~$U_\alpha$.
\end{cor}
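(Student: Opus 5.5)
The plan is to deduce the statement from Proposition~\ref{coflasque-lct-preenvelope}, using the fact that on a Noetherian scheme of finite Krull dimension flat contraherent cosheaves are coflasque. Let $\fG$ be a flat contraherent cosheaf on $X$. By Corollary~\ref{flat-are-alf-contraherent-and-coflasque}(c), $\fG$ is coflasque, so Proposition~\ref{coflasque-lct-preenvelope} applies. It gives an admissible short exact sequence $0\rarrow\fG\rarrow\fQ\rarrow\fF\rarrow0$ in $X\Ctrh_\cfq$. Here $\fQ$ is a coflasque locally cotorsion contraherent cosheaf, and $\fF$ is a finitely iterated extension of direct images $j_\alpha{}_!\fH$ of flat contraherent cosheaves $\fH$ on~$U_\alpha$.

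Next I would check that $\fF$ is flat. By Lemma~\ref{flat-direct-images}, applied to the open immersions $j_\alpha\:U_\alpha\rarrow X$ (the schemes $U_\alpha$ are affine Noetherian, hence semi-separated of finite Krull dimension), each $j_\alpha{}_!\fH$ is a flat contraherent cosheaf on~$X$. The full subcategory $X\Ctrh^\fl$ is closed under extensions in $X\Ctrh$. Since the iterated extension is formed by admissible short exact sequences in $X\Ctrh_\cfq\subset X\Ctrh$, induction on its length shows that $\fF$ is flat.

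Then $\fQ$ is an extension of the flat cosheaves $\fG$ and $\fF$ in $X\Ctrh$, so $\fQ$ is flat as well. Thus $\fQ$ is a flat locally cotorsion contraherent cosheaf, and Corollary~\ref{lct-prj=flat} (with $\bW=\{X\}$) identifies it as a projective locally cotorsion contraherent cosheaf. All three terms are flat, so the sequence lies in the exact category $X\Ctrh^\fl$, which carries the exact structure inherited from $X\Ctrh$.

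The only point needing care is making sure that Lemma~\ref{flat-direct-images} really applies to the direct images appearing in Proposition~\ref{coflasque-lct-preenvelope}, that is, that its hypotheses hold for $U_\alpha\rarrow X$. Since the $U_\alpha$ are affine and $X$ is Noetherian, this is immediate. The rest is formal closure under extensions.
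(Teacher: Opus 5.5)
Your proposal is correct and follows essentially the same route as the paper's proof. It deduces that $\fG$ is coflasque, applies Proposition~\ref{coflasque-lct-preenvelope}, uses Lemma~\ref{flat-direct-images} to make $\fF$ flat, gets $\fQ$ flat by closure under extensions, and finishes with Corollary~\ref{lct-prj=flat}.
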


\begin{proof}
 This is~\cite[Corollary~6.2.6(a)]{Pcosh} and a particular case
of~\cite[Corollary~4.7.6(a)]{Pform}.
 By Corollary~\ref{flat-are-alf-contraherent-and-coflasque}(c),
the cosheaf $\fG$ is coflasque.
 Hence Proposition~\ref{coflasque-lct-preenvelope} provides
a short exact sequence $0\rarrow\fG\rarrow\fQ\rarrow\fF\rarrow0$
with a coflasque locally cotorsion contraherent cosheaf $\fQ$
and a contraherent cosheaf $\fF$ of the desired form.
 Now the cosheaf $\fF$ is flat by Lemma~\ref{flat-direct-images},
and it follows that the cosheaf $\fQ$ is flat as an extension of
two flat contraherent cosheaves.
 Using Corollary~\ref{lct-prj=flat}, we can conclude that $\fQ$ is
a projective locally cotorsion contraherent cosheaf.
\end{proof}

 We recall that an exact category $\sE$ is said to have
\emph{homological dimension\/~$\le d$} (where $d\ge-1$ is an integer)
if $\Ext_\sE^{d+1}(X,Y)=0$ for all $X$, $Y\in\sE$.

\begin{cor} \label{exact-category-of-flats-cor}
 Let $X$ be a Noetherian scheme of finite Krull dimension~$D$.
 Then the homological dimension of the exact category of flat
contraherent cosheaves $X\Ctrh^\fl$ does not exceed~$D$.
 There are both enough projective and enough injective objects in
the exact category $X\Ctrh^\fl$; the full subcategory of projective
objects is $X\Ctrh_\prj$, and the full subcategory of injective objects
is $X\Ctrh^\lct_\prj$.
\end{cor}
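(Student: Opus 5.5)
The plan is to produce approximation sequences for $X\Ctrh^\fl$ on both sides, show that the relevant classes are projective and injective in $X\Ctrh^\fl$, and then read off the dimension bound from Raynaud--Gruson.

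\emph{Enough injectives.} By Corollary~\ref{flat-lct-prj-preenvelope}, every flat contraherent cosheaf $\fG$ admits an admissible short exact sequence $0\rarrow\fG\rarrow\fQ\rarrow\fF\rarrow0$ in $X\Ctrh^\fl$ with $\fQ\in X\Ctrh^\lct_\prj$. Every object of $X\Ctrh^\lct_\prj$ is flat by Corollary~\ref{lct-prj=flat}. It is also injective in $X\Ctrh^\fl$. To see this, take an admissible monomorphism $\fQ\rarrow\fG'$ in $X\Ctrh^\fl$, with cokernel $\fH$ flat. Since $\fQ$ is locally cotorsion, by Corollary~\ref{lct-prj=flat} it suffices to show that $\Ext^1(\fH,\fQ)=0$ for flat $\fH$ and $\fQ\in X\Ctrh^\lct_\prj$. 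I would obtain this by embedding $\fH$ into some $\fQ'\in X\Ctrh^\lct_\prj$ via Corollary~\ref{flat-lct-prj-preenvelope}. Pushing out the extension along $\fQ$, the class dies provided $\Ext^1(\fF,\fQ)=0$ for $\fF$ a finitely iterated extension of direct images $j_\alpha{}_!\fG_\alpha$ of flat contraherent cosheaves on $U_\alpha$.

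For such direct images, the adjunction~\eqref{cosheaves-direct-image-restriction-adjunction}, combined with exactness of $j_\alpha{}_!$ on flats (Lemma~\ref{flat-direct-images}), turns extensions of $j_\alpha{}_!\fG_\alpha$ by $\fQ$ into extensions of $\fG_\alpha$ by $j_\alpha^!\fQ$ on the affine scheme $U_\alpha$. There $j_\alpha^!\fQ$ is a flat cotorsion module (Corollary~\ref{lct-prj-local-coflasque}(a)) and $\fG_\alpha$ is flat, so the $\Ext^1$ vanishes. The step needing care is that every admissible extension of $j_\alpha{}_!\fG_\alpha$ by $\fQ$ restricts to an extension on $U_\alpha$ that corresponds back under the adjunction. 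I would handle this by constructing the comparison map explicitly, as in the proof of Lemma~\ref{antilocally-flat-basic-properties}(b), using surjectivity of Hom from $j_\alpha{}_!\fG_\alpha$. Granting this, every injective object is a direct summand of some $\fQ$, and $X\Ctrh^\lct_\prj$ is closed under direct summands. Hence the injectives of $X\Ctrh^\fl$ are exactly $X\Ctrh^\lct_\prj$.

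\emph{Enough projectives.} Corollary~\ref{noetherian-lcth-prj-cor} gives enough projectives in $X\Ctrh$, and they are all flat. For a flat $\fG$, the kernel of an admissible epimorphism $\fP\rarrow\fG$ with $\fP$ projective lies in $X\Ctrh^\fl$, since the flat class is closed under kernels of admissible epimorphisms. Such $\fP$ remain projective in the full exact subcategory $X\Ctrh^\fl$. Conversely, any projective of $X\Ctrh^\fl$ is a summand of such a $\fP$. So the projectives of $X\Ctrh^\fl$ are exactly $X\Ctrh_\prj$.

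\emph{Dimension bound.} With enough injectives, it suffices to show that every flat $\fG$ has an $X\Ctrh^\lct_\prj$-coresolution of length $\le D$ inside $X\Ctrh^\fl$. Iterate Corollary~\ref{flat-lct-prj-preenvelope} to obtain $0\rarrow\fG\rarrow\fQ^0\rarrow\dotsb\rarrow\fQ^{D-1}\rarrow\fC\rarrow0$, where $\fC$ is flat. On each $U_\alpha$ this restricts to an exact sequence of contraadjusted modules whose middle terms are flat cotorsion. Theorem~\ref{raynaud-gruson-theorem}(b), together with Proposition~\ref{co-resolution-dimension-prop}(b) in $\cO(U_\alpha)\Modl$, shows that $\fC[U_\alpha]$ is cotorsion. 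Thus $\fC$ is flat and locally cotorsion, hence lies in $X\Ctrh^\lct_\prj$ by Corollary~\ref{lct-prj=flat}. Therefore $\Ext^{D+1}$ in $X\Ctrh^\fl$ vanishes.

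I expect the main obstacle to be the $\Ext^1$-vanishing against direct images from non-affine open immersions, i.e.\ the injectivity of $X\Ctrh^\lct_\prj$ in $X\Ctrh^\fl$.
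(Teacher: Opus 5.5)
Your handling of the projectives is fine and agrees with the paper. So is your length-$D$ coresolution argument (Raynaud--Gruson on each $U_\alpha$, plus Proposition~\ref{co-resolution-dimension-prop}(b) and Corollary~\ref{lct-prj=flat}).

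The gap is in the injectivity of $X\Ctrh^\lct_\prj$ in $X\Ctrh^\fl$, and your dimension bound also rests on it. The reduction ``the class dies provided $\Ext^1(\fF,\fQ)=0$'' does not follow. Applying $\Ext({-},\fQ)$ to $0\rarrow\fH\rarrow\fQ'\rarrow\fF\rarrow0$ gives an exact sequence $\Ext^1(\fQ',\fQ)\rarrow\Ext^1(\fH,\fQ)\rarrow\Ext^2(\fF,\fQ)$. So you would need both $\Ext^1(\fQ',\fQ)=0$ and $\Ext^2(\fF,\fQ)=0$, and the vanishing of $\Ext^1(\fF,\fQ)$ controls neither. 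Your adjunction argument for $\Ext^1(j_\alpha{}_!\fG_\alpha,\fQ)=0$ is itself sound (the paper uses it later, in Corollary~\ref{flat-are-antilocally-flat}(a)), but it computes the wrong group here.

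The missing observation is that $X\Ctrh^\lct_\prj$ is a split exact category. An admissible short exact sequence in $X\Ctrh^\fl$ with all terms in $X\Ctrh^\lct_\prj$ is admissible in $X\Ctrh^\lct$ and ends in a projective object there, so it splits; in particular $\Ext^1(\fQ',\fQ)=0$.

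This is how the paper proceeds. The class $X\Ctrh^\lct_\prj=X\Ctrh^\fl\cap X\Ctrh^\lct$ is closed under extensions and under cokernels of admissible monomorphisms in $X\Ctrh^\fl$. It is cogenerating by Corollary~\ref{flat-lct-prj-preenvelope}. So it is coresolving and split, and Lemma~\ref{split-co-resolving-lemma}(b) shows it is exactly the class of injectives, with enough of them.

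Concretely, take an admissible monomorphism $\fQ\rarrow\fG'$ in $X\Ctrh^\fl$ and compose it with $\fG'\rarrow\fQ'$ from Corollary~\ref{flat-lct-prj-preenvelope}. The cokernel of $\fQ\rarrow\fQ'$ is flat and locally cotorsion, hence projective in $X\Ctrh^\lct$. So $\fQ\rarrow\fQ'$ splits, and then so does $\fQ\rarrow\fG'$.

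No $\Ext$ computation against direct images along non-affine open immersions is needed, so the step you flagged as the main obstacle is not one. Your route could be salvaged only by proving $\Ext^1(\fQ',\fQ)=0$ anyway, and in addition $\Ext^2(\fF,\fQ)=0$ via a Yoneda-$\Ext^2$ version of the adjunction.
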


\begin{proof}
 This is~\cite[Corollary~6.2.6(b)]{Pcosh} and a particular case
of~\cite[Corollary~4.7.6(b)]{Pform}.
 The assertions that $X\Ctrh_\prj$ is the full subcategory of
projective objects in $X\Ctrh^\fl$ and there are enough such
projective objects follow from
Corollary~\ref{noetherian-lcth-prj-cor}(a,c).

 The full subcategory $X\Ctrh^\lct_\prj$ is cogenerating in
$X\Ctrh^\fl$ by Corollary~\ref{flat-lct-prj-preenvelope}.
 This full subcategory is also obviously closed under extensions;
and it is closed under cokernels of admissible monomorphisms
since $X\Ctrh^\lct_\prj=X\Ctrh^\fl\cap X\Ctrh^\lct$ by
Corollary~\ref{lct-prj=flat} and the full subcategory $X\Ctrh^\lct$
is closed under cokernels of admissible monomorphisms in $X\Ctrh$
by Section~\ref{lct-lin-subsecn}.
 So $X\Ctrh^\lct_\prj$ is a coresolving subcategory in $X\Ctrh^\fl$.
 Since $X\Ctrh^\lct_\prj$ is a spit exact category, it follows
by virtue of Lemma~\ref{split-co-resolving-lemma}(b) that 
$X\Ctrh^\lct_\prj$ is the full subcategory of injective objects
in $X\Ctrh^\fl$ and there are enough such injective objects.

 Finally, the coresolution dimensions of all the objects of
$X\Ctrh^\fl$ with respect to $X\Ctrh^\lct_\prj$ do not exceed~$D$
by Corollary~\ref{lct-prj=flat} and
Theorem~\ref{raynaud-gruson-theorem}(b)
(cf.\ the proof of Lemma~\ref{ssep-lct-lct-prj-coresol-dim}).
 As $X\Ctrh^\lct_\prj$ is the full subcategory of injective objects
and there are enough of them, this means that the homological
dimension of the exact category $X\Ctrh^\fl$ does not exceed~$D$.
\end{proof}

\begin{lem} \label{flat-explicitly-described-precover}
 Let $X$ be a Noetherian scheme of finite Krull dimension.
 Let $X=\bigcup_{\alpha=1}^N U_\alpha$ be a finite affine open
covering of~$X$.
 Then, for any flat contraherent cosheaf\/ $\fG$ on $X$, there exists
a short exact sequence\/ $0\rarrow\fQ\rarrow\fF\rarrow\fG\rarrow0$ in
the exact category $X\Ctrh^\fl$ with a projective locally cotorsion
contraherent cosheaf\/ $\fQ$ on $X$ and a contraherent cosheaf\/ $\fF$
on $X$ that is a finitely iterated extension of the direct images of
flat contraherent cosheaves from~$U_\alpha$.
\end{lem}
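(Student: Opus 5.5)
The plan is to obtain the precover sequence from the preenvelope sequence of Corollary~\ref{flat-lct-prj-preenvelope} by the Salce trick (Lemma~\ref{salce-lemma}(a)), carried out inside the exact category $X\Ctrh^\fl$. We want to keep track of the explicit form of the middle term, so we run the Salce construction by hand rather than citing the lemma abstractly.

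Let $\sS$ denote the class of finitely iterated extensions, in $X\Ctrh^\fl$, of the direct images $j_\alpha{}_!\fH$ of flat contraherent cosheaves $\fH$ on~$U_\alpha$. By Lemma~\ref{flat-direct-images}, applied to the semi-separated Noetherian schemes $U_\alpha$ of finite Krull dimension, these direct images lie in $X\Ctrh^\fl$. Hence $\sS\subset X\Ctrh^\fl$, and $\sS$ is closed under extensions in $X\Ctrh^\fl$.

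\emph{Step~1: a generating epimorphism from $\sS$.} Given a flat contraherent cosheaf $\fG$, we find an admissible epimorphism $\fP\rarrow\fG$ in $X\Ctrh^\fl$ with $\fP\in\sS$. By Corollary~\ref{noetherian-lcth-prj-cor}(a,b), there is an admissible epimorphism $\fP=\bigoplus_\alpha j_\alpha{}_!\fP_\alpha\rarrow\fG$ with projective contraherent cosheaves $\fP_\alpha$ on~$U_\alpha$. These $\fP_\alpha$ are flat by Corollary~\ref{antilocally-flat-are-flat}, so $\fP\in\sS$ as a finite direct sum. The kernel $\fK$ of $\fP\rarrow\fG$ is flat, because $X\Ctrh^\fl$ is closed under kernels of admissible epimorphisms.

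\emph{Step~2: Salce pushout.} Apply Corollary~\ref{flat-lct-prj-preenvelope} to $\fK$. This gives a short exact sequence $0\rarrow\fK\rarrow\fQ\rarrow\fF'\rarrow0$ in $X\Ctrh^\fl$ with $\fQ\in X\Ctrh^\lct_\prj$ and $\fF'\in\sS$. Take the pushout of $0\rarrow\fK\rarrow\fP\rarrow\fG\rarrow0$ along $\fK\rarrow\fQ$. This produces $0\rarrow\fQ\rarrow\fF\rarrow\fG\rarrow0$, together with a short exact sequence $0\rarrow\fP\rarrow\fF\rarrow\fF'\rarrow0$. Pushouts along admissible monomorphisms exist in any exact category, and both sequences are admissible in $X\Ctrh^\fl$ (one can equally form them in $X\Lcth_\bW$ and observe that all terms are flat). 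Hence $\fF$ is an extension of $\fF'\in\sS$ by $\fP\in\sS$, so $\fF\in\sS$. The sequence $0\rarrow\fQ\rarrow\fF\rarrow\fG\rarrow0$ is then the desired one.

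The main obstacle is the bookkeeping in Step~2. We must check that the pushout sequences are admissible in $X\Ctrh^\fl$ and not merely in $X\Lcth$. This follows because $X\Ctrh^\fl$ is a full exact subcategory closed under extensions in $X\Lcth_\bW$: the pushout $\fF$, computed in $X\Lcth_\bW$, is an extension of flat contraherent cosheaves, hence is itself flat and contraherent. We must also confirm that a finite direct sum of direct images counts as a finitely iterated extension, which is immediate from split sequences.
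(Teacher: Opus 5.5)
Your proposal is correct and is essentially the paper's proof. The paper simply cites Lemma~\ref{salce-lemma}(a) in the exact category $X\Ctrh^\fl$, with Corollary~\ref{flat-lct-prj-preenvelope} supplying the special preenvelopes and Corollary~\ref{noetherian-lcth-prj-cor}(a--c) making the class of finitely iterated extensions of direct images of flat cosheaves from the $U_\alpha$ generating; you have unwound that Salce argument by hand. (One cosmetic point: Corollary~\ref{noetherian-lcth-prj-cor} as stated gives an admissible epimorphism only from a direct summand of $\bigoplus_\alpha j_\alpha{}_!\fP_\alpha$, but its proof produces one from the full direct sum, or you can map the complementary summand by zero.)
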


\begin{proof}
 This is~\cite[Lemma~6.2.7(b)]{Pcosh} and a particular case
of~\cite[Lemma~4.7.7(b)]{Pform}.
 The proof is similar to that of Lemma~\ref{qcoh-ssep-alf-precover-lem}.
 The assertion follows from Corollaries~\ref{flat-lct-prj-preenvelope}
and~\ref{noetherian-lcth-prj-cor}(a\+-c) by virtue of
Lemma~\ref{salce-lemma}(a).
\end{proof}

 Let $X$ be a Noetherian scheme of finite Krull dimension.
 The next corollary mentions the $\Ext$ groups in the exact categories
of locally contraherent cosheaves on~$X$.
 One would like to claim that all such categories we are interested in
are either resolving or coresolving in each other, and therefore
the $\Ext$ groups computed in them agree, similarly to the discussion
in Section~\ref{alf-subsecn}.
 However, we do not know that yet, as the assertion that the full
subcategory $X\Lcth_\bW^\lct$ is coresolving (or cogenerating) in
$X\Lcth_\bW$ will be only proved later in this section.
 (While, on the other hand, the assertion that the full subcategories
$X\Ctrh\subset X\Lcth_\bW\subset X\Lcth$ are resolving in each other
is provable using Corollary~\ref{noetherian-lcth-prj-cor}(a).)

 One possible way to tackle this difficulty is to use the derived
categories in order to prove that all the $\Ext$ groups in the exact
categories in question agree, as in~\cite[Corollary~6.2.8]{Pcosh}
or~\cite[Corollary~4.7.8]{Pform}.
 In order to avoid referring to the derived categories in this paper,
we settle on the following compromise.

 All the exact categories of (locally) contraherent cosheaves that
we are interested in are obviously closed under extensions in
each other.
 Hence the groups $\Ext^1$ computed in them agree.
 Based on this observation, let us use the notation
$\Ext^{X,1}({-},{-})$ for the groups $\Ext^1$ in any one of
the exact categories $X\Lcth$, $X\Lcth^\lct$, $X\Lcth_\bW$, 
$X\Lcth_\bW^\lct$, $X\Ctrh_\cfq$, $X\Ctrh^\lct_\cfq$,
or $X\Ctrh^\fl$.

\begin{cor} \label{flat-are-antilocally-flat}
 Let $X$ be a Noetherian scheme of finite Krull dimension with an open
covering\/~$\bW$.
 Let $X=\bigcup_{\alpha=1}^N U_\alpha$ be a finite affine open
covering of $X$ subordinate to\/~$\bW$. \par
\textup{(a)} One has\/ $\Ext^{X,1}(\fG,\fQ)$ for any flat
contraherent cosheaf\/ $\fG$ and any locally cotorsion\/
$\bW$\+locally contraherent cosheaf\/ $\fQ$ on~$X$.
 Consequently, all flat contraherent cosheaves on $X$ are
antilocally flat (with respect to any open covering\/~$\bW$). \par
\textup{(b)} A contraherent cosheaf\/ $\fG$ on $X$ is flat if and
only if\/ $\fG$ is a direct summand of a finitely iterated extension
of the direct images of flat contraherent cosheaves from~$U_\alpha$.
\end{cor}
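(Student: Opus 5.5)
Part~(a) reduces to showing that the direct images $j_\alpha{}_!\fH$ of flat contraherent cosheaves $\fH$ on $U_\alpha$ satisfy $\Ext^{X,1}(j_\alpha{}_!\fH,\fQ)=0$ for every locally cotorsion $\bW$\+locally contraherent $\fQ$. Granting this, Lemma~\ref{flat-explicitly-described-precover} presents any flat $\fG$ as a quotient $0\rarrow\fQ'\rarrow\fF\rarrow\fG\rarrow0$ with $\fQ'\in X\Ctrh^\lct_\prj$ and $\fF$ a finitely iterated extension of such direct images. Since $\Ext^{X,1}$ is computed in any of the extension-closed exact categories listed above, the long exact sequence of $\Ext^{X,1}$ in the first argument only needs $\Ext^1$ and the surjectivity of $\Hom$ along admissible epimorphisms. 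So $\Ext^{X,1}(\fF,\fQ)=0$ follows by induction on the length of the extension.

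For $\fG$ itself I would use a pushout. Given an extension $0\rarrow\fQ\rarrow\fE\rarrow\fG\rarrow0$, pull it back along $\fF\rarrow\fG$; this pulled-back extension of $\fF$ by $\fQ$ splits. Hence the class of the extension is the image of some element of $\Hom^X(\fQ',\fQ)$ under the connecting map. That alone does not kill the class, so the cleaner route is to first show $\Ext^{X,1}(\fG,\fQ)=0$ when $\fQ$ is additionally assumed projective locally cotorsion, which holds because $X\Ctrh^\lct_\prj$ is the class of injectives of $X\Ctrh^\fl$ (Corollary~\ref{exact-category-of-flats-cor}). For general locally cotorsion $\fQ$, I would then use the precover sequence: $\fG$ is a direct summand of $\fF$ once $\Ext^{X,1}(\fG,\fQ')=0$, which is exactly the injectivity case just settled. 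So the sequence $0\rarrow\fQ'\rarrow\fF\rarrow\fG\rarrow0$ splits and $\fG$ is a direct summand of $\fF$. Direct summands inherit $\Ext^1$\+vanishing, which gives the first assertion of~(a). Antilocal flatness then follows from the definition: the $\Hom^X(\fG,{-})$ sequence for a short exact sequence of locally cotorsion cosheaves is exact because $\Ext^{X,1}(\fG,\fP)=0$. The same splitting argument also gives the ``only if'' direction of~(b).

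The key step, and the expected main obstacle, is the vanishing $\Ext^{X,1}(j_!\fH,\fQ)=0$ for $j=j_\alpha$, with $j$ not necessarily affine. I would take an extension $0\rarrow\fQ\rarrow\fE\rarrow j_!\fH\rarrow0$ in $X\Lcth_\bW$ and apply $j^!$. Since $U_\alpha$ is affine and subordinate to $\bW$, we get $j^!j_!\fH=\fH$, and the result is a short exact sequence in $U_\alpha\Ctrh$, i.e.\ of contraadjusted $\cO_X(U_\alpha)$\+modules. Its kernel $\fQ[U_\alpha]$ is cotorsion and its cokernel $\fH[U_\alpha]$ is flat, so it splits. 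Its splitting $\fH\rarrow j^!\fE$ corresponds by adjunction~\eqref{cosheaves-direct-image-restriction-adjunction} to a morphism $j_!\fH\rarrow\fE$. The composition $j_!\fH\rarrow\fE\rarrow j_!\fH$ corresponds under the same adjunction to the identity of $\fH$, hence is the identity, so the extension splits. The adjunction holds for arbitrary cosheaves of $\cO$\+modules, and $j_!\fH$ is contraherent by Lemma~\ref{flat-direct-images}, so no affineness of $j$ is needed. This is the partial left adjoint argument from Corollary~\ref{noetherian-lcth-prj-cor}.

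For the ``if'' direction of~(b), the direct images $j_\alpha{}_!\fH$ are flat contraherent cosheaves by Lemma~\ref{flat-direct-images}. The class $X\Ctrh^\fl$ is closed under extensions and direct summands. Hence any direct summand of a finitely iterated extension of such direct images is flat, which completes~(b).
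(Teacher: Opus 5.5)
Your proposal is correct and matches the paper's proof. You prove the key vanishing $\Ext^{X,1}(j_!\fH,\fQ)=0$ by applying $j^!$, splitting the resulting cotorsion-by-flat sequence on $U_\alpha$, and transporting the section back by adjunction. You then split the sequence from Lemma~\ref{flat-explicitly-described-precover} because $\fQ'$ is injective in $X\Ctrh^\fl$, which yields the ``only if'' of (b) and reduces (a) to direct summands of finitely iterated extensions. The pullback detour you abandon is not needed, and you correctly supply the verification that $j_!\fH\rarrow\fE$ is a section, which the paper leaves to the reader.
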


\begin{proof}
 This is a partial version of~\cite[Corollary~7.2.9]{Pcosh} or of
the even more general result of~\cite[Corollary~4.7.9]{Pform}.

 Part~(b): this is similar to Corollary~\ref{antilocally-flat-cor}.
 The easy ``if'' implication holds by
Lemma~\ref{flat-direct-images}.
 To prove the ``only if'', consider a short exact sequence
$0\rarrow\fQ\rarrow\fF\rarrow\fG\rarrow0$ from
Lemma~\ref{flat-explicitly-described-precover}.
 This is a short exact sequence in the exact category $X\Ctrh^\fl$,
and $\fQ$ is an injective object of $X\Ctrh^\fl$ by
Corollary~\ref{exact-category-of-flats-cor}.
 Hence the short exact sequence splits and $\fG$ is a direct summand
of~$\fF$.

 Part~(a): we follow the idea of~\cite[alternative proof of
Corollary~4.7.9(a)]{Pform}.
 In view of part~(b), it suffices to show that
$\Ext^{X,1}(j_!\fH,\fQ)=0$ for any affine open subscheme
$U\subset X$ subordinate to $\bW$ with the open immersion morphism
$j\:U\rarrow X$, and any flat contraherent cosheaf $\fH$ on~$U$.
 The argument is based on $\Ext^1$\+adjunction considerations
similar to~\cite[Lemma~1.7(d) or the opposite version of
Lemma~1.7(b)]{Pal}.

 Let $0\rarrow\fQ\rarrow\fM\rarrow j_!\fH\rarrow0$ be an (admissible)
short exact sequence in $X\Lcth_\bW$.
 We need to show that this short exact sequence splits.
 Applying the exact functor $j^!$
\,\eqref{restriction-of-lcth-cosheaves}, we obtain a short exact
sequence $0\rarrow j^!\fQ\rarrow j^!\fM\rarrow\fH\rarrow0$
in $U\Ctrh$ with $j^!\fQ\in X\Ctrh^\lct$
(by~\eqref{restriction-of-lcth-lct-cosheaves})
and $\fH\in X\Ctrh^\fl$.

 The equivalence of exact categories from
Lemma~\ref{ctrh-cosheaves-on-affine-scheme} transforms the latter
short exact sequence into a short exact sequence of contraadjusted
$\cO_X(U)$\+modules $0\rarrow\fQ[U]\rarrow\fM[U]\rarrow\fH[U]\rarrow0$
with a flat $\cO_X(U)$\+module $\fH[U]$ and a cotorsion
$\cO(U)$\+module~$\fQ[U]$.
 By the definition of a cotorsion module (see
Section~\ref{cotorsion-modules-subsecn}), this short exact sequence
of $\cO_X(U)$\+modules splits.
 Hence the short exact sequence $0\rarrow j^!\fQ\rarrow j^!\fM
\rarrow\fH\rarrow0$ in $U\Ctrh$ splits as well, and we can choose
a section $\fH\rarrow j^!\fM$ for the split epimorphism
$j^!\fM\rarrow\fH$.

 By the adjunction isomorphism of abelian
groups~\eqref{cosheaves-direct-image-restriction-adjunction},
the morphism $\fH\rarrow j^!\fM$ in $U\Ctrh$ corresponds to
a morphism $j_!\fH\rarrow\fM$ in $X\Lcth_\bW$.
 We leave it to the reader to check that the latter morphism
is a section for the admissible epimorphism $\fM\rarrow j_!\fH$
in $X\Lcth_\bW$.
\end{proof}

\begin{rem}
 In the following lemma, proposition, and corollary, our argument
is a version of~\cite[Theorem~1.1]{AB}.
 Before spelling out our own argument, let us point out that, in
the notation of~\cite{AB}, we need to consider an exact (rather
than abelian) category $\bC=X\Lcth_\bW$, the full subcategory
$\bX=X\Ctrh_\cfq\subset X\Lcth_\bW$, and the full subcategory
$\bomega=X\Ctrh^\lct_\cfq\subset X\Ctrh_\cfq=\bX$ (which is
cogenerating in $\bX$ by Proposition~\ref{coflasque-lct-preenvelope}).
 Then, in the notation of~\cite{AB}, we have $\widehat\bX=\bC$
(by Corollary~\ref{noetherian-lcth-prj-cor}(a,c) and
Lemma~\ref{coflasque-resol-dim}) and $\widehat\bomega=X\Lcth^\lct_\bW$
(by Theorem~\ref{noetherian-lcth-lct-prj-theorem}(a),
Corollary~\ref{lct-prj-local-coflasque}(b), and
Lemma~\ref{coflasque-resol-dim}; one also needs to use the fact
that the full subcategory $X\Lcth^\lct_\bW$ is closed under
cokernels of admissible monomorphisms in $X\Lcth_\bW$).

 Our argument below proves a bit more than the theorem from~\cite{AB}
in our context, in that it provides approximation
sequences~(\ref{special-precover-sequence}\+-%
\ref{special-preenvelope-sequence}) with flat (rather than
merely coflasque) objects $F$ and~$F'$.
 See Proposition~\ref{noetherian-lcth-lct-preenvelope} and
Corollary~\ref{noetherian-lcth-flat-precover}.
\end{rem}

\begin{lem} \label{approximable-closed-under-cokers-of-admmonos}
 Let $X$ be a Noetherian scheme of finite Krull dimension with
an open covering\/~$\bW$.
 Consider the class of all\/ $\bW$\+locally contraherent cosheaves\/
$\fM$ on $X$ for which there exists an (admissible) short exact
sequence\/ $0\rarrow\fM\rarrow\fQ\rarrow\fF\rarrow0$ in $X\Lcth_\bW$
with a locally cotorsion\/ $\bW$\+locally contraherent cosheaf\/
$\fQ\in X\Lcth^\lct_\bW$ and a flat contraherent cosheaf\/
$\fF\in X\Ctrh^\fl$.
 Then the class of all such objects\/ $\fM$ is closed under cokernels
of admissible monomorphisms in $X\Lcth_\bW$.
\end{lem}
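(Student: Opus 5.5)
Let $0\rarrow\fM'\rarrow\fM\rarrow\fM''\rarrow0$ be an admissible short exact sequence in $X\Lcth_\bW$, and suppose $\fM'$ and $\fM$ both belong to the class. The plan is the standard argument showing that the left-hand side of a cotorsion-pair-like situation is closed under cokernels of admissible monomorphisms. Choose sequences $0\rarrow\fM\rarrow\fQ\rarrow\fF\rarrow0$ and $0\rarrow\fM'\rarrow\fQ'\rarrow\fF'\rarrow0$ as in the statement, with $\fQ,\fQ'\in X\Lcth^\lct_\bW$ and $\fF,\fF'\in X\Ctrh^\fl$.

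First, form the composition $\fM'\rarrow\fM\rarrow\fQ$. It is an admissible monomorphism, and its cokernel $\fK$ fits into a short exact sequence $0\rarrow\fM''\rarrow\fK\rarrow\fF\rarrow0$; this holds by the Noether-type lemma for exact categories. Next, we want to extend the monomorphism $\fM'\rarrow\fQ$ along $\fM'\rarrow\fQ'$. For this it suffices that $\Ext^{X,1}(\fF',\fQ)=0$, which is Corollary~\ref{flat-are-antilocally-flat}(a): $\fF'$ is flat contraherent and $\fQ$ is locally cotorsion. So there is a morphism $\fQ'\rarrow\fQ$ restricting to the given map on $\fM'$. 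We then replace $\fQ$ by $\fQ'\oplus\fQ$ and use the admissible monomorphism $\fM'\rarrow\fQ'\oplus\fQ$ whose first component is the admissible monomorphism into $\fQ'$. Alternatively, and more cleanly, we take the pushout of $0\rarrow\fM'\rarrow\fM\rarrow\fM''\rarrow0$ along $\fM'\rarrow\fQ'$. This gives $0\rarrow\fQ'\rarrow\fN\rarrow\fM''\rarrow0$ together with $0\rarrow\fM\rarrow\fN\rarrow\fF'\rarrow0$.

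Now embed $\fN$. Pushing out $0\rarrow\fM\rarrow\fN\rarrow\fF'\rarrow0$ along $\fM\rarrow\fQ$ yields $0\rarrow\fQ\rarrow\fP\rarrow\fF'\rarrow0$. This sequence splits by Corollary~\ref{flat-are-antilocally-flat}(a), so $\fP\simeq\fQ\oplus\fF'$. We also get $0\rarrow\fN\rarrow\fP\rarrow\fF\rarrow0$. Then $\fM''$ is the cokernel of $\fQ'\rarrow\fN$, and composing gives an admissible monomorphism $\fQ'\rarrow\fP$. Taking cokernels, we obtain $0\rarrow\fM''\rarrow\fP/\fQ'\rarrow\fF\rarrow0$.

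The main obstacle is that $\fP=\fQ\oplus\fF'$ is not locally cotorsion, and $\fP/\fQ'$ is not obviously locally cotorsion either. To repair this, we use the Salce-type trick: apply the original hypothesis again to get a flat-cokernel embedding of the flat summand. The first thing to try is the following. $\fF'$ is flat, and by Corollary~\ref{flat-lct-prj-preenvelope} there is $0\rarrow\fF'\rarrow\fR\rarrow\fF_1\rarrow0$ with $\fR\in X\Ctrh^\lct_\prj$ and $\fF_1$ flat. Replace $\fP$ by $\fQ\oplus\fR$, into which $\fN$ still embeds admissibly, with cokernel an extension of $\fF$ and $\fF_1$, hence flat. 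So $\fN$ admits $0\rarrow\fN\rarrow\fQ''\rarrow\fF''\rarrow0$ with $\fQ''$ locally cotorsion and $\fF''$ flat. Composing with $\fQ'\rarrow\fN$, the cokernel $\fC$ of $\fQ'\rarrow\fQ''$ is locally cotorsion, because $X\Lcth^\lct_\bW$ is closed under cokernels of admissible monomorphisms (Section~\ref{lct-lin-subsecn}). Moreover, $0\rarrow\fM''\rarrow\fC\rarrow\fF''\rarrow0$ is exact by the Noether lemma in exact categories. This exhibits $\fM''$ in the class.

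Finally, we must check that all the relevant compositions are admissible monomorphisms in $X\Lcth_\bW$. This holds because admissible monomorphisms are closed under composition and pushouts preserve them.
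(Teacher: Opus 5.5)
Your proof is correct and is essentially the paper's argument. It rests on the same two inputs the paper's sketch names, Corollary~\ref{flat-are-antilocally-flat}(a) and Corollary~\ref{flat-lct-prj-preenvelope}, plus closure of $X\Lcth^\lct_\bW$ under cokernels of admissible monomorphisms and closure of flat contraherent cosheaves under extensions. The paper spends the $\Ext^1$-vanishing on lifting $\fM'\rarrow\fQ$ to $\fQ'\rarrow\fQ$ and then takes termwise cokernels of a morphism of short exact sequences; you instead spend it on splitting $0\rarrow\fQ\rarrow\fP\rarrow\fF'\rarrow0$, which lets you manage with pushouts and the Noether lemma rather than a $3\times3$ lemma (the unused detours through $\fK$, the lift $\fQ'\rarrow\fQ$, and $\fQ'\oplus\fQ$ can be deleted).
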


\begin{proof}
 We skip the details of the argument, which is essentially the same
as the proof of the more general~\cite[Lemma~4.7.10]{Pform}.
 One needs to use Corollary~\ref{flat-are-antilocally-flat}(a)
for constructing a morphism of the given short exact sequences, and
Corollary~\ref{flat-lct-prj-preenvelope} (with the first assertion of
Lemma~\ref{flat-direct-images}) for improving such a morphism to
make it a termwise admissible monomorphism with the termwise
cokernel staying in the same desired exact subcategory $X\Ctrh^\fl$.
\end{proof}

\begin{prop} \label{noetherian-lcth-lct-preenvelope}
 Let $X$ be a Noetherian scheme of finite Krull dimension with
an open covering\/~$\bW$.
 Then, for any\/ $\bW$\+locally contraherent cosheaf\/ $\fM$ on $X$,
there exists a short exact sequence\/ $0\rarrow\fM\rarrow\fQ
\rarrow\fF\rarrow0$ in the exact category $X\Lcth_\bW$ with
a locally cotorsion\/ $\bW$\+locally contraherent cosheaf\/ $\fQ$
and a flat contraherent cosheaf\/ $\fF$ on~$X$.
\end{prop}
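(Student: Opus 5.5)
Following the Auslander--Buchweitz strategy outlined in the Remark above, the plan is to resolve $\fM$ by coflasque objects and then use Lemma~\ref{approximable-closed-under-cokers-of-admmonos} to descend along the resolution. Call $\fM$ \emph{approximable} if a sequence $0\rarrow\fM\rarrow\fQ\rarrow\fF\rarrow0$ as in the statement exists.

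The first step is to treat coflasque contraherent cosheaves. Let $\fE\in X\Ctrh_\cfq$, and choose a finite affine open covering $X=\bigcup_\alpha U_\alpha$ subordinate to $\bW$. Proposition~\ref{coflasque-lct-preenvelope} gives $0\rarrow\fE\rarrow\fQ\rarrow\fF\rarrow0$ in which $\fQ$ is coflasque and locally cotorsion. Here $\fF$ is a finitely iterated extension of direct images of flat contraherent cosheaves from the $U_\alpha$. By Lemma~\ref{flat-direct-images} (applied to the semi-separated affine schemes $U_\alpha$) these direct images are flat, and $X\Ctrh^\fl$ is closed under extensions, so $\fF$ is flat. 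Since $\fQ$ is contraherent and locally cotorsion, it lies in $X\Lcth^\lct_\bW$. Hence every coflasque contraherent cosheaf is approximable.

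The second step builds a finite coflasque resolution of an arbitrary $\fM\in X\Lcth_\bW$. By Corollary~\ref{noetherian-lcth-prj-cor}(a) there are enough projectives in $X\Lcth_\bW$, and by Corollary~\ref{noetherian-lcth-prj-cor}(c) they are flat, hence coflasque. Take a projective resolution $\dotsb\rarrow\fP_1\rarrow\fP_0\rarrow\fM\rarrow0$ and truncate it at stage $D$, the Krull dimension. The $D$\+th syzygy $\fE_D$ is coflasque by Lemma~\ref{coflasque-resol-dim}. This yields an exact sequence $0\rarrow\fE_D\rarrow\fP_{D-1}\rarrow\dotsb\rarrow\fP_0\rarrow\fM\rarrow0$ in $X\Lcth_\bW$ with all terms except $\fM$ coflasque contraherent, hence approximable by the first step. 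Denote by $\fK_i$ the image of $\fP_i\rarrow\fP_{i-1}$, so that $\fK_D=\fE_D$ and $\fK_0=\fM$.

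The third step is a descending induction. Each short exact sequence $0\rarrow\fK_{i+1}\rarrow\fP_i\rarrow\fK_i\rarrow0$ is admissible in $X\Lcth_\bW$. If $\fK_{i+1}$ and $\fP_i$ are approximable, then Lemma~\ref{approximable-closed-under-cokers-of-admmonos} makes $\fK_i$ approximable. Starting from $\fK_D=\fE_D$ and descending to $i=0$ shows that $\fM$ is approximable. The main obstacle is concealed in Lemma~\ref{approximable-closed-under-cokers-of-admmonos}, whose proof the paper skips. Its content is that, given approximations of the subobject and of the middle term, the map between them can be lifted using the $\Ext^1$ vanishing from Corollary~\ref{flat-are-antilocally-flat}(a). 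One then modifies the approximation of the middle term, via Corollary~\ref{flat-lct-prj-preenvelope}, so that the induced map becomes a termwise admissible monomorphism with flat cokernels. Granting that lemma, only a routine check remains: the admissible epimorphisms $\fP_0\rarrow\fM$ and all syzygies live in $X\Lcth_\bW$, which holds because its admissible epimorphisms and their kernels are compatible with those in $X\Lcth$.
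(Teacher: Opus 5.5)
Your proposal is correct and follows the paper's proof. You handle the coflasque case via Proposition~\ref{coflasque-lct-preenvelope}, using Lemma~\ref{flat-direct-images} to see that the cokernel is flat. You then produce a coflasque resolution of length at most $D$ from Corollary~\ref{noetherian-lcth-prj-cor}(a,c) and Lemma~\ref{coflasque-resol-dim}, and finish by applying Lemma~\ref{approximable-closed-under-cokers-of-admmonos} iteratively. The only cosmetic difference is that you build the finite coflasque resolution explicitly from a truncated projective resolution, whereas the paper phrases this step as $X\Ctrh_\cfq$ being a resolving subcategory of $X\Lcth_\bW$ with resolution dimensions bounded by $D$.
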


\begin{proof}
 This is a particular case of~\cite[Proposition~4.7.11]{Pform}.
 By Proposition~\ref{coflasque-lct-preenvelope}, the assertion of
the present proposition holds for any coflasque contraherent
cosheaf $\fM=\fE\in X\Ctrh_\cfq$.

 Furthermore, the full subcategory $X\Ctrh_\cfq$ is closed under
extensions and kernels of admissible epimorphisms in $X\Lcth_\bW$ by
Lemma~\ref{coflasque-closedness-properties-and-cosections}(a\+-b),
and it is generating by Corollary~\ref{noetherian-lcth-prj-cor}(a,c).
 So the full subcategory $X\Ctrh_\cfq$ is resolving in $X\Lcth_\bW$.
 By Lemma~\ref{coflasque-resol-dim}, the resolution dimensions of
all the objects of $X\Lcth_\bW$ with respect to the resolving
subcategory $X\Ctrh_\cfq$ do not exceed the Krull dimension of~$X$.
 Thus the $\bW$\+locally contraherent cosheaf $\fM$ has a finite
resolution by coflasque contraherent cosheaves in the exact
category $X\Lcth_\bW$.

 Applying Lemma~\ref{approximable-closed-under-cokers-of-admmonos}
iteratively, we obtain the desired short exact sequence for
any cosheaf $\fM\in X\Lcth_\bW$.
\end{proof}

 As a part of Proposition~\ref{noetherian-lcth-lct-preenvelope},
we obtain the second main result of this paper.

\begin{thm} \label{noetherian-findim-enough-lct-theorem}
 Let $X$ be a Noetherian scheme of finite Krull dimension with
an open covering\/~$\bW$.
 Then, for any\/ $\bW$\+locally contraherent cosheaf\/ $\fM$ on $X$,
there exists an admissible monomorphism\/ $\fM\rarrow\fQ$ in
the exact category $X\Lcth_\bW$ from\/ $\fM$ into a locally
cotorsion\/ $\bW$\+locally contraherent cosheaf\/
$\fQ\in X\Lcth^\lct_\bW$.  \qed
\end{thm}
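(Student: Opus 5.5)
The theorem is an immediate consequence of Proposition~\ref{noetherian-lcth-lct-preenvelope}, and my plan is simply to extract the admissible monomorphism from the short exact sequence provided there. Given a $\bW$\+locally contraherent cosheaf $\fM$ on $X$, that proposition yields an admissible short exact sequence $0\rarrow\fM\rarrow\fQ\rarrow\fF\rarrow0$ in $X\Lcth_\bW$, with $\fQ\in X\Lcth^\lct_\bW$ and $\fF\in X\Ctrh^\fl$. The first morphism $\fM\rarrow\fQ$ is then an admissible monomorphism in $X\Lcth_\bW$ into a locally cotorsion $\bW$\+locally contraherent cosheaf, which is exactly what is claimed. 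As a bonus, its cokernel is flat and contraherent.

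For the reader, I would also recall how the proposition itself is assembled, since that is where the content lies. There are three steps.

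\begin{enumerate}
\item \emph{Coflasque case.} For coflasque contraherent $\fM$, the sequence is supplied by Proposition~\ref{coflasque-lct-preenvelope}. This is the inductive gluing over a finite affine covering, carried out inside $X\Ctrh_\cfq$ using Proposition~\ref{coflasque-direct-images}. One then checks that the cokernel is flat via Lemma~\ref{flat-direct-images} and Corollary~\ref{flat-are-alf-contraherent-and-coflasque}(c).
\item \emph{Finite coflasque resolutions.} The coflasque contraherent cosheaves form a resolving subcategory of $X\Lcth_\bW$; generation comes from Corollary~\ref{noetherian-lcth-prj-cor}(a,c). By Lemma~\ref{coflasque-resol-dim}, every object therefore has a coflasque resolution of length at most the Krull dimension~$D$.
\item \emph{Passing back along the resolution.} Starting from the left end of that resolution, one repeatedly applies Lemma~\ref{approximable-closed-under-cokers-of-admmonos}: the class of cosheaves admitting such approximation sequences is closed under cokernels of admissible monomorphisms. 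This shows that $\fM$ itself admits the desired sequence.
\end{enumerate}

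The real obstacle is step~3, namely Lemma~\ref{approximable-closed-under-cokers-of-admmonos}. Given an admissible monomorphism $\fM'\rarrow\fM''$ with approximation sequences for both $\fM'$ and $\fM''$, one must first build a morphism between these sequences. The $\Ext^1$\+vanishing in Corollary~\ref{flat-are-antilocally-flat}(a) makes this possible. One then modifies the morphism so that it becomes a termwise admissible monomorphism whose termwise cokernels remain locally cotorsion and flat, respectively. For this I would use Corollary~\ref{flat-lct-prj-preenvelope} to enlarge the middle term by a projective locally cotorsion cosheaf, and then take cokernels using the $3\times3$ lemma in the exact category.

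Since all of this is established before the statement, the proof of the theorem itself needs nothing beyond the one-line extraction above.
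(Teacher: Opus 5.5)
Your proposal is correct and matches the paper: Theorem~\ref{noetherian-findim-enough-lct-theorem} is obtained there exactly as the first map of the approximation sequence in Proposition~\ref{noetherian-lcth-lct-preenvelope}. Your three-step recap of that proposition (the coflasque case via Proposition~\ref{coflasque-lct-preenvelope}, finite coflasque resolutions via Corollary~\ref{noetherian-lcth-prj-cor}(a,c) and Lemma~\ref{coflasque-resol-dim}, and iterating Lemma~\ref{approximable-closed-under-cokers-of-admmonos}) follows the paper's argument.
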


 For completeness of the exposition, we deduce a couple more
corollaries before this paper is finished.

\begin{cor} \label{noetherian-lcth-flat-precover}
 Let $X$ be a Noetherian scheme of finite Krull dimension with
an open covering\/~$\bW$.
 Then, for any\/ $\bW$\+locally contraherent cosheaf\/ $\fM$ on $X$,
there exists a short exact sequence\/ $0\rarrow\fQ\rarrow\fF
\rarrow\fM\rarrow0$ in the exact category $X\Lcth_\bW$ with
a locally cotorsion\/ $\bW$\+locally contraherent cosheaf\/ $\fQ$
and a flat contraherent cosheaf\/ $\fF$ on~$X$.
\end{cor}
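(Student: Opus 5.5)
The plan is to deduce the special precover sequence from the special preenvelope sequence of Proposition~\ref{noetherian-lcth-lct-preenvelope} by the Salce lemma, Lemma~\ref{salce-lemma}(a). We apply it in the exact category $\sE=X\Lcth_\bW$ with $\sF=X\Ctrh^\fl$ and $\sC=X\Lcth^\lct_\bW$. Three hypotheses must be checked.

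\begin{enumerate}
\item \emph{The preenvelope sequences exist for every object of $\sE$.} Proposition~\ref{noetherian-lcth-lct-preenvelope} provides exactly the sequences \eqref{special-preenvelope-sequence}, with $C\in\sC$ and $F'\in\sF$.
\item \emph{$\sF$ is closed under extensions in $\sE$.} Let $0\rarrow\fF'\rarrow\fE\rarrow\fF''\rarrow0$ be admissible in $X\Lcth_\bW$ with $\fF'$, $\fF''$ flat contraherent. Then $\fE$ is $\bW$\+flat, since flat modules are closed under extensions and exactness is checked on affines subordinate to $\bW$. By Corollary~\ref{flat-are-alf-contraherent-and-coflasque}(b), a $\bW$\+flat $\bW$\+locally contraherent cosheaf is flat and contraherent, so $\fE\in X\Ctrh^\fl$.
\item \emph{$\sF$ is generating in $\sE$.} Corollary~\ref{noetherian-lcth-prj-cor}(a) gives enough projective objects in $X\Lcth_\bW$. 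By Corollary~\ref{noetherian-lcth-prj-cor}(c), all of them are flat contraherent cosheaves, so every $\fM$ is an admissible quotient of an object of $\sF$.
\end{enumerate}

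With these checked, Lemma~\ref{salce-lemma}(a) yields $0\rarrow\fQ\rarrow\fF\rarrow\fM\rarrow0$ with $\fF$ flat contraherent and $\fQ$ locally cotorsion.

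Concretely, the argument Lemma~\ref{salce-lemma}(a) encodes runs as follows. Take an admissible epimorphism $\fP\rarrow\fM$ from a projective object, with kernel $\fK$. Embed $\fK$ into a locally cotorsion $\fQ$ with flat cokernel $\fF'$, via Proposition~\ref{noetherian-lcth-lct-preenvelope}. Form the pushout $\fF$ of $\fP\leftarrow\fK\rarrow\fQ$. This gives $0\rarrow\fQ\rarrow\fF\rarrow\fM\rarrow0$ and $0\rarrow\fP\rarrow\fF\rarrow\fF'\rarrow0$, so $\fF$ is flat by step~(2).

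The only potentially delicate point is step~(2): closedness of $X\Ctrh^\fl$ under extensions inside the larger category $X\Lcth_\bW$, rather than inside $X\Ctrh$. It is handled by Corollary~\ref{flat-are-alf-contraherent-and-coflasque}(b), which in turn relies on finite Krull dimension.
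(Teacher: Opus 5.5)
Your proof is correct and is the paper's argument: the paper also obtains the corollary from Proposition~\ref{noetherian-lcth-lct-preenvelope} and Corollary~\ref{noetherian-lcth-prj-cor}(a,c) via Lemma~\ref{salce-lemma}(a), with the same choice of $\sF=X\Ctrh^\fl$ and $\sC=X\Lcth^\lct_\bW$. Your explicit check that $X\Ctrh^\fl$ is closed under extensions in $X\Lcth_\bW$, which the paper leaves implicit, is fine; it also holds without finite Krull dimension, because $X\Ctrh$ is closed under extensions in $X\Lcth_\bW$ (Section~\ref{loc-contrah-subsecn}) and flat modules are closed under extensions.
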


\begin{proof}
 This is a particular case of~\cite[Corollary~4.7.12]{Pform}.
 The proof is similar to those of
Lemmas~\ref{qcoh-ssep-alf-precover-lem}
and~\ref{flat-explicitly-described-precover}.
 The assertion follows from
Proposition~\ref{noetherian-lcth-lct-preenvelope} and
Corollary~\ref{noetherian-lcth-prj-cor}(a,c) by virtue of
Lemma~\ref{salce-lemma}(a).
\end{proof}

\begin{cor} \label{noetherian-findim-scheme-flat-cotorsion-pair}
 Let $X$ be a Noetherian scheme of finite Krull dimension with an open
covering\/~$\bW$.
 Then the pair of classes (flat contraherent cosheaves on $X$, locally
cotorsion\/ $\bW$\+locally contraherent cosheaves on~$X$) is a complete
cotorsion pair in the exact category of\/ $\bW$\+locally contraherent
cosheaves $X\Lcth_\bW$.
\end{cor}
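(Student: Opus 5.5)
We have to show that (flat contraherent cosheaves, locally cotorsion $\bW$\+locally contraherent cosheaves) is a cotorsion pair in $X\Lcth_\bW$. It will then be complete automatically: the two approximation sequences are already supplied by Proposition~\ref{noetherian-lcth-lct-preenvelope} (special preenvelopes) and Corollary~\ref{noetherian-lcth-flat-precover} (special precovers). So the whole content is the two equalities of $\Ext^1$\+orthogonal classes:
$$
X\Ctrh^\fl={}^{\perp_1}(X\Lcth^\lct_\bW), \qquad
X\Lcth^\lct_\bW=(X\Ctrh^\fl)^{\perp_1},
$$
both computed in $X\Lcth_\bW$.

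The orthogonality $\Ext^{X,1}(\fG,\fQ)=0$ for all $\fG\in X\Ctrh^\fl$ and $\fQ\in X\Lcth^\lct_\bW$ is exactly Corollary~\ref{flat-are-antilocally-flat}(a). This gives the inclusions $X\Ctrh^\fl\subset{}^{\perp_1}(X\Lcth^\lct_\bW)$ and $X\Lcth^\lct_\bW\subset(X\Ctrh^\fl)^{\perp_1}$. Note that $X\Ctrh^\fl\subset X\Lcth_\bW$ holds, since flat contraherent cosheaves are contraherent.

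For the reverse inclusions we use the standard direct summand argument.

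First, suppose $\fM\in X\Lcth_\bW$ satisfies $\Ext^{X,1}(\fM,\fQ)=0$ for all locally cotorsion $\fQ$. Take the sequence $0\rarrow\fQ\rarrow\fF\rarrow\fM\rarrow0$ from Corollary~\ref{noetherian-lcth-flat-precover}. It splits, so $\fM$ is a direct summand of the flat contraherent cosheaf $\fF$. Flatness and contraherence are both checked on cosections over affine open subschemes, and flat modules and contraadjusted modules are both closed under direct summands. Hence $\fM\in X\Ctrh^\fl$.

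Dually, suppose $\Ext^{X,1}(\fF',\fM)=0$ for all flat contraherent $\fF'$. Take $0\rarrow\fM\rarrow\fQ\rarrow\fF\rarrow0$ from Proposition~\ref{noetherian-lcth-lct-preenvelope}. It splits, so $\fM$ is a direct summand of the locally cotorsion cosheaf $\fQ$, and $\fM$ is locally cotorsion because cotorsion modules are closed under direct summands.

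The only point needing care is that $\Ext^1$ here means Yoneda $\Ext^1$ in $X\Lcth_\bW$. The relevant sequences live in $X\Lcth_\bW$, and all the categories involved are closed under extensions in one another, so these groups agree with the $\Ext^{X,1}$ used in Corollary~\ref{flat-are-antilocally-flat}. With that checked, there is no real obstacle: the substantive work was done in Proposition~\ref{noetherian-lcth-lct-preenvelope} and Corollary~\ref{flat-are-antilocally-flat}.
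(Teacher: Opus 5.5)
Your proposal is correct and follows essentially the same route as the paper. It takes the $\Ext^{X,1}$\+orthogonality from Corollary~\ref{flat-are-antilocally-flat}(a) and the two approximation sequences from Proposition~\ref{noetherian-lcth-lct-preenvelope} and Corollary~\ref{noetherian-lcth-flat-precover}, then uses the splitting (direct summand) argument to get maximality of both classes; you spell this last step out, including closure of the two classes under direct summands, where the paper simply cites the ``direct summand lemma''.
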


\begin{proof}
 This is an improvement upon~\cite[Corollary~6.2.11]{Pcosh} and
a particular case of an assertion from~\cite[Section~4.7]{Pform}.
 This is also a non-semi-separated Noetherian version of
Corollary~\ref{antilocally-flat-cotorsion-pair} above, and the proof
is similar.
 One has $\Ext^{X,1}(\fF,\fQ)=0$ for all $\fF\in X\Ctrh^\fl$ and
$\fQ\in X\Lcth_\bW^\lct$ by
Corollary~\ref{flat-are-antilocally-flat}(a).
 The special preenvelope and special precover sequences exist
by Proposition~\ref{noetherian-lcth-lct-preenvelope} and
Corollary~\ref{noetherian-lcth-flat-precover}.
 Using the approximation sequences provided by these proposition
and corollary, one can show that $X\Ctrh^\fl\subset X\Lcth_\bW$ is
the maximal class with the $\Ext^1$\+orthogonality property with
respect to $X\Lcth_\bW^\lct$, and $X\Lcth_\bW^\lct\subset X\Lcth_\bW$
is the maximal class with the $\Ext^1$\+orthogonality property with
respect to $X\Ctrh^\fl$.
 This is an instance of the ``direct summand lemma'',
\cite[Lemma~B.1.2]{Pcosh} or~\cite[Lemma~A.2.3]{Pform}.
\end{proof}

\begin{cor} \label{noetherian-findim-flat=alf}
 Let $X$ be a Noetherian scheme of finite Krull dimension with an open
covering\/~$\bW$.
 Then a\/ $\bW$\+locally contraherent cosheaf on $X$ is flat if
and only if it is antilocaly flat (with respect to the given open
covering\/~$\bW$).
 Consequently, the class of all antilocally flat\/ $\bW$\+locally
contraherent cosheaves on $X$ does not depend on the open covering\/
$\bW$, and all such cosheaves are contraherent on the whole of~$X$.
\end{cor}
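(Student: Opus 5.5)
The plan is to prove the two inclusions separately, and then read off the "consequently" assertions from the fact that flatness does not mention the covering.

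For "flat implies antilocally flat": a flat \(\bW\)-locally contraherent cosheaf \(\fF\) is flat and contraherent by Corollary~\ref{flat-are-alf-contraherent-and-coflasque}(b). Here we interpret "flat" for a \(\bW\)-locally contraherent cosheaf as \(\bW\)-flat; by that corollary this agrees with flatness over all affine opens. Corollary~\ref{flat-are-antilocally-flat}(a) then gives \(\Ext^{X,1}(\fF,\fQ)=0\) for all \(\fQ\in X\Lcth^\lct_\bW\). Now take a short exact sequence \(0\to\fP\to\fQ\to\fR\to0\) in \(X\Lcth^\lct_\bW\). Applying \(\Hom^X(\fF,{-})\) gives the usual six-term exact sequence of \(\Hom\) and \(\Ext^1\) in the exact category \(X\Lcth_\bW\). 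Surjectivity onto \(\Hom^X(\fF,\fR)\) follows from the vanishing of \(\Ext^{X,1}(\fF,\fP)\). So \(\fF\) is antilocally flat.

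For the converse, let \(\fG\) be antilocally flat. By Corollary~\ref{noetherian-lcth-flat-precover}, there is a short exact sequence \(0\to\fQ\to\fF\to\fG\to0\) in \(X\Lcth_\bW\) with \(\fQ\) locally cotorsion and \(\fF\) flat contraherent. Apply antilocal flatness to this sequence. For that we need it to be a sequence of locally cotorsion cosheaves, which is not automatic. This is the main obstacle, and I would handle it as follows.

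\begin{itemize}
\item By Proposition~\ref{noetherian-lcth-lct-preenvelope}, embed \(\fG\) as \(0\to\fG\to\fQ'\to\fF'\to0\) with \(\fQ'\) locally cotorsion and \(\fF'\) flat.
\item Apply the definition of antilocal flatness not to \(\fG\)'s own precover, but to a short exact sequence \(0\to\fQ\to\fE\to\fQ'\to0\) of locally cotorsion cosheaves.
\item Obtain this sequence as the pullback of \(0\to\fQ\to\fF\to\fG\to0\) along the retraction structure, or more simply by pushing out.
\end{itemize}

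Cleaner: antilocal flatness gives that every admissible epimorphism in \(X\Lcth^\lct_\bW\) onto a locally cotorsion object lifts maps from \(\fG\). Use instead the characterization \(\Ext^{X,1}(\fG,\fQ)=0\) for \(\fQ\) locally cotorsion. To derive it, given an extension \(0\to\fQ\to\fM\to\fG\to0\), push it out along \(\fG\to\fQ'\). Concretely, form the pushout of \(\fM\leftarrow\fG\)-side appropriately. Precisely, form \(\fM'\) with \(0\to\fQ\to\fM'\to\fQ'\to0\) as the pushout of \(\fM\) along \(\fG\to\fQ'\); this is possible since \(\Ext^1(\fF',\fQ)=0\) lets us extend the class. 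Then \(\fM'\) is locally cotorsion as an extension, since \(X\Lcth^\lct_\bW\) is closed under extensions. Antilocal flatness lifts \(\fG\to\fQ'\) to \(\fG\to\fM'\), and this map factors through \(\fM\) by the pullback property, giving a splitting.

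Once \(\Ext^{X,1}(\fG,\fQ)=0\) is established, the precover sequence \(0\to\fQ\to\fF\to\fG\to0\) splits. Hence \(\fG\) is a direct summand of the flat contraherent cosheaf \(\fF\), so it is flat and contraherent.

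Finally, flatness of a contraherent cosheaf is defined without reference to \(\bW\). Therefore the class of antilocally flat \(\bW\)-locally contraherent cosheaves equals \(X\Ctrh^\fl\) for every \(\bW\), which proves independence of the covering and global contraherence.
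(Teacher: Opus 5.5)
Your ``flat $\Rightarrow$ antilocally flat'' direction, the splitting of the precover sequence from Corollary~\ref{noetherian-lcth-flat-precover}, and the ``consequently'' part are all fine. The gap is the key step, deriving $\Ext^{X,1}(\fG,\fQ)=0$ for locally cotorsion $\fQ$ from antilocal flatness of $\fG$.

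You cannot ``push out'' an extension $0\rarrow\fQ\rarrow\fM\rarrow\fG\rarrow0$ along a map $\fG\rarrow\fQ'$ out of its quotient term. What you actually need is a class in $\Ext^1(\fQ',\fQ)$ whose pullback along $\fG\rarrow\fQ'$ is the given class. Apply $\Hom^X({-},\fQ)$ to $0\rarrow\fG\rarrow\fQ'\rarrow\fF'\rarrow0$: the obstruction to such a lift lies in $\Ext^2(\fF',\fQ)$, not in $\Ext^1(\fF',\fQ)$. The vanishing of $\Ext^1(\fF',\fQ)$ only makes $\Hom^X(\fQ',\fQ)\rarrow\Hom^X(\fG,\fQ)$ surjective and $\Ext^1(\fQ',\fQ)\rarrow\Ext^1(\fG,\fQ)$ injective. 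So the locally cotorsion extension $\fM'$ is never actually constructed, and the argument does not go through as written.

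The missing idea is to embed the middle term rather than $\fG$.
\begin{enumerate}
\item By Theorem~\ref{noetherian-findim-enough-lct-theorem}, choose an admissible monomorphism $\fM\rarrow\fQ''$ with $\fQ''$ locally cotorsion.
\item The composition $\fQ\rarrow\fM\rarrow\fQ''$ is an admissible monomorphism. Its cokernel $\fR$ is locally cotorsion, because $X\Lcth^\lct_\bW$ is closed under cokernels of admissible monomorphisms in $X\Lcth_\bW$.
\item The given extension is then the pullback of the sequence $0\rarrow\fQ\rarrow\fQ''\rarrow\fR\rarrow0$ of locally cotorsion cosheaves along the induced map $\fG\rarrow\fR$.
\item Antilocal flatness lifts $\fG\rarrow\fR$ to $\fG\rarrow\fQ''$, which yields a section of $\fM\rarrow\fG$.
\end{enumerate}
This is exactly the general lemma the paper's proof invokes. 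It uses only $\Ext^1$ and the cogenerating property; the flat cokernel of the preenvelope plays no role.

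Alternatively, you could rescue your own route by proving $\Ext^2(\fF',\fQ)=0$ in $X\Lcth_\bW$ by dimension shifting. Take $0\rarrow\fK\rarrow\fP\rarrow\fF'\rarrow0$ with $\fP$ projective. Then $\fP$ is flat by Corollary~\ref{noetherian-lcth-prj-cor}(c), so $\fK$ is flat, and Corollary~\ref{flat-are-antilocally-flat}(a) applies to $\fK$. But that is additional work your proposal does not do, and it brings in higher $\Ext$ groups that the paper deliberately avoids at this point.
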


\begin{proof}
 This is a particular case of~\cite[Corollary~4.7.13(b)]{Pform}.
 Recall that the class of all flat $\bW$\+locally contraherent
cosheaves on $X$ does not depend on $\bW$ by
Corollary~\ref{flat-are-alf-contraherent-and-coflasque}(b),
and all flat contraherent cosheaves on $X$ are antilocally flat by
Corollary~\ref{flat-are-antilocally-flat}(a).
 As a part of
Corollary~\ref{noetherian-findim-scheme-flat-cotorsion-pair},
we have the equality $X\Ctrh^\fl={}^{\perp_1}(X\Lcth^\lct_\bW)
\subset X\Lcth_\bW$.
 The assertion that all antilocally flat $\bW$\+locally contraherent
cosheaves on $X$ are flat follows from that equality by virtue of
the result of~\cite[Lemma~A.2.2(b)]{Pform}, which is applicable since
the full subcategory $X\Lcth_\bW^\lct$ is cogenerating in $X\Lcth_\bW$
by Theorem~\ref{noetherian-findim-enough-lct-theorem}.
\end{proof}

\end{document}